\theoremstyle{plain}
\newtheorem{thmintro}{Theorem}
\newtheorem{corintro}{Corollary}[thmintro]
\newtheorem{thm}{Theorem}[section]
 \newtheorem{cor}[thm]{Corollary}
 \newtheorem{lem}[thm]{Lemma}
 \newtheorem{prop}[thm]{Proposition}
\theoremstyle{definition}
 \newtheorem{ex}[thm]{Example}
 \newtheorem{rmk}[thm]{Remark}
 \newtheorem{ques}[thm]{Question}
 \newtheorem{defn}[thm]{Definition}
\newcommand{\II}{ \mathbb{I}}
\newcommand{\JJ}{ \mathbb{J}}
\newcommand{\Z}{\mathbb{Z}}
\newcommand{\R}{\mathbb{R}}
\newcommand{\CC}{\mathbb{C}}
\newcommand{\PP}{\mathbb{P}}
\newcommand{\cA}{{\mathcal A}}
\newcommand{\Hh}{{\mathcal H}}
\newcommand{\cM}{{\mathcal M}}
\newcommand{\cV}{{\mathcal V}}
\newcommand{\rank}{\operatorname{rank}}
\newcommand\isomto{\stackrel{\sim}{\smash{\longrightarrow}\rule{0pt}{0.4ex}}}
\newcommand{\ot}{\otimes}
\newcommand{\Imt}{{\rm Im}\,}
\newcommand{\Ke}{{\rm Ker}\,}
\newcommand{\Cok}{{\rm Coker}\,}
\newcommand{\del}{\partial}
\newcommand{\delb}{{\bar \partial}}
\newcommand{\mub}{{\bar \mu}}
\newcommand{\gr}{\operatorname{gr}}
\newcommand{\mult}{\operatorname{mult}}
\newcommand{\img}[2][1]{\begin{gathered}\includegraphics[scale=#1]{#2}\end{gathered}}
\newcommand{\Img}{\mathrm{Im}}
\newcommand{\dR}{\mathrm{dR}}
\newcommand{\id}{\mathrm{id}}
\newcommand{\pdef}{\mathrm{pdef}}
\title{A $dd^c$-type condition beyond the K\"ahler realm}
\author[J. Stelzig]{Jonas Stelzig}
  \address[J. Stelzig]{Mathematisches Institut der Ludwig-Maximilians-Universit\"at M\"unchen, Theresienstra{\ss}e 39, 80333 M\"unchen.}
  \email{jonas.stelzig@math.lmu.de}
\author[S. Wilson]{Scott O. Wilson}
  \address[S. Wilson]{Department of Mathematics, Queens College, City University of New York, 65-30 Kissena Blvd., Flushing, NY 11367}
  \email{scott.wilson@qc.cuny.edu}
\keywords{complex manifold, deformation, Fr\"olicher spectral 
sequence, pure Hodge structure, K\"ahler, Vaisman, rational homotopy theory}
\subjclass[2010]{}
\begin{document}

\begin{abstract}
This paper introduces a generalization of the $dd^c$-condition for complex manifolds.  Like the $dd^c$-condition, it admits a diverse collection of characterizations, and is hereditary under various geometric constructions. Most notably, it is an open property with respect to small deformations.  The condition is satisfied by a wide range of complex manifolds including all compact complex surfaces, and all compact Vaisman manifolds. 	We show there are computable invariants of a real homotopy type which in many cases prohibit it from containing any complex manifold satisfying such $dd^c$-type conditions in low degrees. This gives rise to numerous examples of almost complex manifolds which cannot be homotopy equivalent to any of these complex manifolds. 
\end{abstract}

\maketitle


\section{Introduction}

Well-formulated algebraic conditions can reveal deep connections within geometry and topology. This is epitomized in the work of Deligne, Griffiths, Morgan, and Sullivan, \cite{DGMS}, which introduces
the $d d^c$-condition for complex manifolds. This seemingly simple algebraic condition is a versatile tool in the study of compact complex manifolds for at least the following reasons:  

\begin{enumerate}
	\item It admits characterizations of rather distinct nature (using elements, indecomposable bicomplexes, the Fr\"olicher spectral sequence and pure Hodge structures, numerical inequalities).
	\item It passes to other manifolds in many geometric situations, such as holomorphic domination, projective bundles, small deformations, blow-ups (along $dd^c$-centers), etc.
	\item It holds on a fairly large class of manifolds, in particular, on compact K\"ahler manifolds.
	\item It implies topological restrictions on the underlying manifold: odd Betti numbers are even, and formality holds, in the sense of rational homotopy theory.
\end{enumerate}

In this paper we present a generalization of the $dd^c$-condition, termed the $dd^c+3$-condition, for which we obtain full analogues  of $(1)$--$(3)$ above. In the last section, we broaden our scope and 
provide a general framework for studying the real homotopy type of 
complex manifolds. This yields topological obstructions to the existence of complex structures satisfying a  low-degree variant of the $dd^c+3$-condition.

\subsection*{Definition and equivalent characterizations}
Let $M$ be a compact complex manifold and $\cA:=\cA(M)$ its bicomplex of $\CC$-valued differential forms.
One knows from \cite{DGMS} that the  $dd^c$-condition has several equivalent formulations (c.f. Theorem \ref{thm: ddc-cond} below). One such statement is that the following diagram induces an isomorphism in cohomology:
\[\label{fig: dc-diagram}
\xymatrix{
 & \left( \Ke d^c, d \right) \ar[dl]_i \ar[dr]^\pi &  \\
 \left(\cA,d \right)& & \left(H_{d^c} , d=0  \right)
}
\]
We perform a pushout on this diagram to reveal a new long exact sequence, valid for all complex manifolds, which is used in one formulation of our first main result:

\begin{thmintro}[The $dd^c + 3$-condition] \label{thmintro:ddc+3}
Let $(M,J)$ be a compact complex manifold.  The following are equivalent:
\begin{enumerate}
\item If $x \in \cA$ with $x = dy$, and $x= d^c z$, then $x = dw$ with $w \in \Ke d^c$.

\item The bicomplex $(\cA, \del, \delb)$ decomposes as a direct sum of dots, squares and length $3$ zigzags, i.e.:
\[
\xymatrix{   \CC_{p,q},  \\ }\quad
\xymatrix{
	\CC_{p,q+1} \ar[r]^-\del & \CC_{p+1,q+1} \\
	\CC_{p,q} \ar[r]^-\del  \ar[u]^-\delb & \CC_{p+1,q} \ar[u]_-\delb,
} \quad 
\xymatrix{
	\CC_{p,q+1}& \\
	\CC_{p,q} \ar[r]^-\del  \ar[u]^-\delb & \CC_{p+1,q},
} \quad
\xymatrix{
	\CC_{p,q+1}  \ar[r]^-\del  &  \CC_{p+1,q+1} \\
	& \CC_{p+1,q} \ar[u]_-\delb 
}
\]

\item The Fr\"olicher spectral sequence degenerates at $E_1$, and the total purity defect is at most $1$.
\item
The connecting homomorphism in the long exact sequence 
\[
\xymatrix{
\cdots \ar[r] & H^k_{d} \oplus  H^k_{d^c}  \ar[r]^-{p - j}  & H^k \left( \cA / \Img  \, d^c \right) \ar[r]^-{\delta_k}  & H^{k+1} \left( \Ke d^c \right) \ar[r]^-{i+\pi}  & H^k_{d} \oplus  H^k_{d^c} \ar[r] & \cdots 
}
\]
is zero for all $k$.

\item The following numerical equality holds:
\[ 
\sum_k \dim H^k(\Ke d^c) + \dim H^k(\cA/ \Img \, d^c) = 2 \sum_k \dim H^k_{\dR}(M).
\]

\end{enumerate}
\end{thmintro}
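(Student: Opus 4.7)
The plan is to exploit the structure theorem for bounded double complexes of $\CC$-vector spaces (due to Khovanov--Qi and Stelzig): any such complex decomposes uniquely, up to permutation, as a direct sum of indecomposables, which are squares and zigzags of arbitrary positive length (dots being the length-$1$ zigzags). Each of conditions (1)--(5) is compatible with direct sums---it holds on $\cA$ iff it holds on every indecomposable summand---so it will suffice to identify, for each condition, the indecomposables on which it holds, and to verify that all five cut out the same class, namely dots, squares, and length-$3$ zigzags.

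For (1) $\Leftrightarrow$ (2) I would check the condition case-by-case on the indecomposables. Dots carry no nonzero $d$-exact element, so (1) is vacuous. On a square with corners $a,b,c,v$ and sign convention $\delb b = -v$, the only simultaneously $d$- and $d^c$-exact element (up to scalar) is $v$, and one has $v = d\bigl((c-b)/2\bigr)$ with $(c-b)/2 \in \Ke d^c$. On the ``cospan'' length-$3$ zigzag $a \xrightarrow{\del} v \xleftarrow{\delb} c$ the same averaging gives $v = d\bigl((a+c)/2\bigr)$ with $(a+c)/2 \in \Ke d^c$; on the ``span'' length-$3$ zigzag no nonzero element is simultaneously $d$- and $d^c$-exact, so (1) again holds vacuously. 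For a zigzag of length $\geq 4$ one exhibits a specific $(d,d^c)$-exact element whose only $d$-preimages fall outside $\Ke d^c$---for example on $a \xrightarrow{\del} b \xleftarrow{\delb} c \xrightarrow{\del} e$, the element $b$ is $d$-exact only via $b=da$, while $a \notin \Ke d^c$.

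For (2) $\Leftrightarrow$ (3) I would invoke the standard cohomological dictionary for indecomposables: dots contribute one class each to $H_\delb^{p,q}$, $H_\del^{p,q}$ and $H^{p+q}_\dR$; squares contribute to none of them; even-length zigzags contribute to $H_\delb$ and $H_\del$ but not to $H_\dR$ and hence obstruct $E_1$-degeneration of the Fr\"olicher spectral sequence; and odd zigzags of length $\geq 5$ contribute to the total purity defect. Thus $E_1$-degeneration plus total purity defect at most $1$ leaves exactly dots, squares, and length-$3$ zigzags. For (4) $\Leftrightarrow$ (5), I would split the LES of (4): vanishing of every $\delta_k$ breaks it into short exact sequences, and $\dim H^k_d = \dim H^k_{d^c} = \dim H^k_\dR$ yields (5); conversely the standard identity $\sum_k \bigl(\dim H^k(\Ke d^c) + \dim H^k(\cA/\Img d^c)\bigr) - 2\sum_k \dim H^k_\dR = 2\sum_k \rank \delta_k$ shows that (5) forces every $\delta_k=0$. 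Finally, (1) $\Leftrightarrow$ (4) will follow from the explicit formula $\delta_k([\alpha]) = [d\alpha] \in H^{k+1}(\Ke d^c)$ extracted from the pushout: this vanishes for every representative iff every $(d,d^c)$-exact element $d\alpha$ can be written as $dw$ with $w\in\Ke d^c$, which is precisely (1).

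The main obstacle is the length-$3$ case in (1) $\Leftrightarrow$ (2): the averaging argument that produces $w\in\Ke d^c$ works only because $d$ and $d^c$ collapse onto the same one-dimensional image in this indecomposable, a coincidence which fails at length $4$ and is exactly what singles out the $dd^c+3$-condition. Verifying the explicit form of the connecting map $\delta_k$ from the pushout is a routine diagram chase, and the dimension count underlying (4) $\Leftrightarrow$ (5) is standard.
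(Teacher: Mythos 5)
Your proposal is correct and follows essentially the same strategy as the paper: reduce to the decomposition into indecomposable bicomplexes and verify each condition zigzag-by-zigzag (the paper's Figure~\ref{fig:zzddc+3}), identify the element-wise condition with the vanishing of the image of $\delta_k$ via the explicit descriptions of $H(\Ke d^c)$ and $H(\cA/\Img\, d^c)$, and extract the numerical criterion from the rank bookkeeping of the long exact sequence. The only cosmetic difference is that you additionally verify condition (1) directly on each indecomposable by an averaging argument (and should note that for odd zigzags of length $\geq 5$ the offending $(d,d^c)$-exact class is a difference of the two outermost corners rather than a single middle entry), whereas the paper routes (1) entirely through the connecting homomorphism; both are routine once the table of indecomposables is in hand.
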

Every property above has a more restrictive counterpart that characterizes the usual $dd^c$-condition. In view of $(2)$, we call a complex manifold satisfying these conditions a $dd^c+3$-manifold. To make condition $(3)$ above precise, we introduce a non-negative integer that measures the extent to which the pure Hodge condition fails, called the \emph{purity defect}. While it can be defined in terms of filtrations, as in Definition \ref{defn:puritydefect}, it is easily (and equivalently) understood
in terms of lengths of the odd zigzags appearing in any decomposition of $(\cA,\del,\delb)$ into indecomposable bicomplexes, Proposition  \ref{prop:puritydefect}.  The conditions $(2)$ and $(3)$ have a natural generalization to higher length odd zigzags (resp. higher purity defect) and some of the results in this paper will hold for the resulting more general classes of manifolds.

The spaces  $H^* ( \Ke d^c )$ and $H^* ( \cA / \Img  \, d^c)$  in condition $(4)$ are closely related to the well-studied  Bott-Chern and Aeppli groups, $H^*_{BC}(M)$ and $H^*_A(M)$, respectively. In fact, these are pairwise isomorphic if and only if 
$H^*_{\dR}(M)$ inherits a pure Hodge structure, Theorem \ref{thm:purity1}. This is deduced from a natural diagram,
 respecting Poincar\'e duality, that contains all these groups, the long exact sequence, and certain purity obstruction groups, see subsection \ref{ssec:BCA}.

The numerical characterization in condition $(5)$ follows from a more general set of inequalities
\[
h_{BC} + h_A \geq h_{\Ke d^c} + h_{\cA / \Img\,d^c} \geq h_\delb + h_\del \geq  2  \sum_k b_k,
\]
where $b_k = \dim H^k_{\dR}(M)$, and the suggestive notation $h_{BC} = \dim H_{BC}$, $h_\delb = \dim H_\delb$, etc., is used.
This includes the case considered in \cite{AnTo}. All possible equalities above are characterized in Proposition \ref{prop;numerics} in terms of various degeneration and purity conditions, including the $d d^c + 3$ condition.
\subsection*{Inheritance of the $dd^c+3$-condition}
Just as for the $dd^c$-condition, the validity of the $dd^c+3$-condition is quite robust under many geometric constructions. In fact:
\begin{thmintro} \label{thmintro:constrmthds} 
The $dd^c+3$-condition satisfies:
\begin{enumerate}
	\item\label{itmint: blowup ddc+3} A blow-up of a manifold $M$ along a smooth center $Z\subseteq M$ is $dd^c+3$ if and only if both $M$ and $Z$ are $dd^c+3$.
	\item\label{itmint: ddbar x ddc+3} A product is $dd^c+3$ if and only if one factor is a $dd^c+3$-manifold and one is a $dd^c$-manifold.
	\item\label{itmint: ddc+3 modification}The target of a holomorphic surjection $f:M\to N$ with $M$ a $dd^c+3$-manifold and $\dim M=\dim N$ is again a $dd^c+3$-manifold.
	\item\label{itmint: ddc+3 proj. bdls} Projectivized holomorphic vector bundles are $dd^c+3$-manifolds if and only if the base of the bundle is a $dd^c+3$-manifold.
	\item\label{itmint: small defs} Any sufficiently small deformation of a $dd^c+3$-manifold is again a $dd^c+3$-manifold.
\end{enumerate}

\end{thmintro}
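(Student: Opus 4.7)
The unifying principle is the bicomplex decomposition characterization, Theorem~\ref{thmintro:ddc+3}(2): $M$ is $dd^c+3$ precisely when the Krull--Schmidt decomposition of $\cA(M)$ into indecomposables involves only dots, squares, and the two length-three zigzags. Parts (1)--(4) then reduce to comparing multisets of indecomposables after invoking appropriate bicomplex-level structure theorems, while (5) is analytic in nature and handled via characterization (5).

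For (1), the bicomplex-level blow-up formula expresses $\cA(\tilde M_Z)$ as the direct sum (up to bicomplex quasi-isomorphism) of $\cA(M)$ and shifted copies of $\cA(Z)$. Bidegree shifts preserve zigzag length and shape, so the multiset of indecomposables of $\tilde M_Z$ is the disjoint union of those of $M$ and $Z$, and the equivalence follows. For (2), K\"unneth upgrades to $\cA(M\times N)\simeq \cA(M)\otimes \cA(N)$, and a direct computation on tensor products of the four allowed indecomposables shows that tensoring by a dot is a shift, tensoring by a square yields sums of squares, tensoring a length-three zigzag with a dot gives a length-three zigzag, but tensoring two length-three zigzags produces an indecomposable of length at least five (for instance, $\ulcorner\otimes\ulcorner$ decomposes as a length-five zigzag plus a square, of total dimension $5+4=9$). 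This forces the asymmetry between the two factors. For (3), an equal-dimensional holomorphic surjection admits a pushforward $f_*$ on forms commuting with $\del$ and $\delb$, with $f_*f^*=(\deg f)\cdot\id$; this realizes $\cA(N)$ as a bicomplex direct summand of $\cA(M)$, so every indecomposable of $\cA(N)$ also appears in $\cA(M)$. For (4), a bicomplex-level Leray--Hirsch theorem gives $\cA(\PP(E))\simeq \cA(B)\otimes H^{*,*}(\PP^{r-1})$, with $H^{*,*}(\PP^{r-1})$ a sum of $r$ dots; hence the indecomposable types of $\cA(\PP(E))$ and $\cA(B)$ coincide.

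The main obstacle is (5). I would deploy characterization (5): for a smooth holomorphic family $\pi:\mathcal{X}\to\De$ with central fiber $\mathcal{X}_0$ a $dd^c+3$-manifold, the Betti numbers of the fibers are locally constant, so $2\sum_k b_k$ is independent of $t$. The inequality $\sum_k(\dim H^k(\Ke d^c_t)+\dim H^k(\cA_t/\Img d^c_t))\geq 2\sum_k b_k$ holds at every $t$, with equality at $t=0$. It then suffices to show upper semi-continuity in $t$ of both $\dim H^k(\Ke d^c_t)$ and $\dim H^k(\cA_t/\Img d^c_t)$. My plan is to construct, for each of these groups, a self-adjoint fourth-order elliptic operator whose kernel computes the cohomology, in the spirit of Schweitzer's operators for the Bott--Chern and Aeppli groups; a standard Kodaira--Spencer argument then yields upper semi-continuity of the kernel dimensions. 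An alternative route is to deduce upper semi-continuity of the multiplicities of zigzags of length $\geq 5$ in a holomorphic family of bicomplexes, which by~(2) is equivalent to openness of the $dd^c+3$-condition.
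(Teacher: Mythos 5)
Parts (1)--(4) of your proposal follow essentially the same route as the paper: the paper's Proposition~\ref{thm:constrmthds} invokes the $E_1$-isomorphisms $\cA(\tilde M)\simeq_1\cA(M)\oplus\bigoplus_i\cA(Z)[i]$, $\cA(\PP(\cV))\simeq_1\bigoplus_i\cA(M)[i]$, $\cA(M)\simeq_1\cA(N)\oplus\cA(M)/f^*\cA(N)$ and the K\"unneth formula, and then compares zigzag multiplicities, exactly as you do. Your computation that $L\otimes L$ splits as a square plus a length-five zigzag is correct and is the same mechanism as the paper's Proposition~\ref{prop: purdef of prod} (additivity of the purity defect); for the converse of (2) you should also make explicit that tensoring with the degree-$(0,0)$ dot of each factor exhibits every indecomposable of each factor, shifted, inside the product, so that both factors are forced to be $dd^c+3$ before the $L\otimes L$ obstruction applies. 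One small caveat in (3): for a holomorphic surjection with positive-dimensional fibers over some points (e.g.\ a modification), the pushforward $f_*$ is defined on currents rather than on smooth forms, so $\cA(N)$ is not literally a bicomplex direct summand of $\cA(M)$; the paper sidesteps this by citing the $E_1$-isomorphism with the quotient complex, which is the standard repair.

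Part (5) is where your route genuinely diverges from the paper's, and where there is a real gap. Your plan rests on upper semi-continuity of $t\mapsto\dim H^k(\Ke d^c_t)$ and $t\mapsto\dim H^k(\cA_t/\Img\,d^c_t)$, to be extracted from fourth-order elliptic operators ``in the spirit of Schweitzer.'' But $(\Ke d^c,d)$ is not a complex of sections of vector bundles --- it is the kernel of a first-order operator sitting inside $\cA$ --- so the Schweitzer/Kodaira--Spencer machinery does not apply as stated, and no such operator is known. More structurally, Proposition~\ref{comparegroups} of the paper gives $\dim H^k(\Ke d^c)=\dim H^k_{BC}-\dim H^k_{\urcorner}$; since $h^k_{BC}$ is itself only upper semi-continuous, upper semi-continuity of $\dim H^k(\Ke d^c)$ would require a \emph{lower} bound on the jump of the purity obstruction group $H^k_{\urcorner}$, and you offer no mechanism for that. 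The paper's warning example (the Iwasawa manifold, whose purity defect jumps \emph{up} under deformation precisely because $E_1$-degeneration fails at the center) shows that semi-continuity statements for these purity-sensitive functors are delicate and cannot be expected ``in general''; any correct argument must use the $dd^c+3$ hypothesis on the central fiber before the semi-continuity step, which your plan does not do.

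The paper's actual argument (Theorem~\ref{thm: semi-cont FcapFbar} and Corollaries~\ref{cor: pdef semcont}, \ref{cor:ddc+3stable}) is: first, $E_1$-degeneration with constant Hodge numbers propagates to nearby fibers by the classical elliptic argument; then, \emph{given} degeneration, strictness identifies $F^pH^k_{\dR}(M_t)$ with $H^k(F^p\cA(M_t))$, these have constant rank and form vector subbundles of the flat bundle $\{H^k_{\dR}(M_t)\}$, and the dimension of an intersection of two subbundles is upper semi-continuous; hence $\dim F^pH^k\cap\bar F^qH^k$, and with it the purity defect, can only drop. This proves openness via characterization (3)/(6) of Theorem~\ref{thm:ddc+3} rather than the numerical characterization (5). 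If you want to salvage your numerical route, you would in effect have to reprove this filtration-level semi-continuity anyway; I would recommend adopting it directly.
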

All of these statements have exact analogues replacing $dd^c+3$ by $dd^c$. Note however that $dd^c+3$-manifolds behave like a module over $dd^c$-manifolds, rather than as a ring themselves. In fact, the purity defect behaves additively under products. 

By means of the weak factorization theorem, one can extract statements about bimeromorphic invariants from $(1)$. For example:

\begin{corintro}
The $dd^c+3$-condition is a bimeromorphism invariant of compact complex manifolds in complex dimension at most four.
\end{corintro}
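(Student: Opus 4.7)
The plan is to combine the weak factorization theorem for bimeromorphic maps with Theorem \ref{thmintro:constrmthds}\eqref{itmint: blowup ddc+3}, reducing the question to checking that all centers of blow-ups that can appear are themselves $dd^c+3$-manifolds. Concretely, I would first invoke weak factorization to write an arbitrary bimeromorphism $f: M \dashrightarrow M'$ between smooth compact complex manifolds of dimension $n \leq 4$ as a finite zigzag of smooth compact complex manifolds $M = M_0, M_1, \ldots, M_k = M'$, where each consecutive pair $M_i, M_{i+1}$ is related by a blow-up or blow-down along a smooth center $Z_i$.

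Next, I would observe that a blow-up along a center of codimension $\leq 1$ is an isomorphism, so in the factorization every center may be assumed to have codimension $\geq 2$; hence $\dim_\CC Z_i \leq n-2 \leq 2$. Since points and smooth curves are K\"ahler (hence in particular $dd^c$, and therefore $dd^c+3$), and since all compact complex surfaces are $dd^c+3$ as asserted in the abstract, each center $Z_i$ is a $dd^c+3$-manifold.

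Finally, I would propagate the $dd^c+3$-property along the factorization using the \emph{if and only if} in Theorem \ref{thmintro:constrmthds}\eqref{itmint: blowup ddc+3}: at each step one of $M_i, M_{i+1}$ is the blow-up of the other along the $dd^c+3$-center $Z_i$, and the biconditional transfers the property in either direction. Starting from $M_0 = M$ being $dd^c+3$, an induction on the number of steps yields that $M_k = M'$ is $dd^c+3$. The main point requiring care is the availability of weak factorization in the (possibly non-K\"ahler, non-Moishezon) compact complex analytic setting; once that is granted, the codimension bound $\dim_\CC Z_i \leq 2$ together with the iterative use of \eqref{itmint: blowup ddc+3} makes everything else mechanical. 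Note also that the same argument fails starting in dimension five, where threefold centers can appear and are no longer automatically $dd^c+3$, so one expects the bound $n \leq 4$ in the statement to be sharp for this style of argument.
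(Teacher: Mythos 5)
Your proposal is correct and follows essentially the same route as the paper: weak factorization reduces a bimeromorphism to a chain of blow-ups along smooth centers, which in dimension at most four have dimension at most two and hence are $dd^c+3$ (points and curves being K\"ahler, surfaces by Corollary \ref{cor:surfaces}), so the biconditional in Proposition \ref{thm:constrmthds}(1) transfers the property along the chain. The paper handles the availability of weak factorization in the analytic setting simply by citing \cite{AKMW} and \cite{Wlod}, exactly as you anticipate.
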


The deformation property in $(5)$ above is a consequence of the following more general statement:

\begin{thmintro} \label{thmintro: defs}  The condition ``$E_1$-degeneration and purity defect at most $k$'' 
is preserved under small deformations of compact complex manifolds.
\end{thmintro}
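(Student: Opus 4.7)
The plan is to find a numerical characterization of ``$E_1$-degeneration and purity defect at most $k$'' as an equality between a sum of cohomology dimensions that is upper semi-continuous in holomorphic families, and a quantity determined by the Betti numbers, which is locally constant in a family of compact complex manifolds. Standard upper semi-continuity will then force the equality to be preserved in a neighborhood of the central fiber.

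For $k=0$ the relevant equality is the Angella--Tomassini identity $\sum_{p,q} h^{p,q}_{BC} + h^{p,q}_A = 2\sum_n b_n$, and for $k=1$ it is the equality of Theorem A(5), namely $\sum_n h^n(\Ke d^c) + h^n(\cA/\Img d^c) = 2\sum_n b_n$. For general $k$, I would construct analogous invariants by iterating the pullback/pushout construction appearing before Theorem A, obtaining a tower of subcomplexes and quotient complexes of $\cA$ whose $d$-cohomologies ``absorb'' odd zigzags of length up to $2k+1$. A long exact sequence connecting these to de Rham cohomology would then yield an inequality $N_k(M) \geq 2\sum_n b_n$, with equality iff the relevant connecting maps vanish, which would be the defining property of the condition.

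Upper semi-continuity of the dimensions of these cohomology groups in a holomorphic family $\pi: X \to B$ can be obtained either by realizing them as kernels of elliptic differential operators depending continuously on $t \in B$ (a Kodaira--Spencer-type argument, as used for Bott--Chern cohomology), or by deducing it inductively from the already-known upper semi-continuity of Dolbeault, Bott--Chern and Aeppli dimensions via the defining long exact sequences; one may also appeal to upper semi-continuity of higher Fr\"olicher page dimensions. Combined with the topological invariance of Betti numbers in a family, upper semi-continuity at the central fiber forces $N_k(M_t) = 2\sum_n b_n(M_{t_0})$ on a neighborhood of $t_0$, giving openness.

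The main obstacle is the first step: constructing the correct numerical invariant for general $k$ that cleanly characterizes purity defect $\leq k$, rather than merely $E_r$-degeneration for some specific page. Once such an invariant is in place and shown to be both bounded below by $2\sum_n b_n$ and upper semi-continuous, the openness statement follows from the standard semi-continuity argument outlined above.
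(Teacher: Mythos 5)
Your overall strategy (find a numerical equality ``upper semi-continuous quantity $=$ topological quantity'' and let semi-continuity do the work) is a legitimate template --- it is exactly how one proves deformation openness of the $dd^c$-condition from the Angella--Tomassini inequality --- but as written the proposal has two genuine gaps, and the second one already bites at $k=1$. First, the invariant $N_k$ for general $k$ is never constructed; you acknowledge this is the ``main obstacle,'' but it is not a technicality: a single total-dimension invariant must distinguish odd zigzags of length $2k+1$ from those of length $2k+3$ while remaining insensitive to their position, and the iterated pullback/pushout tower you gesture at is precisely the content that would need to be supplied. Second, and more seriously, even for $k=1$ the upper semi-continuity of $\sum_k \dim H^k(\Ke d^c)+\dim H^k(\cA/\Img\,d^c)$ is unsupported. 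Unlike $H_{BC}$ and $H_A$, the groups $H(\Ke d^c)$ and $H(\cA/\Img\,d^c)$ are not the harmonic spaces of any standard elliptic operator (their denominators are $d(\Ke d^c)$ and $\Img\,d+\Img\,d^c$ rather than $\Img\,dd^c$, differing from the Bott--Chern/Aeppli groups exactly by the purity obstruction groups $H_\urcorner$, $H_\llcorner$), and a long exact sequence does not transfer upper semi-continuity to a middle term: from $h_{BC}$ upper semi-continuous and $\dim H(\Ke d^c)=h_{BC}-\dim H_\urcorner$ one would need \emph{lower} semi-continuity of $\dim H_\urcorner$, which is not available. So the semi-continuity input of your argument is missing even in the base case.

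The paper takes a different route that sidesteps both issues: under $E_1$-degeneration on the central fibre, $\dim F^pH^m_{\dR}(M_t)=\sum_{r\geq p,\,r+s=m}h^{r,s}(M_t)$ is locally constant, so the Hodge filtration steps $F^pH^m_{\dR}(M_t)$ and $\bar F^qH^m_{\dR}(M_t)$ form vector subbundles of the (locally trivial) de Rham bundle; the dimension of an intersection of two subbundles is upper semi-continuous, and by the Serre-duality lemma the condition $\pdef\leq k$ is exactly the vanishing of $F^pH^m_{\dR}\cap\bar F^qH^m_{\dR}$ for $p+q-m>k$, hence open. This argument works uniformly in $k$ with no new invariant to construct, and it only uses the classical semi-continuity of Hodge numbers. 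If you want to salvage your approach, you would need either a Hodge-theoretic (elliptic) realization of $H(\Ke d^c)$ and its higher-$k$ analogues, or a direct proof that the relevant dimensions cannot jump up --- at which point you would essentially be reproving the filtration statement above.
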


In the $dd^c$-case, small deformations have exactly the same cohomological invariants (Hodge numbers, Bott-Chern numbers, etc). This is in general not true in the setting of Theorem \ref{thmintro: defs}. However, under a slight technical strengthening of the $dd^c+3$-condition on the central fibre, satisfied by compact surfaces and Vaisman manifolds, the $E_1$-isomorphism type of the bicomplex of forms is constant under small deformations, see Corollary \ref{cor: E1isotype of Vaisman constant under small defs}. In particular, under this condition the Hodge and Bott-Chern numbers of nearby fibres agree with that of the central one.

\subsection*{Vaisman manifolds and the $dd^c+3$-condition}
Beyond $d d^c$-manifolds, there is an abundance of complex manifolds satisfying the $d d^c+3$-condition of Theorem \ref{thm:ddc+3},  including all compact complex surfaces (Corollary \ref{cor:surfaces}),
certain twistor spaces (Proposition \ref{prop:twddc+3}), and many nilmanifolds.

Our main example, however, are compact Vaisman manifolds \cite{Vais}, \cite{VaisGH}. Recall that a complex manifold is called Vaisman if it carries a Hermitian metric such that the fundamental form satisfies $d \omega =  \theta \wedge \omega$, with $\theta$ parallel. These form a large class of manifolds. For instance, given any projective manifold $M$ embedded as the zero section into a negative line bundle $L$, the quotient of $L\setminus M$ by the cyclic group generated by any complex number $\lambda\in\CC^\times\setminus S^1$ acting by translation in the fibres, carries a Vaisman metric \cite{VaisLCKGCK}. This construction generalizes the familiar examples of the Hopf manifolds.

\begin{thmintro}\label{thmintro: Vaisman}
	Compact Vaisman manifolds satisfy the $dd^c+3$-condition.
\end{thmintro}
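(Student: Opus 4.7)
The plan is to verify characterization (2) of Theorem~\ref{thmintro:ddc+3}: namely, that the bicomplex of complex-valued forms on a compact Vaisman manifold decomposes as a direct sum of dots, squares, and length-$3$ zigzags.

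Setting up the geometric data, on a compact Vaisman manifold $(M,J,g,\omega)$ with $d\omega = \theta\wedge\omega$ and $\theta$ parallel, the Lee form $\theta$ is automatically $d$-closed, and a standard calculation using parallelism shows the anti-Lee form $\theta^c := -\theta\circ J$ is also $d$-closed. Decomposing $\theta = \theta^{1,0} + \theta^{0,1}$ into types, this forces each of $\theta^{1,0}, \theta^{0,1}$ to be simultaneously $\partial$- and $\bar\partial$-closed. The real $2$-plane spanned by $\theta^\sharp, J\theta^\sharp$ is parallel, and its orthogonal complement defines a transversely K\"ahler holomorphic foliation $\cF$. As a bigraded vector space,
\[
\cA \;=\; \cA_B \,\oplus\, (\theta^{1,0}\wedge \cA_B) \,\oplus\, (\theta^{0,1}\wedge \cA_B) \,\oplus\, (\theta^{1,0}\wedge\theta^{0,1}\wedge\cA_B),
\]
where $\cA_B\subseteq \cA$ denotes the sub-bicomplex of $\cF$-basic forms.

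Next, I would exploit the transverse K\"ahler structure on $\cA_B$. Since $\cF$ admits a basic K\"ahler form, Hodge theory for transversely K\"ahler foliations adapts to yield a basic $\partial\bar\partial$-lemma, so $(\cA_B,\partial,\bar\partial)$ already decomposes into dots and squares. The full bicomplex $\cA$ is then built from four copies of $\cA_B$ glued by differentials that involve multiplication by $\partial\theta^{0,1}$ and $\bar\partial\theta^{1,0}$, each expressible in terms of the transverse K\"ahler form. The task is to verify that each square in $\cA_B$ contributes only dots and squares to the decomposition of $\cA$, while each basic dot either survives as a dot, pairs off into a square, or combines with the $\theta$-twisted copies to form a length-$3$ zigzag anchored at $\theta^{1,0}$ or $\theta^{0,1}$.

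The main obstacle is this last combinatorial analysis: one must rule out any zigzag of length $\geq 5$. A priori the length is bounded by the four-step filtration $\{1,\theta^{1,0},\theta^{0,1},\theta^{1,0}\theta^{0,1}\}$, so each connecting map can only propagate across at most three diagonals before vanishing, but making this quantitative requires tracking that the gluing operators have no hidden rank beyond what the transverse K\"ahler class prescribes. As a robust fallback I would instead verify the numerical criterion (5) of Theorem~\ref{thmintro:ddc+3}, invoking the explicit computations of Dolbeault, Bott--Chern, and Aeppli numbers of compact Vaisman manifolds (due to Tsukada and to Istrati--Otiman) and checking the equality $\sum_k \bigl[\dim H^k(\Ke d^c) + \dim H^k(\cA/\Img d^c)\bigr] = 2\sum_k \dim H^k_{\dR}(M)$ term by term.
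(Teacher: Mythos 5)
Your overall strategy---reduce to basic forms tensored with the exterior algebra on $\theta^{1,0},\theta^{0,1}$, use transverse K\"ahler Hodge theory on the basic part, and decompose the result into indecomposables---is essentially the paper's (Proposition \ref{prop: Vaisman-dc via harmonics} and Theorem \ref{thm: dc-str Vaisman}), but the proposal has a false preliminary claim and leaves the decisive step unproved. First, the anti-Lee form $\theta^c=-\theta\circ J$ is \emph{not} $d$-closed on a Vaisman manifold: up to sign, $d\theta^c=d^c\theta=\omega_0=\omega-\theta\wedge J\theta$, which is the nonzero transverse K\"ahler form. Consequently $\theta^{1,0}$ and $\theta^{0,1}$ are not both $\del$- and $\delb$-closed; rather $\del\theta^{0,1}=-\delb\theta^{1,0}=\tfrac i2\omega_0\neq 0$ while $\delb\theta^{0,1}=\del\theta^{1,0}=0$. (Your later sentence about gluing by $\del\theta^{0,1}$ and $\delb\theta^{1,0}$ contradicts the closedness claim; and if that claim were true, the length-$3$ zigzags would not appear at all.) Relatedly, the displayed equality $\cA=\cA_B\oplus\theta^{1,0}\wedge\cA_B\oplus\cdots$ does not hold for arbitrary forms; one must pass to invariant (or harmonic basic) forms and prove that the resulting inclusion is an $E_1$-isomorphism, which is the content of Proposition \ref{prop: Vaisman-dc via harmonics}.

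More seriously, the step you flag as ``the main obstacle''---ruling out zigzags of length $\geq 5$---is exactly where the proof lives, and the four-step filtration by $\theta$-degree does not by itself bound zigzag lengths, since a single total degree can contain many sources and targets chained together through multiplication by $\omega_0$. What closes this gap in the paper is the transverse hard Lefschetz theorem: the basic harmonic forms $\Hh_B$ decompose as a $\CC[L]$-module ($L=\omega_0\wedge-$) into truncated free modules $P_{p,q}[\omega_0]/(\omega_0^{n-p-q+1})$ generated by primitives, so that $\Hh_B\otimes\Lambda\langle\theta^{1,0},\theta^{0,1}\rangle$ splits into explicit summands $P_{p,q}\otimes\Lambda\langle\theta^{1,0},\theta^{0,1},\omega_0\rangle/(\omega_0^{n-p-q+1})$, each of which is checked by hand to be a sum of dots, squares, a reverse $L$ at the bottom of the Lefschetz string and an $L$ at the top. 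Without this control on the rank of $\omega_0$-multiplication, longer odd zigzags cannot be excluded. Finally, the fallback via criterion (5) does not go through as stated: the known Bott--Chern and Aeppli numbers give $h_{BC}+h_A$, which by Proposition \ref{prop;numerics} is \emph{strictly larger} than $h_{\Ke d^c}+h_{\cA/\Img\,d^c}$ whenever purity fails---and Vaisman manifolds have purity defect $1$---so one would still have to compute $H(\Ke d^c)$ and $H(\cA/\Img\,d^c)$ directly, which in practice requires the zigzag decomposition anyway.
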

It was previously known  for Vaisman manifolds that the Fr\"olicher spectral sequence degenerates at $E_1$, \cite[Thm 3.5]{Tsu}, so in view of Theorem \ref{thmintro:ddc+3}, the new contribution here is the control over the lack of purity in the cohomology. In fact, the theorem as stated is a consequence of a more general computation that precisely identifies which zigzags appear in which positions within the bicomplex of forms of a Vaisman manifold, Theorem \ref{thm: dc-str Vaisman}. 

This complete calculation allows one to draw many other conclusions, some of them yielding new and simple proofs of known results, like the fact that no Oeljeklaus-Toma manifold of type $(s,t)$ with $s\geq 2$ can be Vaisman \cite{KasVais}, or the very recent calculation of the Bott-Chern cohomology of a Vaisman manifold \cite{IO}.
Others are, to the best of our knowledge, new:

\begin{corintro}
	The middle cohomology of a compact Vaisman manifold of complex dimension $n+1$ carries a pure Hodge structure of weight $n+1$.
\end{corintro}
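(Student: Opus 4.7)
The plan is to deduce the corollary directly from the refined Theorem \ref{thm: dc-str Vaisman}, which specifies precisely which indecomposable zigzag types appear at which bidegrees of the bicomplex $\cA(M)$ for a compact Vaisman manifold $M$.

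First, I would record the following dictionary relating the bicomplex decomposition to the Hodge filtration on $H^\bullet_{\dR}(M)$. Squares contribute nothing to cohomology; each dot at bidegree $(p,q)$ contributes a class of pure type $(p,q)$; and each length-3 zigzag of either shape contributes exactly one class in $H^{p+q+1}_{\dR}$ whose representatives all carry nontrivial components in two distinct bidegrees. A short filtration computation inside such a zigzag shows that the resulting class sits off the diagonal $r+s=p+q+1$ of the Hodge decomposition: for an L-shape at $(p,q+1),(p,q),(p+1,q)$ the class lies in $F^{p+1}\cap\overline{F^{q+1}}$, whereas for a reverse-L at $(p,q+1),(p+1,q+1),(p+1,q)$ the class lies in $F^p\cap\overline{F^q}$ but in neither $F^{p+1}$ nor $\overline{F^{q+1}}$. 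In either case the Deligne condition $F^i\cap\overline{F^{k-i+1}}=0$ (respectively $F^i+\overline{F^{k-i+1}}=H^k$) fails in degree $k=p+q+1$. Consequently, $H^{k}_{\dR}(M)$ carries a pure Hodge structure of weight $k$ if and only if no length-3 zigzag in the decomposition has middle total degree $k$.

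With this equivalence in place, the claim reduces to verifying that no length-3 zigzag of $\cA(M)$ has middle total degree $n+1$. I would then invoke Theorem \ref{thm: dc-str Vaisman}, which enumerates explicitly the bidegrees at which length-3 zigzags of a compact Vaisman manifold of complex dimension $n+1$ occur. The parallel Lee form $\theta$ and its $J$-conjugate constrain these zigzags to the bidegrees adjacent to the extreme rows and columns of the Hodge diamond, so that their middle total degrees are forced into a short list that excludes $n+1$. Hence $H^{n+1}_{\dR}(M)$ is spanned entirely by classes coming from dots in the decomposition, and these immediately furnish the pure Hodge decomposition $H^{n+1}(M)=\bigoplus_{p+q=n+1}H^{p,q}$.

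The substantive input is Theorem \ref{thm: dc-str Vaisman} itself; given its explicit enumeration of the zigzag positions, the present corollary amounts to a bookkeeping verification and a one-line filtration lemma. The main obstacle therefore lies not in this proof but in the structure theorem, whose proof must exploit the rigidity provided by the parallel Lee form together with the transversal Sasakian / K\"ahler leaf-space structure to pin down the $E_\infty$-page of the Fr\"olicher spectral sequence at the level of indecomposable bicomplexes.
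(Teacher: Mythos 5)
Your proposal is correct and follows essentially the paper's own route: the corollary is read off from Theorem \ref{thm: dc-str Vaisman} by checking that every length-$3$ zigzag in the decomposition contributes its de Rham class either in total degree $k+1\leq n$ (the reverse-$L$'s on $\theta^{0,1},\omega_0^{j+1},\theta^{1,0}$ over $P_{p,q}$ with $k=p+q\leq n-1$) or in total degree $2n-k+1\geq n+2$ (the dual $L$'s), so that $H^{n+1}$ is spanned entirely by the dots $\theta^{1,0}P_{p,q}\oplus\theta^{0,1}P_{p,q}$ with $p+q=n$, which are of pure types $(p+1,q)$ and $(p,q+1)$. One small inaccuracy: the exclusion of degree $n+1$ comes from this total-degree bookkeeping, not from the zigzags lying ``adjacent to the extreme rows and columns of the Hodge diamond'' (for $p+q$ close to $n$ they sit near the middle of the diamond), but this does not affect the validity of the argument.
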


\begin{corintro}
	Every small deformation $V_t$ of a compact Vaisman manifold $V_0$ has the same $E_1$-isomorphism type, i.e. 
	for all $t$ sufficiently small:
	\begin{enumerate}
		\item The bicomplex $\cA(V_t)$ has the same zigzag multiplicities as $\cA(V_0)$,
		\item For any cohomological functor $H$ (e.g. $H_{BC},H_A,H_{\delb},..$),  $H(V_t)\cong H(V_0)$.
	\end{enumerate}
\end{corintro}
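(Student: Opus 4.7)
The plan is to deduce both statements from results already stated in the paper. Part (1) is essentially a direct application of Corollary \ref{cor: E1isotype of Vaisman constant under small defs}, which guarantees constancy of the bicomplex $E_1$-isomorphism type in a neighbourhood of a central fibre satisfying the technical strengthening of the $dd^c+3$-condition mentioned in the paragraph before it. The first step is therefore to verify that a compact Vaisman manifold meets this strengthened hypothesis. This is where Theorem \ref{thm: dc-str Vaisman} enters: it provides an explicit list of the indecomposable summands of $\cA(V_0)$, locating every length-$3$ zigzag in a specific bidegree. A direct inspection of this list confirms the strengthening.

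Having verified the hypothesis on $V_0$, invoking Corollary \ref{cor: E1isotype of Vaisman constant under small defs} yields statement (1), since the $E_1$-isomorphism type of a bicomplex is by definition recorded exactly by the multiplicities of its indecomposable summands (dots, squares, and zigzags), so "constant $E_1$-isomorphism type" is the same as "constant zigzag multiplicities."

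Statement (2) is then a formal consequence of (1) together with the structure theorem for bicomplexes. Any additive cohomological functor $H$ such as $H_{BC}$, $H_A$, $H_\delb$, $H_\del$ or $H_{\dR}$ can be evaluated on each indecomposable summand, so the isomorphism class of $H(\cA(V_t))$ depends only on the multiplicities of these summands. Since these are constant in $t$ by (1), we have $H(V_t) \cong H(V_0)$.

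The main obstacle, and the reason the hypothesis on the central fibre matters, is precisely the verification step: under a generic small deformation of a $dd^c+3$-manifold, one might a priori expect squares to degenerate into pairs of length-$2$ zigzags, or two length-$3$ zigzags to recombine into a dot plus a square, changing multiplicities while keeping Betti numbers fixed and preserving $E_1$-degeneration with purity defect at most $1$. The content of the strengthened hypothesis, and the reason Vaisman manifolds satisfy it, is that the specific bidegrees in which length-$3$ zigzags appear in $\cA(V_0)$, as dictated by Theorem \ref{thm: dc-str Vaisman}, prevent such recombinations on cohomological grounds. We use both Corollary \ref{cor: E1isotype of Vaisman constant under small defs} and the Vaisman calculation as black boxes here.
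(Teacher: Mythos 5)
Your proposal is correct and follows essentially the same route as the paper: verify via the explicit decomposition in Theorem \ref{thm: dc-str Vaisman} that a compact Vaisman manifold has degenerate Fr\"olicher spectral sequence and satisfies condition $(\ast)$ (in each degree only dots together with one adjacent type of length-$3$ zigzag contribute, cf.\ Remark \ref{rmk: SandVare*}), then apply the general deformation-stability result, with statement (2) following formally because cohomological functors are determined by zigzag multiplicities. The only slip is a citation: the general result you invoke for a central fibre satisfying the strengthened hypothesis is Proposition \ref{prop: def stability for strong ddc+3}, not Corollary \ref{cor: E1isotype of Vaisman constant under small defs}, which is the Vaisman-specific statement being proved.
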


In Section \ref{Vanishing of higher multiplicative operations} we record some results on the vanishing of higher operations and Massey products on Vaisman manifolds, in analogy with the case of Sasakian manifolds, established in \cite{BFMT}. Together with the formality of $dd^c$-manifolds, this suggests a further study of the interplay between the $dd^c$-type conditions and the real homotopy type, which is carried out in the last section of the paper.

\subsection*{Homotopical restrictions imposed by $dd^c$-type conditions}
There are two ways to prove formality for $dd^c$-manifolds \cite{DGMS}: One consists 
in building a highly structured minimal model having a certain compatibility with the bigrading. The other, very quick one, consists in noting that the diagram $A\leftarrow \Ke d^c\rightarrow H(A)$ connects $A$ to its cohomology by quasi-isomorphisms. The second approach may at first seem to be very particular to the $dd^c$-setting. However, in the last section of the article, we turn it into a general technique to study the homotopy type of a complex manifold. Namely, we observe that the existence of a diagram $A\leftarrow B\rightarrow H(A)$ with certain extra properties (e.g. fixed ranks of the induced maps in cohomology) only depends on the homotopy type of $A$. On the other hand, for any complex manifold $M$ one obtains such a diagram for $A=\cA(M)$, and cohomological conditions on $M$ translate into conditions on the ranks of the induced maps in cohomology. Applying this kind of reasoning, we obtain:

\begin{thmintro} \label{thm;introhmtpyobst}
	Let $M$ be a compact manifold of dimension $2n$, with $j$-minimal model $\psi: \cM^j\to \cA(M)$ such that 
	\begin{enumerate}
		\item The map $H^{2n}(\psi)$ is surjective (i.e. the $j$-minimal model sees the fundamental class)
		\item The algebra $\langle H^{\leq j}(M)\rangle$ generated by cohomology classes in degree $\leq j$ has trivial intersection with $H^{j+1}(M)$ and $H^{2n}(M)$.
		\end{enumerate}
	If there is a complex manifold $N$ in the homotopy type of $M$ such that 
	\begin{enumerate}[resume]
		\item the natural map $\Ke d^c\to \cA(N)$ induces isomorphisms in $H^{s}$ for $s \leq j$, and
		\item the natural map $\Ke d^c\to \cA(N)\oplus H(N)$ induces an injection in $H^{j+1}$,
	\end{enumerate}
then $n=0$.
\end{thmintro}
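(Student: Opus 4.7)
The approach exploits the general principle developed in this section: for a CDGA $A$, the existence of a diagram $A\leftarrow B\to(H(A),0)$ satisfying prescribed rank conditions in cohomology is a quasi-isomorphism invariant of $A$, hence a real homotopy invariant of the underlying manifold. For any complex manifold $N$, the natural diagram $\cA(N)\hookleftarrow(\Ke d^c,d)\to H$ provides such data, and conditions (3) and (4) say precisely that the left map is an isomorphism in $H^{\leq j}$ and that the pair of maps is injective in $H^{j+1}$. Since $N\simeq M$ in real homotopy type, $\cA(M)$ admits an analogous diagram $\cA(M)\xleftarrow{\pi}B\xrightarrow{\rho}(H,0)$ with the same rank conditions, with $H$ identified on the relevant range with $H(M)$.

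\medskip
Next, I would lift the $j$-minimal model $\psi\colon\cM^j\to\cA(M)$ through $\pi$ to a CDGA map $\phi\colon\cM^j\to B$ with $\pi\phi\simeq\psi$, by standard Sullivan-style obstruction theory. At a generator $v$ of degree $k\leq j$, the obstruction class $[\phi(dv)]\in H^{k+1}(B)$ maps to zero under $H(\pi)$ because $\psi(dv)=d\psi(v)$ is exact, and to zero under $H(\rho)$ by the chain-map property of $\rho$ combined with the triviality of $d_H$ (coordinating the inductive extension of $\rho\phi$ through degree $k$). The isomorphism in (3) disposes of obstructions in degrees $\leq j$; the joint injectivity in (4) handles the remaining obstruction in degree $j+1$.

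\medskip
The composition $\chi:=\rho\phi\colon\cM^j\to(H,0)$ is then a graded algebra map into a trivial-differential CDGA, so its image is contained in the subalgebra $\langle H^{\leq j}\rangle_H\subseteq H$ generated by the images of $V^{\leq j}$. Translating back via the identification $H\cong H(M)$ from Step~1, and using that on a lifted class $[\phi(c)]\in H^{2n}(B)$ (for $c\in\cM^j$ a cocycle of degree $2n$) the maps $H(\pi)$ and $H(\rho)$ recover, respectively, $[\psi(c)]$ and $\chi(c)$, the image of $H^{2n}(\psi)$ lies in $\langle H^{\leq j}(M)\rangle\cap H^{2n}(M)$. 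By (2) this intersection is zero, while by (1) it contains the fundamental class $[M]$. Hence $[M]=0$, which for a compact manifold of real dimension $2n$ forces $n=0$.

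\medskip
\noindent\emph{Main obstacle.} The cleanest step is the algebraic constraint on $\chi$: being an algebra map into a zero-differential CDGA, it automatically lands in a subalgebra generated in low degrees. The delicate matters are (a) Step~1, which relies on the homotopy-invariance machinery of this section to interpret ``the transported diagram'' rigorously, and (b) Step~2, where the joint injectivity (4) must be deployed to kill both the $H(\pi)$- and $H(\rho)$-components of the lifting obstruction simultaneously.
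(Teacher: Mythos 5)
Your overall strategy is the one the paper uses (it is the $k=2n$ case of Theorem \ref{thm;TandAimpliesInequ} via Corollary \ref{cor;j<kLEQ2n}, prototyped in Theorem \ref{thm;warmup}): transport the diagram $\cA(N)\leftarrow \Ke d^c\rightarrow H(N)$ across the real homotopy equivalence, compare $j$-minimal models, and exploit that a cdga map into a zero-differential algebra lands in the subalgebra generated in degrees $\leq j$. The one genuine variation is directional: you lift the given $j$-minimal model of $\cA(M)$ up to $B$ by obstruction theory, whereas the paper takes a $j$-minimal model of $B$ and shows (Lemma \ref{lem;jminBA}) that composing with $\varphi_A$ yields a $j$-minimal model of $A$; by uniqueness of $j$-minimal models these are equivalent, and both use hypothesis (2) in degree $j+1$ at exactly the same point (to kill the $\varphi_H$-component of the degree-$(j+1)$ obstruction, resp.\ to show $H^{j+1}(\varphi_H\circ\psi)=0$). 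That part of your argument is fine.

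There is, however, a gap in your final inference. From ``$H(\pi)[\phi(c)]=[\psi(c)]$ and $H(\rho)[\phi(c)]=\chi(c)$'' you conclude that $\Img H^{2n}(\psi)\subseteq \langle H^{\leq j}(M)\rangle\cap H^{2n}(M)$, but these are two \emph{different} maps evaluated on the same class, and nothing you have said relates their values. If $H^{2n}(\rho)$ happened to be the zero map, you would get $\chi(c)=0$, which is perfectly consistent with hypothesis (2), and no contradiction would arise. The missing ingredient is precisely the top-degree nondegeneracy of the second leg of the standard $d^c$-diagram: for a compact complex $2n$-manifold, $H^{2n}(\Ke d^c)$ is one-dimensional (Proposition \ref{Pduality}) and $H^{2n}(\II\circ\pi)\colon H^{2n}(\Ke d^c)\to H^{2n}$ is nonzero --- e.g.\ because the volume form of any Hermitian metric is $d$- and $d^c$-closed and integrates nontrivially; equivalently this is the symmetry axiom $\rank H^{2n}(\varphi_H)=\rank H^{2n}(\varphi_A)=1$ of Definition \ref{defn;dcdiagram}, which is preserved under the transport of Proposition \ref{prop: dc homotopy invariance}. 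With this in hand the argument closes: if $[\psi(c)]=[M]\neq 0$ then $[\phi(c)]$ generates the one-dimensional $H^{2n}(B)$, hence $\chi(c)=H^{2n}(\rho)[\phi(c)]\neq 0$, contradicting $\chi(c)\in\langle H^{\leq j}(M)\rangle\cap H^{2n}(M)=0$. You should add this step explicitly; it is where the complex-analytic input (beyond the mere existence of some diagram) actually enters.
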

The last condition is the $dd^c+3$-condition in degree $j+1$. The combination of the last two conditions can be recast in terms of which indecomposable bicomplexes can occur in $\cA(M)$, and also in terms of classical invariants like Hodge numbers and Hodge filtrations. The result as stated above is a less general (and less technical) version of the result in the main body of the text, which gives a topological lower bound on the complexity of the bicomplex of complex structures satisfying these $dd^c$-type conditions in low degrees, Theorem \ref{thm;TandAimpliesInequ}. That inequality is combined with a complex-analytic refinement of Poincar\'e duality, allowing one to relax the top-degree conditions above, Corollary \ref{cor;j<kLEQ2n}. 

Applying the Theorem with $j=1$, one obtains
\begin{corintro}
	The filiform nilmanifolds $G/\Gamma$, associated with the cdga of left invariant forms given by $\eta^1,...,\eta^{2n}$ s.t. $d\eta^k=\eta^1\wedge\eta^{k-1}$ cannot support a complex structure which satisfies the $dd^c+3$-condition with pure $H^1$.
\end{corintro}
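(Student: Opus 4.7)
The plan is to apply Theorem \ref{thm;introhmtpyobst} with $j=1$ to the filiform nilmanifold $G/\Gamma$. The work splits into verifying the two topological hypotheses (1) and (2) of that theorem for $G/\Gamma$; the assumptions of the corollary then translate directly into the analytic conditions (3) and (4) on the putative complex structure, and one reads off the contradiction.

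For condition (1), I would invoke Nomizu's theorem: the inclusion of the cdga of left-invariant forms $\Lambda(\eta^1,\ldots,\eta^{2n})$ into $\cA(G/\Gamma)$ is a quasi-isomorphism. This cdga is already minimal, since every generator sits in degree $1$ and each $d\eta^k$ is decomposable, and since it is generated purely in degree $1$ it coincides with its own $1$-minimal model. So one may take $\psi\colon\cM^1\to\cA(G/\Gamma)$ to be this inclusion, and $H^{2n}(\psi)$ is then an isomorphism, in particular surjective.

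For condition (2), I would compute $H^{\leq 1}$ directly. Since $d\eta^k=\eta^1\wedge\eta^{k-1}$ for $k\geq 3$ and the $2$-forms $\eta^1\wedge\eta^{k-1}$ for such $k$ are linearly independent, the closed $1$-forms are exactly $a\eta^1+b\eta^2$; as the $0$-forms are constants, there are no nonzero exact $1$-forms, so $H^1=\spa\{[\eta^1],[\eta^2]\}$. The key computation is that $[\eta^1][\eta^2]=0$ in $H^2$, because $\eta^1\wedge\eta^2=d\eta^3$. Combined with $[\eta^i]^2=0$ from graded commutativity, this shows $\langle H^{\leq 1}\rangle$ is concentrated in degrees $\leq 1$, and hence has trivial intersection with $H^2$ and $H^{2n}$, provided $2n\geq 4$, which is automatic for a genuine filiform.

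To finish, the excerpt tells us that condition (4) at $j=1$ is precisely the $dd^c+3$-condition in degree $2$, which is implied by the full $dd^c+3$-condition, while purity of $H^1$ yields condition (3) at $j=1$ via the dictionary between Hodge filtrations and the diagram $\cA\leftarrow\Ke d^c\to H(\cA)$ announced in the excerpt. Theorem \ref{thm;introhmtpyobst} then forces $n=0$, contradicting $n\geq 2$. The main obstacle I anticipate is carefully extracting this last translation from the main text; the remaining steps are routine verifications of the combinatorial hypotheses on the cohomology algebra of $G/\Gamma$.
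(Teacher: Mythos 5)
Your proposal is correct and follows essentially the same route as the paper: the paper's proof (the ``Filiform revisited'' example in Section \ref{sec;hmtpyobstr}) likewise applies the homotopy obstruction theorem with $j=1$, using that nilmanifolds are $1$-minimal so $r_1^{2n}=b_{2n}=1$, and that the cup product on $H^1$ vanishes because $\eta^1\wedge\eta^2=d\eta^3$, so $d_1^{2n}=0$. The translation you were worried about --- that $dd^c+3$ together with purity of $H^1$ gives a $1$-controlled structure, i.e.\ conditions $(3)$ and $(4)$ --- is exactly the content of Lemma \ref{lem;jsimple} in the body of the paper.
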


It is known that the filiform nilmanifolds cannot admit left-invariant complex structures, and it is unknown whether they admit any complex structures at all (for $n\geq 3$). We stress that the same conclusion holds for any manifold rationally homotopy equivalent to a filiform nilmanifold, and for connected sums with any $1$-connected manifold. Further, since the conditions are only in very low degree, the result rules out many complex structures, and including many which are not $dd^c+3$. 
On the other hand, we give examples of $6$-dimensional manifolds which are complex, but are never $dd^c+3$ with pure $H^1$. We also give many non-nilmanifold examples, in particular rationally highly connected ones.\\

In the almost half-century since its appearance, the pioneering work of Deligne, Griffiths, Morgan, and Sullivan, has inspired a great number of applications related to K\"ahler geometry and rational homotopy theory. 
As we hope to demonstrate in this article, a return to these ideas sheds further light on complex geometry and its interaction with homotopy theory, far beyond the K\"ahler realm.\\

\noindent \textbf{Acknowledgements:} Large parts of this paper were written during a two-months stay of J.S. at the Graduate Center of the City University of New York. J.S. would like to thank Dennis Sullivan and the City University of New York for the invitation, financial support, and hospitality. We also thank Chi Li for a talk at the CUNY Graduate Center, which inspired the results in section \ref{ssec:stabilitydef}. The second author acknowledges support provided by a PSC-CUNY Award, jointly funded by The
Professional Staff Congress and The City University of New York.

\tableofcontents

\section{Preliminaries}
We recall some  definitions  and results that will be used below.

A \textit{bicomplex} (or  \textit{double complex}) is a bigraded $\CC$-vector space, $A=\bigoplus_{p,q \in \Z}A^{p,q}$, together with endomorphisms $\del$ and $\delb$,  of bidegrees $(1,0)$ and $(0,1)$, respectively, such that $d=\del+\delb$  satisfies $d^2=0$. Most of our bicomplexes will have a real structure, i.e. a complex anti-linear involution  $\sigma:A\to A$ such that $\sigma (A^{p,q})=A^{q,p}$ and $\sigma d \sigma=d$, hence we use the suggestive overline notation, but in general, no real structure is stipulated. Unless explicitly stated otherwise, we will always deal with bounded bicomplexes, i.e. those satisfying $A^{p,q}=0$ for all but finitely many $p,q \in \Z$. 

Our principal example is the space $\cA=\cA(M)$ of complex-valued forms on a complex manifold $M$, which further carries the structure of a graded-commutative differential graded algebra (cdga) and a real structure because it is the complexification of the space of real forms.

For any bicomplex $A$, one can form the \textit{column and row cohomology}, known as the Dolbeault and conjugate Dolbeault cohomologies, defined  by $H_{\delb}=\frac{\Ke\delb}{\Imt\delb}$ and $H_{\del}=\frac{\Ke{\del}}{\Imt\del}$. The column and row filtrations $F^pA=\bigoplus_{r\geq p} A^{r,s}$ and $\bar F^q=\bigoplus_{s\geq q}A^{r,s}$ induce  spectral sequences converging from these to the total cohomology $H_d= \frac{\Ke d}{\Imt d}$. The total cohomology has an induced \emph{pure Hodge structure (of weight $k$ in degree $k$)} if the two induced filtrations 
\[ F^pH^k_d(A)=\{[a]\mid a\in F^p A^k\}
\quad \textrm{and} \quad 
\bar F^q H^k_d(A)=\{[a]\mid a\in \bar F^q A^k\}
\] 
on $H^k$ are $k$-opposed, i.e. satisfy 
\[
H^k_d(A)=\bigoplus_{p+q=k}F^pH^k_d(A)\cap \bar F^q H^k_d(A). 
\]
This is equivalent to the condition that, for all $k$,
\[
F^pH^k_d(A)\cap \bar F^{k+1-p} H^k_d(A) =H^k_d(A)
\]
for all $p$, c.f. \cite{DeHII}.

A bicomplex is called \textit{indecomposable} if it cannot be written as a direct sum of two nontrivial sub-bicomplexes. Every indecomposable subcomplex is isomorphic to either a square, or a zigzag. The structure of these are recalled and indicated in diagrams below, when first needed in the proof of Theorem \ref{thm:ddc+3}. The length of a zigzag is its dimension as a vector space (i.e. the number of nonzero corners). Zigzags of length $1$ or $2$ will be called `dots' and `lines', respectively. Zigzags of length three with outgoing arrows will be called `L's', zigzags of length $3$ with incoming arrows will be called `reverse L's'. Any bicomplex can be written as a direct sum of indecomposable subcomplexes 
\[
A=\bigoplus_I I^{\oplus\mult_I(A)},
\] where $I$ runs over all squares and zigzags, and the multiplicity $\mult_I(A)$ of every isomorphism type of indecomposable bicomplex is the same in any such decomposition, c.f. \cite{KhQi}, \cite{StStrDbl}.

A map $f:A\to B$ of bicomplexes is called an \textit{$E_1$-isomorphism}, or bigraded weak equivalence, if it induces an isomorphism in both row and column cohomology. If both $A$ and $B$ have real structures, $\sigma_A$ and $\sigma_B$, and 
$\sigma_B f=f\sigma_A$, then $f$ is an $E_1$-isomorphism if and only if it induces an isomorphism in Dolbeault cohomology. This is the case for example for $A=\cA(M), B=\cA(N)$ for complex manifolds $M,N$ and $f=\varphi^*$ for some holomorphic map $\varphi: N \to M$. We write $A\simeq_1 B$ if there exists a chain of $E_1$-isomorphisms connecting $A$ and $B$. One has $\mult_Z(A)=\mult_Z(B)$ for any zigzag if and only if $A\simeq_1 B$, c.f. \cite{StStrDbl}. 

For any bicomplex, one can introduce the operator $d^c=\II^{-1} \, d \, \II$, where $\II$ acts on $A^{p,q}$ as multiplication by $i^{p-q}$. If $\left(\cA(M),d \right)$ is the differential forms of a complex manifold $(M,J)$, then $\II$ equals the extension of $J$ as an algebra automorphism, and $d^c = i (\delb - \del) $ is also a derivation. Let $H_{d^c}= \frac{\Ke d^c}{\Imt d^c}$ denote the cohomology of $\left(\cA(M), d^c\right)$, which is isomorphic to de Rham cohomology. 

Unless explicitly stated otherwise, we will assume all manifolds to be compact and connected.

\section{The $d d^c+3$-condition}

The results of this section are primarily algebraic and apply to any bounded bicomplex, while the main example of interest is the complex of $\CC$-valued smooth differential forms on a complex manifold. We'll use the abbreviated notation $\cA$ for either case, and highlight certain cases as appropriate.

\subsection{A long exact sequence} 

In this subsection we derive a new long exact sequence and observe that the vanishing of the connecting homomorphism in this sequence is a mild weakening of the so-called $d d^c$-condition. First we recall:

\begin{thm}\label{thm: ddc-cond} (The $d d^c$-condition, \cite{DGMS}, Theorem 5.7) For any bounded bicomplex $(\cA,\del, \delb)$, the following are equivalent:
\begin{enumerate}
\item For all $x \in \cA$, if $d^c x = 0$ and $x= d z$, then $x = d d^c w $ for some $w$.
\item The spectral sequences induced by the row and column filtrations both degenerate at $E_1$, and for each $k$ there is an induced pure Hodge structure of weight $k$ on $H^k(\cA)$.
\item The bicomplex $(\cA,\del, \delb)$ is a direct sum of
\begin{enumerate}
\item bicomplexes with only a single component, and $\del = \delb =0$,
\item bicomplexes which are a square of isomorphisms.
\end{enumerate}
\end{enumerate}
 \end{thm}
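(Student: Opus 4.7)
The plan is to reduce the equivalence of $(1)$, $(2)$, $(3)$ to a case-by-case analysis on indecomposable summands of $\cA$, using the structure theorem recalled in the preliminaries. Writing $\cA=\bigoplus_I I^{\oplus \mult_I(\cA)}$ with $I$ ranging over squares and zigzags, I would first note that each of the three conditions is additive under direct sums: the spaces $\Ke d^c$, $\Img d$, $\Img dd^c$, all row, column, and total cohomologies, and the induced column and row filtrations, commute with direct sums. Hence it suffices to verify the equivalence on each indecomposable.

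The forward implications $(3)\Rightarrow(1)$ and $(3)\Rightarrow(2)$ are then direct: on a dot, $d=0$, so $(1)$ is vacuous, and the single nonzero component contributes a pure Hodge summand of weight $p+q$; on a square of isomorphisms, the total, Dolbeault, conjugate Dolbeault and Bott--Chern cohomologies all vanish, so $(1)$ and the Hodge-theoretic content of $(2)$ hold trivially.

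The main work is the converse: showing that any indecomposable other than a dot or a square violates both $(1)$ and $(2)$. I would organize this by length. For a length-two zigzag $A\xrightarrow{\del} B$ (or its $\delb$ analogue), a direct calculation gives $d^c A=-iB$ and $dd^c\equiv 0$ on the zigzag, so the target $B$ is $d$-exact and $d^c$-closed yet not $dd^c$-exact, breaking $(1)$; simultaneously, this zigzag contributes two dimensions to one of $H_\delb,H_\del$ while $H_d=0$, obstructing $E_1$-degeneration of the corresponding spectral sequence. For an odd-length zigzag of length at least three, the unique total cohomology class admits representatives of two distinct bidegrees $(p,q+1)$ and $(p+1,q)$, placing it in $F^{p+1}H^k\cap \bar F^{q+1}H^k$ even though $(p+1)+(q+1)>k$, which obstructs purity; an appropriate sum of these representatives is a $d^c$-closed, $d$-exact element not lying in $\Img dd^c$, which refutes $(1)$ as well. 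Even-length zigzags of length at least four are handled just as in the length-two case by an $H_d$ versus $H_\delb$ (or $H_\del$) dimension count.

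The main obstacle is bookkeeping: one has to track the powers of $i$ appearing in $d^c=\II^{-1}d\II$ carefully on each corner of each zigzag, and identify precisely in which step of the column and row filtrations a given cohomology representative lives. Once these verifications are in place, the result follows at once from the decomposition theorem.
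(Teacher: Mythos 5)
Your proposal is correct and matches the approach the paper itself points to: the paper does not write out a proof of this theorem but cites \cite{DGMS} and explicitly notes that the modern concise proofs proceed exactly as you do, by checking conditions $(1)$ and $(2)$ on every indecomposable summand (dots, squares, zigzags) after invoking the decomposition theorem. One small imprecision: for odd zigzags with \emph{incoming} outermost arrows the total cohomology class has no pure representative at all (rather than two pure representatives of adjacent bidegrees), so purity fails there because $\sum_{a+b=k}F^aH^k\cap\bar F^bH^k\neq H^k$ instead of via a nonzero intersection in too-high filtration degree; the conclusion is unaffected.
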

 
 Condition $(3)$ above can be equivalently stated as $\mult_I(\cA)=0$, unless $I$ is a dot or a square, and there are concise proofs now of the above theorem by checking the validity of statements $(1)$ and $(2)$ on every indecomposable complex, c.f. \cite[§2.3]{KhQi}, \cite[Cor. 7]{StStrDbl}.

Deligne, Griffiths, Morgan and Sullivan \cite{DGMS} show that all K\"ahler manifolds satisfy the $d d^c$-condition, and that
the $d d^c$-condition has strong implications for the topology of the underlying manifold.
To obtain the latter, one method is to show the following diagram
\[
\xymatrix{
 & \left( \Ke d^c, d \right) \ar[dl]_i \ar[dr]^\pi &  \\
 \left(\cA,d \right)& & \left(H_{d^c} , d=0  \right)
}
\]
is defined whenever $(M,J)$ satisfies the $d d^c$-condition, and that it induces an isomorphism in cohomology. From this it follows that $M$ is \emph{formal}, i.e. the differential graded algebra of differential forms on $M$ is connected by a chain of quasi-isomorphisms of differential graded algebras to its cohomology (equipped with zero differential). According to Sullivan's theory of rational homotopy, the rational homotopy groups are then a formal consequence of the cohomology groups, i.e. can be computed directly by a relatively simple procedure \cite{InfComp}.

Our first new observation is that the above diagram is well defined for all complex manifolds, even if the $d d^c$-condition does not hold. Let $(M,J)$ be an almost complex manifold and define $d^c=\II^{-1} \, d \, \II$ as before.
It is well known that $J$ is integrable if an only if $d$ and $d^c$ commute in the graded sense, i.e. $[d,d^c]=0$. This implies the existence of a diagram of cdga's as above, but what is not obvious is that $d=0$ on $H_{d^c}$. This follows from a more fundamental algebraic relation expressed in the proposition below.

\begin{prop} \label{Jdindent}
An almost complex structure $J$ is integrable if and only if:
\begin{align*}
[d, \JJ] &= d^c \\
[d^c , \JJ] &= - d ,
\end{align*}
where $\JJ$ denotes the extension of $J$ as a derivation. 
\end{prop}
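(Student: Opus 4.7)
The plan is to exploit the bihomogeneous decomposition of $d$ on an almost complex manifold. In general, $d$ splits into four pieces $d = \bar A + \delb + \del + A$, where the summands have bidegrees $(-1,2)$, $(0,1)$, $(1,0)$, $(2,-1)$ respectively. The Newlander--Nirenberg theorem identifies integrability of $J$ with the vanishing of the two extreme pieces, $A = \bar A = 0$. Thus the goal reduces to showing that each of the two stated identities imposes precisely $A = \bar A = 0$, while the middle pieces $\del, \delb$ are consistent automatically.

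The key observation is that $\JJ$, being the derivation extending $J$ on $1$-forms, acts on $\cA^{p,q}$ as the scalar $i(p-q)$; and $\II$ acts as $i^{p-q}$. From this, for any operator $T$ homogeneous of bidegree $(r,s)$, a one-line calculation (compare the weights on the source $\cA^{p,q}$ and the target $\cA^{p+r,q+s}$) yields
\[
[\JJ, T] \;=\; i(r-s)\,T, \qquad \II^{-1} T \II \;=\; i^{s-r}\,T.
\]
In particular, both operations preserve each bihomogeneous piece and act on it by an explicit scalar depending only on the bidegree.

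I then substitute each of the four pieces of $d$ into these formulas to obtain explicit expressions for $[d, \JJ]$ and for $d^c = \II^{-1} d \II$ in terms of $\bar A, \delb, \del, A$, and compare bihomogeneous components of the identity $[d,\JJ] = d^c$. The scalars on the $\delb$- and $\del$-pieces coincide automatically, while the scalars on the $\bar A$- and $A$-pieces differ by a nonzero factor; matching these components therefore forces $\bar A = A = 0$, which is the integrability condition. Conversely, assuming integrability, the identity holds because only the middle pieces survive. The second identity $[d^c, \JJ] = -d$ is handled by the same mechanism: apply the two scalar-on-bidegree formulas to the bihomogeneous decomposition of $d^c$ and again compare components.

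The main obstacle is purely one of bookkeeping: tracking the correct scalar coefficients (powers and multiples of $i$) attached to each of the four bidegree components across two distinct operator identities, and verifying that the middle pieces are indeed always consistent while the outer pieces force the Nijenhuis-type components to vanish. No conceptual input beyond the bidegree decomposition and the eigenvalue of $\JJ$ is required.
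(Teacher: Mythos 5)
Your proposal is correct and follows essentially the same route as the paper: decompose $d$ into its four bigraded components, observe that $\JJ$ acts on $(p,q)$-forms by $i(p-q)$ so that both $[\,\cdot\,,\JJ]$ and conjugation by $\II$ act on a bidegree-$(r,s)$ operator by explicit scalars, and note that the scalars agree on the $(0,1)$ and $(1,0)$ pieces but disagree on the $(-1,2)$ and $(2,-1)$ pieces, forcing those to vanish. The paper's proof is exactly this computation, recording $[d,\JJ]-\II^{-1}d\,\II = 4i(\bar\mu-\mu)$ and handling the second identity by conjugating the first by $\II$.
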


The equations imply $[d,d^c]=0$. The two above equations are in fact equivalent, as can be seen by conjugating either by $\II$. Indeed, $\JJ$ and $\II$ commute, since $\JJ$ acts on $(p,q)$-forms by $i(p-q)$, which also gives the following beautiful formula:
\[
e^{\frac \pi 2 \JJ} = \II.
\]

\begin{proof}
On complex valued forms,   $d= \mub + \delb +\del + \mu$ with components of bidegrees $(-1,2)$, $(0,1)$, $(1,0)$ and $(2,-1)$, respectively.  In bidgree $(p,q)$, $\II^{-1} = (-1)^{p-q} \II$, so that conjugating an operator of bidegree $(r,s)$ by $\II$ acts by multiplication by $(-i)^{r-s}$. As operators on forms of bidegree $(p,q)$,
\[
[d, \JJ] 
=  3i \mub + i \delb - i \del - 3i \mu ,
\]
whereas
\[
\II^{-1} \, d \, \II 
=  -i \mub + i\delb - i \del + i \mu,
\]
so that
\[
[d,\JJ] - \II^{-1} \, d \, \II = 4 i( \mub - \mu).
\]
This vanishes if and only if $J$ is integrable.
\end{proof}
\begin{rmk}
Working with an arbitrary bigraded complex (not necessarily with multiplicative structure) one can define $\JJ$ as multiplication by $i(p-q)$ in bidegree $(p,q)$ and a similar argument shows an analogous characterization in the purely algebraic setting.
\end{rmk}
\begin{cor} \label{fundsquare}
For any complex manifold (resp. any bicomplex), there is a commutative diagram of complexes
\[
\xymatrix{
 & \left( \Ke d^c, d \right) \ar[dl]_i \ar[dr]^\pi &  \\
 \left(\cA,d \right) \ar[dr]_{p} & & \left(H_{d^c} , d = 0 \right) \ar[dl]^j \\
 & \left( \cA / \Img \, d^c , d \right) 
}
\]
This is both a pullback and a pushout in the category of complexes.
\end{cor}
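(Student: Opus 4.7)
My proof splits into three stages: verify that all four objects are genuine complexes and all four maps are chain maps; check commutativity of the square; and establish the pullback and pushout universal properties. The algebraic workhorse throughout is Proposition \ref{Jdindent}.

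First I would establish the complexes and chain maps. The graded commutation $dd^c + d^c d = 0$ (the precise content of $[d,d^c]=0$, itself obtained from $[d,\JJ]=d^c$ combined with $d^2=0$) immediately shows that $d$ preserves $\Ke d^c$ (if $d^c x = 0$ then $d^c dx = -d\, d^c x = 0$) and descends to $\cA/\Img d^c$. The less obvious point is that the induced differential on $H_{d^c}$ vanishes; here I would invoke the companion relation $[d^c, \JJ] = -d$, rearranged as $d = \JJ d^c - d^c \JJ$. Applied to $x \in \Ke d^c$ this yields $dx = -d^c(\JJ x) \in \Img d^c$, so $[dx]_{d^c} = 0$. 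The same identity shows that $\pi$ and $j$ are chain maps, and $i$, $p$ are visibly so. Commutativity of the square is then the tautology that both $p \circ i(y)$ and $j \circ \pi(y)$ equal $y + \Img d^c$ for $y \in \Ke d^c$.

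For the universal properties I would argue directly and degreewise. For the pullback, the natural map $\Ke d^c \to \cA \times_{\cA/\Img d^c} H_{d^c}$ given by $y \mapsto (y, [y]_{d^c})$ admits an explicit inverse: given $(x, [y]_{d^c})$ in the fibre product, the compatibility $p(x) = j([y]_{d^c})$ forces $x = y + d^c z$ for some $z$, and because $(d^c)^2 = 0$ this $x$ automatically lies in $\Ke d^c$ with $[x]_{d^c} = [y]_{d^c}$. Dually, for the pushout I would identify $(\cA \oplus H_{d^c})/K$ with $\cA / \Img d^c$ via $(x, [y]_{d^c}) \mapsto [x + y]$, where $K = \{(y, -[y]_{d^c}) : y \in \Ke d^c\}$. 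This map is evidently surjective, and its kernel equals $K$: if $[x+y] = 0$ then $x + y = d^c v$, and one decomposes $(x, [y]_{d^c}) = -(y, -[y]_{d^c}) + (d^c v, -[d^c v]_{d^c})$ with both summands in $K$ (the second using that $d^c v \in \Ke d^c$ has trivial class in $H_{d^c}$).

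The only substantively non-formal step is the vanishing of $d$ on $H_{d^c}$: this is precisely the dual manifestation of integrability encoded in Proposition \ref{Jdindent}, and once it is in hand the remaining arguments are categorical bookkeeping anchored on the single identity $dd^c + d^c d = 0$.
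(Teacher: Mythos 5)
Your proposal is correct and follows the paper's own route: the one non-formal ingredient is exactly the paper's appeal to Proposition \ref{Jdindent} (via $[d^c,\JJ]=-d$, giving $d(\Ke d^c)\subseteq\Img d^c$ and hence $d=0$ on $H_{d^c}$ and the chain-map property of $\pi$ and $j$), and your explicit verification of the fibre product and cokernel identifications is just the detailed version of what the paper dismisses as ``immediate to check.''
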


\begin{proof}
Proposition \ref{Jdindent} implies that $d$ passes to $H_{d^c}$ with $d = 0 $, and that the two right maps  respect the differentials. The last statement is immediate to check.
\end{proof}

\begin{rmk}
The $dd^c$-condition holds in degree $k$ if and only if  
\[
H^k( \Img \, d^c,d) = 0 \quad \textrm{for all $k$}.
\]
 Thus the $d d^c$-condition holds if and only if one (and every) map in the diagram  
of Corollary \ref{fundsquare} is an isomorphism in cohomology. 
To see this, one inserts the complex $( \Img \, d^c,d)$ as the kernel or cokernel in all places, and passes to any of the long exact sequences in cohomology. 
\end{rmk}

A square of complexes
\[
\xy0;/r.25pc/:
(0,10)*+{A}="A";
(-10,0)*+{B}="B";
(10,0)*+{C}="C";
(0,-10)*+{D}="D";
(0,5)*+{\text{\rotatebox{135}{$\lrcorner$}}};
(0,-5)*+{\text{\rotatebox{-45}{$\lrcorner$}}};
{\ar_i"A";"B"};
{\ar^\pi"A";"C"};
{\ar^j"C";"D"};
{\ar_p"B";"D"};
\endxy
\]
is both a pullback and a pushout if and only if there is a short exact sequence
\[
\xymatrix{
0 \ar[r] & A=\Ke(p - j)  \ar[r]^-{i+\pi}  &  B \oplus  C  \ar[r]^-{p - j }  & \Cok(i+\pi)=D \ar[r] & 0.
}
\]
This gives the following:

\begin{thm} \label{LES}
For any complex manifold (resp. any bicomplex) there is a short exact sequence of complexes:
\[
\xymatrix{
0 \ar[r] &  \left( \Ke d^c, d \right) \ar[r]^-{i+\pi}  &  \left( \cA,d \right) \oplus  \left( H_{d^c},0 \right)  \ar[r]^-{p - j}  & \left( \cA / \Img \, d^c , d \right) \ar[r]  & 0,
}
\]
and therefore a long exact sequence in cohomology:
\[
\xymatrix{
\cdots \ar[r]^-{\delta_{k-1}} & H^k \left( \Ke d^c \right)  \ar[r]^-{i+\pi} & H^k_{d} \oplus  H^k_{d^c}  \ar[r]^-{p - j}  & H^k \left( \cA / \Img  \, d^c \right) \ar[r]^-{\delta_k}  & H^{k+1} \left( \Ke d^c \right) \ar[r]  &\cdots
}
\]
\end{thm}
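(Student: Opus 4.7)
The statement is essentially a direct consequence of Corollary \ref{fundsquare}, combined with the standard fact, stated in the paragraph just before the theorem, that a commutative square in an abelian category (here, chain complexes of $\CC$-vector spaces) is simultaneously a pullback and a pushout if and only if the associated four-term sequence is short exact. The plan is to extract the short exact sequence from this fact and then invoke the standard long exact sequence in cohomology.

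First, I would invoke Corollary \ref{fundsquare} to conclude that the square with vertices $\Ke d^c$, $\cA$, $H_{d^c}$, $\cA/\Img d^c$ and morphisms $i,\pi,p,j$ is both a pullback and a pushout. If a reader wanted explicit verification of the resulting short exact sequence, I would check: $i+\pi$ is injective because $i$ is; $p-j$ is surjective because $p$ already is; $(p-j)\circ(i+\pi)=0$ is exactly the commutativity $p\circ i=j\circ\pi$ of the square; and the reverse inclusion $\Ke(p-j)\subseteq \Img(i+\pi)$ is a direct restatement of the pullback property, since $p(a)=j(h)$ forces $a-\tilde h\in\Img d^c$ for some $d^c$-closed representative $\tilde h$ of $h$, whence $a\in\Ke d^c$ and $(a,h)=(i+\pi)(a)$.

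Second, I would check that $i+\pi$ and $p-j$ are chain maps, which reduces to the individual chain-map property of $i,\pi,p,j$ already recorded in Corollary \ref{fundsquare} (using Proposition \ref{Jdindent} for the fact that $d=0$ on $H_{d^c}$). Then the sequence
\[
0\longrightarrow (\Ke d^c,d)\xrightarrow{\,i+\pi\,} (\cA,d)\oplus (H_{d^c},0)\xrightarrow{\,p-j\,}(\cA/\Img d^c,d)\longrightarrow 0
\]
is a short exact sequence of complexes, and the associated long exact sequence in cohomology, with connecting homomorphism $\delta_k$ produced via the usual snake-lemma zig-zag, is precisely the displayed sequence.

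There is no real obstacle in this argument: the entire content is already contained in the pullback/pushout property established in Corollary \ref{fundsquare}. If anything is subtle, it is only making sure the signs and the zero differential on $H_{d^c}$ are consistently used; however these are immediate from the definitions of the maps and from Proposition \ref{Jdindent}.
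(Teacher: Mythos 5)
Your proposal is correct and follows essentially the same route as the paper, which gives no separate proof beyond noting that Corollary \ref{fundsquare} exhibits the square as simultaneously a pullback and a pushout, and that this is equivalent to exactness of the four-term sequence $0\to \Ke(p-j)\to B\oplus C\to \Cok(i+\pi)\to 0$. Your added explicit verifications (injectivity of $i+\pi$, surjectivity of $p-j$, and the middle exactness via a $d^c$-closed representative) are all sound and merely spell out what the paper leaves implicit.
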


\begin{rmk}
For $\cA=\cA(M)$ of a compact complex manifold $M$, the vector spaces appearing in this long exact sequence are all finite dimensional when the manifold is compact. One way to establish this is to relate them to the Bott-Chern and Aeppli cohomology groups, as is done in section \ref{ssec:BCA}.
\end{rmk}

The isomorphism type of this long exact sequence, and all things algebraically derived from it, are invariants of the biholomorphism type of complex manifolds. In particular, this holds for the rank of the connecting map $\delta$. We will later see that this rank is even a bimeromorphism invariant in complex dimension at most four, see Remark \ref{rmk;rank-delta-Bimer-Inv-Leq4}.

Finally, we relate the $d d^c$-condition to the long exact sequence from Theorem \ref{LES}.

\begin{lem}
If $\cA$ satisfies the  $d d^c$-condition, then the connecting homomorphism from Theorem \ref{LES} is zero in all degrees.
\end{lem}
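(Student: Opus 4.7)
The plan is to exploit the observation already recorded in the Remark just before this lemma: under the $dd^c$-condition, every map appearing in the fundamental diagram of Corollary \ref{fundsquare} is a quasi-isomorphism. In particular, the inclusion $i:(\Ke d^c,d)\hookrightarrow(\cA,d)$ induces an isomorphism $H^k(\Ke d^c)\isomto H^k_d$ in every degree $k$.

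From this, the conclusion is essentially formal. The map $i+\pi: H^k(\Ke d^c)\to H^k_d\oplus H^k_{d^c}$ has first component $i$, which is an isomorphism by the above. Hence $i+\pi$ is injective in every degree. By exactness of the long exact sequence of Theorem \ref{LES} at $H^k(\Ke d^c)$, this forces the preceding connecting homomorphism $\delta_{k-1}:H^{k-1}(\cA/\Img d^c)\to H^k(\Ke d^c)$ to vanish. Since this holds for all $k$, all connecting homomorphisms are zero.

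I would write this up in two sentences and, at most, add the parenthetical remark that one could just as well have used that $\pi$ is an isomorphism, or dually that $p$ is a quasi-isomorphism (making $p-j$ surjective in cohomology and hence forcing $\delta=0$ by the other side of exactness). There is no real obstacle: the entire content is that the $dd^c$-condition turns the commutative square of Corollary \ref{fundsquare} into a square of quasi-isomorphisms, whereupon the associated Mayer--Vietoris-type long exact sequence automatically degenerates into split short exact sequences. The only thing to be mildly careful about is to cite the Remark (or reprove its one-line content) rather than silently invoke it.
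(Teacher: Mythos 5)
Your proof is correct and follows essentially the same route as the paper: the paper's own proof notes that under the $dd^c$-condition the map $i+\pi$ is a sum of injective maps (or, alternatively, $p-j$ is a sum of surjective maps), and concludes $\delta_k=0$ by exactness, which is exactly your argument. Your explicit appeal to the Remark after Corollary \ref{fundsquare} is a reasonable way to justify the injectivity of $i$ in cohomology, which the paper leaves implicit.
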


\begin{proof}
Consider
\[
\xymatrix{
 H^k_{d} \oplus  H^k_{d^c}  \ar[r]^-{p - j}  & H^k \left( \cA / \Img  \, d^c \right) \ar[r]^-{\delta_k}  & H^{k+1} \left( \Ke d^c \right) \ar[r]^-{i+\pi} & H_d^{k+1} \oplus H_{d^c}^{k+1}
}
\]
The  $d d^c$-condition implies the first map, being the sum of two surjective maps, is surjective. Alternatively, the
$d d^c$-condition implies the last map, being the sum of two injective maps, is injective. By either argument, $\delta_k=0$.
\end{proof}

\subsection{Equivalent characterizations of $d d^c+3$} 

The vanishing of the connecting homomorphism $\delta$ does not quite imply the $d d^c$-condition. In fact, we have:

\begin{thm}[The $dd^c + 3$-condition]\label{thm:ddc+3}
For any bounded bicomplex $\cA$, the following are equivalent:
\begin{enumerate}
\item
The connecting homomorphism 
\[\delta_k: H^k \left( \cA/ \Img \, d^c \right)  \to H^{k+1} \left( \Ke d^c \right)
\]  
in the long exact sequence 
\[
\xymatrix{
\cdots \ar[r] & H^k_{d} \oplus  H^k_{d^c}  \ar[r]^-{p - j}  & H^k \left( \cA / \Img  \, d^c \right) \ar[r]^-{\delta_k}  & H^{k+1} \left( \Ke d^c \right) \ar[r]^-{i+\pi}  & H^k_{d} \oplus  H^k_{d^c} \ar[r] & \cdots 
}
\]
is zero for all $k$.
\item  For all $k \geq 0$, the diagram
\[
\xymatrix{
 &H^k ( \Ke \, d^c ) \ar[dl]_i \ar[dr]^\pi &  \\
 H^k_d \ar[dr]_{p} & & H^k_{d^c}  \ar[dl]^j \\
 & H^k ( \cA / \Img \, d^c ) 
}
\]
is both a pullback and a pushout in the category of vector spaces.
\item The following holds, for all $k\geq 0$: 

For all $x \in \cA^{k+1}$, if $x = dy$ and $x= d^c z$, then $x = dw$ with $w \in \Ke d^c$.
\item The following numerical equality holds:
\[ 
\sum_k \dim H^k(\Ke d^c) + \dim H^k(\cA/ \Img \, d^c) = 2 \sum_k b_k.
\]
(here, we make the additional assumption on $\cA$ that all involved quantities are finite).
\item The bicomplex $(\cA, \del, \delb)$ decomposes as a direct sum of dots, squares and length $3$ zigzags.

\item The Fr\"olicher (row- and column-) spectral sequences degenerate at $E_1$, and the total purity defect is equal to $1$.

\end{enumerate}
\end{thm}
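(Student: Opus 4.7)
The strategy is to combine two types of arguments: (a) direct manipulation of the long exact sequence from Theorem \ref{LES} and of Proposition \ref{Jdindent} for the equivalences (1) $\Leftrightarrow$ (2) $\Leftrightarrow$ (3) $\Leftrightarrow$ (4); and (b) the structure theorem for bounded bicomplexes recalled in the preliminaries for the equivalences (1) $\Leftrightarrow$ (5) $\Leftrightarrow$ (6). The structure theorem is the key reduction: since every functor entering the statement---the various $H^k$'s, $H^k(\Ke d^c)$, $H^k(\cA/\Img d^c)$, the connecting map $\delta_k$, and the Fr\"olicher spectral sequence---is additive on direct sums, the conditions (1), (5), and (6) can be checked summand by summand on an indecomposable decomposition of $\cA$.

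For the abstract equivalences I would proceed as follows. \textbf{(1) $\Leftrightarrow$ (2)}: The connecting map vanishes in all degrees iff the long exact sequence splits into short exact sequences
\[
0 \to H^k(\Ke d^c) \to H^k_d \oplus H^k_{d^c} \to H^k(\cA/\Img d^c) \to 0,
\]
which, by standard linear algebra, is the same as saying the square in (2) is both a pullback and a pushout of vector spaces. \textbf{(1) $\Leftrightarrow$ (3)}: Unwind the snake-lemma description of $\delta_k$. Given $\xi \in \cA^k$ representing a cocycle in $H^k(\cA/\Img d^c)$, i.e.\ $d\xi = d^c z$ for some $z$, the lift $(\xi,0)\in\cA\oplus H_{d^c}$ has differential $(d\xi,0)$, and this equals $(i+\pi)(d\xi)$ (noting that $d\xi\in \Img d^c \subseteq \Ke d^c$ and $[d\xi]=0$ in $H_{d^c}$). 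Hence $\delta_k([\xi])=[d\xi]\in H^{k+1}(\Ke d^c)$, and vanishing for all $\xi$ is exactly the statement in (3), with $x=d\xi$. \textbf{(1) $\Leftrightarrow$ (4)}: Apply rank--nullity across the long exact sequence. A telescoping argument yields
\[
\sum_k \dim H^k(\Ke d^c) + \sum_k \dim H^k(\cA/\Img d^c) = 2\sum_k b_k + 2\sum_k \rank(\delta_k),
\]
so (4) is equivalent to $\rank(\delta_k)=0$ for all $k$.

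For \textbf{(1) $\Leftrightarrow$ (5)} I enumerate the indecomposable types. On a dot or a square all groups match trivially and $\delta=0$ (these are the classical $dd^c$-summands). On a length-$3$ zigzag (``L'' or ``reverse L'') one computes directly that $H^k(\Ke d^c)$ and $H^k(\cA/\Img d^c)$ receive one-dimensional contributions making the relevant piece of the long exact sequence already a short exact sequence, so $\delta=0$. On any even-length zigzag, or odd zigzag of length $\geq 5$, one exhibits an explicit class in $H^k(\cA/\Img d^c)$ whose snake-lemma image is a nonzero class in $H^{k+1}(\Ke d^c)$. For \textbf{(5) $\Leftrightarrow$ (6)} I would invoke the standard correspondence between the Fr\"olicher spectral sequence and zigzag lengths (degeneration at $E_1$ is equivalent to the absence of even-length zigzags) together with the identification of the purity defect of an odd zigzag of length $2k+1$ as $k$, which together force exactly the list in (5).

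The main obstacle is the indecomposable-level verification that zigzags of length $\geq 4$ produce a nonzero connecting map: this requires carefully choosing a generator in $H^k(\cA/\Img d^c)$ for such a summand, applying $d$ to a lift, and recognizing that the resulting element of $\Ke d^c$ represents a nonzero class in $H^{k+1}(\Ke d^c)$. All remaining equivalences are then either routine linear algebra or follow from assembling additive functors on the fixed indecomposable list.
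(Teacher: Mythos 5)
Your proposal is correct and follows essentially the same route as the paper: the linear-algebra identifications of the connecting map and the rank count for the equivalences (1)--(4), the reduction to indecomposable summands by additivity of all the functors involved for (5), and the standard zigzag dictionary for $E_1$-degeneration and purity defect for (6). The only slip is in your description of the length-$3$ case, where in fact one of $H(\cA/\Img\, d^c)$ or $H(\Ke\, d^c)$ vanishes identically on an $L$ (respectively a reverse $L$) rather than both contributing one dimension; this only makes the vanishing of $\delta$ more immediate and does not affect the argument.
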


The purity defect will be introduced in the subsection \ref{sec:purity}, where the equivalence of item $(6)$ will be proved.

\begin{rmk}
	If $\cA$ is equipped with a real structure $\sigma$ (e.g. if $\cA=\cA(M)$) then $d$ and $d^c$ are real operators and one may replace $\cA$ by the fixed points of $\sigma$ (the real forms) in $(1)$--$(4)$. In that setting, also, the two spectral sequences in $(6)$ are conjugate to each other, so it suffices to consider one.
\end{rmk}

\begin{proof} 
The sequence in $(1)$ has vanishing connecting homomorphism if and only if the long exact sequence splits into short exact sequences
\[
\xymatrix{
0 \ar[r]  & H^k ( \Ke d^c )  \ar[r]^-{i+\pi} & H^k_{d} \oplus  H^k_{d^c}  \ar[r]^-{p - j}  & H^k ( \cA / \Img  \, d^c ) \ar[r]   & 0 ,
}
\]
which holds if and only if diagram in $(2)$ is a pushout and pullback for all $k \geq 0$. 
To see the equivalence of the third condition, note that from the definitions we have
\[
H( \cA / \Img  \,d^c) = \frac{d^{-1}(\Img \,d^c)}{\Img \, d + \Img \,d^c} \quad
\textrm{and} \quad 
H(\Ke \, d^c) = \frac{\Ke \, d \cap \Ke \, d^c}{ d( \Ke \,d^c)}.
\]
Condition $(3)$ expresses that the connecting homomorphism $\delta$ has image zero.

The equivalence of condition $(4)$ to $(1)$ can be seen as follows.
For any compact complex manifold, the long exact sequence of Theorem \ref{LES} implies:
\[
\dim \left( H^k(\Ke d^c) / \Img \, \delta_{k-1} \right)  + \dim \left( \Ke \delta_k \right)= 2 b_k.
\]
Then the equality in $(4)$ holds if and only if $(1)$ holds.

Proving that condition $(5)$ is equivalent to $(1)$ will require several steps. First, the 
diagram 
\[
\delta_k: H^k ( \cA / \Img \,  d^c )  \to H^{k+1} ( \Ke d^c )
\]
can be understood as a functor on the category of bicomplexes over $\CC$, with values in the (linear) category of diagrams of two vector spaces and a linear map between them. Namely, for any bicomplex $(\mathcal{B}, \partial, \bar \partial)$ we let $d = \partial + \bar \partial$, and $d^c = i ( \bar \partial - \partial)$, and consider the diagram above. The map $\delta_k$ is readily seen to be induced by $d$. This functor is linear and it takes direct sums of bicomplexes to direct sums of vector spaces and maps between them, since $\Ke d^c$ and $\mathcal{B}/\Img \, d^c$ are compatible with direct sums. 

From \cite{KhQi},\cite{StStrDbl}, every (bounded) bicomplex $(\mathcal{B}, \partial, \bar \partial)$
decomposes as a direct sum of dots, squares, and zigzags, the definition of which we recall below.

 To complete the proof of the claim, it suffices to check that the map 
\[
\delta_j: H^j ( \cA / \Img \, d^c )  \to H^{j+1} ( \Ke d^c )
\]
is zero, for all $j$, on all on all bicomplexes which contain only dots, squares, and length $3$ zigzags, and that $\delta_j$ is non-zero for some $j$ on any bicomplex that contains an even length zigzag, or an odd zigzag of length $5$ or higher. We summarize these groups and the map $\delta_k$ in Figure \ref{fig:zzddc+3} below, which can be checked on a case-by-case basis, and explain the case of each row in the diagram. 

{\bf (Dot)} The case of a dot is a single vector space $\CC$, in some bi-degree $(p,q)$ with $k=p+q$, and zeroes elsewhere with vanishing differentials $\del$ and $\delb$. We compute that $\cA / \Img \, d^c  = \Ke d^c = \CC$ in degree $k$, with zero in all other degrees, so that $\delta_j : H^j ( \cA / \Img \, d^c ) \to H^{j+1}( \Ke d^c )$
is the zero map for all $j$. 

{\bf (Square)} A square is a bicomplex whose only non-zero entries are as follows, with maps that are isomorphisms:
\[
\xymatrix{
 \CC_{p,q+1} \ar[r]^-\del & \CC_{p+1,q+1} \\
 \CC_{p,q} \ar[r]^-\del  \ar[u]^-\delb & \CC_{p+1,q} \ar[u]_-\delb
}
\]
Let $k = p+q$.
Computing the total complexes $ (\cA / \Img \, d^c ,d)$ in total degrees $k,k+1,k+2$, we have $\CC \isomto \CC \to 0$. Similarly,  the complex $(\Ke d^c, d) $ in degrees $k,k+1,k+2$ is $0 \to \CC \isomto \CC$.
This shows the functors $H ( \cA / \Img \, d^c )$ and $H( \Ke d^c )$ vanish on squares, and clearly $\delta_j=0$ for all $j$.

{\bf (Odd length $3$ zigzag)} 
Consider an ``L'', i.e. a length $3$ zigzag with outgoing differentials that are isomorphisms:
\[
\xymatrix{
 \CC_{p,q+1}& \\
 \CC_{p,q} \ar[r]^-\del  \ar[u]^-\delb & \CC_{p+1,q}
}
\]
In this case the total complex $ (\cA / \Img \, d^c ,d)$ in degrees $k,k+1$ is $\CC \isomto \CC$, so the cohomology $ H (\cA / \Img \, d^c ,d)$ is zero in all degrees. Similarly, the complex  $(\Ke d^c, d) $ in total degrees $(k,k+1)$ is $0 \to \CC^2$ and the cohomology $H(\Ke d^c, d) $ is $(0,\CC^2)$ in degrees $k$ and $k+1$, respectively. Thus $\delta_j=0$ for all $j$.

Next we consider a ``reverse L'', i.e. a length $3$ zigzag with incoming differentials that are isomorphisms:
\[
\xymatrix{
 \CC_{p,q+1}  \ar[r]^-\del  &  \CC_{p+1,q+1} \\
 & \CC_{p+1,q} \ar[u]_-\delb 
}
\]
In this case the total complex $ (\cA / \Img \, d^c ,d)$ in total degrees $(k+1,k+2)$ is $\CC^2 \to 0$, so the cohomology $ H (\cA / \Img \, d^c ,d)$ is $(\CC^2,0)$ in total degrees $k+1$ and $k+2$, respectively. Similarly, the complex  $(\Ke d^c, d) $ in total degrees $(k+1,k+2)$ is $\CC\isomto \CC$ and the cohomology $H(\Ke d^c, d) $ is $(0,0)$ in degrees $k+1$ and $k+2$, respectively. Thus $\delta_j=0$ for all $j$.

{\bf (General odd-length zigzag)} For general odd-length zigzags we have two cases, outgoing and incoming:
\[
\xymatrix{
 \CC & & &  & \\
 \CC \ar[r]^-\del  \ar[u]^-\delb & \CC & & & \\
  & &  \CC \ar[r]^-\del    \ar@{}[ul]|-{\ddots} &\CC & \\
 &  & & \CC \ar[r]^-\del \ar[u]^-\delb& \CC
}
\quad 
\xymatrix{
 \CC \ar[r]^-\del & \CC & &  & \\
  & \CC \ar[r]^-\del  \ar[u]^-\delb & \CC& & \\
 & & & \CC \ar@{}[ul]|-{\ddots} \ar[r]^-\del & \CC \\
&  & & &  \CC \ar[u]^-\delb 
}
\]
Consider the first case, which is in total degrees $k$ and $k+1$, with vector spaces $\CC^m$ and $\CC^{m+1}$, respectively.
The total complex $ (\cA / \Img \, d^c ,d)$ is $\CC^m$ in degree $k$, and $\CC$ in degree $k+1$, since $d^c$ is injective on each copy of $\CC$ in degree $k$, but not onto. The differential $d: \CC^m \to \CC$ is onto, so the total cohomology of $H(\cA / \Img \, d^c ,d)$ is $\CC^{m-1}$, and $0$, in total degrees $k$ and $k+1$, respectively. 

Continuing with this odd-length outgoing case, the total complex $ (\Ke d^c ,d)$ is 
$0$ in degree $k$, and $\CC^{m+1}$ in degree $k+1$. So the total cohomology of $H(\Ke d^c ,d)$ is $0$ and $\CC^{m+1}$, in total degrees $k$ and $k+1$, respectively. Finally, the differential $\delta_k:H^k(\cA / \Img \, d^c ,d)\to H^{k+1}(\Ke d^c ,d)$ is the injection $\CC^{m-1} \to \CC^{m+1}$.

The case of odd-length incoming is computed similarly. The results are in Figure \ref{fig:zzddc+3}, and yields that $\delta_k :H^k(\cA / \Img \, d^c ,d)\to H^{k+1}(\Ke d^c ,d)$ is the surjection $\CC^{m+1} \twoheadrightarrow \CC^{m-1}$, is non-zero for $m > 1$.

{\bf (Even-length zigzags)} For general odd-length zigzags we again have two cases, where the top-leftmost space has an outgoing or incoming map:
\[
\xymatrix{
 \CC \ar[r]^-\del & \CC & &  & \\
  & \CC  \ar[u]^-\delb & &  &\\
 & & \CC \ar[r]^-\del  \ar@{}[ul]|-{\ddots}  & \CC & \\
&  & &  \CC \ar[u]^-\delb \ar[r]^-\del & \CC
}
\quad
\xymatrix{
 \CC & & &  & \\
 \CC \ar[r]^-\del  \ar[u]^-\delb & \CC & & & \\
  & &  \CC \ar[r]^-\del  \ar@{}[ul]|-{\ddots} &\CC & \\
 &  & & \CC \ar[u]^-\delb&
}
\]
Suppose each complex has $\CC^m$ in total degrees $k$ and $k+1$.
Here the two cases yield the same complexes in total degree. Namely, in either case, the complex $(\cA / \Img \, d^c ,d)$ is $\CC^m \to 0$  with the same cohomology, and the complex $(\Ke d^c ,d)$ is $0 \to \CC^m$, with same cohomology. The differential $\delta_k: H^k(\cA / \Img \, d^c ,d) \to H^{k+1}(\Ke d^c ,d)$ is the isomorphism $\CC^m \isomto \CC^m$, which is a non-zero for $m>0$, i.e. length at least $2$.

{\small
\begin{figure}

\begin{tabular}{ c| c | c | c | c | c | c}
zigzag type & length & $ \cA / \Img \, d^c$  & $\Ke d^c$  &  $H (\cA / \Img \, d^c )$ & $H(\Ke d^c )$ & $\textrm{rank} (\delta)$ \\  \hline 
Dot & $1$ & $\CC$ & $\CC$ & $\CC$ & $\CC$ & $0$ \\
Square & NA &  $\CC \to \CC \to 0$ & $0 \to \CC \to \CC$ & $(0,0,0)$  & $(0,0,0)$ & $0$ \\
$L$ &  $3$ & $\CC \to \CC$  & $0 \to \CC^2$ & $(0,0)$ & $(0,\CC^2)$ & $0$ \\
Rev. $L$ & $3$ & $\CC^2 \to 0$ & $\CC \to \CC$  & $(\CC^2 , 0) $& $(0,0)$  & $0$ \\
Odd Out. & $2m+1$ & $\CC^m \twoheadrightarrow \CC$ & $0 \to \CC^{m+1}$ & $(\CC^{m-1},0)$ & $(0,\CC^{m+1})$ & $m-1$ \\
Odd Inc. & $2m+1$ & $\CC^{m+1} \to 0$ &  $\CC \hookrightarrow \CC^{m}$ & $(\CC^{m+1},0)$ & $(0,\CC^{m-1})$ & $m-1$ \\
Even Out. & $2m$ & $\CC^m \to 0$ & $0 \to \CC^m$ & $(\CC^m, 0) $ &  $(0 ,\CC^m)$  & $m$ \\
Even Inc. & $2m$ & $\CC^m \to 0$ &  $0 \to \CC^m$ & $(\CC^m , 0)$ & $(0 , \CC^m)$ & $m$ 
\end{tabular}

 \caption{Zigzag contributions for the connecting map ${\delta_k: H^k(\cA / \Img \, d^c ,d) \to H^{k+1}(\Ke d^c ,d)}$}\label{fig:zzddc+3}
\end{figure}
 }
\end{proof}

\begin{defn} The equivalent conditions in Theorem \ref{thm:ddc+3} will be referred to as the \emph{$d d^c+3$-condition}. 
A complex manifold $M$ will be said to be $d d^c+3$ if the bicomplex of differential forms on $M$
satisfies the  $d d^c+3$-condition.
\end{defn}

\begin{ex}
	For $M=S^1\times S^3$ with the complex structure of a Hopf manifold, the Fr\"olicher spectral sequence degenerates and $b_0=b_1=1$, $b_2=0$. Hence one has (see e.g. \cite[Ch. 4]{StStrDbl})
	\[
	\cA(M)\simeq_1 \img{S000_2}\oplus\img{S100_2}.
	\]
	Hence, it is $dd^c+3$. We will  generalize this below in two ways  (to all complex surfaces and to all Vaisman manifolds).
\end{ex}
The $d d^c + 3$-condition fails in general in complex dimension greater than $2$, as the following example shows.

\begin{ex} \label{S3S3} 
For $M=S^3 \times S^3$ with the Calabi-Eckmann complex structure, $h^{0,1}(M)\neq 0$ by \cite{Bor}, so the Fr\"olicher spectral sequence does not degenerate. Thus, $M$ is not $dd^c+3$. One can analyze this failure more precisely. In fact, one may extract from the calculations in \cite[§3.3]{ATBCf} that
\[
\cA(M)\simeq_1 \img{S000_3}\oplus \img{S1101_3}\oplus\img{S311_3}.
\]
By inspecting Figure \ref{fig:zzddc+3} one sees that the connecting homomorphism
\[
\delta_k: H^k \left( \cA / \Img \, d^c \right)  \to H^{k+1} \left( \Ke d^c \right)
\] 
is an isomorphism for $k=1,4$, and the source and target are non-zero (in fact 2-dimensional) in this case.
\end{ex}

We end this section with a remark concerning the other potential extremity of the connecting homomorphism.

\begin{rmk}
The connecting homomorphism $\delta$ is an isomorphism (away from the top and bottom degrees) if and only if $M$ is a rational homology sphere. Note that beyond the standard sphere $S^6$, there are numerous rational homology $6$-spheres that are almost complex. In fact, as shown by (\cite{AM}, p.5), performing surgery on the first factor of $S^1 \times N$ preserves the condition of being spin$^c$, which in dimension $6$ is equivalent to being almost complex. Applying this construction to those $5$-dimensional lens spaces which are spin$^c$ yields infinitely many topologically distinct examples. It is not known if any posses a complex structure.
\end{rmk}
 
\subsection{Purity defect} \label{sec:purity}

We relate the $d d^c +3$-condition to a modest failure of the pure Hodge condition. To do this, we introduce a definition that measures the extent to which a complex manifold fails to have a pure Hodge structure, namely a non-negative integer, called the \emph{purity defect}, defined in terms of the Hodge filtration. Proposition \ref{prop:puritydefect} shows that this number simply measures the longest odd length zigzag in the bicomplex $(\cA,\del, \delb)$. It will follow that a manifold satisfies the $d d^c +3$-condition if and only if there is first page degeneration and purity defect at most $1$, Corollary \ref{cor:ddc+3iffE1andpd1}. Again, the definition and the last mentioned Corollary work just as well for any (bounded) bicomplex, but for ease of language and because we show some specifically geometric results, we work with the complex of differential forms in this subsection.

Recall that for any complex manifold $(M,J)$  the Hodge-filtration,
\[
F^p \cA^k(M) := \bigoplus_{j\geq p} \cA^{j,k-j} (M),
\]
induces a filtration on the de Rham cohomology, via
\[
 F^p  H^k_{d R}(M;\CC)  := \Img\left(F^p \cA^k(M) \cap \Ke d\longrightarrow H_{\dR}^k(M;\CC)\right),
\]
as the space of de Rham classes that are representable by forms with holomorphic bidegree greater than or equal to $p$. We let
$\bar F$ denote the conjugate filtration and we say that $H^k_{d R}(M)$ inherits a pure Hodge structure  (of weight $k$ in degree $k$) if
\[
H^k_{d R}(M ;\CC) = \bigoplus_{p+q=k}  F^p  H^k_{d R}(M;\CC) \cap \bar F^q  H^k_{d R}(M;\CC).
\]

\begin{defn}\label{defn:puritydefect} Let $(M,J)$ be a complex manifold.
\begin{enumerate}
	\item The \emph{total filtration} is the descending filtration defined by 
	\[
	F_{tot}^rH_{\dR}^k(M):=\sum_{p+q=r}F^pH^k_{\dR}(M;\CC)\cap \bar{F}^qH^k_{\dR}(M;\CC).
	\]
	\item The \emph{purity defect in degree $k$} is defined to be:
	\[
	\pdef_k(M):=\max \bigg\{ |d| \, \bigg| \, d\in\Z \,\, \text{and}  \,\, \operatorname{gr}_{F_{tot}}^{k+d}\! H_\dR^k(M;\CC)\neq 0 \bigg\},
	\]
	where we understand the maximum to be $0$ if $H_{\dR}^k(M;\CC)=0$.
	\item The \emph{(total) purity defect} is the nonnegative integer
	\[
	\pdef(M):=\max_k \pdef_k(M).
	\]
\end{enumerate}
\end{defn}

Note that $F_{tot}^r=0$ for $r$ sufficiently large. In particular, 
\[
F^p  H^k_{d R}(M;\CC) \cap \bar F^q  H^k_{d R}(M;\CC) =0
\]
 whenever $p+q-k$ is greater than the purity defect in degree $k$. 
This observation admits a sort of converse. As a consequence of Serre duality, on any connected compact complex manifold, one has nondegenerate pairings (cf. \cite{SteLR})
\[
\gr^{k+d}_{F_{tot}}H_\dR^k(M;\CC)\times\gr_{F_{tot}}^{2n-k-d} H_\dR^{2n-k}(M;\CC)\to\CC.
\]
 Thus, we have:
\begin{lem}
	On a connected compact complex manifold $M$, the purity defect $\pdef(M)$ is the absolute value of the maximal number $p+q-k$ such that 
	\[ 
	F^pH_\dR^k(M;\CC)\cap \bar F^qH_\dR^k(M;\CC)\neq 0.
	\]
\end{lem}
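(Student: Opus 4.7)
My plan is to establish the equality by showing two inequalities, using the Serre duality pairing quoted just above as the main tool.

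The direction $\pdef(M)\le \max|p+q-k|$ is essentially immediate from definitions: if $\gr^r_{F_{tot}}H_\dR^k(M;\CC)\ne 0$ then $F_{tot}^r H_\dR^k(M;\CC)$ is non-zero, and unfolding $F_{tot}^r = \sum_{p+q=r} F^p \cap \bar F^q$ shows that at least one summand $F^pH_\dR^k\cap \bar F^qH_\dR^k$ is non-zero with $p+q=r$. Thus $|r-k|=|p+q-k|$ is bounded by the right-hand side.

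For the reverse direction, I would first recast the right-hand side in filtration-theoretic terms. Setting $r_{\max}(k):=\max\{r : F_{tot}^rH_\dR^k(M;\CC)\ne 0\}$, one checks that $r_{\max}(k)$ is itself a jump index, since $\gr^{r_{\max}(k)}_{F_{tot}}=F_{tot}^{r_{\max}(k)}/0\ne 0$, and that unfolding the defining sum gives $r_{\max}(k)=\max\{p+q : F^pH_\dR^k\cap\bar F^qH_\dR^k\ne 0\}$. The right-hand side of the lemma is therefore exactly $\max_k(r_{\max}(k)-k)$, a quantity which is automatically non-negative (e.g.\ the constant class in $H^0$ contributes $0$), so the absolute value in the statement is a bookkeeping convenience.

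It remains to show $\pdef(M)=\max_k(r_{\max}(k)-k)$. The inequality $\geq$ here is straightforward, as the maximal jump index contributes $r_{\max}(k)-k$ to $\pdef_k(M)$. The interesting inequality $\leq$ requires matching every ``downward'' defect $k-r_{\min}(k)$ to some upward defect, and this is where Serre duality enters. The non-degenerate pairing $\gr^r_{F_{tot}}H_\dR^k\times \gr^{2n-r}_{F_{tot}}H_\dR^{2n-k}\to\CC$ gives a bijection $r\leftrightarrow 2n-r$ between the jump sets of $H_\dR^k$ and $H_\dR^{2n-k}$ exchanging smallest with largest, whence $k-r_{\min}(k)=r_{\max}(2n-k)-(2n-k)$. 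Taking the maximum over all $k$, every lower defect at degree $k$ is then accounted for as an upper defect at degree $2n-k$, and one concludes $\pdef(M)=\max_k(r_{\max}(k)-k)$, as required.

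The main subtlety is this last Serre-duality step: once one accepts the cited non-degenerate pairing on graded pieces, matching $r_{\min}$ on one side with $r_{\max}$ on the other is a direct calculation, but it is the only essential input and without it the proof of the reverse inequality in the range $p+q<k$ would be incomplete.
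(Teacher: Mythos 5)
Your argument is correct and follows essentially the same route as the paper: the paper's entire justification is the non-degenerate Serre-duality pairing $\gr^{k+d}_{F_{tot}}H_\dR^k(M;\CC)\times\gr^{2n-k-d}_{F_{tot}}H_\dR^{2n-k}(M;\CC)\to\CC$ quoted just before the lemma, which is exactly the step you isolate as the essential input, and your reduction of the right-hand side to $\max_k\bigl(r_{\max}(k)-k\bigr)$ together with the matching $k-r_{\min}(k)=r_{\max}(2n-k)-(2n-k)$ fills in the details correctly. One small remark: the bound in your first paragraph is against $\max|p+q-k|$ with the absolute value inside the maximum, which is not the quantity in the statement (it is always at least $2n$, since $F^0H_\dR^{2n}\cap\bar F^0H_\dR^{2n}=H_\dR^{2n}\neq 0$ gives $|p+q-k|=2n$); that paragraph is superfluous, as your later argument establishes the equality directly.
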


\begin{ex}\label{expl: purdef} We list some low degree examples and a bound of $\pdef(M)$ in terms of the dimension of $M$. 
Let $H^k = H^k_{d R}(M;\CC)$.
\begin{enumerate}
\item The purity defect is zero in degree $k$ if and only if 
\[
F^p  H^k \cap \bar F^q  H^k = 0 \quad \quad \textrm{for $p+q >k$},
\]
and 
\[
\sum_{p+q=k} F^p H^k\cap\bar F^p H^k=H^k.
\]
Equivalently, this holds if and only if there is a pure Hodge structure  of weight $k$ in degree $k$.  This in particular implies the bicomplex $(\cA, \del ,\delb)$ has no odd zigzags of positive length, c.f. \cite{DGMS} \cite{StStrDbl}. 

\item The purity defect is at most $1$ in degree $k$ if and only if
\[
F^p  H^k \cap \bar F^q  H^k = 0 \quad \quad \textrm{for $p+q  > k+ 1$},
\]
and 
\[
\sum_{p+q=k-1} F^p H^k\cap\bar F^p H^k=H^k.
\]
Thus, if a nonzero class $[\omega^{p,q}]=[\omega^{r,s}]$ has two pure representatives of types $(p,q)$ and $(r,s)$, then $|p-r|\leq 1$, and additionally, every class $\mathfrak{c}\in H^k$ can be written as a sum of classes $\mathfrak{c}=\sum \mathfrak{c}_i$, where each $\mathfrak{c}_i=[\omega^{p_i,q_i}+\omega^{p_i+1,q_i-1}]$ is representable by a closed forms with at most two neighboring components.
In particular, for any $n$-dimensional complex manifold $M$, one has $\pdef_1(M)=\pdef_{2n-1}\leq 1$. 
As we see in the proposition below, a purity defect of at most $1$ implies all odd zigzags are length at most $3$.

\item On any compact complex manifold of complex dimension $n$, Serre duality and bidegree reasons imply
\[
\pdef_k(M)=\pdef_{2n-k}(M)\leq k.
\] 
In particular, $\pdef(M)\leq n$. The inequality will be improved in Corollary \ref{cor: pdef leq n-1} below to $\pdef(M)\leq n-1$.
\end{enumerate}
\end{ex}

\begin{prop}  \label{prop:puritydefect}
A complex manifold $(M,J)$ has $\pdef(M)\leq \ell$ if and only if there are no zigzags of odd length greater than $2\ell +1$, in any decomposition of $\cA(M)$ into indecomposables.
\end{prop}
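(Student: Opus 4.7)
The plan is to reduce to indecomposable bicomplexes via the decomposition theorem and to compute the purity-defect contribution of each type separately. The Hodge filtration $F^p$, its conjugate $\bar F^q$, the induced filtrations on $H_{\dR}$, and hence the total filtration $F_{tot}^r$ are all compatible with direct sums of bicomplexes, so the purity defect of $\cA(M)$ is the maximum of the purity defects contributed by each indecomposable summand. Since every bounded bicomplex decomposes as a direct sum of dots, squares and zigzags \cite{KhQi, StStrDbl}, it suffices to show that dots, squares and even-length zigzags contribute purity defect $0$, while an odd zigzag of length $2m+1$ contributes purity defect exactly $m$.

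Dots of bidegree $(p,q)$ give a single pure class and hence have purity defect $0$. Squares and even-length zigzags are acyclic with respect to the total differential $d$ (either by direct inspection, or by combining Figure \ref{fig:zzddc+3} with the long exact sequence of Theorem \ref{LES}), and so contribute nothing. It remains to analyze the odd zigzags.

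Fix an odd zigzag $Z$ of length $2m+1$ and parametrize its $m+1$ outer corners by the bidegrees $(P,Q), (P+1,Q-1), \ldots, (P+m,Q-m)$, all at the common total degree $k := P+Q$; the $m$ inner corners then sit at total degree $k-1$ in the outgoing case or $k+1$ in the incoming case. A direct check shows $H^j_d(Z) = 0$ for $j \neq k$ and $H^k_d(Z) \cong \CC$. In the outgoing case, each inner corner yields, via $\del + \bar\del$, a relation identifying two neighboring outer-corner classes, so the generator of $H^k_d$ admits a pure representative at each outer bidegree; hence $F^a H^k \neq 0$ iff $a \leq P+m$ and $\bar F^b H^k \neq 0$ iff $b \leq Q$, whence $F_{tot}^{k+m} H^k = \CC$ while $F_{tot}^{k+m+1} H^k = 0$, giving $\pdef_k = m$. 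In the incoming case, a short induction along the zigzag (using that each internal differential is an isomorphism) shows that every nontrivial $d$-closed element of the outer layer has all $m+1$ outer-corner components nonzero, so no pure representative exists; this forces $F^a H^k \neq 0$ iff $a \leq P$ and $\bar F^b H^k \neq 0$ iff $b \leq Q-m$, and therefore $F_{tot}^{k-m} H^k = \CC$, $F_{tot}^{k-m+1} H^k = 0$, again giving $\pdef_k = m$.

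Assembling the three cases, $\pdef(M)$ equals the maximum of $m$ over all odd length-$(2m+1)$ zigzags appearing in the decomposition of $\cA(M)$, which is exactly the stated equivalence. The main subtlety lies in the incoming case: the absence of a pure representative means that the relevant filtration steps must be read off from the \emph{minimum} first and second coordinates forced to appear in every representative, rather than from the maximum, reversing the sign of the resulting grade in $F_{tot}$ but leaving its absolute value unchanged.
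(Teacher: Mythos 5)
Your proof is correct and takes essentially the same route as the paper: reduce to a decomposition into indecomposables (noting compatibility of $F$, $\bar F$, and $F_{tot}$ with direct sums) and compute the purity-defect contribution of each type, with dots, squares and even zigzags contributing $0$ and an odd zigzag of length $2m+1$ contributing exactly $m$. The paper merely sketches this computation (citing the refined Betti-number formula from the structure-of-bicomplexes reference and working only the outgoing case), whereas you carry out both cases in full, including the correct observation that in the incoming case the unique closed class has all outer-corner components nonzero and hence no pure representative.
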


This follows  because the multiplicity of odd-length zigzags with length $2|p+q-k|+1$ are measured by the refined Betti-numbers
\[
b_k^{p,q}(M)= \dim \frac{F^pH^k\cap \bar F^qH^k}{F^{p+1}H^k\cap \bar F^qH^k+F^pH^k\cap \bar F^{q+1}H^k},
\]
where $H^k:=H^k_\dR(M;\CC)$, see \cite{StStrDbl}. We recall the simple idea behind this formula, which also explains Proposition \ref{prop:puritydefect}.  Consider an odd zigzag of the form
\[
\xymatrix{
 \CC & & &  & \\
 \CC \ar[r]^-\del  \ar[u]^-\delb & \CC & & & \\
  & &  \CC \ar[r]^-\del    \ar@{}[ul]|-{\ddots} &\CC & \\
 &  & & \CC \ar[r]^-\del \ar[u]^-\delb& \CC
}
\]
The total cohomology is one-dimensional and represented both by a generator for the top left corner, and by the cohomologous generator for the bottom right corner. So, the longer such a zigzag is, the greater the possible value $r=p+q$ such that $F^pH^k_{\dR}(M;\CC)\cap \bar{F}^qH^k_{\dR}(M;\CC) \neq 0$, and the greater is the purity defect. A similar calculation can be done for odd zigzags with incoming arrows.

\begin{cor}\label{cor: pdef leq n-1}
	For any compact complex manifold of complex dimension $n$, the inequality $\pdef(M)\leq n-1$ holds.
\end{cor}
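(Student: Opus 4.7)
The plan will be to localize the problem to middle degree $n$, reduce further to a single vanishing $F^n H^n \cap \bar F^n H^n = 0$, and establish that by an integration argument exploiting bidegree cancellation.

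First, I would note that by Example \ref{expl: purdef}(3) we already know $\pdef_k(M) \leq \min(k, 2n-k)$, which is $\leq n-1$ whenever $k\neq n$. So the entire content of the corollary reduces to improving the estimate $\pdef_n(M) \leq n$ to $\pdef_n(M) \leq n-1$. By definition this amounts to $\gr^{n+d}_{F_{tot}} H^n_{\dR}(M;\CC) = 0$ for $|d|=n$, i.e., to both $\gr^{2n}H^n = 0$ and $\gr^{0}H^n = 0$. The nondegenerate Serre-type pairing cited just before Definition \ref{defn:puritydefect} equates the dimensions of these two groups, so it is enough to verify only the first. Since bidegrees on an $n$-complex-dimensional manifold are bounded by $n$, one has $F^{n+1} = \bar F^{n+1} = 0$ on $H^n$, and therefore $\gr^{2n} H^n = F^n H^n \cap \bar F^n H^n$.

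The central step I would carry out is then: any de Rham class with both a pure $(n,0)$-representative $\alpha$ and a pure $(0,n)$-representative $\beta$ must be zero. Writing $\alpha-\beta = d\gamma$ and integrating against $\bar\beta$ gives
\[
\int_M \alpha\wedge\bar\beta \;=\; \int_M \beta\wedge\bar\beta \;+\; \int_M d\gamma\wedge\bar\beta.
\]
The last term vanishes by Stokes' theorem since $d\bar\beta = \overline{d\beta} = 0$ on the compact manifold $M$. The left-hand side vanishes for bidegree reasons, as $\alpha\wedge\bar\beta$ lives in $\cA^{2n,0}(M) = 0$. So $\int_M \beta\wedge\bar\beta = 0$; but for any Hermitian metric this integral equals, up to a fixed nonzero constant depending only on $n$, the squared $L^2$-norm of $\beta$, which forces $\beta\equiv 0$ and hence $[\alpha]=[\beta]=0$.

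The main conceptual subtlety I anticipate is that, in the language of Proposition \ref{prop:puritydefect}, violations of $\pdef_n = n$ can arise from two distinct indecomposable summands: the outgoing odd zigzag of length $2n+1$ from $(0,n)$ to $(n,0)$, whose cohomology class has pure representatives at both corners, and its Serre-dual incoming zigzag of the same length, whose class only admits mixed-bidegree representatives. The integration argument above naturally handles the outgoing case; a direct attack on the incoming case seems more delicate, since no analogous bidegree vanishing is available. Routing through the pairing of \cite{SteLR} to trade the incoming side for the outgoing side is what makes the argument clean.
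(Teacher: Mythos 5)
Your proof is correct, and it reaches the same analytic bedrock as the paper's — a Stokes-plus-positivity argument at the extreme bidegrees $(n,0)$ and $(0,n)$ — but packages it quite differently. The paper argues via the zigzag decomposition: by Proposition \ref{prop:puritydefect}, $\pdef(M)=n$ would force a zigzag of length $2n+1$, which for space reasons must touch bidegree $(n,0)$, and it then cites the structural fact from \cite{StStrDbl} that only dots and squares can have nonzero components there. You instead stay entirely at the level of filtrations: the reduction to degree $n$ via Example \ref{expl: purdef}(3), the identification $\gr^{2n}_{F_{tot}}H^n = F^nH^n\cap \bar F^nH^n$, the use of the nondegenerate pairing $\gr^{2n}_{F_{tot}}H^n\times \gr^0_{F_{tot}}H^n\to\CC$ to dispose of the ``incoming'' side, and then the explicit computation $\int_M\alpha\wedge\bar\beta = \int_M\beta\wedge\bar\beta$ with the left side vanishing in $\cA^{2n,0}=0$ and the right side being a nonzero multiple of $\|\beta\|_{L^2}^2$ — all steps check out, including your observation that $d\bar\beta=0$ because $\beta\in\cA^{0,n}$ is $d$-closed. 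What your route buys is self-containedness (the citation to \cite{StStrDbl} is unfolded into an explicit two-line integration) and independence from the indecomposable decomposition; what the paper's route buys is brevity and the sharper structural conclusion that \emph{no} indecomposable other than dots and squares meets the $(n,0)$ column, which is reused elsewhere (e.g.\ in Proposition \ref{prop:twddc+3}). Your closing remark correctly identifies the one genuine subtlety — that the incoming length-$(2n+1)$ zigzag contributes to $\gr^0H^n$ rather than $\gr^{2n}H^n$ and admits no pure representative — and the duality pairing is indeed the right way to trade it for the outgoing case (equivalently, one could invoke Serre duality of zigzag multiplicities). The only blemish, shared with the paper, is the vacuous edge case $n=0$.
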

\begin{proof}
	By Proposition \ref{prop:puritydefect},  $\pdef(M)=n$ implies that there is a zigzag of length $2n+1$, which, for space reasons, would have to have a nonzero component in bidegree $(n,0)$. However, by an application of Stokes' theorem one may see that the only indecomposable complexes with nonzero components in degree $(n,0)$ are dots and squares (c.f. \cite[Ch. 4]{StStrDbl}).
\end{proof}

The following corollary completes the proof of Theorem \ref{thm:ddc+3}, showing $(5)$ is equivalent to $(6)$.

\begin{cor} \label{cor:ddc+3iffE1andpd1}
The $d d^c + 3$-condition holds if and only if the Fr\"olicher spectral sequence degenerates at $E_1$ and the purity defect is at most $1$.
\end{cor}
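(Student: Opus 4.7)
The plan is to use the structural decomposition of any bounded bicomplex into dots, squares, and zigzags, and to translate each of the two conditions ``$E_1$-degeneration'' and ``$\pdef \leq 1$'' into vanishing statements for the multiplicities of certain families of indecomposables.

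First, I would recall (from \cite{KhQi}, \cite{StStrDbl}) that the column (and hence, by conjugation, the row) Fr\"olicher spectral sequence of $\cA$ degenerates at $E_1$ if and only if $\mult_I(\cA) = 0$ for every even-length zigzag $I$. The reason is that dots, squares, and odd-length zigzags carry no higher Fr\"olicher differentials beyond $E_1$ (a direct computation on each indecomposable), while an even zigzag of length $2m$ supports a nonzero differential on the $m$-th page of the spectral sequence. Thus $E_1$-degeneration is equivalent to the absence of all even-length zigzags.

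Next, Proposition \ref{prop:puritydefect} (applied with $\ell = 1$) immediately gives that $\pdef(M) \leq 1$ is equivalent to the absence of odd-length zigzags of length strictly greater than $3$.

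Taking the conjunction of these two criteria, $E_1$-degeneration together with $\pdef(M) \leq 1$ holds exactly when the only indecomposables with nonzero multiplicity in $\cA$ are dots, squares, and odd zigzags of length $3$ (i.e.\ L's and reverse L's). This is precisely condition $(5)$ of Theorem \ref{thm:ddc+3}, and the already-proven equivalence of $(5)$ with the $dd^c+3$-condition therefore finishes the argument.

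The only non-routine step is the first one, identifying $E_1$-degeneration with the absence of even zigzags; this is a bookkeeping exercise on each indecomposable but should be stated explicitly (or cited) for the proof to be self-contained. Everything else reduces to combining the zigzag dictionary of Proposition \ref{prop:puritydefect} with Theorem \ref{thm:ddc+3}(5).
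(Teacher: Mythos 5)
Your argument is correct and is essentially the paper's own proof: the paper likewise identifies $E_1$-degeneration with the absence of even-length zigzags, uses Proposition \ref{prop:puritydefect} to identify $\pdef\leq 1$ with the absence of odd zigzags of length greater than $3$, and concludes via condition $(5)$ of Theorem \ref{thm:ddc+3}. The only point worth stating carefully is that one needs both the row and column spectral sequences to degenerate to exclude all even zigzags (a single even zigzag can kill only one of the two), which your parenthetical conjugation remark correctly handles in the geometric case.
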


\begin{proof}
Degeneration at $E_1$ occurs if and only if there are no even zigzags, and purity defect at most $1$ occurs if and only if there are no odd zigzags of length greater than $3$. The two together are equivalent to the condition that a bicomplex $(\cA,\del,\delb)$ decomposes into a direct sum of dots, squares, and length three zigzags.
\end{proof}

 \begin{cor}
	The  $dd^c$-condition holds if and only if the $dd^c + 3$ condition holds and $H_{d R}(M;\CC)$ has a pure Hodge structure in all degrees, i.e. purity defect is zero.
\end{cor}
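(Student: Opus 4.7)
The plan is to prove this corollary by combining the indecomposable-bicomplex characterizations of both conditions with the translation of purity defect into zigzag lengths. Specifically, I will use Theorem \ref{thm: ddc-cond}(3), which says $dd^c$ is equivalent to $\cA$ decomposing into only dots and squares; Theorem \ref{thm:ddc+3}(5), which says $dd^c+3$ is equivalent to $\cA$ decomposing into only dots, squares, and length $3$ zigzags; and Proposition \ref{prop:puritydefect}, which says $\pdef(M) \leq \ell$ is equivalent to the absence of odd zigzags of length greater than $2\ell+1$.

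For the forward direction, suppose the $dd^c$-condition holds. Then by Theorem \ref{thm: ddc-cond}(3), $\cA(M)$ is a direct sum of dots and squares. In particular it contains no length $3$ zigzags, so the $dd^c+3$-condition holds via Theorem \ref{thm:ddc+3}(5). Moreover, since there are no odd zigzags of length $>1$, Proposition \ref{prop:puritydefect} with $\ell=0$ gives $\pdef(M)=0$ (equivalently, one could invoke Theorem \ref{thm: ddc-cond}(2), which yields a pure Hodge structure in every degree).

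For the backward direction, assume both the $dd^c+3$-condition and $\pdef(M)=0$. Theorem \ref{thm:ddc+3}(5) shows $\cA(M)$ decomposes as a direct sum of dots, squares, and length $3$ zigzags. Proposition \ref{prop:puritydefect} applied with $\ell=0$ rules out all odd zigzags of length greater than $1$, in particular the length $3$ ones. Hence only dots and squares remain, so by Theorem \ref{thm: ddc-cond}(3) the $dd^c$-condition holds. There is no real obstacle here; the work has already been carried out in the earlier structural results, and this corollary simply records the intersection of the two bicomplex-level characterizations.
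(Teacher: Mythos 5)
Your proof is correct and follows essentially the same route as the paper, which likewise deduces the corollary from Theorem \ref{thm: ddc-cond} together with Example \ref{expl: purdef}(1) (or Proposition \ref{prop:puritydefect}); you have simply written out the zigzag bookkeeping that the paper leaves implicit.
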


\begin{proof} 
	This follows from Theorem \ref{thm: ddc-cond} and Example \ref{expl: purdef} (1) (or Proposition \ref{prop:puritydefect}). 
\end{proof}

\begin{rmk}
	The last two Corollaries suggest a natural generalization of the $dd^c+3$ condition: Consider those manifolds with degenerate Fr\"olicher spectral sequence and a fixed bound on the purity defect (i.e. no even zigzags and bounded length of odd zigzags). This type of condition will also naturally re-appear in later sections.
\end{rmk}

\begin{prop}\label{prop: purdef of prod}
	For any compact complex manifolds, $M$ and $N$,
	\[
	\pdef(M\times N)=\pdef(M)+\pdef(N).
	\]
\end{prop}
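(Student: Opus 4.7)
The plan is to prove the two inequalities separately. The lower bound is immediate from the Künneth theorem; the upper bound requires a Künneth-type statement at the bicomplex level together with the structural characterization of purity defect from Proposition \ref{prop:puritydefect}.

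For $\pdef(M\times N)\geq \pdef(M)+\pdef(N)$, I would choose nonzero classes $\alpha\in F^{p_1}H^{k_M}(M;\CC)\cap\bar F^{q_1}H^{k_M}(M;\CC)$ and $\beta\in F^{p_2}H^{k_N}(N;\CC)\cap\bar F^{q_2}H^{k_N}(N;\CC)$ with $p_1+q_1-k_M=\pdef(M)$ and $p_2+q_2-k_N=\pdef(N)$. Because the Hodge filtrations on $H^*(M\times N;\CC)$ are compatible with the Künneth isomorphism via pullback, $\alpha\otimes\beta$ defines a nonzero class in $F^{p_1+p_2}\cap \bar F^{q_1+q_2}$ of total degree $k_M+k_N$, which gives the desired inequality.

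For the reverse inequality, I would invoke a Künneth formula at the bicomplex level, $\cA(M\times N)\simeq_1 \cA(M)\otimes_{\CC} \cA(N)$. This follows because the wedge of pullbacks is a map of bicomplexes inducing the classical Dolbeault Künneth isomorphism, which, together with the real structure, upgrades to an $E_1$-isomorphism (cf.\ \cite{StStrDbl}). Via Proposition \ref{prop:puritydefect}, the upper bound then reduces to showing that the maximal odd zigzag length in the tensor product of two bicomplexes is at most the sum of the maximal odd zigzag lengths in the factors, minus one. Tensoring with a dot is merely a shift; tensoring with a square produces only squares; the tensor product of an odd zigzag with an even zigzag, or of two even zigzags, contributes only even zigzags and squares. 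The remaining case, odd$\otimes$odd, is the main obstacle.

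The heart of the argument is therefore the claim that the tensor product of odd zigzags of lengths $2a+1$ and $2b+1$ decomposes into squares and odd zigzags, with the longest odd summand of length exactly $2(a+b)+1$. I would establish this by arranging the tensor product on a rectangular $(2a+1)\times(2b+1)$ grid of bidegrees and iteratively peeling off an ``outermost'' staircase subcomplex: the outermost boundary staircase is itself an indecomposable odd zigzag of length $2(a+b)+1$, and after removing it one is left with a smaller rectangular bicomplex of the same type to which the same procedure applies. The resulting decomposition expresses the product as a sum of nested odd zigzags of decreasing length together with squares; in particular, the longest odd summand has length exactly $2(a+b)+1$, which completes the proof.
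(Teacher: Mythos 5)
Your two-inequality strategy is essentially the paper's: the paper also proves both directions through the K\"unneth $E_1$-isomorphism $\cA(M\times N)\simeq_1\cA(M)\otimes\cA(N)$ and the behaviour of zigzags under tensor product (outsourcing the latter to \cite[Ch.~3]{StStrDbl}). Your lower bound is a genuine, and arguably cleaner, alternative: instead of tensoring two maximal odd zigzags (normalized via Serre duality to have incoming outermost arrows) and identifying the result as a single long odd zigzag, you multiply explicit classes in $F^{p_1}\cap\bar F^{q_1}$ and $F^{p_2}\cap\bar F^{q_2}$ and use compatibility of the filtrations with cup product. This is fine, modulo one small point you leave implicit: that $\pdef(M)$ can always be realized with \emph{positive} sign $p_1+q_1-k_M=\pdef(M)$ rather than negative; this follows from the Serre-duality lemma right after Definition \ref{defn:puritydefect}, and should be said.

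The gap is in the final paragraph, your ``heart of the argument''. The tensor product of two odd zigzags of lengths $2a+1$ and $2b+1$ does \emph{not} decompose into nested odd zigzags of decreasing length: it contains \emph{exactly one} odd zigzag (of length $2(a+b)+1$), and everything else is a square. You can see this by counting: the total cohomology of the product is one-dimensional by K\"unneth, and each odd zigzag contributes exactly one dimension to it; alternatively, $h_{\del}+h_{\delb}$ equals $2$ for every zigzag regardless of length and $0$ for squares, while for the product it is $1\cdot 1+1\cdot 1=2$. (Check the smallest case: the product of two L's is a length-$5$ zigzag plus a single square, not two nested odd zigzags.) Consequently your recursion breaks at the second step --- after peeling the outer staircase, the remainder is a sum of squares, not ``a smaller rectangular bicomplex of the same type'', so the induction does not close and the claimed decomposition it is supposed to produce is false. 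The conclusion you actually need survives and is reached faster without any recursion: the $H_{\del}$- and $H_{\delb}$-classes of the product sit at bidegrees differing by $(a+b,-(a+b))$, which forces the unique odd zigzag to have length exactly $2(a+b)+1$; since every other summand is a square, no longer odd zigzag can occur, and the upper bound on $\pdef(M\times N)$ follows.
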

 \begin{proof}
  Let $Z_M$ be an odd zigzag of maximal length $2m+1$ in $\cA(M)$ and $Z_N$ an odd zigzag of maximal length $2n+1$ in $\cA(N)$. By Serre duality, we may assume they both have incoming outermost arrows. Now, there is an $E_1$-isomorphism $\cA(M\times N)\simeq_1 \cA(N)\otimes \cA(M)$ and so $Z_M\otimes Z_N$ is a direct summand in $\cA(M\times N)$. But $Z_M\otimes Z_N\simeq_1 Z_{M\times N}$ where $Z_{M\times N}$ is an odd zigzag of length $2(n+m)+1$. Thus $\pdef(M)+\pdef(N)\leq \pdef(M\times N)$. The argument also works the other way, since a tensor product of an even length zigzag with any other bicomplex does not contain odd zigzags (\cite[Ch. 3]{StStrDbl}).
 \end{proof}
 We conclude with the following curious observation:
 \begin{prop}\label{prop: mult Hodge}
 	If $\pdef(M)\leq 1$, the cohomology algebra $H(\Ke d^c)$ carries a multiplicative Hodge structure, i.e., the cohomology groups $H^k(\Ke \, d^c)$ admit a real Hodge structure of weight $k$ such that the cup product restricts to maps $H^{p,q}(\Ke d^c)\otimes H^{p',q'}(\Ke d^c)\longrightarrow H^{p+p',q+q'}(\Ke d^c)$. (c.f. \cite{Voi08}).  
	 \end{prop}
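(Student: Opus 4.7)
The plan is to use the zigzag decomposition. By Proposition \ref{prop:puritydefect}, the hypothesis $\pdef(M)\leq 1$ means that $\cA(M)$ decomposes as a direct sum of dots, squares, $L$'s, and reverse $L$'s. The natural candidate for the Hodge decomposition is to set
\[
H^{p,q}(\Ke d^c):=\{[\omega]\in H^{p+q}(\Ke d^c)\mid \omega\in \cA^{p,q}\cap \Ke d^c\}.
\]

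First I would check that $H^k(\Ke d^c)=\bigoplus_{p+q=k}H^{p,q}(\Ke d^c)$. For this I can appeal directly to the zigzag-by-zigzag calculations carried out in the proof of Theorem \ref{thm:ddc+3} and summarized in Figure \ref{fig:zzddc+3}: the only summands contributing to $H(\Ke d^c)$ are dots at $(p,q)$, which yield one pure-bidegree class in $H^{p,q}$, and $L$'s with corner element $x$ at $(p,q)$, which yield two pure-bidegree classes $[\delb x]\in H^{p,q+1}$ and $[\del x]\in H^{p+1,q}$; squares and reverse $L$'s contribute nothing. Therefore the generators provided by the decomposition give a basis of $H^k(\Ke d^c)$ consisting of pure-bidegree classes, so the $H^{p,q}$ span, and directness is automatic from the directness of the summand decomposition.

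Next, complex conjugation preserves $\Ke d^c$ (because $d^c$ is a real operator) and swaps $\cA^{p,q}$ with $\cA^{q,p}$, so $\overline{H^{p,q}(\Ke d^c)}=H^{q,p}(\Ke d^c)$, which gives the real Hodge structure of weight $k$ on $H^k(\Ke d^c)$. Multiplicativity is then immediate: since $d^c$ is a derivation, $\Ke d^c$ is a subalgebra of $\cA(M)$, and the wedge of a $(p,q)$- and a $(p',q')$-form in $\Ke d^c$ lies in $\cA^{p+p',q+q'}\cap \Ke d^c$, which descends to the desired bigraded cup product.

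The only mildly subtle point, which I expect to be the main (small) obstacle, is the bigraded decomposition itself: a priori one might worry that a cohomology class could be representable only by mixed-bidegree elements of $\Ke d^c$ (as happens, for example, for the class of $\del x-\delb x$ in a square), but the enumeration in Figure \ref{fig:zzddc+3} precisely rules out any such contribution to cohomology once $\pdef(M)\leq 1$.
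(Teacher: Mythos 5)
There is a genuine gap in your first step. You assert that $\pdef(M)\leq 1$ forces $\cA(M)$ to decompose into dots, squares, $L$'s and reverse $L$'s, but that is not what Proposition \ref{prop:puritydefect} says: purity defect at most $1$ only excludes \emph{odd} zigzags of length greater than $3$. Even-length zigzags of arbitrary length are still allowed, since the hypothesis here is only $\pdef(M)\leq 1$ and \emph{not} $E_1$-degeneration (the combination of the two is exactly the $dd^c+3$-condition, by Corollary \ref{cor:ddc+3iffE1andpd1}, and the proposition is stated for the weaker hypothesis). This omission matters for your argument as written, because even zigzags are \emph{not} invisible to $H(\Ke d^c)$: by Figure \ref{fig:zzddc+3}, an even zigzag of length $2m$ contributes $\CC^m$ to $H^{k+1}(\Ke d^c)$. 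So your claim that ``the only summands contributing to $H(\Ke d^c)$ are dots \ldots and $L$'s'' is false, and your enumeration of pure-bidegree generators is incomplete.

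The conclusion nevertheless survives, and the repair is exactly the point the paper's own proof makes: for any even-length zigzag (as for dots and $L$'s), the subcomplex $\Ke d^c(I)$ inherits a bigrading and is a direct sum of dots, so its contribution to $H(\Ke d^c)$ again consists of pure-bidegree classes (concretely, the degree-$(k+1)$ corners of the even zigzag all lie in $\Ke d^c$ and are closed). Once you add this case, the rest of your argument (spanning by pure classes, the real structure via conjugation, and multiplicativity from $\Ke d^c$ being a subalgebra closed under taking bigraded components of representatives) goes through and is essentially the same as the paper's, which phrases the decomposition in terms of the induced Hodge filtrations on $\Ke d^c$ rather than by exhibiting bigraded pieces directly. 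I would also flag that your one-line justification of directness is thin: you should note that on each relevant indecomposable $I$ the complex $\Ke d^c(I)$ is a bigraded subcomplex with bigraded differential, so distinct pure types cannot represent the same nonzero class.
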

 
 \begin{proof}
 	First note that the Hodge filtrations on $\Ke d^c$ (induced by row and column filtration on $\cA(M)$) are compatible with the wedge product, since they are on $\cA(M)$. In particular, the cup product on cohomology respects these filtrations, i.e. $F^pH^r(\Ke d^c)\cup F^qH^s(\Ke d^c)\subseteq F^{p+q}H^{r+s}(\Ke d^c)$ and similarly for $\bar F$. Now we argue via indecomposable bicomplexes: For $I$ any square, or reverse $L$, $\Ke d^c (I)$ is contractible.  For $I$ any even length zigzag, dot or $L$, the bicomplex structure on $I$ induces a bicomplex structure on $\Ke d^c(I)$, which is then a direct sum of dots. Thus, in all cases that have $\pdef(I)\leq 1$, the Hodge filtrations induce a pure Hodge structure on $H^k(\Ke d^c)$. 
 \end{proof}
 
 As noted in \cite{Voi08}, the existence of a multiplicative Hodge structure on an algebra $H$ imposes further conditions beyond $b_{2k+1}$ are even. For example, the image of the cup product maps $\Img(\cup:H^k\otimes H^{l}\to H^{l+k})$ are sub-Hodge structures, and thus have even rank whenever $l+k$ is odd.
 
 \subsection{Relation to Bott-Chern and Aeppli cohomologies} \label{ssec:BCA}

 Recall the Bott-Chern and Aeppli cohomologies are defined as follows:
 \[
 H^k_{BC}(\cA) = \frac{\Ke d \cap \Ke d^c}{ \Img \,d d^c} \cap \cA^k
 \quad \quad 
 H^k_A (\cA) = \frac{\Ke d d^c}{\Img \,d + \Img \,d^c} \cap \cA^k.
 \]
 It is well known that these are finite dimensional, that $H_{BC}$ is a bi-graded algebra, that $H_A$ is a bi-graded module over $H_{BC}$, and that for any choice of metric, $\star: H_{BC} \to H_A$ is an isomorphism on complementary degrees. Moreover, there is a well defined natural transformation,
 \[
 d: H^*_A  \to H^{*+1}_{BC}.
 \]

 \begin{prop} \label{comparegroups}
 	For any complex manifold (resp. any bicomplex) there is a natural surjection 
 	\[
 	\phi: H^*_{BC} (\cA)  \twoheadrightarrow H^*(\Ke d^c) \quad \textrm{with} \quad \Ke(\phi) = \frac{d(\Ke d^c)}{\Img \, d d^c},
 	\]
 	and injection
 	\[
 	\psi: H^* ( \cA / \Img  \, d^c ) \hookrightarrow H^*_A (\cA)  \quad \textrm{with} \quad \Cok(\psi) = \frac{\Ke d d^c}{d^{-1} \Img \,d^c},
 	\]
 	induced by the identity map.
 	
 	In particular, for any compact complex manifold, the groups  $H^k ( \Ke d^c )$  and $H^k ( \cA / \Img  d^c )$, as well as $ \Ke(\phi) $ and 
 	$ \Cok(\psi)$, are finite dimensional for all $k$. 
 \end{prop}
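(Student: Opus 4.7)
The plan is to construct both maps from the identity on $\cA$ and then check well-definedness and the claimed kernel/cokernel by direct inspection of the definitions. First, I would unfold the relevant groups as
\[
H^k_{BC}(\cA)=\frac{\Ke d\cap\Ke d^c}{\Img dd^c},\qquad H^k(\Ke d^c)=\frac{\Ke d\cap\Ke d^c}{d(\Ke d^c)},
\]
\[
H^k(\cA/\Img d^c)=\frac{d^{-1}(\Img d^c)}{\Img d+\Img d^c},\qquad H^k_A(\cA)=\frac{\Ke dd^c}{\Img d+\Img d^c}.
\]
The key algebraic fact I need is the graded commutation $dd^c+d^cd=0$, which holds for any bicomplex via the formula $dd^c=2i\del\delb$ (and is a consequence of Proposition \ref{Jdindent} in the complex-manifold case). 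This gives the two containments that make the identity-induced maps well-defined: first, $d^c(\cA)\subseteq \Ke d^c$, so $\Img dd^c\subseteq d(\Ke d^c)$ and hence the identity descends to $\phi\colon H_{BC}\to H(\Ke d^c)$; second, if $dx=d^c y$ then $dd^cx=-d^cdx=-d^cd^cy=0$, so $d^{-1}(\Img d^c)\subseteq \Ke dd^c$ and the identity descends to $\psi\colon H(\cA/\Img d^c)\to H_A$.

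Next, I would verify the statements about $\phi$ and $\psi$. For $\phi$: both groups have the same numerator $\Ke d\cap \Ke d^c$, so $\phi$ is surjective; and $[x]\in H_{BC}$ lies in $\Ke\phi$ precisely when $x\in d(\Ke d^c)$, yielding $\Ke\phi=d(\Ke d^c)/\Img dd^c$. For $\psi$: the denominators of $H(\cA/\Img d^c)$ and $H_A$ agree, so an element of the numerator $d^{-1}(\Img d^c)\subseteq \Ke dd^c$ maps to zero in $H_A$ only when it is already zero in $H(\cA/\Img d^c)$; this is injectivity. The cokernel is then $(\Ke dd^c)/(d^{-1}(\Img d^c)+\Img d+\Img d^c)$, and since $\Img d+\Img d^c\subseteq d^{-1}(\Img d^c)$, this simplifies to $\Ke dd^c/d^{-1}(\Img d^c)$ as claimed.

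Finally, finite-dimensionality on a compact complex manifold follows from the classical finite-dimensionality of $H_{BC}$ and $H_A$ (established via elliptic theory for the Bott--Chern and Aeppli Laplacians): the surjection $\phi$ exhibits $H^k(\Ke d^c)$ as a quotient of the finite-dimensional space $H^k_{BC}$, while $\psi$ exhibits $H^k(\cA/\Img d^c)$ as a subspace of the finite-dimensional $H^k_A$; the kernel of $\phi$ and cokernel of $\psi$ are then automatically finite-dimensional as well.

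There is no real obstacle here: the argument is entirely a matter of chasing definitions, with the only substantive input being the anticommutation $[d,d^c]=0$ needed to certify that the two inclusions underlying $\phi$ and $\psi$ exist. The proof is therefore short, and its main role is to set up the diagram in subsection \ref{ssec:BCA} relating the long exact sequence of Theorem \ref{LES} to the classical Bott--Chern/Aeppli framework.
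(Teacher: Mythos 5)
Your proposal is correct and follows exactly the route the paper takes: the paper's proof consists precisely of the two displayed identifications of $H^*(\Ke d^c)$ and $H^*(\cA/\Img\, d^c)$ as subquotients sharing a numerator (resp.\ denominator) with $H_{BC}$ (resp.\ $H_A$), from which the surjection/injection and the kernel/cokernel descriptions are immediate; you have merely spelled out the inclusions $\Img\, dd^c\subseteq d(\Ke d^c)$ and $d^{-1}(\Img\, d^c)\subseteq \Ke dd^c$ that the paper leaves implicit. The finite-dimensionality argument via the classical elliptic theory for $H_{BC}$ and $H_A$ is also the intended one.
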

 
 \begin{proof}
 	This follows since
 	\[
 	H^*(\Ke d^c) = \frac{\Ke d \cap \Ke d^c}{ d( \Ke \,d^c)}
 	\quad \quad
 	H^*(\cA/ \Img \,d^c) = \frac{d^{-1} \Img \, d^c}{\Img \,d + \Img \, d^c}.
 	\]
 \end{proof}

 \begin{defn} \label{defn:puritygroups}
 	For any bicomplex $\cA$ define the  \emph{obstruction to purity groups} 
 	\[
 	H^k_\urcorner (\cA) =  \frac{d(\Ke d^c)}{\Img \, d d^c} \, \cap \, \cA^k \quad \quad
 	\quad \quad
 	H^k_\llcorner (\cA) = \frac{\Ke d d^c}{d^{-1} \Img \, d^c} \, \cap \,\cA^k \quad \quad
 	\]
 \end{defn}

 In summary, we have a diagram
 \[
 \xymatrix{
 	& 0 \ar[d] & 0  & \\
 	\cdots H^k_{d} \oplus  H^k_{d^c} \ar[r] & H^k ( \cA / \Img  \, d^c ) \ar@{^{(}->}_{\psi}[d]  \ar[r]^{\delta_k} & H^{k+1}(\Ke \, d^c)  \ar[u]  \ar[r] & H^{k+1}_{d} \oplus  H^{k+1}_{d^c} \cdots \\
 	& H^k_A  (\cA) \ar[r]^d \ar@{>>}[d] & H^{k+1}_{BC}(\cA)  \ar@{>>}_{\phi}[u] & \\
 	& H^k_\llcorner  (\cA)  \ar[d] \ar[r]^{d=0}& H^{k+1}_\urcorner  (\cA)   \ar@{^{(}->}[u] & \\
 	& 0 & \ar[u] 0 & 
 }
 \]
 
 \begin{prop} \label{Pduality}
 	On a compact connected $n$-dimensional complex manifold $M$, the integration pairing $\omega\mapsto \int_M \omega\wedge -$ induces a duality between the vertical short exact sequences,  in that
 	\[
 	H^k (\Ke d^c) \cong (H^{2n-k}  (\cA/ \Img d^c))^\vee \quad \textrm{and} \quad 
 	H^k_\urcorner (M)\cong (H^{2n-k}_\llcorner (M))^\vee,
 \textrm{ for all }k.	\] 
 \end{prop}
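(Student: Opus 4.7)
My plan is to deduce both asserted dualities from the standard Serre duality $H^k_{BC}(M) \cong (H^{2n-k}_A(M))^\vee$ induced by integration. Given this, it suffices to show that under the perfect pairing $H^k_{BC} \otimes H^{2n-k}_A \to \CC$, the subspaces $H^k_\urcorner \subset H^k_{BC}$ and $H^{2n-k}(\cA/\Img d^c) \subset H^{2n-k}_A$ are mutual annihilators. Once this is established, the two short exact sequences from Proposition \ref{comparegroups} (displayed in the diagram above) yield both asserted isomorphisms formally, by dualising either short exact sequence.

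The first ingredient is a form of Stokes's theorem for $d^c$: for any $\omega \in \cA^{2n-1}(M)$, $\int_M d^c \omega = 0$. This holds because the bidegree-$(n,n)$ component of $d^c\omega$ comes only from the $(n-1,n)$ and $(n,n-1)$ components of $\omega$, and on those $\del$ (resp.~$\delb$) coincides with $d$, so the integral reduces to ordinary Stokes. Combining this with ordinary Stokes and the fact that $d^c$ is a graded derivation, a direct calculation shows the integration pairing descends to all four cohomology groups in the statement, and that any class $[d\alpha'] \in H^k_\urcorner$ (with $\alpha' \in \Ke d^c$) pairs trivially with every class in $H^{2n-k}(\cA/\Img d^c)$. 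This is one of the two inclusions of annihilators needed.

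For the reverse inclusion I would argue via the decomposition of $\cA(M)$ into indecomposable sub-bicomplexes. The integration pairing $\cA^{p,q}(M) \otimes \cA^{n-p,n-q}(M) \to \CC$ is perfect and intertwines $\del$ and $\delb$ with (plus or minus) their transposes, so each indecomposable summand $Z$ has a Serre dual $Z^\vee$—obtained by reflecting bidegrees through $(n/2,n/2)$ and reversing all arrows—appearing as a summand with the same multiplicity as $Z$. A case-by-case computation on each indecomposable type (dot, square, $L$, reverse $L$, and odd/even zigzags of each length), extending the tabulation in Figure \ref{fig:zzddc+3} to also cover $H_{BC}$, $H_A$, $H_\urcorner$ and $H_\llcorner$, verifies that the integration pairing restricts to a perfect pairing between the corresponding groups on each Serre-dual pair of summands; summing over all summands then yields both duality isomorphisms. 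The main obstacle is the verification that the Serre-dual matching indeed respects the integration pairing as claimed, including correct bookkeeping of signs; once this structural fact is in place, the remaining case-analysis is elementary finite-dimensional linear algebra.
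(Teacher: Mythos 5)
Your route is genuinely different from the paper's and is essentially sound. The paper treats $H^k(\Ke d^c(-))$ as a cohomological functor (it kills squares and commutes with direct sums), invokes \cite[Cor.~20]{StStrDbl} to say that the integration map $\cA\to D\cA$ into the dual bicomplex induces isomorphisms on all such functors, and then identifies $H^k(\Ke d^c(D\cA))$ with $(H^{2n-k}(\cA/\Img\, d^c))^\vee$ by a one-line piece of linear algebra, $[\Ke d^c(D\cA)]^k\cong([\cA/\Img\, d^c]^{2n-k})^\vee$; no case analysis on zigzags is needed, and the second isomorphism is obtained the same way. You instead start from the known integration duality $H^k_{BC}\cong(H^{2n-k}_A)^\vee$, prove that $H^k_\urcorner$ annihilates $H^{2n-k}(\cA/\Img\, d^c)$ via Stokes for $d^c$ (this computation is correct: $\int d\alpha'\wedge\beta=\pm\int\alpha'\wedge d^c\gamma=\pm\int d^c(\alpha'\wedge\gamma)=0$), and then deduce both dualities by dualising the short exact sequences of Proposition \ref{comparegroups}. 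That is a clean and somewhat more self-contained derivation, and it has the virtue of exhibiting the two dualities as the two halves of a single perfect pairing compatible with the whole diagram of subsection \ref{ssec:BCA}. Two cautions on your second half. First, the statement that the integration pairing ``restricts to a perfect pairing on each Serre-dual pair of summands'' is both stronger than you need and not literally available: the pairing on forms does not respect a chosen decomposition into indecomposables, and the dual of the Fr\'echet space of forms is currents, so $\cA\not\cong D\cA$ on the nose. What you actually need, given your annihilator inclusion, is only the dimension count $\dim H^k(\Ke d^c)=\dim H^{2n-k}(\cA/\Img\, d^c)$, which forces the annihilator inclusion to be an equality; this count does follow from the zigzag tabulation once you know $\mult_Z(\cA)=\mult_{Z^\vee}(\cA)$. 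Second, that multiplicity symmetry is itself a theorem (Serre duality packaged as the statement that $\cA\to D\cA$ is an $E_1$-isomorphism, i.e.\ the paper's \cite[Cor.~20]{StStrDbl}); it does not follow from formal properties of the wedge-and-integrate pairing alone. So your argument ultimately rests on the same external input as the paper's, reached through $H_{BC}$/$H_A$ duality rather than through the dual bicomplex directly.
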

 \begin{proof}
  This follows from a general duality statement for cohomological functors. We do the first case in detail. The linear functor sending a bicomplex $(A,\del,\delb)$ to $H^k(\Ke d^c(A))$ sends squares to zero and commutes with arbitrary direct sums, and so defines a cohomological functor. Denote by $D\cA$ the dual bicomplex, as a bigraded vector space given by $(D\cA)^{p,q}=(A^{n-p,n-q})^\vee$ with differential $d_{D\cA}=(\varphi\mapsto(-1)^{|\varphi|-1}\varphi\circ d)$. Since $M$ is compact and oriented, the duality map $\cA\to D\cA$ given by $\omega\mapsto \int_M \omega\wedge -$ induces an isomorphism on all cohomological functors \cite[Cor. 20]{StStrDbl}, so in particular
 \[H^k(\Ke d^c(\cA))\cong H^k(\Ke d^c(D\cA)).
 \]
 Now it is a linear algebra calculation to show that 
 \[
 H^k(\Ke d^c(D\cA))\cong (H^{2n-k}(\cA/\Img \, d^c))^\vee.
 \]
 In fact, since over fields cohomology commutes with duals, this follows from the identification \[[\Ke d^c(D\cA)]^k=\{\varphi\in [\cA^{2n-k}]^\vee \mid \varphi\circ d^c=0\}\cong ([\cA/\Img \, d^c]^{2n-k})^\vee.
\]
 \end{proof}

 The following result will characterize the case in which $\phi$ and $\psi$ are isomorphisms, in terms of the existence of a pure Hodge structure.

 \begin{thm} \label{thm:purity1} 
 	Let $\cA$ be a bicomplex. The following are equivalent:
 	\begin{enumerate}
	\item $\pdef(\cA)=0$.
 		\item The de Rham cohomology $H^k_{d R}(\cA)$ inherits a pure Hodge structure in all degrees $k$.
 		\item The purity obstruction groups vanish for all $k$,
 		\[
 		H^k_\urcorner (\cA)  =0  \quad \textrm{and} \quad H^k_\llcorner (\cA)  = 0.
 		\]
 		\item The natural maps induced by the identity,
\[
\phi:  H^k_{BC} (\cA)  \to H^k(\Ke d^c)  \quad \textrm{and} \quad
\psi:  H^k \left( \cA / \Img  \,d^c \right) \to  H^k_A (\cA) ,
\]
are isomorphisms for all $k$.
 	\end{enumerate}
 \end{thm}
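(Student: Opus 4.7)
My plan is to verify each equivalence by reducing to a case check on indecomposable summands. Recall from the Preliminaries that every bounded bicomplex $\cA$ decomposes as a direct sum of dots, squares, and zigzags, and that all of the constructions appearing in the theorem commute with direct sums: the de Rham cohomology with its Hodge and conjugate filtrations (hence $\pdef$), the Bott-Chern, Aeppli, $\Ke d^c$, and $\cA/\Img d^c$ cohomologies, and the natural comparison maps $\phi, \psi$ of Proposition \ref{comparegroups}. So it suffices to verify each implication on a single indecomposable $I$.

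Two of the three needed equivalences are essentially free. The equivalence $(1) \Leftrightarrow (2)$ is Example \ref{expl: purdef}(1), which unpacks $\pdef_k(\cA) = 0$ as the statement that $H^k_{dR}$ carries a pure Hodge structure of weight $k$; applying this for all $k$ gives $(2)$. The equivalence $(3) \Leftrightarrow (4)$ is contained in Proposition \ref{comparegroups}: that proposition already exhibits $\phi$ as a surjection with kernel $H_\urcorner$ and $\psi$ as an injection with cokernel $H_\llcorner$, so $\phi$ and $\psi$ are isomorphisms in every degree exactly when both purity obstruction groups vanish.

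The heart of the argument is the equivalence $(1) \Leftrightarrow (3)$. By Proposition \ref{prop:puritydefect}, $\pdef(\cA) = 0$ holds if and only if $\cA$ contains no odd-length zigzag summand of length $\geq 3$. So I verify two claims on indecomposables $I$: first, on a dot, a square, or any even-length zigzag (including lines), both $H_\urcorner(I)$ and $H_\llcorner(I)$ vanish; this is immediate on dots and squares, and on even zigzags a short computation shows that the images of $d$ and $d^c$ jointly force $\Img dd^c = d(\Ke d^c)$ and $\Ke dd^c = d^{-1}(\Img d^c)$. Second, on any odd zigzag of length $2m+1 \geq 3$, the ``protruding'' outer corner lies in $\Ke dd^c$ but, for bidegree reasons, cannot be hit by $d$ modulo $\Img d^c$ in the outgoing case, producing a nonzero class in $H_\llcorner$; the incoming case is dual and yields a nonzero class in $H_\urcorner$. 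The main technical obstacle is simply careful bidegree bookkeeping on each zigzag, entirely analogous to the computation summarized in Figure \ref{fig:zzddc+3} for the connecting map $\delta_k$; indeed, one could extend that table with two further columns, one for $H_\urcorner$ and one for $H_\llcorner$, and read off the conclusion. Additivity under direct sums then delivers the equivalences on a general $\cA$.
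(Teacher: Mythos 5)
Your proposal is correct and follows essentially the same route as the paper: the paper likewise treats $(1)\Leftrightarrow(2)$ and $(3)\Leftrightarrow(4)$ as immediate from Example \ref{expl: purdef} and Proposition \ref{comparegroups}, and establishes the remaining equivalence by computing $H_\urcorner$ (and, by duality or a symmetric computation, $H_\llcorner$) on every indecomposable, recorded in Figure \ref{fig:zigzagkerdctoHBC} --- exactly the extra table columns you describe. One small point to sharpen in the write-up: since the denominator of $H_\llcorner$ is $d^{-1}(\Img\, d^c)$, the relevant fact about the protruding corner $u$ of an outgoing odd zigzag is that $du\notin \Img\, d^c$, rather than the phrase ``cannot be hit by $d$ modulo $\Img\, d^c$,'' which reads as $u\notin\Img\, d+\Img\, d^c$ (the Aeppli condition).
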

 
 \begin{proof} It remains only to show $(3)$ is equivalent to $(2)$.
 	Let us focus on the groups $H_\urcorner$ first, where a similarly calculation can be done for $H_\llcorner$, or, if $\cA=\cA(M)$ for a compact manifold, one may appeal to Serre Duality as in Proposition \ref{Pduality}. 
 	
 	Recall that $H_{\dR}(\cA) $ has a pure Hodge structure if and only if there are only squares, even zigzags, and dots, with no odd zigzags of length greater than one, \cite{StStrDbl}. The proof then proceeds by computing either $H_\urcorner(\cA) $ or $\phi$ on every type of indecomposable complex. The results are summarized in Figure \ref{fig:zigzagkerdctoHBC}.
 \end{proof}

 	{\small
 		\begin{figure}
 					\begin{tabular}{ c| c | c | c | c | c }
 					zigzag type & length & $H_\urcorner (\cA)$ & $H_{BC}(\cA) $    &  $H(\Ke d^c )$ & $\phi$ \\  \hline 	Dot & $1$ & $0$ & $\CC$ &  $\CC$ & Iso \\
 					Square & NA &  $0$ &  $(0,0,0)$  & $(0,0,0)$ & Iso \\
 					Odd Out. & $2m+1$ & $0$ & $(0,\CC^{m+1})$ & $(0,\CC^{m+1})$ & Iso \\
 					Odd Inc. & $2m+1$ & $\CC$ & $(0,\CC^{m})$ & $(0,\CC^{m-1})$ & Surj \\
 					Even Out. & $2m$ & $0$ &  $(0 ,\CC^m)$   &  $(0 ,\CC^m)$  & Iso \\
 					Even Inc. & $2m$ & $0$ &  $(0 , \CC^m)$ & $(0 , \CC^m)$ & Iso 
 				\end{tabular}
 		\caption{Zigzag contributions for the short exact sequence ${0\to H_\urcorner (\cA)  \to  H_{BC}(\cA)  \to H(\Ke \, d^c)\to 0}$}
		\label{fig:zigzagkerdctoHBC}
 	\end{figure}
 }

\subsection{Numeric inequalities and characterizations} \label{sec:numerics}

In Theorem \ref{thm:ddc+3}, we gave a numeric characterization of the $dd^c+3$ condition as an equality of cohomology dimensions. In this section we characterize this as the extremal case of an inequality valid for all compact complex manifolds, and derive some related numerical inequalities. These should be compared to the result of \cite{AnTo}, that for any compact complex manifold
	\[
	\sum_k \dim H^k_A(M)+ \dim H^k_{BC}(M) \geq 2 \sum_k b_k.
	\]
	Here $b_k:=\dim H^k(M)$ denotes the dimension of the de Rham cohomology, and equality holds if and only if the $dd^c$-condition holds. This will also follow from the results below. Again, all results remain valid if we let $\cA$ be an arbitrary bicomplex for which all quantities considered here are finite.

For any compact complex manifold, define the following numbers:
\begin{align*}
h_{BC} &= \sum_k \dim H^k_{BC}  & h_A &=\sum_k \dim H^k_{A}    \\
h_{\Ke d^c}  &= \sum_k \dim H^k(\Ke d^c) &   h_{\cA / \Img\,d^c} &= \sum_k \dim H^k(\cA / \Img \, d^c) \\  
h_\delb &= \sum_{\stackrel{p+q=k}{k\geq0}} \dim H^{p,q}_\delb  &  h_\del &= \sum_{\stackrel{p+q=k}{k\geq0}}\dim H^{p,q}_\del
\end{align*}

\begin{prop} \label{prop;numerics}
For any compact complex manifold $(M,J)$,
\[
h_{BC} + h_A \geq h_{\Ke d^c} + h_{\cA / \Img\,d^c} \geq h_\delb + h_\del \geq  2  \sum_k b_k.
\]
\begin{enumerate}
	\item The first inequality is equality if and only if there is a pure Hodge structure.
	\item The middle inequality is equality if and only if $E_2$-degeneration and purity defect $1$, i.e. only zigzags of length at most $3$.
	\item The last inequality is equality if and only if $E_1$-degeneration.
	\item  The first two inequalities are both equality if and only if pure Hodge and $E_2$-degeneration.
	\item The last two inequalities are both equality if and only if $d d^c + 3$. 
\end{enumerate}
\end{prop}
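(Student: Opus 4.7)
The plan is to reduce all three inequalities and their equality characterizations to additive computations on indecomposable bicomplexes. Each of the seven dimensions involved---$h_{BC}$, $h_A$, $h_{\Ke d^c}$, $h_{\cA/\Img d^c}$, $h_\delb$, $h_\del$, and $\sum_k b_k$---is the total dimension of a cohomological functor, and is therefore additive under direct sums of bicomplexes. Decomposing $\cA = \bigoplus_I I^{\oplus \mult_I(\cA)}$ into dots, squares, and zigzags, each claim reduces to a check on each isomorphism type of $I$.

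The outer inequalities follow from already-established tools. For the first, Proposition \ref{comparegroups} yields $h_{BC} - h_{\Ke d^c} = \dim H_\urcorner$ and $h_A - h_{\cA/\Img d^c} = \dim H_\llcorner$, so the difference $h_{BC} + h_A - (h_{\Ke d^c} + h_{\cA/\Img d^c})$ equals $\dim H_\urcorner + \dim H_\llcorner \geq 0$, and Theorem \ref{thm:purity1} identifies vanishing of both purity obstruction groups with the pure Hodge condition, giving (1). For the last inequality, the Fr\"olicher spectral sequence from $H_\delb$ to $H_d$ gives $h_\delb \geq \sum_k b_k$ with equality iff it degenerates at $E_1$; applying the same to the conjugate (row) spectral sequence gives $h_\del \geq \sum_k b_k$ with analogous equality, and summing yields the inequality and the $E_1$-degeneration characterization of (3).

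The central new content is the middle inequality. I would prove it by populating Figure \ref{fig:zzddc+3} with the analogous computations of $h_\delb$ and $h_\del$ on each indecomposable. These are routine, since each row or column of a zigzag is either a singleton (contributing $1$) or an isomorphism pair (contributing $0$), and a direct check shows $h_\delb + h_\del = 2$ on every dot and zigzag, and $= 0$ on squares. Combining with the $H(\Ke d^c)$ and $H(\cA/\Img d^c)$ columns of Figure \ref{fig:zzddc+3}, one obtains $h_{\Ke d^c} + h_{\cA/\Img d^c} = 2$ for dots and zigzags of length $\leq 3$, and $= 2\lfloor \ell/2 \rfloor > 2$ for zigzags of length $\ell \geq 4$. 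Thus the middle inequality holds, with equality exactly when no zigzag has length $\geq 4$---which is the combined condition ``$E_2$-degeneration and purity defect at most $1$'' asserted in (2).

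Finally, (4) and (5) follow by intersecting equality conditions. The intersection of ``pure Hodge'' (no odd zigzags of length $\geq 3$) with ``$E_2$-degen and $\pdef \leq 1$'' (no zigzags of length $\geq 4$) leaves only dots, squares, and length-$2$ zigzags, which is pure Hodge combined with $E_2$-degeneration, yielding (4). The intersection of ``$E_2$-degen and $\pdef \leq 1$'' with ``$E_1$-degen'' (no even zigzags) leaves only dots, squares, L's, and reverse L's, which by Theorem \ref{thm:ddc+3} is the $dd^c+3$-condition, yielding (5). The main technical effort is in the middle inequality, but with the less-standard quantities already tabulated in Figure \ref{fig:zzddc+3}, the remaining bookkeeping is entirely routine.
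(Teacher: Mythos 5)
Your proposal is correct and follows essentially the same route as the paper: claim (1) via Theorem \ref{thm:purity1} (and Proposition \ref{comparegroups}), claim (3) via the Fr\"olicher spectral sequences, claim (2) by inspecting Figure \ref{fig:zzddc+3} together with the observation that $h_{\delb}+h_{\del}$ equals $2$ on every zigzag and $0$ on squares, and claims (4)--(5) by intersecting the equality conditions. Your per-indecomposable tabulation of $h_{\Ke d^c}+h_{\cA/\Img d^c}$ as $2\lfloor \ell/2\rfloor$ is exactly the bookkeeping the paper leaves implicit, and it checks out.
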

\begin{rmk}
	A characterization of the outermost equality $h_{BC}+h_A=2\sum b_k$ was also obtained in \cite{PoSteUb}.
\end{rmk}
\begin{proof}
Claim $(1)$ is in Theorem \ref{thm:purity1}, while claim $(3)$ is immediate. Claim $(4)$ follows from $(1)$ and $(2)$, while 
$(5)$ follows from $(2)$, $(3)$, and Theorem \ref{thm:ddc+3}. It remains to show $(2)$. This follows from an inspection of Figure \ref{fig:zzddc+3} and the observation that the expression $h_{\delb}+h_{\del}$ vanishes on squares and is equal to two on every zigzag, regardless of its length.
\end{proof}
\begin{rmk}
	On a compact complex manifold, by duality and real structure, one may replace the chain of inequalities by
	\[
	h_{BC}\geq h_{\Ke d^c}\geq h_{\delb}\geq \sum b_k,
	\]
	with the same characterizations of equalities.
\end{rmk}

\section{First examples of $dd^c+3$ manifolds and construction methods} \label{sec:methodsandexs}

In addition to all  $d d^c$-manifolds, there are numerous examples of $dd^c+3$-manifolds. 

\subsection{Complex surfaces} \label{ssec:surfaces}
For every compact complex surface $S$, the Fr\"olicher spectral sequence degenerates at the first page \cite{BaHu} and by Corollary \ref{cor: pdef leq n-1}, one has $\pdef(S)\leq 1$. Thus:

\begin{cor} \label{cor:surfaces}
	Any compact complex surface satisfies the $dd^c+3$-condition.
\end{cor}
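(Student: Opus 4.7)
The plan is to verify condition (6) from Theorem~\ref{thm:ddc+3} (equivalently, the hypothesis of Corollary~\ref{cor:ddc+3iffE1andpd1}): namely that the Fr\"olicher spectral sequence of $S$ degenerates at $E_1$ and that the total purity defect satisfies $\pdef(S)\leq 1$.

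For the first half, I would invoke the classical result that for any compact complex surface the Fr\"olicher spectral sequence degenerates at the first page; this is due to Barth--Hulek (and is already cited in the excerpt as \cite{BaHu}). No new argument is needed here.

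For the second half, I would apply Corollary~\ref{cor: pdef leq n-1}, which gives the uniform bound $\pdef(M)\leq n-1$ on any compact complex manifold of complex dimension $n$. Specializing to $n=2$, this immediately yields $\pdef(S)\leq 1$.

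Combining the two ingredients, the hypotheses of Corollary~\ref{cor:ddc+3iffE1andpd1} are satisfied, so $S$ is a $dd^c+3$-manifold. There is essentially no obstacle in the argument; the work is entirely contained in the two inputs (\cite{BaHu} and Corollary~\ref{cor: pdef leq n-1}), and Corollary~\ref{cor:ddc+3iffE1andpd1} packages them into the desired conclusion.
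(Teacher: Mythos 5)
Your proposal matches the paper's own argument exactly: the paper also cites \cite{BaHu} for $E_1$-degeneration and Corollary \ref{cor: pdef leq n-1} for $\pdef(S)\leq 1$, then concludes via the characterization in Corollary \ref{cor:ddc+3iffE1andpd1}. The proof is correct and takes essentially the same approach.
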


One may also describe the entire bicomplex in detail: $E_1$-degeneration implies that there can be no even zigzags. $H^0_{\dR}(S)$, $H^2_{\dR}(S)$, $H^4_{\dR}(S)$ have a pure Hodge decomposition, \cite{BaHu}, so there are no odd length zigzags (other than dots) contributing to $b_0,b_2,b_4$. On the other hand, $H^1_{\dR}(S)$ (and by duality $H^3_{\dR}(S)$) admit a pure Hodge structure if and only if $b_1$ is even, which coincides with the K\"ahler case. 
If $b_1$ is odd, then $h^{0,1}=h^{1,0}+1$, and the first and third cohomologies have purity defect $1$, in the sense of Definition \ref{defn:puritydefect}. 

In fact, the decomposition of the complex valued differential forms $\cA(S)$ into indecomposables is as follows:
\[
\cA(S) \simeq_1\img{S000_2}^{\oplus b_0}\!\!\!\oplus~ \img {S110_2}^{\oplus h^{0,1}} \!\!\!\!\oplus~\img{S220_2}^{\oplus h^{2,0}}\!\!\!\!\oplus~ \img{S211_2}^{\oplus b_2-2h^{2,0}}\!\!\!\!\!\oplus~\img{S100_2}^{\oplus\varepsilon},
\]
Here $\varepsilon=0$ if and only if $S$ satisfies the $d d^c$-condition, and $\varepsilon=1$ otherwise.

 We remark that the entire long exact sequence from Theorem \ref{LES}, as well as the various groups considered here, can all be deduced from the decomposition into indecomposables above. Therefore, they are determined by the oriented topology of $S$.

\subsection{Twistor spaces}  \label{ssec:twspaces}
Let $M$ be a compact four-manifold with a self-dual Riemannian metric and denote by $Z(M)$ its twistor space. In \cite{EaSi}, it is shown that the Fr\"olicher spectral sequence of $Z(M)$ always degenerates at the second page. Furthermore, this second page $E_2$ is computed in terms of metric data on the base as follows:
\[
\begin{tikzpicture}[scale=0.5]
	
	\draw[->, thick] (0,0) -- (24.5,0) node[anchor=west] {$p$};
	\draw (0,2) -- (24,2);
	\draw (0,4) -- (24,4);
	\draw (0,6) -- (24,6);
	\draw (0,8) -- (24,8);
	
	\draw[->, thick] (0,0) -- (0,8.5) node[anchor=south]{$q$};
	\draw (4,0) -- (4,8);
	\draw (12,0) -- (12,8);
	\draw (20,0) -- (20,8);
	\draw (24,0) -- (24,8);
	
	\draw (2,-.5) node {0};
	\draw (8,-.5) node {1};
	\draw (16,-.5) node {2};
	\draw (22,-.5) node {3};
	
	\draw (-.5,1) node {0};
	\draw (-.5,3) node {1};
	\draw (-.5,5) node {2};
	\draw (-.5,7) node {3};

	\node (00) at (2,1) {$H^0(M;\CC)$};
	\node (01) at (2,3) {$H^1(M;\CC)$};
	\node (02) at (2,5) {$H^2_-(M;\CC)$};
	\node (03) at (2,7) {$0$};
	\node (10) at (8,1) {$0$};
	\node (11) at (8,3) {$H^0(M;\CC)\oplus H^2_+(M;\CC)$};
	\node (12) at (8,5) {$H$};
	\node (13) at (8,7) {$0$};
	\node (20) at (16,1) {$0$};
	\node (21) at (16,3) {$K$};
	\node (22) at (16,5) {$H_+^2(M;\CC)\oplus H^4(M;\CC)$};
	\node (23) at (16,7) {$0$};
	\node (30) at (22,1) {$0$};
	\node (31) at (22,3) {$H^2_-(M;\CC)$};	
	\node (32) at (22,5) {$H^3(M;\CC)$};
	\node (33) at (22,7) {$H^4(M;\CC)$};

\end{tikzpicture}
\]
Here $H^2_{\pm}(M;\CC)$ denote the spaces of (anti-)self-dual classes, and the definition of $H, K$ need not concern us.

Due to the lack of symmetry in the $E_2^{1,0}$ and $E_2^{0,1}$, we see that as soon as $b_1(M)\neq 0$, or $b_2^-(M)\neq 0$, the twistor space does not satisfy the $d d^c$-condition. However:

\begin{prop} \label{prop:twddc+3}
	Whenever $E_1(Z(M))=E_2(Z(M))$ and $b_2^-(M)=0$, the twistor space $Z(M)$ is $dd^c+3$.
\end{prop}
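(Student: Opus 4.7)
By Corollary \ref{cor:ddc+3iffE1andpd1} it suffices to show that the Fr\"olicher spectral sequence of $Z(M)$ degenerates at $E_1$ and that $\pdef(Z(M))\leq 1$. Degeneration follows immediately from the hypothesis $E_1=E_2$ combined with the known $E_2=E_\infty$ for twistor spaces \cite{EaSi}. For the purity defect, Corollary \ref{cor: pdef leq n-1} applied to the complex $3$-fold $Z(M)$ gives $\pdef(Z(M))\leq 2$, so by Proposition \ref{prop:puritydefect} the only way the bound $\pdef\leq 1$ can fail is if the bicomplex $\cA(Z(M))$ contains an odd zigzag of length $5$. The plan is to rule these out by inspection of the $E_1$-page.

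A length $5$ outgoing zigzag at bidegrees $(p,q+2),(p,q+1),(p+1,q+1),(p+1,q),(p+2,q)$ has non-trivial column cohomology only at its bottom-right corner, contributing $1$ to $E_1^{p+2,q}$, and non-trivial row cohomology only at its top-left corner, contributing $1$ to $\bar E_1^{p,q+2}$ (the other columns and rows contain an isomorphism and hence contract). The real structure on $\cA(Z(M))$ forces $\dim \bar E_1^{a,b}=\dim E_1^{b,a}$, so the existence of such a zigzag requires both $E_1^{p+2,q}\neq 0$ and $E_1^{q+2,p}\neq 0$. Since $Z(M)$ has complex dimension $3$, the admissible range is $(p,q)\in\{0,1\}^2$; in each of these four cases, consulting the $E_2=E_1$ table and using $b_2^-(M)=0$ (which kills $E_1^{0,2}$ and $E_1^{3,1}$) together with the other zero entries of the table (in particular $E_1^{2,0}, E_1^{3,0}, E_1^{0,3}, E_1^{1,3}$), at least one of $E_1^{p+2,q}$ and $E_1^{q+2,p}$ vanishes. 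A parallel analysis, with contributions flipped to $E_1^{p,q+2}$ and $\bar E_1^{p+2,q}$, rules out incoming length $5$ zigzags.

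The main obstacle is the careful bookkeeping of which $E_1$-cell each length $5$ zigzag contributes to, and the realization that knowledge of the Dolbeault (column) cohomology alone is insufficient to distinguish a length $5$ zigzag from combinations of dots and shorter zigzags occupying the same column-cohomology cell. Passing to the conjugate Dolbeault cohomology via the real structure is therefore essential, as is the use of both vanishings in the $E_2$ table induced by $b_2^-(M)=0$ together with the intrinsic zeros forced by the dimension constraints $0\leq p,q\leq 3$.
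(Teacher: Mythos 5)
Your proof is correct and follows essentially the same strategy as the paper's: $E_1$-degeneration kills even zigzags, the dimension bound $\pdef\leq n-1$ kills length-$7$ zigzags, and the remaining length-$5$ zigzags are ruled out because they would force non-vanishing of entries of the $E_1=E_2$ page ($E_2^{2,0}$, $E_2^{0,2}=H^2_-$, etc.) that are zero by the table and the hypothesis $b_2^-(M)=0$. The only cosmetic difference is that the paper first cuts the possible length-$5$ zigzags down to two positions using the fact that no long zigzag can touch bidegree $(3,0)$ or $(0,3)$, whereas you enumerate all four corner positions and dispose of the extra two directly via the vanishing of $E_1^{3,0}$ and $E_1^{0,3}$.
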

\begin{proof}
By the degeneration assumption, there are no even zigzags in any decomposition of $\cA(Z(M))$ into indecomposables. It remains to rule out the possibility of odd zigzags of length greater than three, which, for dimension reasons must have length five or seven. Length seven zigzags do not occur by Corollary \ref{cor: pdef leq n-1}. There are two possibilities for a length five zigzag on a three-fold:
\[
\img{S200_3}\quad\text{ and }\quad\img{S222_3}~.
\]
Any complex having one of these as a direct summand would have $E_2^{0,2}\neq 0$, and $E_2^{2,0}\neq0$, respectively, both of which are prohibited here.
\end{proof}
In \cite[Thm. 5.6]{EaSi}, Eastwood and Singer construct, for any $g\geq 0$, conformally flat metrics on $M=\#_g(S^1\times S^3)$  such that $E_1(Z(M))=E_2(Z(M))$. Since $b_2(S^1\times S^3)=0$, this implies:
\begin{cor}
	For any $g\geq 0$, there are metrics on $M=\#_g(S^1\times S^3)$ such that the associated twistor space $Z(M)$ is a $dd^c+3$-manifold.
\end{cor}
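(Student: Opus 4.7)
The proof will be essentially a direct application of Proposition \ref{prop:twddc+3} to the metrics produced by Eastwood--Singer, so the plan is simply to check that both hypotheses of that proposition are satisfied on $M=\#_g(S^1\times S^3)$ equipped with such a metric.

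First, I would invoke \cite[Thm.~5.6]{EaSi} to fix a conformally flat metric on $M$ for which the Fr\"olicher spectral sequence of $Z(M)$ satisfies $E_1(Z(M))=E_2(Z(M))$. This directly supplies one of the two hypotheses of Proposition \ref{prop:twddc+3}.

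Next I would verify the second hypothesis, $b_2^-(M)=0$. Since the second Betti number of the connected sum of four-manifolds is the sum of the second Betti numbers of the summands, and $b_2(S^1\times S^3)=0$ (by the K\"unneth formula, using $b_1(S^1)=1$, $b_i(S^3)=0$ for $0<i<3$), we obtain $b_2(M)=0$. In particular $b_2^-(M)\leq b_2(M)=0$, so $b_2^-(M)=0$.

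With both hypotheses in hand, Proposition \ref{prop:twddc+3} applies and yields that $Z(M)$ is a $dd^c+3$-manifold. There is no genuine obstacle here; the entire content of the corollary is packaged in the two cited inputs, and all that is required is a short bookkeeping check of Betti numbers of the connected sum.
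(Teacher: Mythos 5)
Your proposal is correct and follows exactly the paper's argument: cite Eastwood--Singer \cite[Thm.~5.6]{EaSi} for the conformally flat metrics with $E_1(Z(M))=E_2(Z(M))$, observe $b_2(\#_g(S^1\times S^3))=0$ so that $b_2^-(M)=0$, and apply Proposition \ref{prop:twddc+3}. The Betti number bookkeeping you spell out is implicit in the paper's one-line remark that $b_2(S^1\times S^3)=0$.
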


\subsection{Construction methods of $dd^c+3$-manifolds}  \label{ssec:constmethods}
The behavior of the bicomplex of differential forms, up to $E_1$-isomorphism, is known for several standard operations \cite{StDblBl}. From this we deduce many constructions which preserve the $dd^c+3$-condition.

\begin{prop} \label{thm:constrmthds}
The $dd^c+3$ condition has the following behavior under geometric constructions:
	\begin{enumerate}
		\item\label{itm: blowup ddc+3} A blow-up of a manifold $M$ along a smooth center $Z\subseteq M$ is $dd^c+3$ if and only if both $M$ and $Z$ are $dd^c+3$.
		\item\label{itm: ddbar x ddc+3} A product is $dd^c+3$ if and only if one factor is a $dd^c+3$-manifold and one is a $dd^c$-manifold.
		\item\label{itm: ddc+3 modification}The target of a holomorphic surjection $f:M\to N$ with $M$ $dd^c+3$ and $\dim M=\dim N$ is again $dd^c+3$.
		\item\label{itm: ddc+3 proj. bdls} Projectivized holomorphic vector bundles are $dd^c+3$-manifolds if and only if the base of the bundle is a $dd^c+3$-manifold.
	\end{enumerate}
\end{prop}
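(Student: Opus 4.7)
The strategy throughout is to invoke characterization $(5)$ of Theorem \ref{thm:ddc+3}: a compact complex manifold is $dd^c+3$ if and only if, up to $E_1$-isomorphism, its bicomplex of forms decomposes as a direct sum of dots, squares, and length-three zigzags. Since this is visible from zigzag multiplicities alone, it is an $E_1$-isomorphism invariant. For each of (\ref{itm: blowup ddc+3})--(\ref{itm: ddc+3 proj. bdls}), my plan is to combine a known $E_1$-isomorphism formula for $\cA$ under the given construction (as recorded in \cite{StDblBl}) with the additivity of zigzag multiplicities under direct sums, and then read off the admissible indecomposables.

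For (\ref{itm: blowup ddc+3}), the blow-up formula in \cite{StDblBl} presents $\cA(\ti M)$ up to $E_1$-isomorphism as $\cA(M)$ together with $r-1$ bigraded-shifted copies of $\cA(Z)$, where $r$ is the codimension. A bigraded shift preserves the type (dot, square, or zigzag, together with its length and parity) of every indecomposable summand, so the list of indecomposables occurring in $\cA(\ti M)$ is the disjoint union of those of $\cA(M)$ and $\cA(Z)$. Thus $\ti M$ is $dd^c+3$ iff each of $M$ and $Z$ is.

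For (\ref{itm: ddbar x ddc+3}), I would use the K\"unneth-type $E_1$-isomorphism $\cA(M\times N)\simeq_1 \cA(M)\otimes \cA(N)$ and then decompose indecomposable-by-indecomposable. The key inputs are that any square tensored with any bicomplex is a sum of squares, that a dot tensored with a dot, square, or length-three zigzag reproduces the same type in a shifted position, but that the tensor product of two length-three zigzags contains, for parity and dimension reasons, a zigzag of odd length five. Hence a product of $dd^c+3$ manifolds is again $dd^c+3$ precisely when at most one factor contributes a length-three zigzag, i.e.\ when one factor is $dd^c$. Part (\ref{itm: ddc+3 proj. bdls}) will then follow from (\ref{itm: ddbar x ddc+3}) via the projective-bundle formula $\cA(\PP(E))\simeq_1\cA(B)\otimes H^*(\PP^{r-1})$, since $H^*(\PP^{r-1})$ is a sum of dots and hence a $dd^c$-bicomplex.

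For (\ref{itm: ddc+3 modification}), the modification formula in \cite{StDblBl} produces an $E_1$-split injection $\cA(N)\hookrightarrow \cA(M)$ whose cokernel is a direct sum of squares (morally, $f^*$ is inverted on top-degree by integration along the generic finite fiber). Hence the multiplicity of every zigzag type in $\cA(N)$ is bounded above by its multiplicity in $\cA(M)$, and the $dd^c+3$ property descends from $M$ to $N$.

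The main obstacle I anticipate is purely one of bookkeeping: citing the correct $E_1$-isomorphism formulas from \cite{StDblBl} in a form that is directly compatible with the zigzag language, and checking the two nontrivial tensor-product assertions in (\ref{itm: ddbar x ddc+3})---that square $\otimes$ anything splits as squares, and that length-three $\otimes$ length-three produces a length-five odd zigzag. Both reduce to finite case checks on indecomposable bicomplexes and are standard in the framework of \cite{KhQi,StStrDbl}; the rest of the argument is formal.
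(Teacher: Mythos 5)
Your proposal is correct and follows essentially the same route as the paper: cite the $E_1$-isomorphism formulas of \cite{StDblBl} for blow-ups, projective bundles, and generically finite surjections, and read off the $dd^c+3$-condition from zigzag multiplicities via characterization $(5)$ of Theorem \ref{thm:ddc+3}. Two small points. First, for the ``only if'' direction of (\ref{itm: ddbar x ddc+3}) your phrasing only treats the case where both factors are already known to be $dd^c+3$; to conclude from ``$M\times N$ is $dd^c+3$'' that each factor has no even zigzags and no odd zigzags of length $\geq 5$, you additionally need that every indecomposable of $\cA(M)$ reappears (shifted) in $\cA(M)\otimes\cA(N)$ after tensoring with a dot of $\cA(N)$ --- the paper routes this through the additivity of the purity defect (Proposition \ref{prop: purdef of prod}) together with the fact that the Fr\"olicher spectral sequence of each factor is a direct summand of that of the product, but your indecomposable-by-indecomposable analysis covers it once this observation is made explicit. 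Second, in (\ref{itm: ddc+3 modification}) the parenthetical claim that the cokernel of $\cA(N)\hookrightarrow\cA(M)$ is a direct sum of squares is false in general (for a blow-up the complement consists of shifted copies of $\cA(Z)$, which contains dots and possibly zigzags); this is harmless, since your argument only uses that $\cA(N)$ is an $E_1$-direct summand, so its zigzag multiplicities are bounded by those of $\cA(M)$.
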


\begin{proof}
	Let $M$ be a complex manifold. Denote by $\tilde{M}$ the blow-up of $M$ in some smooth center $Z$ of codimension $d\geq 2$. Let $\cV$ be a holomorphic vector bundle of rank $r$ over $M$. Let $f:M\to N$ be a holomorphic surjection. In \cite{StDblBl}, it was shown that there are (chains of) $E_1$-isomorphisms
	\begin{align*}
		\cA(\tilde{M})&\simeq_1 \cA(M)\oplus_{i=1}^{d-1} \cA(Z)[i]\\
		\cA(\PP(\cV))&\simeq_1 \sum_{i=0}^{r-1} \cA(M)[i]\\
		\cA(M)&\simeq_1 \cA(N)\oplus \cA(M)/p^*\cA(N)
	\end{align*} Here, $\cA(Z)[i]$ denotes the complex shifted by degree $(i,i)$, i.e. $\cA(Z)[i]^{p,q}=\cA(Z)^{p-i,q-i}$. Since two bounded bicomplexes are $E_1$-isomorphic if and only if all zigzags have the same multiplicity, statements \ref{itm: blowup ddc+3}, \ref{itm: ddc+3 modification} and \ref{itm: ddc+3 proj. bdls} follow.

For statement \ref{itm: ddbar x ddc+3}, note that by the K\"unneth formula, there is an $E_1$-isomorphism $\cA(M\times N)\simeq \cA(M)\otimes \cA(N)$. If $M$ satisfies the $d d^c$-condition, we have $\cA(M)\simeq_1 D$, where $D$ has trivial differential, i.e. it is a direct sum of dots $D=\bigoplus D^{p,q}$. Then $\cA(M\times N)\simeq_1 \bigoplus_{p,q} D^{p,q}\otimes \cA(N)$ and the result follows because the tensor product of any bicomplex with a dot is isomorphic to a shift of the original bicomplex. Conversely, the purity defect is additive under products by Proposition \ref{prop: purdef of prod}, so $\pdef(M\times N)\leq 1$ implies that $\pdef(M)\leq 1$ and $\pdef(N)=0$, or vice versa. Furthermore, the Fr\"olicher spectral sequence of both $M$ and $N$ is a direct summand in that of $M\times N$, so if it degenerates on the product, it does on both factors.
\end{proof}
\begin{rmk}
	Using results of Meng one can generalize statements \ref{itm: ddc+3 modification} and \ref{itm: ddc+3 proj. bdls} (with a similar proof): 
	
	Statement \ref{itm: ddc+3 modification} holds more generally whenever there exists a closed current $T$ on $M$ of bidegree $(r,r)$ for $r=\dim M-\dim N$ such that $f_*T\neq 0$. This is the case for example for any map that admits a holomorphic section, c.f. \cite[§3]{Meng22}.
	
	 Statement \ref{itm: ddc+3 proj. bdls} holds more generally for relative flag varieties and any bundle the cohomology of which looks additively like a product with a $dd^c$-manifold (i.e. which satisfies a Leray-Hirsch type theorem), c.f. \cite[Prop. 3.3]{Meng22}.
\end{rmk}
In particular, condition \ref{itm: ddbar x ddc+3} holds when the center is any curve or surface. Because the equivalence relation determined by `bimeromorphism' is generated by blow-ups in smooth centers \cite{AKMW}, \cite{Wlod}, we obtain:
\begin{cor}
	The $dd^c+3$-property is a bimeromorphism invariant of compact complex manifolds in complex dimension at most four.
\end{cor}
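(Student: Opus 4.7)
The plan is to combine the weak factorization theorem with part~\ref{itm: blowup ddc+3} of Proposition~\ref{thm:constrmthds} and Corollary~\ref{cor:surfaces}. By \cite{AKMW} and \cite{Wlod}, any bimeromorphism between two compact complex manifolds $M$ and $N$ can be factored as a finite zig-zag
\[
M = M_0 \dashrightarrow M_1 \dashrightarrow \cdots \dashrightarrow M_r = N
\]
in which each step is either a blow-up or a blow-down along a smooth center, and every intermediate $M_i$ is a compact complex manifold bimeromorphic to $M$ (so, in particular, of the same complex dimension as $M$).

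Assume $\dim_{\CC} M = \dim_{\CC} N \leq 4$. Then every $M_i$ in the zig-zag also has complex dimension at most $4$. Any smooth center $Z$ appearing in a nontrivial blow-up step has codimension at least two, hence $\dim_{\CC} Z \leq 2$. Such a $Z$ is either a point, a smooth compact complex curve, or a compact complex surface. In each case $Z$ automatically satisfies the $dd^c+3$-condition: points and smooth curves are K\"ahler, hence $dd^c$ and in particular $dd^c+3$, while compact complex surfaces are $dd^c+3$ by Corollary~\ref{cor:surfaces}.

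Under this dimension hypothesis, Proposition~\ref{thm:constrmthds}\ref{itm: blowup ddc+3} therefore collapses at every step of the zig-zag to the single equivalence
\[
M_i \text{ is } dd^c+3 \iff M_{i+1} \text{ is } dd^c+3,
\]
since the hypothesis ``the center is $dd^c+3$'' is automatic. Chaining these equivalences from $M_0$ to $M_r$ yields that $M$ is $dd^c+3$ if and only if $N$ is, as claimed.

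There is essentially no substantial obstacle here, as the real work has been done earlier: the deep inputs are the $E_1$-isomorphism computing the bicomplex of a blow-up (behind Proposition~\ref{thm:constrmthds}\ref{itm: blowup ddc+3}), the automatic $dd^c+3$-property of surfaces, and the weak factorization theorem. The only point to check is the dimension bookkeeping that forces every smooth center involved to have $\dim_{\CC} \leq 2$, which fails as soon as one passes to complex dimension $5$, where smooth threefold centers can appear and need not be $dd^c+3$.
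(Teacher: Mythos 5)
Your proposal is correct and follows exactly the paper's own route: weak factorization reduces the bimeromorphism to a chain of blow-ups/blow-downs along smooth centers, which in complex dimension at most four are points, curves, or surfaces, all automatically $dd^c+3$ (the first two being K\"ahler, the last by Corollary \ref{cor:surfaces}), so Proposition \ref{thm:constrmthds}\ref{itm: blowup ddc+3} gives the equivalence at every step. The dimension bookkeeping you highlight is precisely the point the paper makes when it notes the blow-up criterion collapses whenever the center is a curve or surface.
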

Similarly, the $dd^c+3$ condition is a bimeromorphism invariant in any dimension if and only if submanifolds of $dd^c+3$-manifolds are again $dd^c+3$-manifolds.
\begin{rmk} \label{rmk;rank-delta-Bimer-Inv-Leq4}
	Since the connecting map $\delta_k$ in the long exact sequence vanishes on $dd^c+3$-manifold, one can generalize the last Corollary to the statement that the rank of $\delta_k$ is a bimeromorphism invariant in complex dimension at most four.
	\end{rmk}

\subsection{Stability under deformations} \label{ssec:stabilitydef}

In this subsection we show that purity defect behaves upper semi-continuously, so that the $dd^c+3$-condition is stable under small deformations. Under slightly stronger assumptions, which includes the cases of compact surfaces and Vaisman manifolds, then entire bicomplex is unchanged under small deformations, up to $E_1$-isomorphism, Proposition \ref{prop: def stability for strong ddc+3}.

\begin{thm}\label{thm: semi-cont FcapFbar}
	Let $M$ be a compact complex manifold with degenerate Fr\"olicher spectral sequence $E_1(M)=E_\infty(M)$. For any small deformation $\pi:\mathcal{M}\to \Delta_\epsilon(0)=:B$ with $M=M_0=\pi^{-1}(0)$, the dimension 
	\[
	f^{p,q}_k(t):=\dim F^pH_{\dR}^k(M_t)\cap \bar{F}^qH_\dR^k(M_t)
	\]
	behaves upper semi-continuously, i.e. for any $t$ sufficiently close to $0$ one has:
	\[
	f^{p,q}_k(0)\geq f^{p,q}_k(t).
	\]
\end{thm}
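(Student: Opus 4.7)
The plan is to reduce the statement to the classical fact that, for two continuously varying subbundles of constant rank in a fixed vector bundle, the dimension of the fibrewise intersection is upper semi-continuous. My argument would proceed in three steps, each using only standard deformation-theoretic input.

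First I would establish that both the $E_1$-degeneration and all refined Dolbeault Hodge numbers are constant in a neighbourhood of $0$. The Betti numbers $b_k(M_t)$ are locally constant (Ehresmann), the Hodge numbers $h^{p,q}_{\delb}(M_t):=\dim H^{p,q}_{\delb}(M_t)$ are upper semi-continuous in $t$ (Kodaira--Spencer), and the Fr\"olicher inequality reads
\[
\sum_{p+q=k} h^{p,q}_{\delb}(M_t)\;\geq\; b_k(M_t),
\]
with equality for all $k$ if and only if $E_1$-degeneration holds. The hypothesis at $t=0$, combined with upper semi-continuity and constancy of $b_k$, forces equality in a neighbourhood of $0$ and hence $h^{p,q}_{\delb}(M_t)=h^{p,q}_{\delb}(M_0)$ for all $(p,q)$ and all nearby $t$. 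Since under $E_1$-degeneration $\dim F^p H^k_{\dR}(M_t)=\sum_{r\geq p} h^{r,k-r}_{\delb}(M_t)$, this implies that the dimensions of $F^p H^k_{\dR}(M_t)$ and $\bar F^q H^k_{\dR}(M_t)$ are locally constant in $t$.

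Next I would trivialise. Via Ehresmann (shrinking $B$ if necessary) the family is $C^\infty$-trivial, so parallel transport along the Gauss--Manin connection gives canonical isomorphisms $H^k_{\dR}(M_t)\cong V_k$ onto a fixed vector space. Under constancy of Hodge numbers, a standard argument (essentially Kodaira, cf. Voisin, \emph{Hodge Theory and Complex Algebraic Geometry II}) shows that $F^p H^k_{\dR}(M_t)$ fits into a holomorphic subbundle $\mathcal{F}^p\subseteq V_k\times B$ of constant rank; its conjugate $\overline{\mathcal{F}^q}$ is then a smoothly varying subbundle, also of constant rank. Finally, for any two continuous families $\mathcal{E}_t,\mathcal{F}_t$ of subspaces of constant dimension inside $V_k$, the function $t\mapsto \dim(\mathcal{E}_t+\mathcal{F}_t)$ is lower semi-continuous, being the rank of the continuous linear map $\mathcal{E}_t\oplus\mathcal{F}_t\to V_k$, $(v,w)\mapsto v-w$; hence
\[
\dim(\mathcal{E}_t\cap \mathcal{F}_t)\;=\;\dim\mathcal{E}_t+\dim\mathcal{F}_t-\dim(\mathcal{E}_t+\mathcal{F}_t)
\]
is upper semi-continuous. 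Applying this to $\mathcal{E}_t=F^pH^k_{\dR}(M_t)$ and $\mathcal{F}_t=\bar F^qH^k_{\dR}(M_t)$ yields the claimed inequality $f^{p,q}_k(0)\geq f^{p,q}_k(t)$ for $t$ near $0$.

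The step I expect to require the most care is the continuous/holomorphic variation of the Hodge filtration as a subbundle of the de Rham bundle. One needs to show that each class in $F^p H^k_{\dR}(M_0)$ extends to a family $t\mapsto [\alpha_t]$ with $\alpha_t$ a closed form on $M_t$ of Hodge filtration level at least $p$, depending smoothly on $t$; constancy of $h^{p,q}_{\delb}(M_t)$ together with the existence of smoothly varying harmonic representatives (and the $E_1$-degeneration, which makes $F^pH^k_{\dR}\to \bigoplus_{r\geq p}H^{r,k-r}_{\delb}$ a surjection with kernel of constant dimension) is precisely what makes this extension possible. Alternatively, one could bypass this via the bicomplex-theoretic framework of the paper: under $E_1$-degeneration, the multiplicities of odd zigzags of length $2\ell+1$ (which by the refined Betti-number formula control the differences $f^{p,q}_k-f^{p+1,q}_k-f^{p,q+1}_k+f^{p+1,q+1}_k$) behave upper semi-continuously, which again yields semi-continuity of $f^{p,q}_k$.
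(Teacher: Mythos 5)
Your proposal is correct and follows essentially the same route as the paper: establish constancy of the Hodge numbers and $E_1$-degeneration on nearby fibres, show that the Hodge filtration and its conjugate form subbundles of constant rank inside the de Rham bundle, and conclude by upper semi-continuity of the dimension of an intersection of subbundles. The only difference is cosmetic: where you cite the Kodaira--Voisin argument for the subbundle property, the paper spells it out via the identification $F^pH^k_{\dR}(M_t)=H^k(F^p\cA(M_t))$ (using strictness of $d$, equivalent to $E_1$-degeneration) together with ellipticity of the complex $F^p\cA(M_t)$ and constancy of $\dim H^k(F^p\cA(M_t))=\sum_{r\geq p}h^{r,k-r}(M_t)$ --- precisely the extension mechanism you flag as the delicate step.
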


We learned an essential part of the argument below, namely treating $F^pH_{\dR}^k \cap \bar{F}^qH_\dR^k$ as the intersection of vector bundles,  from a talk of Chi Li, c.f. \cite{Li}, following Voisin \cite{Voisin}.

\begin{proof}
First, we recall the well known argument (see e.g. \cite{Voisin}) that for $t$ sufficiently close to $0$, the Fr\"olicher spectral sequence of $M_t$ degenerates and the Hodge numbers are the same as those of $M_0$: Choosing a smooth family of hermitian metrics on the fibres $M_t$, the Hodge numbers may be computed via the $\delb$-Laplacian which is an elliptic operator that varies smoothly in $t$. Therefore the Eigenvalues vary continuously and in particular the dimension of the kernel (i.e. the Hodge numbers) can only drop when passing from $M_0$ to a nearby fibre:
\[
h^{p,q}_\delb(M_0)\geq h^{p,q}(M_t).
\]
On the other hand, 

\[
b_k(M_0)=\sum_{p+q=k}h^{p,q}(M_0)\geq \sum_{p+q=k}h^{p,q}(M_t)\geq b_k(M_t)=b_k(M_0)
\]
so one has to have equalities everywhere.

Now, for any $t$, consider the inclusion of complexes $F^p\cA(M_t)\subseteq \cA(M_t)$ defined by $F^p\cA(M_t)=\bigoplus_{r\geq p} \cA^{r,s}(M_t)$. The induced map on cohomology has image $F^pH_\dR^k(M_t)$ and kernel 
\[
\frac{\operatorname{\Imt} d\cap F^p\cA^k_{M_t}}{d(F^p\cA^k_{M_t})}.
\]
Thus, the induced map on cohomology is injective if and only if the differential $d$ is strict, which in turn is known to be equivalent to degeneration of the Fr\"olicher spectral sequence \cite[1.3.2]{DeHII}. Therefore, for small $t$, we have an identification $F^pH_\dR^k(M_t)=H^k(F^p\cA(M_t))$.

 Now, because $F^p\cA(M_t)$ is an elliptic complex (see e.g. \cite{St22}), or because their collection over all $t$ forms a resolution of the truncated complex of coherent sheaves of relative holomorphic forms \[0\to\Omega^p_{\cM/B}\to\Omega_{\cM/B}^{p+1}\to\cdots,\] the cohomology groups
$H^k(F^p\cA(M_t))$ form a vector bundle on the base as soon as their dimensions are constant. On the other hand, because the dimension of a filtered vector space is the same as that of its associated graded, we find that
\[
\dim F^pH^k_\dR(\cA(M_t))=\sum_{r+s=k, r\geq p} h^{r,s}(M_t)
\]
is constant indeed. 

In summary, for sufficiently small $t$, we have proved that $\{F^pH_\dR^k(M_t)\}$ form a complex  vector subbundle of the vector bundle $\{H_{\dR}^k(M_t)\}$. The same holds for the conjugate filtration $\bar{F}^qH_\dR^k(M_t)$. But the dimension of an intersection of two  vector subbundles behaves upper semi-continuously. 
\end{proof}
\begin{cor}
	For any $n$-dimensional compact complex manifold $M$ with $E_1(M)=E_\infty(M)$, the dimension of the spaces in the $3$-space decomposition \cite[Thm 4.8]{StDblBl}
	\[
	H_\dR^n(M)=H^{n,0}(M)\oplus \left(F^{1}H^n_\dR(M)\cap\bar F^1H^n_\dR(M)\right)\oplus H^{0,n}(M)
	\]
	is constant under small deformations.
\end{cor}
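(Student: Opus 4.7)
The plan is to reduce the statement to the semi-continuity theorem that precedes it together with the already-established constancy of Hodge numbers.

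First, I would invoke the argument given in the proof of Theorem \ref{thm: semi-cont FcapFbar}: under the standing hypothesis $E_1(M)=E_\infty(M)$, upper semi-continuity of the dimensions of the $\delb$-Laplacian kernels together with constancy of Betti numbers $b_k$ forces every Hodge number to be constant, i.e.\ $h^{p,q}_{\delb}(M_t)=h^{p,q}_{\delb}(M_0)$ for all $p,q$ and all $t$ sufficiently close to $0$. In particular $E_1(M_t)=E_\infty(M_t)$ for small $t$, so the three-space decomposition of \cite[Thm.\ 4.8]{StDblBl} applies fibrewise:
\[
H_\dR^n(M_t)=H^{n,0}(M_t)\oplus \bigl(F^{1}H^n_\dR(M_t)\cap\bar F^1H^n_\dR(M_t)\bigr)\oplus H^{0,n}(M_t).
\]

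Next, I would identify the two outer summands with pieces of the Hodge filtration. Under $E_1$-degeneration one has $\dim F^pH^k_\dR(M_t)=\sum_{r\geq p,\,r+s=k}h^{r,s}(M_t)$, so
\[
\dim H^{n,0}(M_t)=\dim F^nH^n_\dR(M_t)=h^{n,0}(M_t),\qquad \dim H^{0,n}(M_t)=\dim \bar F^nH^n_\dR(M_t)=h^{0,n}(M_t),
\]
and both of these are constant in $t$ by the previous paragraph.

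Finally, combining the three-space decomposition with the constancy of $b_n$ and of the two outer summands gives
\[
\dim\bigl(F^{1}H^n_\dR(M_t)\cap\bar F^1H^n_\dR(M_t)\bigr)=b_n(M_t)-h^{n,0}(M_t)-h^{0,n}(M_t),
\]
which is constant as well. No step is genuinely hard here: the only point that requires care is checking that the fibrewise three-space decomposition indeed applies to $M_t$, which follows because $E_1$-degeneration is preserved under small deformations, as recalled in the first step.
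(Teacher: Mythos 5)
Your proof is correct and follows the route the paper intends: the corollary is stated without proof as an immediate consequence of Theorem \ref{thm: semi-cont FcapFbar}, whose proof already establishes the constancy of Hodge and Betti numbers and the persistence of $E_1$-degeneration that you use. Deriving the middle term's dimension as $b_n(M_t)-h^{n,0}(M_t)-h^{0,n}(M_t)$ is exactly the right bookkeeping, and your check that the three-space decomposition applies fibrewise is the only point of care.
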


\begin{cor}\label{cor: pdef semcont}
	In the situation above, the purity defect behaves upper semi-continuously, in the following sense: For sufficiently small $t$,
	\[
	\pdef(M_0)\leq k \quad \Longrightarrow \quad  \pdef(M_t)\leq k
	\]
\end{cor}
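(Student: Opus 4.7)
The plan is to reduce the statement to the semi-continuity of the dimensions $f^{p,q}_l(t) = \dim F^pH_{\dR}^l(M_t)\cap \bar{F}^qH_\dR^l(M_t)$ already established in Theorem \ref{thm: semi-cont FcapFbar}. First, invoke the lemma following Definition \ref{defn:puritydefect}, which characterizes the purity defect of any compact connected complex manifold $N$ as
\[
\pdef(N) = \max\bigl\{|p+q-l| : F^pH^l_\dR(N;\CC)\cap \bar F^qH^l_\dR(N;\CC)\neq 0\bigr\}.
\]
Consequently, the hypothesis $\pdef(M_0)\leq k$ is equivalent to the vanishing
\[
f^{p,q}_l(0) = 0 \quad \textrm{for every triple $(p,q,l)$ with $|p+q-l|>k$.}
\]

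The key observation is now that, since $M_0$ has bounded complex dimension, the set $S$ of triples $(p,q,l)$ with $0 \leq p,q \leq \dim_\CC M_0$, $0 \leq l \leq 2\dim_\CC M_0$, and $|p+q-l|>k$ is finite. By Theorem \ref{thm: semi-cont FcapFbar}, for each $(p,q,l)\in S$ there is an open neighborhood $U_{p,q,l}$ of $0\in B$ on which $f^{p,q}_l(t)\leq f^{p,q}_l(0)=0$; the intersection $U=\bigcap_{(p,q,l)\in S}U_{p,q,l}$ is again an open neighborhood of $0$. For $t\in U$ one has $f^{p,q}_l(t)=0$ for all $(p,q,l)\in S$, and applying the same lemma in the reverse direction gives $\pdef(M_t)\leq k$. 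The only delicate point is to ensure that the hypothesis of Theorem \ref{thm: semi-cont FcapFbar} (namely $E_1$-degeneration, which was proved to propagate to nearby fibres) applies throughout, but this is already part of that theorem's conclusion, so no additional argument is needed.
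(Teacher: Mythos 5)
Your overall strategy is exactly the one the paper intends: the corollary is stated there without proof, as an immediate consequence of Theorem \ref{thm: semi-cont FcapFbar} together with the lemma characterizing $\pdef$ via the intersections $F^p\cap\bar F^q$, and your finiteness-of-$S$ and intersection-of-neighborhoods bookkeeping is fine, as is your remark about the $E_1$-degeneration hypothesis. However, the absolute value in your characterization is wrong, and it makes one step false as written. The correct translation of the lemma is one-sided: $\pdef(M_0)\leq k$ is equivalent to $f^{p,q}_l(0)=0$ for all triples with $p+q-l>k$, not $|p+q-l|>k$. Indeed, for $p+q-l$ very negative the intersection is typically all of $H^l$; for instance $f^{0,0}_l(0)=\dim\bigl(F^0H^l_{\dR}(M_0)\cap\bar F^0H^l_{\dR}(M_0)\bigr)=b_l(M_0)$, and the triple $(0,0,k+1)$ lies in your set $S$, so your assertion that $f^{p,q}_l(0)=0$ for all $(p,q,l)\in S$ would force $b_{k+1}(M_0)=0$, which certainly does not follow from $\pdef(M_0)\leq k$. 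The reason only the one-sided condition is needed is precisely the Serre-duality pairing built into the cited lemma: the graded pieces $\gr^{l+d}_{F_{tot}}H^l_{\dR}$ with $d<0$ (which encode the failure of $F_{tot}$ to exhaust $H^l_{\dR}$ early enough, a condition not directly controlled by upper semi-continuity of the $f^{p,q}_l$) are dual to graded pieces with positive shift in the complementary degree $2n-l$, and those are controlled by the vanishing of $F^p\cap\bar F^q$ for $p+q>(2n-l)+k$. Once you replace $S$ by the (still finite) set of triples with $p+q-l>k$, every step of your argument goes through verbatim and the proof is complete.
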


\begin{rmk}
	Degeneration of the Fr\"olicher spectral sequence in the central fibre is necessary to obtain the conclusion of Corollary \ref{cor: pdef semcont}. In fact, the Iwasawa manifold has purity defect $0$, but it admits small deformation with purity defect $1$ and $2$, see \cite[§9.1]{SteLR}.
\end{rmk}

\begin{cor} \label{cor:ddc+3stable} 
	The $dd^c+3$-condition is stable under small deformations.
\end{cor}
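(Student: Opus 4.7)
The plan is to reduce this corollary to the two ingredients already established in the subsection, using the characterization of the $dd^c+3$-condition in Corollary \ref{cor:ddc+3iffE1andpd1} as the conjunction of two properties: $E_1$-degeneration of the Fr\"olicher spectral sequence and purity defect at most $1$. Since both of these properties are individually shown to behave well in families, it will suffice to verify that each one is preserved under small deformation and then combine them.

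First, I would recall the classical semicontinuity argument for $E_1$-degeneration, which already appears in the proof of Theorem \ref{thm: semi-cont FcapFbar}: picking a smooth family of Hermitian metrics on the fibres $M_t$, the Dolbeault Laplacian depends smoothly on $t$, so $h^{p,q}_\delb(M_t)$ is upper semicontinuous in $t$. Combined with the universal inequality $\sum_{p+q=k} h^{p,q}_\delb(M_t)\geq b_k(M_t)$, topological invariance of $b_k$, and the equality $\sum_{p+q=k} h^{p,q}_\delb(M_0)= b_k(M_0)$ that characterizes $E_1$-degeneration of the central fibre, one concludes $E_1(M_t)=E_\infty(M_t)$ for all $t$ sufficiently close to $0$.

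Second, having established $E_1$-degeneration on the central fibre, Corollary \ref{cor: pdef semcont} applies directly: for sufficiently small $t$ the inequality $\pdef(M_0)\leq 1$ forces $\pdef(M_t)\leq 1$.

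Combining these two observations, both the $E_1$-degeneration and the bound on purity defect persist in a neighborhood of $0$ in the base, and hence by Corollary \ref{cor:ddc+3iffE1andpd1} the $dd^c+3$-condition persists as well. There is no real obstacle here since the technical work has already been carried out: the only subtle point is that Corollary \ref{cor: pdef semcont} requires $E_1$-degeneration in the central fibre as a hypothesis (as highlighted by the Iwasawa remark), which is automatic from the $dd^c+3$-assumption on $M_0$.
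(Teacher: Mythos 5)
Your proposal is correct and is essentially the argument the paper intends: Corollary \ref{cor:ddc+3stable} is stated without a separate proof precisely because it follows from combining the openness of $E_1$-degeneration (recalled in the first part of the proof of Theorem \ref{thm: semi-cont FcapFbar}) with Corollary \ref{cor: pdef semcont}, via the characterization in Corollary \ref{cor:ddc+3iffE1andpd1}. You also correctly identify the one point requiring care, namely that the semicontinuity of the purity defect needs $E_1$-degeneration on the central fibre, which is supplied by the $dd^c+3$-hypothesis.
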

\begin{rmk}
	The $dd^c+3$-condition introduced here should be compared with the page-$1$-$\del\delb$-condition introduced in \cite{PoSteU}, \cite{PoSteUb}. For the latter, one keeps purity, but relaxes the degeneration step of the Fr\"olicher spectral sequence. That condition, too, enjoys some stability under geometric constructions as in Prop. \ref{thm:constrmthds}. However, it is in general not stable under small deformations.
\end{rmk}

Given the fact that the dimensions of $H_{\dR}^k$ are constant under small deformation, a natural question is whether, given degeneration at $E_1$, the dimensions of the spaces $F^pH^k_\dR\cap \bar F^q H^k_{\dR}$ may actually change under small deformations. In general, the answer is yes.
\begin{ex}
	Consider a family of complex manifolds $\{M_t\}$ such that the central fibre is a type $(iii.a)$ deformation of the Iwasawa manifold and the nearby fibres are type $(iii.b)$ deformations (see \cite[§3.2.1.2]{AngellaBook} for the definition of these deformations). Then the central fibre has purity defect $2$ and degenerate Fr\"olicher spectral sequence, but the nearby fibres have purity defect $1$, see \cite[§9.1]{SteLR}
\end{ex}
However, in certain situations, the spaces $F^pH_{\dR}^k\cap \bar F^qH_{\dR}^k$ actually do have constant dimension:

\begin{prop}\label{prop: def stability for strong ddc+3}
Let $M$ be a compact complex manifold with $E_1(M)=E_\infty(M)$,  such that
\begin{itemize}
\item[($\ast$)] for any $k$, there exists an $r(k)$ such that $\gr_{F_{tot}}^{d}H^k(M)=0$ unless $d=r(k),r(k)-1$.
\end{itemize}
Then, any sufficiently small deformation $M_t$ of $M=M_0$ has the same $E_1$-isomorphism type as $M$, i.e. 
	for all $t$ sufficiently small:
 \begin{enumerate}
 \item The bicomplex $\cA(M_t)$ has the same zigzag multiplicities as $\cA(M_0)$,
	\item For any cohomological functor $H$ (e.g. $H_{BC},H_A,H_{\delb},..$), $H(M_t)\cong H(M_0)$.
 \end{enumerate}
 
\end{prop}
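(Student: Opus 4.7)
The plan is to prove that for $t$ sufficiently close to $0$ all zigzag multiplicities in $\cA(M_t)$ coincide with those in $\cA(M_0)$; this gives (1), and (2) follows since any cohomological functor descends to the category obtained by inverting $E_1$-isomorphisms. I would begin by recording that on $M_t$ the Fr\"olicher spectral sequence still degenerates and the Hodge numbers stay constant: upper semi-continuity of $\delb$-Laplacian eigenvalues gives $h^{p,q}_\delb(M_t)\leq h^{p,q}_\delb(M_0)$, while
\[
\sum_{p+q=k}h^{p,q}_\delb(M_t)\geq b_k(M_t)=b_k(M_0)=\sum_{p+q=k}h^{p,q}_\delb(M_0)
\]
(the last equality by $E_1$-degeneration on the central fibre) forces equality throughout. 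In particular $\cA(M_t)$ has no even zigzags, so the remaining multiplicities are encoded by Hodge numbers together with the refined Betti numbers $b^{p,q}_k$.

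The condition $(\ast)$ now enters through Theorem \ref{thm: semi-cont FcapFbar}. Because $F^pH^k\cap \bar F^qH^k\subseteq F_{tot}^{p+q}H^k$, the hypothesis $\gr^d_{F_{tot}}H^k(M_0)=0$ for $d>r(k)$ yields $f^{p,q}_k(0)=0$ whenever $p+q>r(k)$, and upper semi-continuity of Theorem \ref{thm: semi-cont FcapFbar} propagates this vanishing to $M_t$. Using the identity
\[
b^{p,q}_k=f^{p,q}_k-f^{p+1,q}_k-f^{p,q+1}_k+f^{p+1,q+1}_k,
\]
the three subtracted terms vanish on both $M_0$ and $M_t$ whenever $p+q=r(k)$, so $b^{p,q}_k(M_t)=f^{p,q}_k(M_t)\leq b^{p,q}_k(M_0)$; in other words, the multiplicity of the longest outgoing odd zigzag contributing to $H^k(M_t)$ is upper semi-continuous in $t$. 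Combining this with the Serre-duality step used in Proposition \ref{Pduality} (which exchanges outgoing and incoming zigzags and sends degree $k$ to $2n-k$) and applying $(\ast)$ in the complementary degree gives the analogous semi-continuity for the longest incoming zigzags at degree $k$.

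I would then close the argument by reading the Hodge-number equations as sum-constraints: each $h^{r,s}_\delb(M_t)$ is a sum of the dot multiplicity at $(r,s)$ together with the extremal-corner contributions of all outgoing and incoming odd zigzags compatible with the constraints above, and every such sum is constant in $t$. A downward induction on zigzag length now fixes all multiplicities: a strict drop in the longest-zigzag multiplicity at any $(r,s)$ would force a drop in the corresponding Hodge number, a contradiction; once the longest multiplicities are constant, the Hodge-number identities force the next-longest to be constant, and so on down to the dots. Therefore $\cA(M_t)\simeq_1\cA(M_0)$, proving (1), and (2) follows by applying any cohomological functor.

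The main obstacle is the bookkeeping in the last step, namely organising the downward induction so that the constant Hodge-number identities correctly interact with the upper-semi-continuous refined Betti numbers, and handling outgoing and incoming zigzags together via the duality step. The hypothesis $(\ast)$ is essential here because it restricts each cohomology degree on $M_0$ to at most two consecutive zigzag lengths, which provides exactly the rigidity needed for the top-diagonal semi-continuity to propagate through the Hodge-number sum-constraints to the rest of the bicomplex.
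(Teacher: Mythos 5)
Your proposal follows the paper's route almost exactly: you establish $E_1$-degeneration and constant Hodge numbers on nearby fibres (hence no even zigzags), use Theorem \ref{thm: semi-cont FcapFbar} together with $(\ast)$ to get upper semi-continuity of the refined Betti numbers $b_k^{p,q}$ on the top allowed diagonal $p+q=r(k)$, and use the duality $b_k^{p,q}=b_{2n-k}^{n-p,n-q}$ to handle the other allowed diagonal. The only place you diverge is the closing step, and that is also the only soft spot: your claim that ``a strict drop in the longest-zigzag multiplicity at any $(r,s)$ would force a drop in the corresponding Hodge number'' is not justified as stated, since a priori other multiplicities contributing to the same $h^{r,s}$ could increase to compensate; the downward induction on zigzag length through the Hodge-number identities is therefore more delicate than you acknowledge. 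But the detour is unnecessary: at that point you already know that every $b_k^{p,q}$ on the two allowed diagonals is upper semi-continuous and that these are the only nonzero ones, so the single constraint
\[
b_k(M_t)=\sum_{p+q}b_k^{p,q}(M_t)\leq \sum_{p+q}b_k^{p,q}(M_0)=b_k(M_0)=b_k(M_t)
\]
forces each summand to be constant, which is exactly how the paper concludes. Replace your Hodge-number induction by this one line and the argument is complete.
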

The condition $(\ast)$ visually says that the odd length zigzags appearing in the bicomplex are `not too distinct' in the following sense: Order the odd-length zigzags (up to translation) in an ascending way by their length, where we associate negative length to zigzags with incoming outermost arrows, i.e.
\[
...\leq\text{reverse L's}\leq \text{dots}\leq \text{L's}\leq...
\]

 Then the condition $(\ast)$ says that at most two directly adjacent zigzags types may contribute to de Rham cohomology in any given degree. For example, if the purity defect is one, it says there are (at most) only dots and L's or (at most) only dots and reverse L's in any given degree.

\begin{rmk} \label{rmk: SandVare*}
This condition $(*)$ is satisfied for compact complex surfaces as can be seen from the explicit description of their bicomplexes above, and is also satisfied for Vaisman manifolds, as will follow from Theorem \ref{thm: dc-str Vaisman} below (c.f. Corollary \ref{cor: E1isotype of Vaisman constant under small defs}). One may also prove an analogue of Prop. \ref{thm:constrmthds} for condition $(\ast)$ instead of the $dd^c+3$-condition. 
\end{rmk}

\begin{proof}
It suffices to show the multiplicities of all zigzags are constant for $t$ close to $0$. There are no even zigzags on $M_0$ or nearby fibres by degeneration of the Fr\"olicher spectral sequence, as in the first part of the proof of Theorem \ref{thm: semi-cont FcapFbar}. The odd zigzags are counted by the refined Betti numbers $b_k^{p,q}(M_t)$. Thus we have to show that the numbers $b_k^{p,q}(M_t)$ are constant for $t$ close to $0$. 

First we note that condition ($\ast$) has to hold for nearby fibres as well. In fact, when $F_{tot}^rH^k(M_0)=0$ for some $r$ then also $F_{tot}^rH^k(M_t)=0$ for all nearby fibres by Theorem \ref{thm: semi-cont FcapFbar}. By duality, the same implication holds for the condition $F_{tot}^rH^k=H^k$.

When $p+q=r(k)$, we have
\[
b_k^{p,q}=\dim F^pH^k\cap \bar F^q H^k
\]
by assumption, and we have seen that this number varies upper semi-continuously in Theorem \ref{thm: semi-cont FcapFbar}. On the other hand, $b_k^{p,q}(M)=b_{2n-k}^{n-p,n-q}(M)$ and so also the numbers for $p+q-k=r(k)-1$ vary upper semi-continuously. Now
\[
b_k(M_t)=\sum_{p+q\in\{r(k),r(k)+1\}}b_k^{p,q}(M_t)\leq \sum_{p+q\in\{r(k),r(k)+1\}}b_k^{p,q}(M_0)=b_k(M_0)=b_k(M_t). 
\]
\end{proof}

\section{Vaisman manifolds} \label{sec:Vaisman}
 
A Vaisman manifold will mean a compact complex manifold with Hermitian metric which is locally conformally K\"ahler (LCK) and has parallel Lee form, \cite{Vais}, \cite{VaisGH}. Recall the locally conformal K\"ahler condition is equivalent to the fundamental form $\omega$ satisfying $d\omega=\theta\wedge\omega$ for a closed real $1$-form, called the Lee form, and the parallel condition is that $\nabla\theta=0$ with respect to the Levi-Cevita connection. As is customary, we will assume $\theta\neq 0$ in the following to exclude the K\"ahler case from the discussion. 

\begin{ex}
	The Hopf manifold $(\CC^{n+1}\setminus\{0\})/\lambda^\Z$ for some $\lambda\in\CC^\ast\setminus S^1$ carries the Vaisman metric $\frac 1{\|z\|^2}\sum_{i=1}^{n+1}dz_id\bar z_i$. More generally, take any projective manifold with a negative line bundle $L$ and consider $V:=(L\setminus\{s_0\})/\lambda^\Z$ where $s_0$ denotes the zero section. Then $V$ carries a Vaisman metric \cite{VaisLCKGCK}.
\end{ex}
In fact, this example gives a good (local) picture of the complex structure of a general Vaisman manifold. We refer to \cite{OV} and \cite{OV4} for a general discussion of the structure of compact Vaisman manifolds.

\subsection{The $E_1$-isomorphism type of a Vaisman manifold}
Denote the bigraded components of the closed Lee form by $\theta=\theta^{1,0}+\theta^{0,1}$. Then $d\theta^{1,0}=\delb\theta^{1,0}=-d\theta^{0,1}=-\del\theta^{0,1}$, and setting $\omega_0:=d^c\theta=-2i\del \theta^{0,1}$, we have $\omega_0=\omega-\theta\wedge J\theta$.

The dual vector fields $X_\theta$ and $X_{J\theta}$ are holomorphic, Killing, and generate a group that acts by holomorphic isometries.
Let $\cA^{inv}(M)$ denote the complex of invariant forms under the group action, with subcomplex the basic forms $\cA_B(M)$,
i.e. those in the kernel of $\iota_{X_\theta}$ and $\iota_{X_{J\theta}}$, as well as the kernel of the Lie derivatives 
$\mathcal{L}_{X_\theta}$ and $\mathcal{L}_{X_{J\theta}}$.  The subspace of $d$-harmonic basic forms,  $\Hh_B$, behaves in much that same way as the forms on a K\"ahler manifold, having a Lefschetz decomposition given by the operator 
$L$ given by wedging with $\omega_0$. We refer to \cite{OV2} for a more thorough review of the operators mentioned here and their relations.

\begin{prop}\label{prop: Vaisman-dc via harmonics} (\cite{Tsu},\cite{IshKas})
	With notations as above, the subspace
	\[
	\Hh_B\otimes \Lambda\langle \theta^{0,1},\theta^{1,0}\rangle\subseteq \cA(V)
	\]
	is a $d$-subcomplex and the inclusion is an $E_1$-isomorphism.
\end{prop}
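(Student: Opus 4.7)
The strategy is to factor the inclusion through the intermediate subcomplex of invariant forms, proving three substatements whose conjunction gives the claim: (i) the displayed subspace is $d$-stable, (ii) $\cA^{inv}(V)\hookrightarrow\cA(V)$ is an $E_1$-isomorphism, and (iii) $\Hh_B\otimes\Lambda\langle\theta^{0,1},\theta^{1,0}\rangle\hookrightarrow\cA^{inv}(V)$ is an $E_1$-isomorphism. Throughout I will use, without restating, the structural identities collected just before the proposition, most notably that $\theta$ is closed and parallel, so $X_\theta$ and $X_{J\theta}$ are holomorphic Killing vector fields, and $\omega_0=d^c\theta$ is basic.

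For (i), the closedness of $\theta$ combined with bidegree splitting gives $\del\theta^{1,0}=0$, $\delb\theta^{0,1}=0$, and $\delb\theta^{1,0}=-\del\theta^{0,1}$, so $d\theta^{1,0}$ and $d\theta^{0,1}$ are each a scalar multiple of the basic form $\omega_0$. Since $\omega_0$ is itself $d$-harmonic and basic, the transverse K\"ahler identities on $\cA_B$ imply that $L_{\omega_0}:=\omega_0\wedge(-)$ commutes with the basic Laplacian and hence preserves $\Hh_B$. Combined with $d|_{\Hh_B}=0$ and the Leibniz rule, this gives $d\bigl(\Hh_B\otimes\Lambda\langle\theta^{1,0},\theta^{0,1}\rangle\bigr)\subseteq\Hh_B\otimes\Lambda\langle\theta^{1,0},\theta^{0,1}\rangle$.

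For (ii), the $\R^2$-action generated by $X_\theta,X_{J\theta}$ acts by holomorphic isometries, and compactness of $V$ ensures its closure is a compact torus $T$. Averaging over $T$ produces a projection $\cA(V)\to\cA^{inv}(V)$ that commutes with $\del$ and $\delb$ and splits the inclusion, from which the $E_1$-isomorphism is standard. For (iii), contracting with $X_\theta^{1,0}$ and $X_\theta^{0,1}$ decomposes each invariant form uniquely as
\[
\alpha_0+\theta^{1,0}\wedge\alpha_1+\theta^{0,1}\wedge\alpha_2+\theta^{1,0}\wedge\theta^{0,1}\wedge\alpha_3
\]
with $\alpha_i$ invariant and basic, giving a bigraded isomorphism $\cA^{inv}(V)\cong\cA_B\otimes\Lambda\langle\theta^{1,0},\theta^{0,1}\rangle$. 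Filter by the number of $\theta$-factors, $F^p:=\cA_B\otimes\Lambda^{\leq p}\langle\theta^{1,0},\theta^{0,1}\rangle$. Since $d\theta^{1,0},d\theta^{0,1}\in\cA_B=F^0$, this is a $d$-stable (and even $\del$, $\delb$-stable) filtration of length three, and on the associated graded the differential reduces to $d_B\otimes\id$. Basic transverse-K\"ahler Hodge theory, compatible with the bigrading, gives that $\Hh_B\hookrightarrow(\cA_B,\del_B,\delb_B)$ is an $E_1$-isomorphism; tensoring with the finite-dimensional $\Lambda^p$ (equipped with zero differential) preserves this. A bounded spectral sequence comparison argument for both the column and the row filtrations then transfers the $E_1$-isomorphism from associated graded to total complex.

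The main technical obstacle is Step (iii), where one has to juggle three filtrations simultaneously: the $\theta$-filtration, together with the column and row filtrations that define $E_1$-isomorphism. The input from basic Hodge theory is what makes it work — in particular the compatibility of $\Hh_B$ with the basic bigrading and its preservation under $L_{\omega_0}$ — and this is where the geometric hypothesis (existence of a parallel Lee form, producing a transverse K\"ahler structure) is really used. Steps (i) and (ii) are essentially formal once the Vaisman structure equations are in hand.
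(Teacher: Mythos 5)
The paper itself gives no proof of this proposition: it is quoted from the literature (Tsukada, Ishida--Kasuya), so there is nothing in the text to compare your argument against line by line. Judged on its own terms, your proof is correct and is essentially the standard argument underlying those references: reduce to invariant forms by averaging over the closure of the torus generated by the flows of $X_\theta$ and $X_{J\theta}$, identify $\cA^{inv}(V)$ with $\cA_B\otimes\Lambda\langle\theta^{1,0},\theta^{0,1}\rangle$ via contraction, filter by the number of $\theta$-factors so that the associated graded differential becomes $d_B\otimes\id$, and then invoke transverse K\"ahler Hodge theory (El Kacimi-Alaoui type results plus the transverse K\"ahler identities, which also give $[\Delta_B,L_{\omega_0}]=0$ and hence the $d$-stability in your step (i)). The filtration is indeed compatible with $\del$ and $\delb$ separately, since $\del\theta^{1,0}=\delb\theta^{0,1}=0$ and $\delb\theta^{1,0},\del\theta^{0,1}$ are multiples of the basic form $\omega_0$, so the comparison argument for the column and row spectral sequences goes through and yields the $E_1$-isomorphism; this also shows the subspace is a sub-\emph{bi}complex, which is what the statement implicitly requires.

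One place where your wording is too quick, though the fact you need is standard: in step (ii), the existence of an averaging projection splitting the inclusion $\cA^{inv}(V)\hookrightarrow\cA(V)$ only gives injectivity of $H_{\delb}(\iota)$; a split injection need not be surjective on cohomology. Surjectivity requires that the connected group of biholomorphisms acts trivially on Dolbeault cohomology, which follows from the Dolbeault--Cartan homotopy formula $\mathcal{L}_{X^{1,0}}=[\delb,\iota_{X^{1,0}}]$ for the holomorphic vector field generating the flow. With that sentence added, the argument is complete.
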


Our goal is to describe the structure of the bicomplex $\Hh_B\otimes \Lambda\langle \theta^{0,1},\theta^{1,0}\rangle$ in terms of indecomposables. We will reduce it to an algebraic computation below in the following way. For any $p+q\leq n$ denote by 
\[
P_{p,q}:=\Ke L^{n-p-q+1}\subseteq \Hh_B
\]
the space of primitive harmonic $(p,q)$-forms. Consider $\Hh_B$ as a $\CC[L]$-module, and let $D_{p,q}$ be the $\CC[L]$-submodule generated by $P_{p,q}$, i.e. $D_{p,q}:=P_{p,q}[\omega_0]/\omega_0^{n-p-q+1}$. Writing $S_{p,q}:=D_{p,q}\otimes \Lambda\langle\theta^{1,0},\theta^{0,1}\rangle$, we have 
\[
\Hh_B\otimes \Lambda\langle \theta^{0,1},\theta^{1,0}\rangle=\bigoplus_{p+q\leq n} S_{p,q}.
\]

\begin{thm}[The bicomplex of a Vaisman manifold]\label{thm: dc-str Vaisman}
	Let $V$ be a compact Vaisman manifold of dimension $n+1$.
	The inclusion of bicomplexes
	\[
	\bigoplus_{p+q\leq n} S_{p,q}\subseteq \cA(V)
	\]
	is an $E_1$-isomorphism. 
	Every $S_{p,q}$ is, as a bicomplex, a tensor product of the form 
	\[
	S_{p,q}=P_{p,q}\otimes \Lambda\langle\theta^{1,0},\theta^{0,1},\omega_0\rangle/(\omega^{n-p-q+1}_0).
	\]
	The space of primitive basic harmonic forms $P_{p,q}$ has zero differential, i.e. it is a direct sum of dots. The second factor decomposes as follows into indecomposable bicomplexes:
	\begin{enumerate}
		\item If $k=n$, there are four dots, with no non-zero differentials:
		\[
		\xymatrix{
			\langle\theta^{0,1}\rangle \ar@{}[r]|\oplus &  \langle \theta^{1,0}\theta^{0,1}\rangle\\
			\CC \ar@{}[u]|\oplus \ar@{}[r]|\oplus &  \langle \theta^{1,0}\rangle \ar@{}[u]|\oplus
		}
		\]
		
		\item If $k<n$, there are two dots, two length three zigzags, and (if $k<n-2$) several squares:
		{\small
			\[
			\xymatrix@=15pt{
				& & & & \langle\theta^{0,1}\omega_0^{n-k}\rangle & \langle\theta^{0,1}\theta^{1,0}\omega_0^{n-k}\rangle \\
				& & & & \langle\theta^{1,0}\theta^{0,1}\omega_0^{n-k-1}\rangle\ar[r]^-\del \ar[u]^-\delb & \langle\theta^{1,0} \omega_0^{n-k} \rangle \\
				&  & \langle\theta^{0,1}\omega_0^{j+1}\rangle\ar[r]^-\del  \ar @{} [dr] |{\oplus_{j=0}^{n-k-2}} &  \langle\omega_0^{j+2}\rangle \ar@{}[ur]|\iddots & & \\
				& &  \langle\theta^{1,0}\theta^{0,1}\omega_0^j\rangle\ar[r]_-\del \ar[u]^-\delb &\langle\theta^{1,0}\omega_0^{j+1} \rangle \ar[u]_-\delb  & & \\
				\langle\theta^{0,1}\rangle \ar[r]^-\del &  \langle \omega_0 \rangle \ar@{}[ur]|-\iddots & & & &  \\
				\CC & \langle \theta^{1,0}\rangle \ar[u]_-\delb & & & &  \\
			}
			\]
		}
	\end{enumerate}
\end{thm}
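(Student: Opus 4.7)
The plan is to take Proposition~\ref{prop: Vaisman-dc via harmonics} as a starting point, which already identifies $\Hh_B\otimes\Lambda\langle\theta^{1,0},\theta^{0,1}\rangle$ as an $E_1$-equivalent sub-bicomplex of $\cA(V)$, and reduce the theorem to a completely explicit calculation on this finite-dimensional model. First I would invoke the transverse hard Lefschetz on the basic harmonic forms: as a $\CC[L]$-module via $L = \omega_0\wedge -$, one has
\[
\Hh_B \;=\; \bigoplus_{p+q\leq n} P_{p,q}\bigl[\omega_0\bigr]/\bigl(\omega_0^{n-p-q+1}\bigr) \;=\; \bigoplus_{p+q\leq n} D_{p,q},
\]
and tensoring with $\Lambda\langle\theta^{1,0},\theta^{0,1}\rangle$ yields the vector-space decomposition $\bigoplus S_{p,q}$. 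To upgrade this to a decomposition of bicomplexes I need to know the action of $\del,\delb$ on all the generators.

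The key non-formal input is that every primitive basic $d$-harmonic form $\alpha\in P_{p,q}$ is also $\del$- and $\delb$-closed. This is the transverse analogue of the K\"ahler identities on a Vaisman manifold and can be extracted from the references \cite{Tsu},\cite{OV2},\cite{IshKas} on the transverse K\"ahler structure. Granted this, the remaining computations are formal: reality and closedness of $\theta$ give $\del\theta^{1,0}=0=\delb\theta^{0,1}$ and $\delb\theta^{1,0}=-\del\theta^{0,1}$; the definition $\omega_0=d^c\theta=i(\delb-\del)\theta$ then forces
\[
\del\theta^{0,1}=\tfrac{i}{2}\omega_0,\qquad \delb\theta^{1,0}=-\tfrac{i}{2}\omega_0,
\]
while $d\omega_0 = dd^c\theta=0$ gives $\del\omega_0=\delb\omega_0=0$. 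In particular each $S_{p,q}$ is a sub-bicomplex of $\Hh_B\otimes\Lambda\langle\theta^{1,0},\theta^{0,1}\rangle$, which confirms the direct-sum decomposition. Because the differentials on $P_{p,q}$ vanish, $S_{p,q}$ is isomorphic as a bicomplex to $P_{p,q}\otimes T_{p,q}$, with $P_{p,q}$ a direct sum of dots and
\[
T_{p,q}\;=\;\Lambda\langle\theta^{1,0},\theta^{0,1},\omega_0\rangle/(\omega_0^{n-p-q+1}).
\]

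It remains to decompose $T_{p,q}$ into indecomposables, which is a direct inspection. Setting $k=p+q$, the basis of $T_{p,q}$ splits naturally into four families $\omega_0^j$, $\theta^{1,0}\omega_0^j$, $\theta^{0,1}\omega_0^j$, $\theta^{1,0}\theta^{0,1}\omega_0^j$, with ranges dictated by $\omega_0^{n-k+1}=0$. Applying Leibniz to the formulas above gives
\[
\del(\theta^{0,1}\omega_0^j)=\tfrac{i}{2}\omega_0^{j+1},\quad \delb(\theta^{1,0}\omega_0^j)=-\tfrac{i}{2}\omega_0^{j+1},
\]
\[
\del(\theta^{1,0}\theta^{0,1}\omega_0^j)=-\tfrac{i}{2}\theta^{1,0}\omega_0^{j+1},\quad \delb(\theta^{1,0}\theta^{0,1}\omega_0^j)=-\tfrac{i}{2}\theta^{0,1}\omega_0^{j+1}.
\]
When $k=n$, the quotient kills $\omega_0$ and all differentials vanish, leaving the four dots $\{1,\theta^{1,0},\theta^{0,1},\theta^{1,0}\theta^{0,1}\}$. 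When $k<n$, one reads off the two extremal dots $\langle 1\rangle$ and $\langle\theta^{1,0}\theta^{0,1}\omega_0^{n-k}\rangle$, the reverse $L$ formed by $\{\theta^{0,1},\omega_0,\theta^{1,0}\}$ at the bottom-left, the $L$ formed by $\{\theta^{1,0}\theta^{0,1}\omega_0^{n-k-1},\theta^{0,1}\omega_0^{n-k},\theta^{1,0}\omega_0^{n-k}\}$ at the top-right, and, for each $0\leq j\leq n-k-2$, a square with corners $\theta^{1,0}\theta^{0,1}\omega_0^j,\theta^{1,0}\omega_0^{j+1},\theta^{0,1}\omega_0^{j+1},\omega_0^{j+2}$. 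A dimension count $4(n-k+1)$ shows that these pieces exhaust $T_{p,q}$.

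The main obstacle in this plan is the transverse K\"ahler fact that primitive $d$-harmonic basic forms are simultaneously $\del$- and $\delb$-closed. Once this input is cited, the remainder of the proof is algebraic bookkeeping on an explicit, finite-dimensional model.
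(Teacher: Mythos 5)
Your proposal is correct and follows essentially the same route as the paper: the paper likewise takes Proposition \ref{prop: Vaisman-dc via harmonics} and the Lefschetz decomposition of $\Hh_B$ as given, and its proof consists precisely of the relations $\del\theta^{0,1}=-\delb\theta^{1,0}=\tfrac{i}{2}\omega_0$, $\delb\theta^{0,1}=\del\theta^{1,0}=0$, $\omega_0^{n-k+1}=0$, from which the dots, the two length-three zigzags, and the squares are read off exactly as you do. Your write-up is merely more explicit about the non-formal inputs (that primitive basic harmonic forms are $\del$- and $\delb$-closed, and the dimension count showing the listed pieces exhaust $T_{p,q}$), which the paper leaves implicit in the setup preceding the theorem.
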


\begin{proof} 
	Only the statement about the second factor of $S_{p,q}$ still requires proof. For the first claim, with $k=n$, the bicomplex is 
	\[
	\Lambda\langle\theta^{1,0},\theta^{0,1},\omega_0\rangle/(\omega_0)= \Lambda\langle\theta^{1,0},\theta^{0,1}\rangle,
	\] 
	and the relation $\del\theta^{0,1}=-\delb\theta^{1,0}=\frac i 2 \omega_0$ implies all differentials are zero.
	
	For the general case, $k<n$, we have $\del\theta^{0,1}=-\delb\theta^{1,0}=\frac i 2 \omega_0$ and $\delb\theta^{0,1}=\del\theta^{1,0}=0$, giving the length three zigzag
	\[
	\xymatrix@=15pt{
		\langle\theta^{0,1}\rangle \ar[r]^-\del &  \langle \omega_0 \rangle \\
		& \langle \theta^{1,0}\rangle \ar[u]_-\delb   
	}
	\]
	in total degrees $1$ and $2$. The same relations also give the squares in the statement, with all other differential on these spaces zero, again since $\delb\theta^{0,1}=\del\theta^{1,0}=0$. Finally, the length three zigzag
	\[
	\xymatrix@=15pt{
		\langle\theta^{0,1}\omega_0^{n-k}\rangle &  \\
		\langle\theta^{1,0}\theta^{0,1}\omega_0^{n-k-1}\rangle\ar[r]^-\del \ar[u]^-\delb & \langle\theta^{1,0} \omega_0^{n-k} \rangle \\
	}
	\]
	follows from the same relations and the fact that $\omega_0^{n-k+1}=0$, which also implies that
	$\del \theta^{0,1}\theta^{1,0}\omega_0^{n-k} = \delb \theta^{0,1}\theta^{1,0}\omega_0^{n-k}=0$.
\end{proof}

Since the bicomplex of a Vaisman manifold satisfies condition $(5)$ of Theorem \ref{thm:ddc+3} we have:

\begin{cor} \label{cor:Visddc+3}
	If a compact complex manifold admits a Vaisman metric, then it satisfies the $dd^c+3$-condition. 
\end{cor}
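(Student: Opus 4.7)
The strategy is to directly invoke the structural result in Theorem \ref{thm: dc-str Vaisman} and match it against the equivalent characterization of the $dd^c+3$-condition given by condition $(5)$ of Theorem \ref{thm:ddc+3}, namely that the bicomplex $(\cA,\del,\delb)$ decomposes, up to $E_1$-isomorphism, as a direct sum of dots, squares, and length-$3$ zigzags.

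Concretely, I would proceed as follows. First, recall that the multiplicities of indecomposable subcomplexes are invariant under $E_1$-isomorphism, so it suffices to exhibit any $E_1$-representative of $\cA(V)$ with only dots, squares, and length-$3$ zigzags as direct summands. Theorem \ref{thm: dc-str Vaisman} provides exactly such a representative: the subcomplex $\bigoplus_{p+q\leq n} S_{p,q}\subseteq \cA(V)$ is an $E_1$-sub, and each summand is written as a tensor product
\[
S_{p,q}=P_{p,q}\otimes \Lambda\langle \theta^{1,0},\theta^{0,1},\omega_0\rangle/(\omega_0^{n-p-q+1}),
\]
where $P_{p,q}$ has trivial differentials (so is a sum of dots).

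Next I would note that tensoring a sum of dots with any bicomplex $B$ produces a direct sum of bidegree-shifted copies of $B$, so the zigzag types appearing in $S_{p,q}$ are exactly those appearing in the second tensor factor. But Theorem \ref{thm: dc-str Vaisman} explicitly enumerates the indecomposable summands of the second factor in both cases $p+q=n$ and $p+q<n$, and the list consists solely of dots, squares, and length-$3$ zigzags (the two $L$/reverse-$L$ types together with squares). Hence $\bigoplus S_{p,q}$, and therefore $\cA(V)$, is $E_1$-isomorphic to a direct sum of dots, squares, and length-$3$ zigzags.

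There is essentially no obstacle once Theorem \ref{thm: dc-str Vaisman} is in hand: the corollary reduces to reading off the list of indecomposables appearing there and citing the equivalence $(1)\Leftrightarrow(5)$ in Theorem \ref{thm:ddc+3}. The real content has already been absorbed into Theorem \ref{thm: dc-str Vaisman}, whose proof relied on Proposition \ref{prop: Vaisman-dc via harmonics} together with the explicit identities $\del\theta^{0,1}=-\delb\theta^{1,0}=\tfrac{i}{2}\omega_0$ and $\delb\theta^{0,1}=\del\theta^{1,0}=0$, plus $\omega_0^{n-p-q+1}=0$ on $S_{p,q}$.
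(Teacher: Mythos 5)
Your proposal is correct and follows exactly the paper's route: the corollary is deduced by reading off from Theorem \ref{thm: dc-str Vaisman} that the $E_1$-representative $\bigoplus S_{p,q}$ contains only dots, squares, and length-$3$ zigzags, and then invoking characterization $(5)$ of Theorem \ref{thm:ddc+3}. The only quibble is that strictly only \emph{zigzag} multiplicities (not square multiplicities) are $E_1$-invariants, but since squares are permitted in condition $(5)$ this does not affect the argument.
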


In particular, the Fr\"olicher spectral sequence degenerates at the first page, which was also shown in \cite{Tsu}. By results from section \ref{sec:purity}, we also have:

\begin{cor}  \label{cor:Vpd=1}
	If a compact complex manifold admits a Vaisman metric, then it has purity defect $1$.
\end{cor}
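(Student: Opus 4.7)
The plan is to read the purity defect directly off the explicit $E_1$-decomposition of $\cA(V)$ given by Theorem \ref{thm: dc-str Vaisman}, using the zigzag characterization of $\pdef$ from Proposition \ref{prop:puritydefect}.

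First I would invoke Theorem \ref{thm: dc-str Vaisman} to get the $E_1$-isomorphism $\cA(V)\simeq_1\bigoplus_{p+q\leq n} P_{p,q}\otimes T_{p,q}$, where $T_{p,q}:=\Lambda\langle\theta^{1,0},\theta^{0,1},\omega_0\rangle/(\omega_0^{n-p-q+1})$ is explicitly described as a direct sum of dots, squares, and length-$3$ zigzags. Since each $P_{p,q}$ carries trivial differentials, tensoring preserves indecomposable types (adjusting only multiplicities), so every indecomposable summand of $\cA(V)$ is a dot, a square, or a length-$3$ zigzag. By Proposition \ref{prop:puritydefect} this yields $\pdef(V)\leq 1$ immediately.

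For the matching lower bound I would exhibit at least one length-$3$ zigzag occurring with nonzero multiplicity. Taking $(p,q)=(0,0)$, the primitive space $P_{0,0}$ is nonzero (it contains the constant function $1$), and case (2) of Theorem \ref{thm: dc-str Vaisman} places the length-$3$ zigzag $\langle\theta^{0,1}\rangle\to\langle\omega_0\rangle\leftarrow\langle\theta^{1,0}\rangle$ inside $T_{0,0}$; the indexing is legitimate since $n\geq 1$, as no compact Vaisman manifold can be a curve (curves are K\"ahler and the standing convention excludes that case). This zigzag is genuine because $\theta\neq 0$ forces both $\theta^{1,0}$ and $\theta^{0,1}$ to be nonzero, and $\omega_0=d^c\theta$ is the (nonzero) transverse K\"ahler form. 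Combined with the upper bound, this gives $\pdef(V)=1$.

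Essentially the only input is Theorem \ref{thm: dc-str Vaisman}; granted that structure theorem, the corollary reduces to a bookkeeping check. The main subtlety is confirming that the displayed length-$3$ zigzag truly appears as an indecomposable summand rather than silently splitting, which is controlled by the explicit relation $\del\theta^{0,1}=-\delb\theta^{1,0}=\tfrac{i}{2}\omega_0$ (together with $\delb\theta^{0,1}=\del\theta^{1,0}=0$) already used in the proof of the structure theorem.
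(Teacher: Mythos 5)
Your proof is correct and follows essentially the same route as the paper: the paper also deduces this corollary by combining the explicit zigzag decomposition of Theorem \ref{thm: dc-str Vaisman} with the zigzag characterization of the purity defect from Proposition \ref{prop:puritydefect}. If anything, you are more explicit than the paper about the lower bound (that a length-$3$ zigzag genuinely occurs, so the defect is exactly $1$ and not just at most $1$), which is a worthwhile clarification.
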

\begin{cor} \label{cor:Vphmiddle}
	The middle cohomology of a compact Vaisman manifold carries a pure Hodge structure.
\end{cor}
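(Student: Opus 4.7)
The plan is to read off the answer from the explicit bicomplex decomposition established in Theorem \ref{thm: dc-str Vaisman}. By that result, $\cA(V)\simeq_1 \bigoplus_{p+q\leq n} S_{p,q}$, and each summand $S_{p,q}$ decomposes into dots, squares, and---when $k:=p+q<n$---exactly one ``reverse L'' and one ``L''. Since squares are acyclic, dots contribute pure pieces, and by Proposition \ref{prop:puritydefect} the odd length-$3$ zigzags are the only possible source of non-purity, the task reduces to checking that no such L or reverse L has its total cohomology sitting in degree $n+1$.

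Next I would track, for each type of length-$3$ zigzag in $S_{p,q}$, the total degree in which its one-dimensional total cohomology is concentrated. Since the factor $P_{p,q}$ lives in bidegree $(p,q)$, tensoring with it shifts total degrees up by $k$. Reading off the diagrams in Theorem \ref{thm: dc-str Vaisman}, the reverse L has outer corners $\theta^{0,1}$, $\theta^{1,0}$ in total degree $k+1$, and (using the length-$3$ entries of Figure \ref{fig:zzddc+3}) its total cohomology is concentrated in $H^{k+1}$; similarly the L has outer corners $\theta^{0,1}\omega_0^{n-k}$, $\theta^{1,0}\omega_0^{n-k}$ in total degree $2n-k+1$, with total cohomology concentrated in $H^{2n-k+1}$.

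For either contribution to land in the middle degree $n+1$ one needs $k+1=n+1$ or $2n-k+1=n+1$, both forcing $k=n$. But when $k=n$, Theorem \ref{thm: dc-str Vaisman} says that $S_{p,q}$ consists of four dots and contains no length-$3$ zigzag. Hence only dots contribute to $H^{n+1}_{\dR}(V)$, and this cohomology inherits a pure Hodge structure of weight $n+1$. I do not foresee any real obstacle: all the hard geometric input has been absorbed into Theorem \ref{thm: dc-str Vaisman}, and what remains is the combinatorial bookkeeping sketched above.
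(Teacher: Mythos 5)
Your argument is correct and is essentially the paper's (implicit) proof: the corollary is stated as a direct consequence of the explicit decomposition in Theorem \ref{thm: dc-str Vaisman}, and the required bookkeeping is exactly what you carry out — the reverse L's have total cohomology in degrees $k+1\leq n$ and the L's in degrees $2n-k+1\geq n+2$ for $k<n$, while the $k=n$ summands contribute only dots, so $H^{n+1}_{\dR}(V)=\bigoplus_{p+q=n}\left(P_{p,q}\theta^{1,0}\oplus P_{p,q}\theta^{0,1}\right)$ is pure of weight $n+1$.
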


\begin{cor}  \label{cor:VHBC}
	For any compact Vaisman-manifold $V$ of dimension $n+1$, the Bott-Chern and Aeppli cohomologies up to middle degree can be computed as follows:
	\[
	H_{BC}^{p,q}(V)\cong \begin{cases} 
		P_{p,q}\oplus \omega_0P_{p-1,q-1} & \text{if }p+q\leq n\\
		\theta^{1,0} P_{p-1,q}\oplus \theta^{0,1} P_{p,q-1} &\text{if } p+q=n+1
	\end{cases} 
	\]
	\[
	H_A^{p,q}(V)\cong 
	\begin{cases}
		P_{p,q}\oplus \theta^{1,0}P_{p-1,q}\oplus \theta^{0,1}P_{p,q-1}&\text{if }p+q\leq n\\
		\theta^{1,0}P_{p-1,q}\oplus \theta^{0,1} P_{p,q-1}&\text{if }p+q=n+1
	\end{cases}
	\]
\end{cor}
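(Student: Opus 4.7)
The plan is to read off $H_{BC}^{p,q}(V)$ and $H_A^{p,q}(V)$ directly from the $E_1$-decomposition of $\cA(V)$ established in Theorem~\ref{thm: dc-str Vaisman}. Since Bott-Chern and Aeppli cohomology are additive on direct sums of bicomplexes and vanish on squares, they depend only on the zigzag multiplicities and are invariants of the $E_1$-isomorphism class (c.f.\ Figure~\ref{fig:zigzagkerdctoHBC}). Combined with the decomposition $\cA(V)\simeq_1\bigoplus_{p+q\leq n}S_{p,q}$, where $S_{p,q}=P_{p,q}\otimes F_{p+q}$ with $F_k:=\Lambda\langle\theta^{1,0},\theta^{0,1},\omega_0\rangle/(\omega_0^{n-k+1})$, and with $P_{p,q}$ carrying trivial differentials, this reduces the problem to computing
\[
H_{BC}(\cA(V))\cong\bigoplus_{p+q\leq n}P_{p,q}\otimes H_{BC}(F_{p+q}),
\]
and analogously for $H_A$, i.e.\ to a finite combinatorial count over the indecomposable summands of each factor.

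I would then record the contributions of each type of indecomposable appearing in Theorem~\ref{thm: dc-str Vaisman}. Unwinding the definitions of $H_{BC}=(\Ke d\cap\Ke d^c)/\Img dd^c$ and $H_A=\Ke dd^c/(\Img d+\Img d^c)$ on the three- or four-element complexes yields: a dot at bidegree $(a,b)$ contributes $\CC$ to both $H_{BC}^{a,b}$ and $H_A^{a,b}$; a square contributes nothing to either; an ``L'' with inner corner at $(a,b)$ contributes $\CC$ to $H_{BC}$ at each of the two outer corners $(a+1,b)$ and $(a,b+1)$ and $\CC$ to $H_A$ at the inner corner $(a,b)$; a ``reverse L'' with incoming corner at $(a,b)$ and outer corners at $(a-1,b),(a,b-1)$ contributes $\CC$ to $H_{BC}$ at $(a,b)$ and $\CC$ to $H_A$ at each outer corner.

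With these local contributions recorded, the identification with the stated formulas is then a matter of enumeration. For each bidegree $(p,q)$ with $p+q\leq n+1$, I would locate every indecomposable summand of every $S_{p',q'}$ whose relevant corner lies at $(p,q)$, using the explicit picture of $F_k$ in Theorem~\ref{thm: dc-str Vaisman}. In the range $p+q\leq n$, the Bott-Chern contributions are the ``bottom'' dot $1\otimes P_{p,q}$ of $S_{p,q}$ and the reverse-L top-right corner $\omega_0\otimes P_{p-1,q-1}$ of $S_{p-1,q-1}$. In the range $p+q=n+1$, they come from the ``axis'' dots $\theta^{1,0}\otimes P_{p-1,q}$ and $\theta^{0,1}\otimes P_{p,q-1}$ appearing in the four-dot decomposition of $F_n$ inside $S_{p-1,q}$ and $S_{p,q-1}$. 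An entirely parallel case analysis, now using the L inner-corner and reverse-L outer-corner contributions, produces the $H_A$ formula.

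The hard part will be careful bookkeeping of the bidegree shifts induced by wedging a primitive form with the various products of $\theta^{1,0},\theta^{0,1}$ and powers of $\omega_0$: one must confirm that for the stated ranges, the remaining candidate summands (the ``top-right'' dot $\theta^{1,0}\theta^{0,1}\omega_0^{n-k}\otimes P_{p',q'}$, the L outer corners, and the squares in each $S_{p',q'}$) all have contributing corners at total degrees outside the range under consideration, so that they do not spuriously enter the formula.
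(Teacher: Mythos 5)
Your method is the paper's method: the proof in the text likewise invokes the $E_1$-invariance of $H_{BC}$ and $H_A$ to reduce to $\bigoplus_{r,s}P_{r,s}\otimes H_{BC}\bigl(\Lambda\langle\theta^{0,1},\theta^{1,0},\omega_0\rangle/(\omega_0^{n-(r+s)+1})\bigr)$, and then reads off the answer from the fact that $H_{BC}$ of a zigzag is carried by the dots and the ``tips'' (the incoming corner of a reverse $L$, the two outer corners of an $L$), while $H_A$ is carried by the dots and the complementary corners. Your table of local contributions is correct and agrees with Figure~\ref{fig:zigzagkerdctoHBC} and the diagrams in the paper's proof, and your enumeration in the range $p+q\leq n$ is right.

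The gap is exactly in the step you defer to ``careful bookkeeping'': the claim that all remaining candidate corners land outside the stated ranges is false in middle degree. For $p+q=n+1$ the summand $S_{p-1,q-1}$ has $(p-1)+(q-1)=n-1<n$, so by Theorem~\ref{thm: dc-str Vaisman} it contains the reverse $L$ on $\langle\theta^{0,1}\rangle,\langle\theta^{1,0}\rangle,\langle\omega_0\rangle$; its incoming corner $\omega_0\otimes P_{p-1,q-1}$ sits in bidegree $(p,q)$, total degree $n+1$, and contributes $\omega_0 P_{p-1,q-1}$ to $H^{p,q}_{BC}$. Dually, the inner corner $\theta^{1,0}\theta^{0,1}\otimes P_{p-1,q-1}$ of the $L$ in $S_{p-1,q-1}$ contributes to $H^{p,q}_A$ in the same degree. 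These terms are nonzero in general: for the Hopf surface ($n=1$) one has $P_{1,0}=P_{0,1}=0$, so the displayed middle-degree formulas would give $H^{1,1}_{BC}=H^{1,1}_A=0$, whereas $h^{1,1}_{BC}=h^{1,1}_A=1$, accounted for precisely by $\omega_0P_{0,0}$ and $\theta^{1,0}\theta^{0,1}P_{0,0}$. So an honest execution of your (and the paper's) computation produces these extra summands at $p+q=n+1$; the statement as printed omits them, and your proposal inherits the omission by asserting rather than performing the final count.
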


Note that the groups above middle degree are determined by duality. They can also be written down explicitly using the same method of proof.

\begin{proof}
	Using the notation introduced before Theorem \ref{thm: dc-str Vaisman}, we have:
	\[
	\cA(V) \simeq_1 \bigoplus_{r,s\in\Z}S_{r,s}
	\]
	by  Proposition \ref{prop: Vaisman-dc via harmonics}.
	Now, \cite[Cor. 13]{StStrDbl} states that an $E_1$-isomorphism induces an isomorphism on $H_{BC}$, so by the Definition of $S_{r,s}$ and the fact that $P_{r,s}$ is a complex with trivial differentials, we have:
	\[
	H^{p,q}_{BC}(V)=\bigoplus_{r,s} P_{r,s}\otimes H^{p,q}_{BC}\left(\Lambda\langle \theta^{0,1},\theta^{1,0},\omega_0\rangle/(\omega_0^{n-(r+s)+1})\right).
	\]
	The result now follows by Theorem \ref{thm: dc-str Vaisman}, as the groups $H_{BC}$ on any zigzag are known to be computed by 
	the dots (here $P$) and the ``tips'', i.e. the spaces $X$, $U$, and $T$ in diagrams such as those below:
	\[
	\xymatrix@=15pt{
		W \ar[r]^-\del & X \\
		& Y  \ar[u]_-\delb 
	}
	\quad \quad\quad \quad 
	\xymatrix@=15pt{
		U&  \\
		Z \ar[u]^-\delb \ar[r]^\del & T.
	}
	\]
	The proof for Aeppli cohomology is the same, except $H_{A}$  is computed by 
	the dots (again $P$) and the spaces $W$, $Y$ and $Z$ in the diagrams above.
\end{proof}

Oeljeklaus-Toma (OT) manifolds are manifolds associated with number fields that have $s\geq 1$ real and $t\geq 1$ pairs of distinct conjugate complex embeddings, together with the choice of appropriate subgroups of the group of totally real units. We refer to \cite{OT} for their definition and more details. 
\begin{rmk} (LCK does not imply $dd^c+3$) OT manifolds of type $(s,1)$ with $s\geq 2$ are LCK \cite[p.169]{OT}, but not $dd^c+3$. In fact, the computation in \cite[Cor. 9.6]{SteLR}, their bicomplex always contains zigzags of length $2s+1$. For example, for $M$ an OT manifold of type $(2,1)$, one has
	\[
	\cA(M)\simeq_1\img{S000_3}\oplus\img{S100_3}^{\oplus{2}}\oplus \img{S200_3}.
	\]	
	
\end{rmk}
Generalizing the calculation in the previous remark allows to obtain the following Corollary, previously proved by Hisashi Kasuya \cite{KasVais} via a different route:
\begin{cor}
	Oeljeklaus-Toma manifolds of type $(s,t)$ with $s\geq 2$ are never Vaisman.
\end{cor}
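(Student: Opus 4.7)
The plan is to reduce the statement to an invariant of the bicomplex that Vaisman metrics severely restrict, namely the purity defect. By Corollary \ref{cor:Vpd=1} (or directly from Theorem \ref{thm: dc-str Vaisman}), any compact Vaisman manifold $V$ has $\pdef(V)\leq 1$; equivalently, by Proposition \ref{prop:puritydefect}, no odd zigzag of length $\geq 5$ appears in $\cA(V)$. It therefore suffices to exhibit, in a decomposition of $\cA(M)$ for $M$ an OT manifold of type $(s,t)$ with $s\geq 2$, at least one odd zigzag of length $\geq 5$.

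For this, I would generalize the computation recalled in the remark preceding the corollary (valid for $(s,1)$ and referenced to \cite[Cor. 9.6]{SteLR}) to arbitrary $(s,t)$. Concretely, OT manifolds are solvmanifolds whose bicomplex is quasi-isomorphic (in fact $E_1$-isomorphic, since all operations involved are compatible with the bigrading) to a subcomplex built from left-invariant forms associated with the $s$ real and $t$ pairs of complex embeddings of the underlying number field. The characters controlling the solvable action yield, for each real embedding, a one-form whose differential mixes non-trivially with the one-forms coming from the remaining real embeddings. Iterating this observation (as in the $(s,1)$ case, where one obtains a zigzag of length $2s+1$) produces, for general $t$, an indecomposable summand in $\cA(M)$ which is an odd zigzag of length $2s+1$.

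Since $s\geq 2$ gives $2s+1\geq 5$, this summand is forbidden by Corollary \ref{cor:Vpd=1}, and hence $M$ admits no Vaisman metric. The conclusion then follows.

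The main obstacle is purely bookkeeping: carrying out the bicomplex decomposition for arbitrary $(s,t)$ in enough detail to isolate the length $2s+1$ summand. Since the structure of the left-invariant cdga is controlled by the embeddings of the number field in essentially the same way for any $t$, and since the $t=1$ calculation in \cite{SteLR} is already explicit, this generalization should be routine but is the only technical point requiring care; everything else is a direct application of the structural results of Section \ref{sec:Vaisman}.
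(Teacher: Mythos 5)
Your reduction is exactly the paper's: Vaisman forces purity defect at most $1$ (Corollary \ref{cor:Vpd=1}), while an OT manifold of type $(s,t)$ has purity defect $s\geq 2$, equivalently an odd zigzag of length $2s+1\geq 5$. The only difference is that the paper discharges the key input (degeneration and $\pdef = s$ for all OT manifolds) by citing \cite{OtiTo} and \cite[Thm.~9]{ADOS}, whereas you propose to redo the invariant-form computation for general $t$ and leave it as a sketch; since that computation is indeed the known content of those references, your argument is correct and essentially identical in structure.
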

\begin{proof}
According to \cite{OtiTo}, the Fr\"olicher spectral sequence degenerates on all Oeljeklaus-Toma manifolds and the purity defect of an OT manifold of type $(s,t)$ is equal to $s$ (this follows from \cite[Thm. 9]{ADOS}).
\end{proof}

Along a small deformation of a compact complex manifold with degenerate Fr\"olicher spectral sequence, the Hodge numbers remain constant. In the case of Hopf manifolds, the Hodge numbers determine the multiplicities of all zigzags combinatorially, see \cite{StStrDbl}. Therefore, the multiplicities of all zigzags stay the same under small deformations. The following Corollary is a generalization of this fact to all Vaisman manifolds and all cohomological functors, which follows directly from Remark \ref{rmk: SandVare*} and Proposition \ref{prop: def stability for strong ddc+3}:

\begin{cor}\label{cor: E1isotype of Vaisman constant under small defs}
	Every small deformation $V_t$ of a compact Vaisman manifold $V_0$ has the same $E_1$-isomorphism type, i.e. 
	for all $t$ sufficiently small:
	\begin{enumerate}
	\item The bicomplex $\cA(V_t)$ has the same zigzag multiplicities as $\cA(V_0)$,
	\item For any cohomological functor $H$ (e.g. $H_{BC},H_A,H_{\delb},..$),  $H(V_t)\cong H(V_0)$.
	\end{enumerate}
\end{cor}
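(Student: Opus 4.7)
The plan is to obtain this corollary as an immediate application of Proposition \ref{prop: def stability for strong ddc+3}, so the only substantive work is verifying its two hypotheses for a compact Vaisman manifold $V_0$: first, degeneration of the Frölicher spectral sequence, $E_1(V_0) = E_\infty(V_0)$; second, condition $(\ast)$ stipulating that in each total degree $k$ at most two adjacent graded pieces $\gr_{F_{tot}}^d H^k(V_0)$ are nonzero.

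The first hypothesis is immediate from Corollary \ref{cor:Visddc+3}, which asserts that Vaisman manifolds satisfy the $dd^c+3$-condition, and in particular have $E_1$-degenerate Frölicher spectral sequence.

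For the second hypothesis, the plan is to read off the purity structure directly from the explicit decomposition in Theorem \ref{thm: dc-str Vaisman}, which expresses $\cA(V_0)$ as a direct sum of bicomplexes $S_{p,q}$ whose indecomposable constituents are listed: dots, squares, and length 3 zigzags. Inspecting the diagrams one sees that inside each $S_{p,q}$ with $p+q<n$, the L-zigzag sits in total degrees $1,2$ while the reverse L-zigzag sits in total degrees $2(n-p-q), 2(n-p-q)+1$, separated by squares; hence, collecting contributions across all $S_{p,q}$, in every fixed total degree $k$ one sees only dots paired with one of the two L-types, never both simultaneously. This is precisely the adjacency statement recorded in Remark \ref{rmk: SandVare*}.

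With both hypotheses verified, Proposition \ref{prop: def stability for strong ddc+3} directly yields statement (1), constancy of all zigzag multiplicities under small deformations. Statement (2) then follows because the value of any cohomological functor is determined by the $E_1$-isomorphism type of the bicomplex (cf. \cite[Cor. 13]{StStrDbl} as used in the proof of Corollary \ref{cor:VHBC}), so constancy of zigzag multiplicities forces $H(V_t)\cong H(V_0)$ for every cohomological functor $H$. The only step requiring genuine, if entirely mechanical, work is the combinatorial verification of $(\ast)$ from the explicit description of Theorem \ref{thm: dc-str Vaisman}; everything else is automatic.
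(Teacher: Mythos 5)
Your proposal is correct and follows exactly the paper's route: the corollary is deduced from Proposition \ref{prop: def stability for strong ddc+3} once $E_1$-degeneration and condition $(\ast)$ are checked for $V_0$, the latter via the explicit zigzag positions in Theorem \ref{thm: dc-str Vaisman} (this is precisely the content of Remark \ref{rmk: SandVare*}). One small slip: you have the labels reversed --- in the paper's convention the length-3 zigzag in total degrees $1,2$ of the second factor of $S_{p,q}$ has incoming arrows and is the \emph{reverse} L, while the one in degrees $2(n-p-q),2(n-p-q)+1$ is the L --- but this does not affect the argument, since the relevant point is only that the two types never contribute to de Rham cohomology in the same total degree.
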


\subsection{Vanishing of higher multiplicative operations} \label{Vanishing of higher multiplicative operations}

In this section we record a Vaisman analogue of the formality result of \cite{DGMS} for K\"ahler manifolds. Namely, we show the vanishing of certain higher cohomology operations on compact Vaisman manifolds. This is to be expected by the close relationship between Vaisman and Sasakian manifolds, and the results of \cite[Prop. 4.4]{BFMT} showing that compact Sasakian manifolds have vanishing quadruple and higher Massey products. The latter follows from an algebraic statement proved in \cite[Prop. 4.5]{BFMT}, and essentially the same argument shows:

\begin{lem}
	Let $B=(\bigoplus_{i=0}^{2n+1}B^i,d=0)$ be a (connected) cdga with trivial differential, $\omega\in H$ of degree $2$ and consider an elementary extension $A=(B\otimes \Lambda(y),d)$, with $dy=\omega$. Assume that any cohomology class in $H(A)$ of degree at most $n$ has a representative in $B$ and that any cohomology class in degree at least $n+2$ has a representative in $By$. Then all Massey products $\langle a_1,...,a_k\rangle\in H(A)$ with $k\geq 4$ and no $a_i$ of degree $n+1$ contain zero.
\end{lem}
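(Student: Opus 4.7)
The plan is to exploit the weight decomposition $A = B \oplus By$ together with the relations $d|_B = 0$, $d(by) = (-1)^{|b|} b\omega$, and $y^2 = 0$. First, I would observe that $H(A)$ splits as $(B/B\omega) \oplus (\mathrm{Ann}_B(\omega) \cdot y)$ along the weight grading (weight $0$ on $B$, weight $1$ on $By$), and that the hypothesis translates cleanly: classes of degree $\leq n$ are pure weight-$0$, and classes of degree $\geq n+2$ are pure weight-$1$. Since no $a_i$ has degree $n+1$, each $a_i$ admits a pure-weight representative $m_{i,i}$, taken in $B$ when $|a_i| \leq n$ and in $By$ when $|a_i| \geq n+2$.

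Next, I would build the rest of a defining system $\{m_{i,j}\}$ inductively so that, for all $j > i$, each $m_{i,j}$ is either zero or lies in $By$. For $j = i+1$, the equation $d m_{i, i+1} = \bar m_{i,i} m_{i+1, i+1}$ splits by the weights of the singletons: if both lie in $B$, we solve $\pm c\omega = \bar m_{i,i} m_{i+1, i+1}$ (possible because $a_i a_{i+1} = 0$ in $B/B\omega$) and put $m_{i, i+1} = cy$; if at least one is in $By$, the product lies in $By$ and must be a coboundary, but coboundaries in $A$ lie in $B$, forcing the product to be zero and letting us take $m_{i, i+1} = 0$. For larger intervals I would set $m_{i, j} = 0$ and verify that $R_{i, j} := \sum_r \bar m_{i, r} m_{r+1, j} = 0$: each summand either contains an $m_{s, t}$ with $t - s \geq 2$ (zero by induction) or is a product of two elements of $By$, vanishing because $y^2 = 0$. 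The delicate case is $j - i = 2$, where $R_{i, i+2}$ merely lies in $By$; here, the hypothesis that the $k$-fold Massey product is defined implies each triple sub-product $\langle a_i, a_{i+1}, a_{i+2} \rangle$ contains zero, and the principle ``a weight-$1$ cocycle that is a coboundary must vanish'' (since $d(A) \subseteq B$) lets us adjust the residual ambiguity in $m_{s,s+1}$ (by cocycles $\alpha y$ with $\alpha \in \mathrm{Ann}_B(\omega)$) to achieve $R_{i, i+2} = 0$ on the nose.

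Finally, applying the same weight analysis to the top-level $R_{1, k}$ for $k \geq 4$ yields $R_{1, k} = 0$: every summand either contains an $m_{s,t}$ with $t - s \geq 2$ (zero by the induction) or is a product of two neighbor-pair elements in $By$ (zero by $y^2 = 0$). Hence the Massey product contains the class $[0]=0$, as claimed. The main obstacle is the $j - i = 2$ step, where one must carefully use both the weight-mismatch between coboundaries (always in $B$) and our $R_{i,i+2}$ (living in $By$), and the assumption that the ambient $k$-fold Massey product is defined, to pin down the ambiguity in the defining system and realize $R_{i,i+2}=0$ within our weight-constrained form.
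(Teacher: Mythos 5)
Your overall strategy is the right one, and it is essentially the argument the paper has in mind (the paper gives no proof of this lemma itself, deferring to \cite[Prop.~4.5]{BFMT}): decompose $A=B\oplus By$, use that every coboundary lies in $B$ (so a cocycle in $By$ that is exact must vanish) and that $(By)^2=0$, and build a defining system with all $m_{i,j}\in By$ for $j>i$ so that the top-level sum collapses for $k\geq 4$. Your treatment of the singletons, of $j-i=1$, and of $j-i\geq 3$ is correct.

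The gap is exactly where you flag it, at $j-i=2$, and the resolution you sketch does not close it. First, the assertion that the residual ambiguity can be absorbed by cocycles $\alpha y$ with $\alpha\in\mathrm{Ann}_B(\omega)$ needs an argument: a priori $[R_{i,i+2}]$ lies in $[a_i]H(A)+H(A)[a_{i+2}]$, and the classes $[u],[v]$ appearing there may have nonzero weight-$0$ components, whose representatives lie in $B$; adding those to $m_{s,s+1}$ destroys the $By$-constraint. (This can be repaired by taking weight-$1$ components of $[R_{i,i+2}]=[a_i][u]+[v][a_{i+2}]$, but only after checking that whenever an adjustment is actually needed both $a_i$ and $a_{i+2}$ have weight $0$ --- in every other weight configuration $R_{i,i+2}$ is already $0$ once one sets $m_{s,s+1}=0$ whenever a neighboring singleton lies in $By$.) Second, and more seriously, consecutive triples overlap: $m_{i,i+1}$ enters both $R_{i-1,i+1}$ and $R_{i,i+2}$, so the adjustment made for one triple perturbs the next, and ``each consecutive triple product contains zero'' is genuinely weaker than what you need --- it does not by itself produce a single choice of the $m_{s,s+1}$ trivializing all the $R_{i,i+2}$ simultaneously (this coherence is precisely what distinguishes ``the $k$-fold product is defined'' from ``all sub-triple products vanish''). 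The clean fix is to use the given $k$-fold defining system $\{m'_{i,j}\}$ directly: after purifying the singletons, take $m_{i,i+1}$ to be the $By$-component of $m'_{i,i+1}$ when $a_i,a_{i+1}$ both have weight $0$, and $m_{i,i+1}=0$ otherwise. Then in the only nontrivial case ($a_i,a_{i+1},a_{i+2}$ all of weight $0$) the element $R_{i,i+2}$ is exactly the $By$-component of $R'_{i,i+2}=dm'_{i,i+2}\in B\omega\subseteq B$, hence zero, with no case-by-case adjustment and no interference between overlapping triples.
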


\begin{cor} \label{VMPvanish}
	On a compact Vaisman manifold $V$ of dimension $n+1$, a Massey product $\langle a_1,...,a_k\rangle\in H(V)$  with $k\geq 4$ contains zero, provided that no $a_i$ has degree $n+1$.
\end{cor}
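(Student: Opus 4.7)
The plan is to deduce Corollary~\ref{VMPvanish} from the preceding Lemma by constructing a cdga model of $\cA(V)$ of the required elementary-extension form $A=B\otimes\Lambda(y)$ and checking that its cohomology satisfies the two degree hypotheses of the Lemma.

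First, I would build the model. Averaging over the $T^2$-action generated by the holomorphic Killing fields $X_\theta$ and $X_{J\theta}$ gives a cdga quasi-isomorphism $\cA^{\mathrm{inv}}(V)\hookrightarrow\cA(V)$, and multiplication yields an identification $\cA^{\mathrm{inv}}(V)\cong\cA_B(V)\otimes\Lambda\langle\theta,J\theta\rangle$ as cdgas, with $d\theta=0$ and $dJ\theta=\pm\omega_0\in\cA_B^2(V)$. Since the transverse geometry on a compact Vaisman manifold is K\"ahler, the basic $dd^c$-lemma holds on $V$, so $(\cA_B(V),d)$ is formal and there is a cdga quasi-isomorphism connecting it with its basic cohomology $(H_B^*(V),0)$. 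Tensoring through, I obtain a cdga quasi-isomorphism
\[
A := H_B^*(V)\otimes\Lambda(t,y)\ \xrightarrow{\ \simeq\ }\ \cA(V),\qquad |t|=|y|=1,\ dt=0,\ dy=[\omega_0]\in H_B^2(V).
\]
Setting $B:=H_B^*(V)\otimes\Lambda(t)$, one has $A=B\otimes\Lambda(y)$ with $B$ a cdga with trivial differential concentrated in degrees $0,\ldots,2n+1$, as the Lemma demands.

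Next, I would verify the two degree conditions using Hard Lefschetz on $H_B^*(V)$, which is a compact K\"ahler-type cohomology algebra of real top degree $2n$: multiplication by $[\omega_0]$ on $H_B^k$ is injective for $k\leq n-1$ and surjective for $k\geq n-1$. A direct Koszul-type calculation of the cohomology of $(H_B^*(V)\otimes\Lambda(y),dy=[\omega_0])$, followed by tensoring with $\Lambda(t)$, then yields
\[
H^d(A)\subseteq B\ \text{for } d\leq n,\qquad H^d(A)\subseteq By\ \text{for } d\geq n+2,
\]
while in degree $d=n+1$ the cohomology decomposes as $P^n\!\cdot t\ \oplus\ P^n\!\cdot y$, where $P^n\subset H_B^n$ are the primitive classes in middle basic degree. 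This is exactly the ambiguous degree excluded by the Lemma's hypothesis, and precisely the excluded degree in the statement of the corollary.

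Finally, applying the Lemma to the model $A$ yields that Massey products of length $k\geq 4$ with no entry of degree $n+1$ contain zero in $H(A)$. Since Massey products are invariants of cdga quasi-isomorphisms, the same holds in $H(V)$, proving the corollary. The principal subtlety is Step~1: producing an honest cdga quasi-isomorphism rests on (i) reducing to invariant forms via averaging over the holomorphic isometry action, and (ii) basic formality of compact Vaisman manifolds, which is a consequence of the basic $dd^c$-lemma coming from the transverse K\"ahler structure.
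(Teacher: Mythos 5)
Your proposal is correct and follows essentially the same route as the paper: both pass to the model $A=\bigl(H_B(V)\otimes\Lambda(\theta)\bigr)\otimes\Lambda(J\theta)$ with $d(J\theta)$ landing in degree two of the basic cohomology, and then verify the two degree hypotheses of the preceding Lemma via the explicit description of $H_{\dR}(V)$ in terms of primitive basic classes (the paper cites the known formulae, which encode the same hard Lefschetz computation you perform). Your additional care in justifying the quasi-isomorphism to $\cA(V)$ via averaging and transverse K\"ahler formality is a sound elaboration of what the paper attributes to Tsukada and Ishida--Kasuya.
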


We do not know whether there exists a compact Vaisman manifold with a nontrivial quadruple (or higher) Massey product.

\begin{proof}
As seen before, Vaisman manifolds $V$ have a model $B\otimes\Lambda(J\theta)$, with $B=H\otimes\Lambda(\theta)$, where $H=H_B(V)$ denotes the basic cohomology. This satisfies the required conditions by the explicit formulae for the cohomology of Vaisman manifolds: In fact, denoting by $P_B^k\subseteq H^k $ the primitive part of the basic cohomology, one has\[
H_{\dR}^k(V)=\begin{cases}
	P_B^k\oplus P_B^{k-1}\theta&\text{if }k\leq n\\
	P_B^n\theta\oplus P_B^nJ\theta&\text{if }k=n+1\\
	(P_B^{n-l}\omega_0^{l}\oplus P_B^{n-(l-1)}\omega_0^{l-1}\theta)J\theta&\text{if }k=n+1+l\geq n+2
\end{cases}
\] 
(see for instance \cite{Kash}, \cite{VaisGH}, \cite{OV2} or Theorem \ref{thm: dc-str Vaisman}).
\end{proof}

To establish the second vanishing result in this section, Proposition \ref{C3prop}, we review some background material on $C_\infty$-algebras, $C_n$-algebras, and their morphisms, which are due to Kadeishvili \cite{Kad2}.  These are commutative analogues of $A_\infty$-algebras, etc.,  due to  \cite{Stash}. We use the unshifted sign conventions, for example of \cite{Markl2}, and the Koszul rule for signs is implicit. 
 
 An \textit{$A_\infty$-algebra} on a graded vector space $A = \{A^n\}_{n \in\Z}$ is a 
collection of linear maps $m_k: A^{\ot k} \to A$ of degree $2-k$, for $k \geq 1$ such
that
\[
[d, m_k] = \sum_{\stackrel{ j+\ell =k+1; \, j,l\geq 2}{1 \leq i \leq j}} (-1)^{i(\ell +1)+k} m_j \left( \id^{\ot i-1} \ot m_\ell \ot  \id^{\ot j-i} \right),
\]
where the left hand side uses differential in the complex $Hom(A^{\ot k},A)$. The equations imply $d := m_1$  satisfies $d^2=0$, and that $d$ is a derivation of the product $m_2$. Also, $m_3$ is a chain homotopy for the associativity condition. 

A \textit{$C_\infty$-algebra} is an $A_\infty$-algebra for which each $m_k \in Hom(A^{\ot k},A)$ vanishes on the image of the shuffle product of the tensor co-algebra of $A$ shifted down by one.  We refer the reader to \cite{Markl} for the precise definition of these, which will not be needed here.  By definition, a $C_n$-algebra is a $C_\infty$-algebra such that $m_k=0$ for all $k>n$. A cdga is precisely a $C_2$-algebra. A $C_\infty$-algebra is called \emph{minimal} if $m_1=0$. 

If $(A, m_k)$ and $(A',m_k')$ are $A_\infty$-algebras, an $A_\infty$-morphism from $A$ to $A'$ is a collection of 
linear maps $f_k:A^{\ot k} \to A'$ of degree $1-k$ such that for every $k \geq 1$
\[
[d,f_k] + \sum_B (-1)^{\epsilon} m'_j( f_{r_1} \ot \cdots \ot f_{r_j}) =
\sum_{\stackrel{ j+\ell =k+1; \, j,l\geq 2}{1 \leq i \leq j}} (-1)^{i(\ell +1) + k} f_j (\id^{\ot i-1} \ot m_\ell \ot  \id^{\ot j-i} )
\]
where  
\[
B= \{j,r_1,\ldots,r_j|  2 \leq j \leq k, r_1,\ldots, r_j \geq 1, r_1 +\ldots +  r_j=k\},
\]
\[
\epsilon:= \epsilon(r_1,\ldots,r_j) = \sum_{1 \leq \alpha < \beta \leq s} r_\alpha( r_\beta +1).
\]

A $C_\infty$-morphism is a morphism $\{f_k\}$ of $A_\infty$-algebras such that each map $f_k$ vanishes on shuffles.
An $A_\infty$ or $C_\infty$-morphism is a \emph{quasi-isomorphism} if  $f_1: A \to A'$ induces an isomorphism 
in cohomology of the complexes $(A, m_1)$ and $(A', m'_1)$. 

A $C_\infty$-structure is called \textit{unital} if there's an $m_1$-closed element $1 \in A^0$ which is a unit for the product $m_2$, and $m_k$ vanishes for $k>2$ whenever $1$ is inputted. A morphism $\{f_k\}$  of unital $C_\infty$ algebras means $f_1$ preserves units, and $f_k$ vanishes for $k>2$ whenever $1$ is inputted. 

We now come to the main transfer theorem for transfer of $C_\infty$-structures \cite{CG}. This result has a long list of
antecedents, e.g. \cite{Markl2}, \cite{KS}, \cite{Merk}, \cite{Kad4}, \cite{Markl}, \cite{Kad}, which vary in their level of generality and explicitness of formulas and signs. 

We restrict to the case where $(A,d)$ is a unital cdga, and suppose we have  a \emph{contraction}, namely chain maps
$\pi: A \to H$, $i: H \to A$, with $\pi \circ i = \id_H$ and $i \circ \pi - id = [d,h]$ for some homotopy $h: A^* \to A^{*-1}$ on the complex $(A,d)$. We assume the follow side conditions hold
\[
h^2=0, \quad \quad h \circ i =0, \quad  \quad \pi \circ h = 0,
\]
which can always be arranged, and will hold in the applications below.

\begin{thm} (\cite{CG})  \label{thm:transf}
For any unital cdga $(A,d,m)$ with a contraction $(A,H^*(A),\pi,i)$ satisfying the above side conditions, 
 there is a minimal unital  $C_\infty$-algebra $(H^*(A),m_k)$ given inductively, for $k \geq 2$, by
\[
m_k = \pi \circ p_k \quad \textrm{where} \quad p_k = m \left (\sum_{j=1}^{k-1} (-1)^{k} h p_j \ot
h p_{k-j} \right),
\]
where $h p_1:= i$. Furthermore, there is a  unital $C_\infty$-quasi-isomorphism 
$f_k:A^{\ot k} \to H^*(A)$ given by  $f_k = h \circ p_k$, for $k\geq1$.
\end{thm}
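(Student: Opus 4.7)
The plan is to verify the transferred structure by induction on arity, reading the recursive formulas combinatorially as a sum over planar binary trees. Unraveling $p_k=m\bigl(\sum_{j}(-1)^{k}hp_{j}\otimes hp_{k-j}\bigr)$ starting from $hp_1=i$, one sees that $p_k$ is an alternating sum over planar binary trees with $k$ leaves, where each leaf carries $i$, each internal edge carries $h$, and each internal vertex carries $m$. Post-composing with $\pi$ places $\pi$ at the root (yielding $m_k$); post-composing with $h$ places $h$ there (yielding $f_k$). The side conditions $h^2=0$, $hi=0$, $\pi h=0$ ensure that no degenerate contributions arise and that $m_k,f_k$ are well-defined operators $H^{\otimes k}\to H$ and $H^{\otimes k}\to A$ of the correct degree $2-k$ and $1-k$ respectively.

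Next I would prove the $A_\infty$-relations for $\{m_k\}$ and the $A_\infty$-morphism relations for $\{f_k\}$ by induction on $k$. The mechanism is the identity $[d,h]=i\pi-\id$ applied to each internal edge of a tree. The term $i\pi$ cuts the edge and reconstitutes two smaller trees connected through the root structure, producing exactly compositions of smaller $m_j$ and $f_j$ that contribute to the $A_\infty$-relation by the inductive hypothesis. The term $-\id$ collapses two adjacent $m$-vertices into one, giving associator terms that cancel by associativity of $m$. The remaining $dh+hd$ terms propagate $d$ through the tree using that $d$ is a derivation of $m$ on $A$, ultimately matching the left-hand side $[d,m_k]$ or $[d,f_k]$. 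Summing over all trees and all edges, the contributions reorganize into precisely the $A_\infty$-relations. That $f_1=i$ induces an isomorphism in cohomology is immediate from the contraction data.

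For the $C_\infty$-property, I would argue by induction that both $m_k$ and $f_k$ vanish on the image of the shuffle product in $H^{\otimes k}$ (respectively $A^{\otimes k}$). The base case is $p_2=m(i\otimes i)$, which is graded-commutative since $m$ is and $i$ is a chain map, so $p_2$ vanishes on the single arity-$2$ shuffle. For the inductive step, expand a shuffle input into the recursive formula for $p_k$; each summand involves $hp_j\otimes hp_{k-j}$ applied to a sum of partitioned shuffles. Using the inductive vanishing of $p_j$ on shuffles and the graded-commutativity of $m$ (which kills the binary shuffle of the two outputs of $hp_j$ and $hp_{k-j}$), the sum collapses to zero. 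Unitality is arranged by adjusting the contraction so that $h(1)=0$, whereupon every tree with a $1$-input collapses through $m(1\otimes-)=\id$ to the appropriate lower-arity operation, and unital vanishing for $k>2$ follows.

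The main obstacle is the sign bookkeeping in the second step: one must track the Koszul signs arising from moving $h$ past tensor factors of differing degrees, the explicit signs in the $A_\infty$-relations, and the $(-1)^k$ in the recursive formula for $p_k$, and show they combine correctly so that the $(dh+hd)$ contributions match $[d,m_k]$ and the $i\pi$ contributions match the quadratic terms $\sum m_j'(f_{r_1}\otimes\cdots\otimes f_{r_j})$. Once the tree/sign dictionary is set up, the verification is a finite combinatorial check at each arity, but producing that dictionary cleanly is the technical heart of the argument.
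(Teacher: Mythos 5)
The paper does not prove this theorem: it is quoted verbatim from Cheng--Getzler \cite{CG}, with a list of antecedents, and used as a black box in the proofs of Proposition \ref{C3prop} and Corollaries \ref{cor;SC3} and \ref{cor:VC3}. So there is no in-paper argument to compare against; your proposal has to be judged against the cited literature. Your outline --- expanding $p_k$ as a signed sum over planar binary trees with $i$ on leaves, $h$ on internal edges and $m$ at vertices, then proving the $A_\infty$- and morphism relations by applying $[d,h]=i\pi-\id$ edge by edge, with $i\pi$ producing the quadratic terms and $-\id$ producing associator cancellations --- is exactly the standard homotopy-transfer argument that \cite{CG} and its predecessors carry out, and the side conditions are used exactly as you say.

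The one place where your sketch has a genuine gap is the $C_\infty$ step. You argue that $p_k$ vanishes on shuffles by induction, ``using the inductive vanishing of $p_j$ on shuffles.'' But the recursion deconcatenates the input at position $j$, and the deconcatenation coproduct of a shuffle product is a \emph{sum of tensor products of shuffle products} (the shuffle bialgebra compatibility), which includes cross terms in which neither tensor factor individually lies in the image of the shuffle product --- e.g.\ for $k=3$ the summand $m(i(b)\otimes h\,p_2(a\otimes c))$ arising from $\operatorname{sh}(a;b,c)$ is not killed by the inductive hypothesis. The required cancellation happens only after summing over all trees and all terms of the shuffle, and pairing them using graded commutativity of $m$ at every vertex (or, more systematically, via the Eulerian idempotents / the cocommutativity of the bar construction, which is how \cite{CG} organizes it). So the induction as you state it does not close; this step, rather than the sign bookkeeping, is the technical heart of the transferred structure being $C_\infty$ rather than merely $A_\infty$. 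The rest of the proposal (unitality via $h(1)=0$, quasi-isomorphism from $f_1=i$) is fine.
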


Note that $m_2 = \pi \circ m \circ (i \ot i)$ is simply the transport of the product by the chain equivalence, and on 
elements $x=i(a), y= i(b), z=i(w)$, 
\[
m_3(x,y,z) = -\pi ( h(xy)z + (-1)^{|x|}  xh(yz)). 
\]

 The recursive formulas in the statement above can also be expressed as ``sum over trees'' formulas, which are perhaps easier to visualize. The following lemma gives sufficient conditions for the induced $C_\infty$-algebra on  $H^*(A)$ to be a $C_3$-algebra.
 
\begin{prop} \label{C3prop}
Let $(A,d)$ be a unital cdga such that 
\[
A \cong B \otimes  \Lambda x
\]
as unital algebras, where $B$ has trivial differential, $\mathrm{deg}(x)$ is odd, and $dx \in B$.  Then $(A,d)$ is 
quasi-isomorphic to a minimal unital $C_3$-algebra, via a unital $C_\infty$-quasi-isomorphism
$f_k:A^{\ot k} \to H^*(A)$ satisfying $f_k = 0$ for $k \geq 4$.
\end{prop}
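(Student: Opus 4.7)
The plan is to apply Theorem \ref{thm:transf} to an explicit contraction of $(A,d)$ onto $H^*(A)$ whose homotopy $h$ has very restrictive image, and then show that the recursion defining the transferred operations collapses. Set $\omega := dx \in B$ and $K := \operatorname{Ann}_B(\omega)$; a direct computation gives $H^*(A) \cong (B/\omega B) \oplus Kx$. Choose vector space decompositions $B = V \oplus \omega B$ and $B = K \oplus L$, arranging $1 \in V$ (possible since $\omega$ has strictly positive degree). Let $i: H^*(A) \hookrightarrow A$ identify $H^*(A)$ with $V \oplus Kx \subseteq A$; let $\pi: A \to H^*(A)$ be the projection $B \twoheadrightarrow V$ along $\omega B$ together with the projection $Bx \twoheadrightarrow Kx$ along $Lx$; and define $h: A \to A$ of degree $-1$ by $h|_V = 0$, $h|_{Bx} = 0$, and $h(\omega \ell) := (-1)^{|\ell|} \ell x$ for $\ell \in L$. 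A routine check shows this is a unital contraction satisfying the side conditions $h^2 = 0$, $hi = 0$, $\pi h = 0$.

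Two structural properties now drive the vanishing: (i) $\operatorname{Im}(h) \subseteq Lx \subseteq Bx$ while $h|_{Bx} = 0$, so $h$ annihilates its own image; and (ii) $x^2 = 0$ since $|x|$ is odd, so $(Bx)\cdot(Bx) = 0$ in $A$. Applying Theorem \ref{thm:transf} with the recursion $p_k = (-1)^k m\!\left(\sum_{j=1}^{k-1} hp_j \otimes hp_{k-j}\right)$ and $hp_1 := i$, I first compute $p_3$: its two summands are products of the form $(V \oplus Kx)\cdot Lx$ and $Lx\cdot(V\oplus Kx)$, both of which lie in $Bx$ by $x^2 = 0$. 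Hence $hp_3 = 0$.

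An induction on $k$ then gives $p_k = 0$ and $hp_k = 0$ for all $k \geq 4$: in each term $hp_j \otimes hp_{k-j}$ in the recursion for $p_k$, either $\max(j,k-j) \geq 3$ so that the corresponding factor vanishes by the inductive hypothesis, or $j = k-j = 2$ with $k = 4$, in which case the product lies in $(Lx)\cdot(Lx) = 0$. Therefore $m_k = \pi p_k = 0$ and $f_k = hp_k = 0$ for all $k \geq 4$, yielding the desired minimal unital $C_3$-algebra together with a unital $C_\infty$-quasi-isomorphism with $f_k = 0$ for $k \geq 4$. The only delicate point is arranging $h$ to vanish on the \emph{whole} of $Bx$ (not only on $Kx$), so that $\operatorname{Im}(h)$ is annihilated by $h$; after this, the nilpotency $x^2 = 0$ does the remaining work automatically, with no compatibility between $V$ and $L$ required.
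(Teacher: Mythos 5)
Your proposal is correct and follows essentially the same route as the paper: both construct a contraction whose homotopy $h$ has image inside the ideal $Bx$ while vanishing on all of $Bx$, and then use $(Bx)^2=0$ to collapse the transfer recursion $p_k$ for $k\geq 4$. Your version merely makes the splitting more explicit (via $K=\operatorname{Ann}_B(\omega)$ and the complements $V$ and $L$) where the paper phrases the same choice as $A=\operatorname{Im} d\oplus L\oplus H$ with $L\subseteq Bx$.
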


\begin{proof}
Using the notation of Theorem \ref{thm:transf}, it suffices to show that $p_k = 0$ for all $k \geq 4$.

As complexes, we have $A= B\oplus Bx$, and $Bx$ is an ideal in $A$ satisfying $(Bx)^2=0$.
Note that $\Imt d\subseteq B$ and $\Ke d=B\oplus \Ke d\cap Bx$. This allows to choose a splitting $A=\Imt d \oplus L\oplus H$, where $L\subseteq Bx$, $d:L\to \Imt d$ is an isomorphism and $d|_H=0$. Choose $i: H \to A$ to be the inclusion, that represents the cohomology of $A$, with projection $\pi: A \to H$ 
and define a contracting homotopy $h: A \to A$ to be a projection onto $\Imt(d)$, followed by $d^{-1}:\Imt d\to L$.
By construction,  $\Imt h$ is contained in the ideal $Bx$, and satisfies $h(Bx) = (\Imt h)^2 = 0$.

Therefore, for $k=3$ we have 
\[
p_3 = m \left (i \ot h p_2 \right) + m \left ( h p_2 \ot i \right) \subseteq Bx,
\]
so that $p_4=0$, since
\[
m(i \ot h p_3) = m(h p_2 \ot h p_2 ) = m( h p_3 \ot i )=0.
\]
Similarly, for all $k \geq 4$ and every $1 \leq j <k$, $m \left ( h p_j \ot h p_{k-j} \right) =0$, so $p_k = 0$.
\end{proof}

According to \cite{T}, the differential forms of a compact Sasakian manifold have a model which satisfies the condition of Proposition
\ref{C3prop}. 

\begin{cor} \label{cor;SC3}
 For any compact Sasakian manifold the differential forms are $C_\infty$-quasi-isomorphic to a minimal unital $C_3$-algebra.
 \end{cor}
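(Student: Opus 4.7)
The plan is to reduce immediately to Proposition \ref{C3prop}. By the result of Tievsky \cite{T}, the de Rham algebra $(\cA(M),d)$ of any compact Sasakian manifold $M$ admits a (finite-dimensional) cdga model of the form
\[
\bigl( H_B(M)\otimes \Lambda(x),\, d\bigr),
\]
where $H_B(M)$ denotes the basic cohomology of the Reeb foliation (equipped with zero differential), $x$ is a generator placed in odd degree one (corresponding to the contact form), and the only non-trivial differential is $dx=[\omega_T]\in H_B^2(M)$, the basic class of the transverse Kähler form. This is precisely the algebraic setup of Proposition \ref{C3prop}, with $B=H_B(M)$.

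First I would record that this Tievsky model is quasi-isomorphic to $(\cA(M),d)$ as unital cdgas; in particular, it is quasi-isomorphic to $\cA(M)$ as unital $C_\infty$-algebras (since cdga maps are $C_\infty$-morphisms with $f_k=0$ for $k\geq 2$). Next, I would apply Proposition \ref{C3prop} to the model $B\otimes\Lambda(x)$ directly: its hypotheses hold verbatim ($B$ has trivial differential, $\deg x$ is odd, $dx\in B$), and the conclusion is that this cdga is $C_\infty$-quasi-isomorphic to a minimal unital $C_3$-algebra structure on its total cohomology, namely $H^*(M;\CC)$.

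Finally, I would assemble the two $C_\infty$-quasi-isomorphisms. Since $C_\infty$-quasi-isomorphisms generate an equivalence relation on unital $C_\infty$-algebras (one may invert them up to homotopy, e.g.\ as in the formulas of Theorem \ref{thm:transf}), concatenating the quasi-isomorphism $\cA(M)\simeq H_B(M)\otimes\Lambda(x)$ with the quasi-isomorphism supplied by Proposition \ref{C3prop} yields the asserted $C_\infty$-quasi-isomorphism between $\cA(M)$ and a minimal unital $C_3$-algebra.

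I do not expect any serious obstacle: both ingredients are already in place. The only point that requires a brief sanity check is that Tievsky's model is literally of the form required by Proposition \ref{C3prop}, i.e.\ a tensor product of algebras (not merely a Hirsch/elementary extension in a weaker sense) with an odd degree generator whose differential lands in the first factor; this is clear from Tievsky's construction, where $x$ is a formal variable of degree one adjoined freely to $H_B(M)$.
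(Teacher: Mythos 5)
Your proposal is correct and follows exactly the paper's route: the paper disposes of this corollary in a single sentence, citing Tievsky's model $H_B(M)\otimes\Lambda(x)$ with $dx$ the transverse K\"ahler class and observing that it satisfies the hypotheses of Proposition \ref{C3prop}. Your additional remarks about concatenating $C_\infty$-quasi-isomorphisms and checking that the model is literally a tensor product are sensible fillings-in of detail, not a different argument.
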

 
 Additionally we have:

 \begin{cor} \label{cor:VC3}
For any compact Vaisman manifold the differential forms are $C_\infty$-quasi-isomorphic to a minimal unital $C_3$-algebra.
\end{cor}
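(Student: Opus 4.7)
The plan is to apply Proposition~\ref{C3prop} to an explicit cdga model of $\cA(V)$, mirroring the argument for the Sasakian case in Corollary~\ref{cor;SC3}.

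As recalled in the proof of Corollary~\ref{VMPvanish}, a compact Vaisman manifold $V$ admits a cdga model of the form
\[
A := B \otimes \Lambda(J\theta), \qquad B := H_B(V) \otimes \Lambda(\theta),
\]
where $H_B(V)$ denotes the basic cohomology, carrying the zero differential, and the differential on $A$ is determined by $d|_B = 0$ (since $H_B(V)$ has trivial differential and $\theta$ is closed) together with $d(J\theta) = \omega_0 \in H_B(V)$. In particular, there is a zigzag of cdga quasi-isomorphisms connecting $\cA(V)$ with $A$.

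Next, I would verify the hypotheses of Proposition~\ref{C3prop} with $x := J\theta$: the subalgebra $B$ has trivial differential by construction, the element $x$ has odd degree $1$, and $dx = \omega_0$ lies in $B$. Proposition~\ref{C3prop} therefore produces a minimal unital $C_3$-algebra structure on $H^*(A) \cong H_{\dR}^*(V)$ together with a unital $C_\infty$-quasi-isomorphism $A \to H^*(A)$ whose components $f_k$ vanish for all $k \geq 4$.

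To conclude, I would transfer this structure back to $\cA(V)$. Every strict cdga morphism is in particular a unital $C_\infty$-morphism (with higher components equal to zero), and $C_\infty$-quasi-isomorphisms admit homotopy inverses in the $C_\infty$ sense, so composing along the zigzag between $\cA(V)$ and $A$ yields a unital $C_\infty$-quasi-isomorphism from $\cA(V)$ to the minimal $C_3$-algebra $H_{\dR}^*(V)$. The main obstacle lies in checking that the $C_3$-property of the minimal $C_\infty$-model is an invariant of the cdga quasi-isomorphism class of $A$; this follows from the uniqueness, up to $C_\infty$-isomorphism, of the minimal $C_\infty$-model of a cochain complex.
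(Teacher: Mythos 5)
Your proposal is correct and follows essentially the same route as the paper: both take the Vaisman model $\left(H_B(V)\otimes\Lambda\theta\right)\otimes\Lambda(J\theta)$ recalled in the proof of Corollary \ref{VMPvanish} and apply Proposition \ref{C3prop} with $B=H_B(V)\otimes\Lambda\theta$ and $x=J\theta$. Your closing remark on transporting the result along the zigzag of cdga quasi-isomorphisms back to $\cA(V)$ is a point the paper leaves implicit, and is handled correctly.
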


\begin{proof} 
A real model of the complex $(\cA(V), d)$ is given by
	\[
	H \otimes \Lambda\langle \theta^{0,1},\theta^{1,0}\rangle =  \left( H \ot \Lambda \theta \right) \ot \Lambda ( J\theta)
	\]
	where  $H$ is the basic cohomology, $d \theta =0$, and $d (J \theta)= J\omega_0 \in H \ot \Lambda \theta$, \cite{IshKas}.
Now apply Proposition \ref{C3prop} with $B =  H \ot \Lambda \theta$ and $x=J \theta$.
\end{proof}

\begin{rmk}
The operations in a $C_\infty$-algebra are strongly related to Massey products, \cite{BMM}, and one might view Corollary \ref{cor:VC3} as a uniform version of Corollary \ref{VMPvanish}. As far as we know, there is general no implication between these properties (even if one had Corollary \ref{VMPvanish}  without degree restrictions). 
\end{rmk}

Allowing for a moment the case $d\omega=0$ in the Vaisman condition, we have the following suggestive diagram of implications, which includes formality in the K\"ahler case:
\[
\begin{tikzcd}
\text{K\"ahler}\ar[d,Rightarrow]\ar[r,Rightarrow]&dd^c\ar[r,Rightarrow]\ar[d,Rightarrow]& \text{minimal $C_\infty$ model with }m_k=0,~k\geq 3\ar[d,Rightarrow]\\
\text{Vaisman}\ar[r,Rightarrow]\ar[rr,Rightarrow,bend right]&dd^c+3& \text{minimal $C_\infty$ model with }m_k=0,~k\geq 4.
\end{tikzcd}
\]
This suggests that $dd^c$-type conditions are incompatible with having a highly complex homotopy type. In the next section we will see such a statement made concrete.

\section{Rational homotopy obstructions to $d d^c$-type conditions} \label{sec;hmtpyobstr}

In this section we show that the existence of a complex structure satisfying a variant of the $dd^c+3$-condition imposes non-trivial restrictions on the underlying real homotopy type. We begin with the most basic form of the argument, which already has interesting applications, and provide a generalization below.

First, we need some elementary concepts from rational homotopy theory. All cdga's will be concentrated in non-negative degrees and connected, i.e. $\dim A^0=1$. A cdga $(A,d)$ is called minimal if it is free as a graded-commutative algebra, $A=\Lambda V$, and there is a well-ordered basis $\{x_i\}$ of $V$, with $x_i < x_j$ if $deg(x_i) < deg(x_j)$, such that $dx_i$ is a sum of products of lower order generators. A minimal model for a cdga $A$ is map of cdga's, $\psi: \cM\to A$, such that $\cM$ is minimal and $\psi$ is a quasi-isomorphism. A $k$-minimal model for a cdga $A$ is map of cdga's, $\psi: \cM^k \to A$, such that $\cM^k $ is minimal, generated by degrees less than or equal to $k$, and $H^s(\psi)$ is an isomorphism for $s \leq k$ while $H^{k+1}(\psi)$ is injective. Minimal models and $k$-minimal models always exist and are unique up to isomorphism. There is a simple algorithm for their construction \cite{InfComp}. If a $k$-minimal model is already a minimal model for $A$, we call $A$ $k$-minimal. Typical examples of $1$-minimal cdga's are provided by the differential forms on nilmanifolds.

\begin{ques} \label{qu:fili}
	Consider a filiform nilmanifold $M=G/\Gamma$ where $\Gamma$ is a lattice in the simply connected Lie group $G$ associated with the (1-minimal) cdga of left-invariant forms
	\[\Lambda(\eta^1,...,\eta^6)\quad d\eta^1=d\eta^2=0,~d\eta^k=\eta^1\eta^{k-1}\text{ for }k=3,...,6.
	\]
	Like any even-dimensional nilmanifold, $M$ admits an almost complex structure (e.g. put $J\eta^{2k}=\eta^{2k-1}$). It is known that $M$ does not admit left-invariant complex structures \cite{GM}, and it is unkown whether it admits any complex structures. As a possibly simpler question we may ask: Is it possible that $M$ admits a complex structure which has a fixed $E_1$-isomorphism type for the bicomplex of forms $(\cA(M), \del ,\delb)$? For example, is the following bicomplex possible?
	\[
	\cA(M)\simeq_1\img{S000_3}\oplus \img{S110_3}\oplus \img{S211_3}\oplus \img{S221_3}\oplus \img{S330_3}\oplus \img{S311_3}
	\]
	Note that this would yield the correct Betti numbers, satisfy the $dd^c+3$ condition, and have a pure Hodge structure on $H^1$.
\end{ques}

The following Theorem, a prototype for the results in these section, shows that the bicomplex depicted above (and many others)  \textit{cannot} occur as those of a hypothetical complex structure on $M$.

\begin{thm} \label{thm;warmup}
	Let $M$ be a compact manifold of real dimension $2n$ such that 
	\begin{enumerate}
		\item the cdga of forms $(\cA(M),d,\wedge)$ is $1$-minimal.
		\item the cup product map $\cup:H^1(M)\times H^1(M)\to H^2(M)$ vanishes identically.
	\end{enumerate} 
	Assume there exists a complex structure on $M$ such that
	\begin{enumerate}[resume]
		\item the map $H^1(\Ke d^c)\to H^1(M)$ is an isomorphism.
		\item the map $H^2(\Ke d^c)\to H^2(M)\oplus H^2(M)$ is injective.
	\end{enumerate}Then, $n=0$.
\end{thm}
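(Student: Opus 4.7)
The plan is to lift the minimal model map $\psi\colon \cM^1\to\cA(M)$ through $\Ke d^c$, compose with $\pi$ to obtain a cdga map $g\colon \cM^1\to (H_{d^c},0)$, and then derive a contradiction by evaluating $g_*$ on the fundamental class $[M]\in H^{2n}(M)$ under the assumption $n\ge 1$.

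First I would verify that the cup product $H^1(\Ke d^c)\otimes H^1(\Ke d^c)\to H^2(\Ke d^c)$ vanishes. Given classes $\alpha,\beta$ represented by $\tilde\alpha,\tilde\beta\in \Ke d\cap \Ke d^c$, the product $\tilde\alpha\wedge\tilde\beta$ has $i$-image $[\tilde\alpha]_d\cup[\tilde\beta]_d=0$ by (2); its $\pi$-image in $H^2_{d^c}$ corresponds, via the algebra isomorphism $\II\colon H_{d^c}\xrightarrow{\sim} H_d$ (coming from $\II d^c=d\II$, Proposition \ref{Jdindent}), to a cup product of two $H^1_d$-classes, which again vanishes by (2). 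Condition (4) then forces $[\tilde\alpha\wedge\tilde\beta]=0$ in $H^2(\Ke d^c)$.

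Next I would construct $\tilde f\colon \cM^1\to \Ke d^c$ by induction over the stages of $\cM^1=\Lambda V$, which by (1) equals the full minimal model. On closed generators, I pick $d$- and $d^c$-closed representatives using (3). For a new generator $z$ added at a later stage with $dz$ defined in the previous stage, I must check that $\tilde f(dz)\in \Ke d^c\cap \cA^2$ is $d$-exact in $\Ke d^c$. Its $i$-image equals $\psi(dz)$, which is $d$-exact in $\cA(M)$ by the minimal model construction; its $\pi$-image equals $g(dz)$, where $g:=\pi\circ\tilde f$ is inductively zero on $(\cM^1)^{\geq 2}$ because $g$ sends $V$ into $H^1_{d^c}$ and iterated cup products there vanish by the previous step. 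By (4), $[\tilde f(dz)]=0$ in $H^2(\Ke d^c)$, so a primitive $w\in \Ke d^c\cap \cA^1$ exists; set $\tilde f(z):=w$.

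Finally, the contradiction comes from evaluating $g_*$ two ways: on one hand, $g$ annihilates $(\cM^1)^{\geq 2}$ by cup product vanishing, so $g_*([M])=0$ in $H^{2n}_{d^c}$. On the other hand, $g_*([M])=[\tilde f(\omega)]_{d^c}$ for any cocycle $\omega$ representing $[M]$; since $\cA^{2n}(M)=\cA^{n,n}(M)$ has pure bidegree, $\II$ acts as the identity on top-degree forms, so under $\II\colon H^{2n}_{d^c}\xrightarrow{\sim} H^{2n}_d$ we have $[\tilde f(\omega)]_{d^c}\mapsto [\tilde f(\omega)]_d = i_*\tilde f_*([M]) = \psi_*([M]) = [M]\ne 0$, contradicting Poincaré duality. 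The main obstacle is extending the inductive construction of $\tilde f$ through all stages: condition (4) supplies the required primitive only once both $i$- and $\pi$-images of the obstruction vanish, and while the $i$-side is automatic from the minimal model construction, the $\pi$-side vanishing crucially relies on (2) transferred through the algebra isomorphism $\II$.
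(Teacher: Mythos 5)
Your proposal is correct in all of its essential ideas and reaches the same contradiction as the paper, but it runs the construction in the opposite direction. The paper takes a $1$-minimal model $\psi:\cM^1\to \Ke d^c$ of $\Ke d^c$ (which exists unconditionally) and pushes it forward along $i$: hypothesis (3) gives that $H^1(i\circ\psi)$ is an isomorphism, the cup-product vanishing (2) transported through the algebra isomorphism $\II$ gives $H^{k}(\II\circ\pi\circ\psi)=0$ for $k\geq 2$, and then (4) together with the injectivity of $H^2(\psi)$ forces $H^2(i\circ\psi)$ to be injective; so $i\circ\psi$ is a $1$-minimal model of $\cA(M)$, hence by (1) a full minimal model, and the rank-one statement $\rank H^{2n}(\II\circ\pi)=1$ (your observation that $\II=\id$ on $\cA^{n,n}$, so $H^{2n}(\II\circ\pi)=H^{2n}(i)$) yields the contradiction. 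You instead take the minimal model of $\cA(M)$ and \emph{lift} it through $i$ by an inductive obstruction argument, using (4) to kill the obstruction class once its two components vanish. Both routes buy the same thing; the paper's avoids lifting altogether, which is why it is shorter.

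The one step in your version that needs more care is the claim that the $i$-image of the obstruction $\tilde f(dz)$ ``equals $\psi(dz)$.'' Since you choose fresh $d^c$-closed representatives at each stage, $i\circ\tilde f$ does not literally agree with $\psi$, so $[i\tilde f(dz)]_d=[\psi(dz)]_d=0$ only holds if you carry along a homotopy $i\circ\tilde f\simeq \psi$ through the induction (extending it over each new generator, which uses surjectivity of $H^1(i)$ to correct the choice of primitive $w$). The same homotopy is what justifies $i_*\tilde f_*([M])=\psi_*([M])$ in your final step; without it, neither $H^2(i\circ\tilde f)$ being injective nor $H^{2n}(i\circ\tilde f)\neq 0$ is automatic. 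This is the standard bookkeeping of the lifting lemma and is fixable, but as written it is elided; note that the $\pi$-side of your obstruction argument does \emph{not} suffer from this issue, since $g(dz)$ is computed on the nose as a product of classes in $H^1_{d^c}$, which vanishes by (2) via $\II$ regardless of the choice of representatives.
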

Here, the maps alluded to in $(3)$ and $(4)$ refer to those induced from the left inclusion map $i$, and the 
direct sum map $(i, \II \circ \pi)$ where $ \II=i^{p-q}$, in the following diagram:
\[
\xymatrix{
	& \left( \Ke d^c, d \right) \ar[dl]_i \ar[dr]^{\II \circ\pi} &  \\
	\left(\cA(M),d \right)& & \left(H_{d}(M) , d=0  \right)
}
\]
The example in Question \ref{qu:fili} satisfies conditions $(3)$ and $(4)$ of Theorem \ref{thm;warmup}. These conditions will be discussed in more depth below.

\begin{proof}
Take a $1$-minimal model $\psi: \cM^1(\Ke d^c)=\Lambda V\to \Ke d^c$. We show $i\circ\psi$ is a $1$-minimal model for $\cA(M)$, and therefore a minimal model for $\cA(M)$, since $\cA(M)$ is $1$-minimal. 

Firstly, $H^1(i\circ\psi)$ is an isomorphism by assumption, so it suffices to show that $H^2(i\circ\psi)$ is injective. For the latter, observe that 
$H^k(\II \circ \pi \circ\psi)$ vanishes for $k\geq 2$. Indeed, given any class $\mathfrak{c}\in H^k(M^1(\Ke d^c))$, we may write $\mathfrak{c}=[\sum_i v^1_iv^2_i...v^k_i]$ for some $v_i^j\in V$. Then, using that everything in the image of $\II \circ\pi$ is closed, we compute
\[
H^k(\II \circ\pi\circ \psi) (\mathfrak c)=\sum_i[(\II \circ\pi\circ\psi) (v_i^1)]\cup...\cup[(\II \circ\pi\circ\psi)(v_i^k)]=0.
\]
Since $H^k(i \circ \psi, \II \circ\pi \circ \psi):H^k(\cM^1(\Ke d^c))\to H^k(\cA(M))\oplus H^k(\cA(M))$ is injective, we get that $H^k(i\circ\psi)$ is injective for $k=2$. 

Since $i\circ\psi$ is a minimal model for $\cA(M)$,  $H^{2n}(i\circ\psi)$ is an isomorphism, so the generator of $H^{2n}(\Ke d^c)$ lies in the image of $H^{2n}(\psi)$. On the other hand, we have seen that $\operatorname{rank}H^k(\II \circ\pi\circ\psi)=0$ for all $k\geq 2$. Since for any compact complex $2n$-manifold $\rank H^{2n}(\II \circ\pi)=1$, this implies $2n \leq 1$, so $n=0$.
\end{proof}

In what follows, we generalize this result in several directions. Instead of ruling out certain complex structures only on $M$ itself, we will rule out complex structures on any manifold with the real homotopy type of $M$. Furthermore, we will relax the vanishing of the cup product on first degree cohomology, and drop the hypothesis of $1$-minimality.

\subsection{Additional structure on the real homotopy type of complex manifolds}

We first abstract some homotopy-theoretic properties of the diagram 
\[
\xymatrix{
	& \left( \Ke d^c, d \right) \ar[dl]_i \ar[dr]^{\II \circ\pi} &  \\
	\left(\cA,d \right)& & \left(H_{d}(\cA) , d=0  \right)
}
\]
for compact complex manifolds. All cdgas will be connected and concentrated in non-negative degrees.

Recall that for a map of complexes $\varphi:B\to A$, the cone of $\varphi$ is defined by  
\[
cone(\varphi):= \bigoplus_{n \in \Z} B[-1]^n \oplus A^n \quad \quad d_{cone(\varphi)}(b,a)=(-d_Bb, d_Aa -\varphi(b)),
\]
with $B[-1]^n=B^{n+1}$, and the differential of $B[-1]$ is $-d_B$. The inclusion $\nu : A \to cone(\varphi)$ and projection $\delta : cone(\varphi) \to B[-1]$ given by $\delta(b,a)=-b$ yield an exact sequence 
\[
\xymatrix{
0 \ar[r] & A \ar[r]^-\nu &  {cone(\varphi)} \ar[r]^-\delta & B[-1] \ar[r]& 0
}
\]
whose long exact sequence in cohomology is isomorphic to that of $0 \to B \to A \to Coker \to 0$ in the case of an inclusion
$\varphi: B \to A$.

We will use the following two Lemmas, the proof of which we leave to the reader:

\begin{lem}
	For any map of cdga's $\varphi:B\to A$, $cone(\varphi)$ is a differential graded  module over $B$, via the formula
	\begin{align*}
		B\otimes cone(\varphi)&\longrightarrow cone(\varphi)\\
		(b,(b',a))&\longmapsto ((-1)^{|b|}bb',\varphi(b)a).
	\end{align*}
	 In particular, $H(cone(\varphi))$ is a graded module over $H(B)$ via the same formula.
\end{lem}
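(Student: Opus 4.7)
The plan is to verify directly the three axioms defining a differential graded module: the formula defines an associative, unital action of the graded algebra $B$ on the graded vector space $cone(\varphi)$, and this action satisfies the Leibniz rule with respect to $d_B$ and $d_{cone(\varphi)}$. Since $cone(\varphi) = B[-1] \oplus A$ as a bigraded vector space and the action is defined componentwise, each axiom splits into a check on the $B[-1]$-summand and one on the $A$-summand. This decoupling is what makes the argument routine.

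First I would check the grading: for $b \in B^k$ and $(b',a) \in cone(\varphi)^n$ (so $b' \in B^{n+1}$ and $a \in A^n$), the output $((-1)^{|b|}bb',\varphi(b)a)$ lies in $B^{k+n+1} \oplus A^{k+n} = cone(\varphi)^{k+n}$, as required. Associativity then reduces on the $B[-1]$-component to associativity of $B$ (with the signs combining via $(-1)^{|b_1|}(-1)^{|b_2|} = (-1)^{|b_1 b_2|}$) and on the $A$-component to $\varphi(b_1 b_2) = \varphi(b_1)\varphi(b_2)$, which holds because $\varphi$ is a map of algebras. Unitality follows from $\varphi(1) = 1$.

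The only step that requires genuine care is the Leibniz rule
\[
d_{cone}(b \cdot x) = (d_B b) \cdot x + (-1)^{|b|} b \cdot d_{cone}(x).
\]
For $x = (b',a)$, I would expand both sides. The $B[-1]$-component reduces to the graded Leibniz rule $d_B(bb') = (d_B b) b' + (-1)^{|b|} b \, d_B b'$ in $B$; the minus sign in the differential of $B[-1]$ and the $(-1)^{|b|}$ appearing in the action conspire so that everything matches. The $A$-component expands, on the left, as $d_A(\varphi(b) a) - \varphi(bb')$, and, on the right, as $\varphi(d_B b) \cdot a + (-1)^{|b|}\varphi(b)(d_A a - \varphi(b'))$; these agree using the Leibniz rule in $A$, the identity $\varphi(bb') = \varphi(b)\varphi(b')$, and the fact that $\varphi$ is a chain map. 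The main (mild) obstacle is sign bookkeeping, not any conceptual issue.

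Finally, the postscript about cohomology is automatic: any action of $B$ on a complex by chain maps descends to an action of $H(B)$ on the cohomology, because $d_B$-exact elements act by chain-homotopies to zero via the Leibniz identity just verified. Hence $H(cone(\varphi))$ inherits the structure of a graded module over $H(B)$ by the same formula.
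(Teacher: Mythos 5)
Your verification is correct, and it is exactly the routine check the paper intends: the lemma is one of the "two Lemmas, the proof of which we leave to the reader," so a direct axiom-by-axiom verification (grading, associativity via $\varphi(b_1b_2)=\varphi(b_1)\varphi(b_2)$, unitality, Leibniz, then descent to cohomology) is the whole content. One cosmetic slip: the $A$-component of $d_{cone(\varphi)}(b\cdot(b',a))$ is $d_A(\varphi(b)a)-(-1)^{|b|}\varphi(bb')$ rather than $d_A(\varphi(b)a)-\varphi(bb')$, and this extra sign is precisely what is needed to match the term $-(-1)^{|b|}\varphi(b)\varphi(b')$ on your right-hand side, so the identity still closes.
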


\begin{lem} \label{lem;coneModNat}
	For any map of cdga's, $\varphi:B\to A$, the $H(B)$-module structure on $H(cone(\varphi))$ is functorial in the sense that, for a homotopy commutative diagram of cdga's
	\[
	\begin{tikzcd}
		B\ar[r, "\varphi"]\ar[d, "f_B",swap]& A\ar[d,"f_A"]\\
		B'\ar[r, "{\varphi'}"]&A',
	\end{tikzcd}
	\]
    there is a map $H(cone(\varphi))\to H(cone(\varphi'))$ that intertwines the module structures over the respective cohomology algebras, and is an isomorphism if $f_A$ and $f_B$ are quasi-isomorphisms.
\end{lem}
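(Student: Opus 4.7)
The plan is to construct the comparison map at the chain level via the homotopy, check that it intertwines the module structures on cohomology, and then deduce the quasi-isomorphism claim from the five lemma. Concretely, I would first pick a degree $-1$ map $h\colon B\to A'$ witnessing homotopy commutativity, i.e. $\varphi'f_B-f_A\varphi=d_{A'}h+h d_B$, and define $F\colon cone(\varphi)\to cone(\varphi')$ by $F(b,a):=(f_B(b),\,f_A(a)+h(b))$. Expanding both sides of $dF=Fd$ using the cone differential $d(b,a)=(-d_B b,\,d_A a-\varphi(b))$ reduces precisely to the homotopy equation, so $F$ is a chain map. By construction it restricts to $f_A$ on the subcomplex $A\subseteq cone(\varphi)$ and induces $f_B[-1]$ on the quotient $B[-1]$, yielding a commutative ladder of short exact sequences
\[
\begin{tikzcd}
0 \ar[r] & A \ar[r] \ar[d,"f_A"'] & cone(\varphi) \ar[r] \ar[d,"F"] & B[-1] \ar[r] \ar[d,"f_B{[-1]}"] & 0 \\
0 \ar[r] & A' \ar[r] & cone(\varphi') \ar[r] & B'[-1] \ar[r] & 0.
\end{tikzcd}
\]

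Second, to check that $H(F)$ intertwines the module actions via $H(f_B)$, I would take cocycles $b'\in B$ and $(b,a)\in cone(\varphi)$ (so $d_B b'=0$, $d_B b=0$, $d_A a=\varphi(b)$) and inspect the discrepancy $D:=F(b'\cdot(b,a))-f_B(b')\cdot F(b,a)$. Its first component vanishes automatically because $f_B$ is multiplicative, and its second component can be rewritten, using the Leibniz rule for $d_{A'}$ applied to $h(b')f_A(a)$ together with the homotopy identity $(dh+hd)(b')=\varphi'f_B(b')-f_A\varphi(b')$ and the cocycle condition $d_A a=\varphi(b)$, as an honest coboundary in $cone(\varphi')$. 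Hence $H(F)$ descends to an $H(B)$-module map with respect to the ring morphism $H(f_B)$.

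Third, when $f_A$ and $f_B$ are quasi-isomorphisms, applying the five lemma to the cohomology long exact sequences of the ladder above finishes the argument: the outer vertical maps become $H(f_A)$ and $H(f_B)[-1]$, both isomorphisms by hypothesis, so $H(F)$ is an isomorphism in every degree.

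The main obstacle is the module compatibility in the second step, because the chosen $h$ is in general not a derivation with respect to the multiplications on $B$ and $A'$, so $F$ is only $B$-linear up to a coherent correction; identifying that correction as an explicit coboundary in $cone(\varphi')$ requires carefully assembling the Leibniz rule for $f_A$ with the cocycle hypotheses on both inputs and the homotopy relation for $h$. A cleaner but more abstract route would be to first replace the given homotopy commutative square by a strictly commutative one via a cofibrant replacement in cdgas, after which $F$ is literally $B$-linear on the nose and the verification becomes purely algebraic.
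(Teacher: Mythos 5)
Your step 1 (the chain map $F(b,a)=(f_B(b),f_A(a)+h(b))$) and step 3 (the five lemma applied to the ladder of short exact sequences) are correct, but step 2 contains a genuine gap, and it is not merely a matter of a more careful computation: with an arbitrary chain homotopy $h$, the map $H(F)$ need \emph{not} intertwine the module structures. Following your own recipe, for cocycles $b'\in B$ and $(b,a)\in cone(\varphi)$ the second component of the discrepancy $F(b'\cdot(b,a))-f_B(b')\cdot F(b,a)$ equals
\[
-\bigl(d_{A'}h(b')\bigr)f_A(a)+(-1)^{|b'|}h(b'b)-\varphi'f_B(b')\,h(b),
\]
and after subtracting the coboundary $d_{A'}\bigl(h(b')f_A(a)\bigr)$ (this is exactly where your Leibniz rule, homotopy identity, and cocycle conditions are used up) one is left with the residue $(-1)^{|b'|}h(b'b)-\varphi'f_B(b')h(b)-(-1)^{|b'|}h(b')f_A\varphi(b)$, which measures the failure of $h$ to be a derivation over the pair $(\varphi'f_B,f_A\varphi)$. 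Nothing in your hypotheses forces this to be a coboundary. Concretely: take $B=\Lambda(x,y)$ with $|x|=|y|=1$ and $d=0$, take $A=A'=\Lambda(u,w)$ with $|u|=|w|=1$ and $d=0$, let $\varphi=\varphi'$ be determined by $\varphi(x)=u$, $\varphi(y)=0$, and let $f_A=f_B=\mathrm{id}$. Any degree $-1$ linear map $h\colon B\to A$ is then a valid chain homotopy (both sides of the homotopy identity vanish); choose $h(x)=h(y)=0$ and $h(xy)=w$. The element $(y,0)$ is a cocycle in $cone(\varphi)$, one computes $F(x\cdot(y,0))-x\cdot F(y,0)=(0,-w)$, and the only coboundaries of $cone(\varphi)$ with vanishing first component are $(0,-\varphi(\beta))$, whose second components are scalar multiples of $u$ in degree one. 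Hence $H(F)$ fails to be a module map for this $h$.

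The paper's proof sidesteps this entirely by never integrating the homotopy to a chain-level $h$: it first notes that the lemma is immediate for strictly commutative squares, then chooses a multiplicative (Sullivan) homotopy $K\colon B\to A'\langle t,dt\rangle$ with $\epsilon_0\circ K=\varphi'\circ f_B$ and $\epsilon_1\circ K=f_A\circ\varphi$, and composes the module maps
\[
H(cone(\varphi))\to H(cone(f_A\circ\varphi))\xleftarrow{\ \cong\ }H(cone(K))\xrightarrow{\ \cong\ }H(cone(\varphi'\circ f_B))\to H(cone(\varphi')),
\]
each arrow arising from a strictly commutative square of cdga maps; the middle two are isomorphisms because $\epsilon_0,\epsilon_1$ are quasi-isomorphisms. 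This is essentially the ``cleaner but more abstract route'' you mention at the end, and it is the one you should carry out. Note also that the hypothesis must be read as homotopy commutativity in the cdga sense (a multiplicative homotopy), since, as the example above shows, a bare chain homotopy is insufficient data for the module statement.
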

\begin{proof}
First, note that the statement is clear if the diagram strictly commutes. Thus, in the above setting, we obtain an morphism of $H(B)$-modules $H(cone(\varphi))\to H(cone(f_A \circ\varphi))$ and also an morphism $H(cone (\varphi'\circ f_B))\to H(cone(\varphi'))$ which intertwines the $H(B)$, resp $H(B')$ module structures. Next, pick a homotopy $K:B\to A'\langle t, dt\rangle$ s.t. $\epsilon_0\circ K=\varphi'\circ f_B$ and $\epsilon_1\circ K=f_A\circ \varphi$, where $\epsilon_i$ sends $t\mapsto i, dt\mapsto 0$, we obtain isomorphisms of $H(B)$-modules $H(cone(f_A\circ \varphi))\leftarrow H(cone(K))\to H(cone(\varphi'\circ f_B))$.
\end{proof}

In what follows, $A$ will denote a connected cdga over the reals with finite-dimensional cohomology satisfying $2n$-dimensional Poincar\'e duality, i.e. $H^{2n}(A)\cong \R$ and the pairing $H^k(A)\times H^{2n-k}(A)\to H^{2n}(A)$ is non-degenerate.

\begin{defn} \label{defn;dcdiagram}
 A $d^c$-diagram for $A$ is a triple $(B,\varphi_A,\varphi_H)$ of a connected, cohomologically finite dimensional cdga $B$, and cdga maps
 \[
 \begin{tikzcd}
 	&B\ar[ld,"\varphi_A",swap]\ar[rd,"\varphi_H"]&\\
 	A&&H(A)
 \end{tikzcd}
\]
such that the following conditions hold:
\begin{enumerate}
\item \textbf{Symmetry:} The long exact sequences associated with $\varphi_A$ and $\varphi_H$ are isomorphic. 
\item \textbf{Connectivity:} $H^0(\varphi_A)$ is an isomorphism and $H^1(\varphi_A)$ is injective.
	\item \textbf{Duality:} Denoting $\varphi:=(\varphi_A,\varphi_H): B\to A\oplus H(A)$, one has $H^{2n}(cone(\varphi))\cong \R$ and the pairing $H^k(B)\otimes H^{2n-k}(cone(\varphi))\to H^{2n}(cone(\varphi))$ is non-degenerate for every $k$.
\end{enumerate}
\end{defn}

\begin{ex}\label{ex: formal dc diagram}
	Consider a cdga $A$ with minimal model $\varphi_A: \cM_A\to A$. If $A$ is formal, there exists a quasi-isomorphism $\varphi_H: \cM_A\to H(A)$ and the diagram
	\[
	\begin{tikzcd}
		& \cM_A\ar[ld,"\varphi_A",swap]\ar[rd,"\varphi_H"]&\\
		A &&H(A)
	\end{tikzcd}
	\]
	is a $d^c$-diagram. In this case, the $H(\cM_A)$-module structure on $cone(\varphi)$ is isomorphic to the 
	module structure of $H(A)$ on itself, and so the duality isomorphism $H^k(\cM_A) \to H^{2n-k}(cone(\varphi))^\vee$ 
	simply recovers duality of $H(A)$.
\end{ex}

\begin{ex}\label{ex: standard dc diagram}
	For every compact complex manifold $M$, the diagram 
	\[
	\begin{tikzcd}
		&\Ke d^c\ar[ld,"i",swap]\ar[rd,"\II\circ\pi"]&\\
		\cA(M) &&H(M)
	\end{tikzcd}
	\]
 is a $d^c$-diagram, which we will call a \emph{standard $d^c$-diagram}.
\end{ex}

{\small
\begin{figure}

\begin{tabular}{ c| c | c | c | c | c | c  }
zigzag type & length &  $\Ke d^c$  & $H(\Ke d^c )$ &H(A) & $\rank H(i)$ & $\rank H(J \circ \pi)$ \\  \hline 
Dot & $1$ & $\CC$ &  $\CC$  & $\CC$ & 1 & 1\\
Square & NA &    $0 \to \CC \to \CC$ &  $(0,0,0)$ & (0,0) & 0 & 0 \\
Odd Out. & $2m+1$ &  $0 \to \CC^{m+1}$ &  $(0,\CC^{m+1})$ & $(0,\CC)$ & 1 & 1   \\
Odd Inc. & $2m+1$ & $\CC \hookrightarrow \CC^{m}$ & $(0,\CC^{m-1})$ & $(\CC,0)$ & 0 & 0\\
Even Out. & $2m$ &  $0 \to \CC^m$ &   $(0 ,\CC^m)$ & (0,0) & 0 & 0  \\
Even Inc. & $2m$ &   $0 \to \CC^m$ &  $(0 , \CC^m)$ & (0,0) & 0 & 0
\end{tabular}
\captionsetup{justification=centering}
 \caption{Ranks of maps in standard $d^c$-diagram}\label{fig:phiRanks}
\end{figure}
 }

\begin{proof}
Since the two long exact sequences have the same underlying spaces, they are isomorphic if only if the maps $\varphi_A$ and $\varphi_H$ have the same rank on cohomology. So, 
the symmetry condition follows from a case-by-case inspection of every indecomposable bicomplex, for which we refer to Figure \ref{fig:phiRanks}. 

The connectivity property follows from the long exact sequence associated to the short exact sequence
\[
0 \to \Ke \, d^c \to \cA  \to \cA/\Ke \, d^c  \to 0
\]
since $H^0(\cA/\Ke \, d^c)=0$ for any complex manifold.

For the duality property, since $\varphi:=(i,\II \circ\pi):\Ke d^c\to \cA(M) \oplus H(M)$ is injective, the natural projection $\pi: cone(\varphi)\to\operatorname{Coker}(\varphi)$, given by 
\[
\pi(b,a) = \varphi(b) + \left(  \cA(M) \oplus H(M) \right) / \Img( \varphi),
\]
induces an isomorphism $H(\pi) : H(cone(\varphi)) \to H(\operatorname{Coker}(\varphi))$. By Lemma \ref{lem;coneModNat}, the projection $\pi$ is compatible with the $H(\Ke d^c)$-module-structure on cohomologies,
and  $\operatorname{Coker}(\varphi) =  \cA/ \Img \, d^c$, by Theorem \ref{LES}, so the duality property follows from Proposition \ref{Pduality}.
\end{proof}

The importance of the concept of $d^c$-diagram comes from the following observation, which shows it is a property of the real homotopy type of $A$ to admit a $d^c$-diagram with given invariants (e.g. cohomology long exact sequences, pairings, etc.).

\begin{prop}\label{prop: dc homotopy invariance}
Let $f:A\to A'$ be a quasi-isomorphism of cdga's.
\begin{enumerate}
	\item Let $(B,\varphi_A,\varphi_{H})$ be a $d^c$-diagram for $A$. The pushforward diagram $f_*(B,\varphi_A,\varphi_H):=(B,f\circ\varphi_A,H(f)\circ\varphi_H)$ is a $d^c$-diagram for $A'$.
	\item Let $(B',\varphi_{A'},\varphi_H')$ be a $d^c$-diagram for $A'$. Denote by $\psi:B\to B'$ a minimal model for $B'$ and by $\varphi_A:B\to A$ a cdga map such that $f\circ \varphi_A$ is homotopic to $\varphi_{A'}\circ \psi$ (which exists, unique up to homotopy). Then the pullback diagram $f^*(B',\varphi_{A'},\varphi_H'):=(B, \varphi_A,H(f)^{-1}\circ\varphi_H'\circ\psi)$ is a $d^c$-diagram of $A$.
\end{enumerate}
In both cases, there is a (homotopy) commutative diagram with vertical quasi-isomorphisms:
\[
\begin{tikzcd}
	&&B\ar[lld]\ar[rrd]\ar[dd]&&\\
	A\ar[dd]&&&&H(A)\ar[dd]\\
	&&B'\ar[lld]\ar[rrd]&&\\
	A'&&&&H(A')
\end{tikzcd}
\]
In particular, the number $\rank{\varphi_A}=\rank{\varphi_H}$ and the number $\dim \Ke (\varphi_A,\varphi_H)$ are invariant under pullback and pushforward.
\end{prop}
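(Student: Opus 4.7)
The plan is to derive both statements from the following unifying principle: each of the three defining properties of a $d^c$-diagram—symmetry of long exact sequences, connectivity, and the duality pairing—is invariant under replacement of any vertex by a quasi-isomorphic cdga. The technical input is Lemma \ref{lem;coneModNat}, which guarantees that the cone and its $H(B)$-module structure are transported faithfully along quasi-isomorphisms.

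For the pushforward in (1), connectivity is immediate because $H^0(f)\circ H^0(\varphi_A)$ is an isomorphism and $H^1(f)\circ H^1(\varphi_A)$ is injective. The long exact sequence of $f\circ\varphi_A$ is isomorphic to that of $\varphi_A$ via $H(f)$ in the $A$-slots, and likewise on the $H(A)$-side, so the symmetry isomorphism for $(B,\varphi_A,\varphi_H)$ carries over verbatim. Duality then follows from Lemma \ref{lem;coneModNat} applied to the strictly commutative square with vertical maps $\id_B$ and $f\oplus H(f)$: since the latter is a quasi-isomorphism, one obtains an $H(B)$-linear isomorphism of cone cohomologies, which transfers the pairing property from $(B,\varphi_A,\varphi_H)$ to its pushforward.

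The pullback in (2) is handled along the same lines, after first constructing $\varphi_A$ via the lifting property of minimal cdgas: since $B$ is minimal and $f$ is a quasi-isomorphism, the diagram $B\xrightarrow{\varphi_{A'}\circ\psi} A' \xleftarrow{f} A$ admits a lift $\varphi_A\colon B\to A$, unique up to homotopy, with $f\circ\varphi_A\simeq\varphi_{A'}\circ\psi$. The three properties are then verified using the two quasi-isomorphisms $\psi$ and $f$ simultaneously: the identity $H(f)\circ H(\varphi_A)=H(\varphi_{A'})\circ H(\psi)$ together with connectivity of $(B',\varphi_{A'},\varphi_H')$ gives connectivity for $(B,\varphi_A,\varphi_H)$; composing the symmetry isomorphism for $(B',\varphi_{A'},\varphi_H')$ with the isomorphisms induced by $\psi$, $f$, and $H(f)$ on the respective long exact sequences yields symmetry; and Lemma \ref{lem;coneModNat} applied to the homotopy-commutative square with vertical maps $\psi$ and $f\oplus H(f)$, both quasi-isomorphisms, transfers the duality pairing. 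The commutative diagram in the statement is immediate from construction in both cases, and the invariance of $\rank\varphi_A = \rank\varphi_H$ and $\dim\Ke(\varphi_A,\varphi_H)$ is a formal consequence of the fact that composing with quasi-isomorphisms preserves ranks and kernel dimensions of induced maps on cohomology. The only nontrivial technical step is the minimal-model lifting used at the start of (2), which is a standard application of the cofibrancy of minimal cdgas; everything else is bookkeeping once Lemma \ref{lem;coneModNat} is in hand.
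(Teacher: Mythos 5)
Your proof is correct. The paper states Proposition \ref{prop: dc homotopy invariance} as an observation and gives no proof, but the argument it implicitly relies on is exactly the one you supply: transport connectivity and symmetry through the isomorphisms $H(f)$ and $H(\psi)$ on the long exact sequences, and transport the duality pairing via Lemma \ref{lem;coneModNat} applied to the (strictly, resp.\ homotopy) commutative square with vertical maps $\id_B$ resp.\ $\psi$ and $f\oplus H(f)$, all of which are quasi-isomorphisms. Your handling of the minimal-model lift in part (2) and the final rank/kernel invariance is likewise the standard and intended reasoning, so the proposal fills the gap the paper leaves to the reader.
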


Let us now draw some easy consequences from the definition of $d^c$-diagrams and the long exact sequence in cohomology associated to  $0 \to A\oplus H(A)\to cone(\varphi) \to B[-1] \to 0$, which will highlight common features with a standard $d^c$-diagram.

\begin{prop} \label{prop;dcdiagconsequeces}
	Given a $d^c$-diagram $(B,\varphi_A,\varphi_H)$, the cdga $B$ has the following properties:
	\begin{enumerate}
		\item The cohomology of $B$ is concentrated in degrees $0,...,2n$.
		\item There is an inequality $b_k(B)+b_{2n-k}(B)\geq 2b_k(A)$. 
		\item There is an equality of Euler characteristics $\chi(B)=\chi(A)$.
	\end{enumerate}
Further, denoting by $\psi_A:A\to cone(\varphi)$ and $\psi_H:H(A)\to cone(\varphi)$ the maps induced by the inclusion $A\oplus H(A)\to cone(\varphi)$, we have:
\begin{enumerate}[resume]
		\item\label{property: duality phipsi} The duality pairings induce an isomorphism between the long exact sequences
		\[
		...\longrightarrow H^k(B)\overset{H^k(\varphi_H)}{\longrightarrow} H^k(A)\longrightarrow H^k(cone(\varphi_H))\longrightarrow...
		\]
		and the dual of
		\[
		...\longrightarrow H^{2n-k-1}(cone(\psi_H))\longrightarrow H^{2n-k}(A)\overset{H^{2n-k}(\psi_H)}{\longrightarrow}  H^{2n-k}(cone(\varphi))\longrightarrow...
		\]
		 (and similarly for $\varphi_A,\psi_A$).
	\item\label{property: im=im, A/ker=A/ker}  $cone(\varphi_A) \cong cone(\psi_H)$  and  $cone(\varphi_H) \cong cone(\psi_A)$ (degree preserving)
		\end{enumerate}
\end{prop}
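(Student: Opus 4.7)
My plan is to organize the proof around the structural identification (5), derive (1), (2), and (3) as quick consequences of the duality property, and tackle (4) last. Beginning with (5): the defining short exact sequence $0 \to A \oplus H(A) \to cone(\varphi) \to B[-1] \to 0$ splits in the middle, producing two short exact sequences $0 \to A \to cone(\varphi) \to cone(\varphi)/A \to 0$ and $0 \to H(A) \to cone(\varphi) \to cone(\varphi)/H(A) \to 0$. A direct calculation of the induced differentials yields degree-preserving identifications $cone(\varphi)/A \cong cone(\varphi_H)$ and $cone(\varphi)/H(A) \cong cone(\varphi_A)$. On the other hand, $cone(\psi_A)$ projects naturally onto $cone(\varphi)/A$ with kernel the acyclic complex $cone(\id_A)$, giving a quasi-isomorphism $cone(\psi_A) \simeq cone(\varphi_H)$; symmetrically $cone(\psi_H) \simeq cone(\varphi_A)$.

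For (3), the Euler characteristic of the defining short exact sequence gives $\chi(cone(\varphi)) = 2\chi(A) - \chi(B)$, while duality $b_k(B) = b_{2n-k}(cone(\varphi))$ forces $\chi(B) = \chi(cone(\varphi))$; combining yields $\chi(B) = \chi(A)$. For (1), duality identifies $H^k(B)$ with $H^{2n-k}(cone(\varphi))^\vee$; the complex $cone(\varphi)$ is concentrated in degrees $\geq -1$, and $H^{-1}(cone(\varphi)) = \ker(\varphi|_{B^0}) = 0$ since $\varphi_A, \varphi_H$ are unital, so $H^k(B) = 0$ for $k > 2n$, while connectivity dispatches $k < 0$. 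For (2), let $r_k := \rank H^k(\varphi)$; exactness of the long exact sequence of $\varphi$ gives $r_k \leq b_k(B)$ and $b_k(cone(\varphi)) \geq 2 b_k(A) - r_k$, and duality then yields $b_k(B) + b_{2n-k}(B) \geq 2 b_k(A)$.

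The main obstacle is (4). Using (5) to identify $cone(\varphi_H) \simeq cone(\psi_A)$ and $cone(\psi_H) \simeq cone(\varphi_A)$, both long exact sequences of interest become the long exact sequences of short exact sequences with $cone(\varphi)$ in the middle. The plan is to construct the pairing $H^k(cone(\varphi_H)) \otimes H^{2n-k-1}(cone(\psi_H)) \to \R$ by combining the given pairing $H(B) \otimes H(cone(\varphi)) \to \R$ with Poincar\'e duality on $A$, via a snake-lemma chase on the bicomplex cone. Once this pairing is shown to intertwine the connecting homomorphisms of both long exact sequences, the five-lemma applied to the comparison of the long exact sequence of $\varphi_H$ with the transposed dual of the long exact sequence of $\psi_H$ promotes non-degeneracy from the outer terms (given pairing and Poincar\'e duality of $A$) to the middle pair. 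The subtle point is verifying sign compatibility of the induced pairing with the connecting maps simultaneously from both sides; the \emph{similarly} clause for $\varphi_A, \psi_A$ then follows by interchanging the roles of $A$ and $H(A)$ throughout.
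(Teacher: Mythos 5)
Parts (1)--(3) and (5) of your argument are correct and essentially coincide with the paper's proof: (1)--(3) follow from the long exact sequence of $\varphi=(\varphi_A,\varphi_H)$ together with the duality identification $H^k(B)\cong H^{2n-k}(cone(\varphi))^\vee$ (your check that $H^{-1}(cone(\varphi))=0$ via unitality is a welcome precision over the paper's terse "the cone is non-negatively graded"), and your chain-level identification $cone(\varphi)/H(A)\cong cone(\varphi_A)$ together with the acyclicity of the kernel $cone(\id_{H(A)})$ of $cone(\psi_H)\to cone(\varphi)/H(A)$ is a slightly more direct version of the paper's comparison of the two long exact sequences built around $cone(\varphi)$.

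The gap is in (4), where your text remains a plan rather than a proof, and the step it defers is exactly the one that carries the content. To run the five lemma you first need the middle square of the would-be morphism of long exact sequences to commute, i.e. the compatibility $\langle H(\varphi_H)b,\,a\rangle_{A}=\pm\langle b,\,H(\psi_H)a\rangle$ between Poincar\'e duality on $A$ and the pairing $H^k(B)\otimes H^{2n-k}(cone(\varphi))\to\R$ coming from the $B$-module structure on $cone(\varphi)$; only then is the "snake-lemma" pairing on the third terms well defined, and only then does it automatically intertwine the connecting maps. You explicitly flag this ("the subtle point is verifying sign compatibility\dots simultaneously from both sides") but do not resolve it, and it is not a formality: it requires relating the isomorphism $H^{2n}(cone(\varphi))\cong\R$ to $H^{2n}(A)\cong\R$ via $\psi_H$. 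The paper's device is to settle all of this at once at the chain level: a choice of representative of the fundamental class in $cone(\varphi)^{2n}$ yields a strictly commutative square of chain maps $B\to H(A)$ over $D_{2n}cone(\varphi)\to D_{2n}H(A)$, where $D_{2n}$ is the dualization functor $(D_{2n}C)^k=(C^{2n-k})^\vee$; the vertical arrows are quasi-isomorphisms by the duality axiom and by Poincar\'e duality of $H(A)$, and functoriality of the cone then produces the isomorphism of the two long exact sequences in one stroke, with the compatibility with connecting maps and all signs automatic. I recommend replacing your degree-by-degree construction in cohomology with this chain-level square; as written, part (4) is not established.
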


\begin{proof} First,  $H^k(B)=0$ for $k >2n$ by duality, since the cone is non-negatively graded. The second claim follows from the long exact sequence
\[
\xymatrix{
\cdots \ar[r] &  H^k(B) \ar[r]^-\varphi &  H^k(A) \oplus H^k(A) \ar[r] &  H^k(cone(\varphi)) \ar[r] &  H^{k+1}(B) \ar[r]  & \cdots 
}
\]
again by duality $H^k(cone(\varphi))  \cong H^{2n-k}(B)$, using exactness at $H^k(A) \oplus H^k(A)$. 
The third claim follows similarly from exactness and duality, since the Euler characteristic is additive along  long exact sequences.

For the fourth claim, note that the choice of a representative for a fundamental class induces a commutative diagram
\[
\begin{tikzcd}
B\ar[r,"\varphi_H"]\ar[d]&H\ar[d]\\
D_{2n}cone(\varphi) \ar[r,"D_{2n}\psi_H"]&D_{2n}H
\end{tikzcd}
\]
where $D_{2n}$ denotes the dualization functor, defined for any complex $C$ as $(D_{2n}C)^k=(C^{2n-k})^\vee$, with differential given (up to sign) by pullback. Thus, we obtain an isomorphism of the associated long exact sequences involving the cones of $\varphi_H$ and $D_{2n}\psi_H$. 

Finally, for the last claim we note that from the definitions there is a short exact sequence
\[
0\longrightarrow H\overset{\psi_H}{\longrightarrow} cone(\varphi)\longrightarrow cone(\varphi_A)\longrightarrow 0.
\]
Thus, comparing this long exact sequence with that induced by
\[
0\longrightarrow cone(\varphi)\longrightarrow cone(\psi_H)\longrightarrow H[-1]\longrightarrow 0
\]
we see that the natural map $cone(\varphi_A)\to cone(\psi_H)$ has to be a quasi-isomorphism. The case of $\varphi_H$ and $\psi_A$ is analogous.
\end{proof}

\begin{rmk}
 	A $d^c$-diagram $(B,\varphi_A,\varphi_H)$ which is quasi-isomorphic (as in Prop. \ref{prop: dc homotopy invariance}) to a standard $d^c$-diagram has certain additional properties:
	\begin{enumerate}
		\item The odd Betti numbers, $b_{2k+1}(B)$, and the sums of complementary Betti numbers, $b_k(B)+b_{2n-k}(B)$, are all even.
		\item \label{cond: mult Hodge} If $A$ comes from a complex manifold $M$, with $\pdef(M)\leq 1$, then $H(B)$ inherits a multiplicative Hodge structure by Prop. \ref{prop: mult Hodge}.
	\end{enumerate}
\end{rmk}

\subsection{Main result and applications}

The main result in this section gives a topological lower bound on the complexity of the bicomplex of complex structures satisfying a $dd^c$-type condition in low degrees, Theorem \ref{thm;TandAimpliesInequ}. We begin with a Lemma that gives several equivalent formulations of this $dd^c$-type condition.
 
\begin{lem} \label{lem;jsimple}
Let $A$ be a Poincar\'e duality cdga with $d^c$-diagram $(B,\varphi_A,\varphi_H)$ with $\varphi = (\varphi_A,\varphi_H)$. The following conditions are equivalent, for any fixed $j\geq0$.
\begin{enumerate}
\item\label{cond: leq j iso, j+1 inj}  $H^s(\varphi_A)$ is an isomorphism for all $s \leq j$ and $H^{j+1}(\varphi)$ is injective.
\item\label{cond: geq 2n-j iso, 2n-j-1 surj} $H^s(\varphi_A)$ is an isomorphism for all $s\geq 2n-j$ and $H^{2n-j-1}(\psi)$ is surjective, where $\psi=\psi_A+\psi_H$ denotes the map: $A\oplus H(A)\to cone(\varphi)$.
\item\label{cond: everything iso down} All maps $H^s(\varphi_A)$, $H^s(\varphi_H)$, $H^{s}(\psi_A)$, $H^s(\psi_H)$ are isomorphisms for $s\leq j$.
\item\label{cond: everything iso up} All maps $H^s(\varphi_A)$, $H^s(\varphi_H)$, $H^{s}(\psi_A)$, $H^s(\psi_H)$ are isomorphisms for $s\geq 2n-j$.
\end{enumerate}
For a standard $d^c$-diagram $(\Ke\, d^c, i , \II \circ \pi)$ coming from a complex manifold $M$, the above conditions are equivalent to
\begin{enumerate}[resume]
\item\label{cond: FSS + pdef} For degrees $s\leq j$, we have $E_1$-degeneration, $b_s=\sum_{p+q=s}h^{p,q}$,  and pure Hodge structure, $\pdef_s(M)=0$. In degree $j+1$, we have $F^pH^{j+1}(M)\cap\bar{F}^qH^{j+1}(M)=0$ whenever $p+q>j+2$.
\item\label{cond: non-allowed zigzags} For any decomposition of $\cA(M)$ into indecomposables, there are no even zigzags and no odd zigzags of length $\geq 3$ in bidegrees $s,s+1$ for $s\leq j$, except possibly $L$-shaped zigzags in degrees $j,j+1$. 
\end{enumerate}
\end{lem}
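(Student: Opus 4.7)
The plan is to split the six conditions into two parts: the formal equivalences (1)--(4), which hold for any $d^c$-diagram, and the zigzag-theoretic and Hodge-theoretic reformulations (5) and (6), which use the specific structure of a standard $d^c$-diagram.

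For the formal part, I would translate each condition into the vanishing of an appropriate cone cohomology, and then invoke the three structural tools available. These are: (i) the symmetry axiom of Definition \ref{defn;dcdiagram}(1), which implies that $H^s(\varphi_A)$ and $H^s(\varphi_H)$ have matching rank in each degree; (ii) the cone identifications $cone(\varphi_A)\simeq cone(\psi_H)$ and $cone(\varphi_H)\simeq cone(\psi_A)$ of Proposition \ref{prop;dcdiagconsequeces}(5), which transfer vanishing conditions between $\varphi$-cones and $\psi$-cones degree by degree; and (iii) the Poincar\'e-type duality of Definition \ref{defn;dcdiagram}(3), which converts low-degree vanishing statements into high-degree ones. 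The easy direction (3)$\Rightarrow$(1) unwinds the long exact sequence of $\psi$: if $H^j(\psi_A)$ and $H^j(\psi_H)$ are isomorphisms, then $H^j(\psi)$ is surjective, hence $H^{j+1}(\varphi)$ is injective. For (1)$\Rightarrow$(3), symmetry yields $H^s(\varphi_H)$ iso from $H^s(\varphi_A)$ iso in $s\leq j$, and the additional assumption ``$H^{j+1}(\varphi)$ injective'' (equivalently, $H^j(\psi)$ surjective) combined with the cone identifications delivers the required isomorphisms for $\psi_A$ and $\psi_H$. The equivalence (1)$\Leftrightarrow$(2) is then Poincar\'e duality applied to the same vanishing conditions, and (2)$\Leftrightarrow$(4) is dual to (1)$\Leftrightarrow$(3).

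For the standard-diagram part, I would decompose $\cA(M)$ into indecomposable bicomplexes and tabulate, for each type (dot, square, L, reverse L, longer odd and even zigzags), the contribution to $H^s(\Ke d^c)$, $H^s_d$, $H^s_{d^c}$ together with the ranks of $H^s(i)$ and $H^s(\II\circ\pi)$ as in Figure \ref{fig:phiRanks}. A degree-by-degree scan shows that condition (1) holds precisely when the only indecomposables supporting cohomology in degrees $\leq j+1$ are dots, squares, and L-shaped zigzags in total degrees $(j,j+1)$: any other odd zigzag of length $\geq 3$ breaks surjectivity or injectivity of $H^s(\varphi_A)$ at some $s\leq j$, and even zigzags obstruct $E_1$-degeneration. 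This is exactly (6). The equivalence (6)$\Leftrightarrow$(5) is then the standard dictionary of \cite{StStrDbl} between zigzag multiplicities and the refined Betti numbers $b_s^{p,q}$: an odd zigzag of length $2\ell+1$ contributes to $F^pH^s\cap \bar F^q H^s$ with $p+q-s=\ell$, so the zigzag restrictions in (6) translate exactly into the Hodge-filtration conditions in (5).

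The main obstacle will be the careful bookkeeping in the first part. Specifically, I must use the cone identifications at the boundary degree $j+1$ to propagate the injectivity of $H^{j+1}(\varphi)$ through the diagram and extract isomorphisms of $\psi_A, \psi_H$ at degree $j$: symmetry only guarantees an abstract isomorphism of long exact sequences, so the structural quasi-isomorphisms of Proposition \ref{prop;dcdiagconsequeces}(5) must be applied with care to relate the kernels of $H^{j+1}(\varphi_A)$ and $H^{j+1}(\varphi_H)$ to the cones of the $\psi$-maps at degree $j$. Once this boundary analysis is settled, the remaining verifications reduce to routine LES and duality chases, and the zigzag scan is mechanical given Figure \ref{fig:phiRanks}.
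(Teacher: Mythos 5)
Your proposal follows essentially the same route as the paper's proof: the equivalence of (1)--(4) via the symmetry axiom, the cone identifications of Proposition \ref{prop;dcdiagconsequeces}, and the duality axiom, together with the case-by-case zigzag tables for (5) and (6). The boundary step you flag at degree $j$ is resolved in the paper by a short diagram chase: since $H^j(\varphi_A)$ and $H^j(\varphi_H)$ are isomorphisms and the composites $H^j(\psi_A)\circ H^j(\varphi_A)$ and $H^j(\psi_H)\circ H^j(\varphi_H)$ agree, the two maps $H^j(\psi_A)$ and $H^j(\psi_H)$ have the same image, and surjectivity of their sum (equivalent to injectivity of $H^{j+1}(\varphi)$) then forces each to be surjective, hence an isomorphism.
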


We emphasize that the condition ``$H^{j+1}(\varphi)$ is injective'' in \ref{cond: leq j iso, j+1 inj}, cannot be dropped, and is equivalent to the vanishing of the connecting homomorphism $\delta: H^j(cone(\varphi)) \to H^{j+1}(B)$, i.e. the $dd^c+3$-condition (in degree $j+1$) when $B = \Ke \, d^c$.

For clarity, we illustrate condition \ref{cond: non-allowed zigzags} explicitly assuming $j=1$ (in total complex dimension $3$, but the low-degree part is the same in any total dimension). The following zigzags cannot occur:
\[
\img{S1101_3}~\quad ; ~ \quad \img{S1201_3}~\quad ; ~ \quad \img{S1110_3} ~\quad ; ~ \quad \img{S100_3} ~\quad ; ~ \quad\img{S222_3},
\]
while, modulo duality, all zigzags in degrees $\geq 2$ are allowed, and the following zigzags in smaller degrees are allowed:
\[
\img{S000_3} ~\quad ; ~ \quad \img{S110_3} ~\quad ; ~ \quad \img{S221_3}
\]

\begin{proof}
The equivalence of condition \ref{cond: everything iso up} and \ref{cond: everything iso down} follows by duality: In fact, by Prop. \ref{prop;dcdiagconsequeces}\ref{property: duality phipsi}, $H^s(\varphi_A)$ (resp. $H^s(\varphi_H)$) is an isomorphism if and only if  $H^{2n-s}(\psi_A)$ (resp. $H^{2n-s}(\psi_H)$) is an isomorphism. 

Next, we show $\ref{cond: leq j iso, j+1 inj}\Rightarrow \ref{cond: everything iso down}$. By the symmetry axiom, $H^s(\varphi_A)$ is an isomorphism if and only if $H^s(\varphi_H)$ is as well. Whenever $H^s(\varphi_A)$ and $H^{s+1}(\varphi_A)$ are isomorphisms,  $H^s(cone (\varphi_A))=0$, so by \ref{prop;dcdiagconsequeces}\ref{property: im=im, A/ker=A/ker}, also $H^s(\psi_H)$ is also an isomorphism. Similarly, whenever $H^s(\varphi_H)$ and $H^{s+1}(\varphi_H)$ are isomorphisms, $H^s(\psi_A)$ is also an isomorphism. Therefore, \ref{cond: leq j iso, j+1 inj} implies $H^s(\varphi_A)$ and $H^s(\varphi_H)$ are isomorphisms for $s\leq j$ and $H^s(\psi_A)$ and $H^s(\psi_H)$ are isomorphisms for $s<j$ and injective for $j=s$. In degree $j=s$, we thus have a diagram:
\[
\begin{tikzcd}
	&H^j(B)\ar[ld,swap,"\simeq"]\ar[rd,"\simeq"]&\\
	H^j(A)\ar[rd, hook]&&H^j(A)\ar[ld,hook']\\
	&H^j(cone(\varphi))
\end{tikzcd}
\]
where we know in addition, from the long exact sequence for $\varphi=(\varphi_A,\varphi_H)$, that the sum of the two bottom maps: $H^j(\psi_A)+H^j(\psi_H):H^j(A)\oplus H^j(A)\to H^j(cone(\varphi))$ is surjective. Then a simple diagram chase yields that both bottom maps are surjective individually. This shows \ref{cond: everything iso down}.

Conversely, if \ref{cond: everything iso down} holds, $H^s(\varphi_A)$ is an iso for $s\leq j$ by assumption and the map $H^j(\psi_A)+H^j(\psi_H)$ has to be surjective since each summand is an isomorphism. Again by the long exact sequence, this implies $H^{j+1}(\varphi)$ is injective, so \ref{cond: leq j iso, j+1 inj} holds. 

The equivalence between \ref{cond: geq 2n-j iso, 2n-j-1 surj} and \ref{cond: everything iso up} follows analogously.

For the equivalence with Condition \ref{cond: non-allowed zigzags}, we refer the reader to Figures \ref{fig:phiRanks}, and \ref{fig:zzddc+3}, and for Condition \ref{cond: FSS + pdef} the origin is \cite[Ch. 2]{StDblBl}, while the argument is a single-degree version of the proof of  Corollary \ref{cor:ddc+3iffE1andpd1}.
\end{proof}

\begin{rmk}
	As this Lemma illustrates, there are many (inequivalent) ways of truncating $dd^c$-type conditions. For instance, in view of conditions \ref{cond: leq j iso, j+1 inj} and \ref{cond: FSS + pdef}, one may call the equivalent conditions of this Lemma as ``$dd^c$ in degrees $\leq j$ and $dd^c+3$ in degree $j+1$''. On the other hand, given condition \ref{cond: everything iso down}, one might call them simply ``$dd^c$ in degrees $\leq j$''. Finally, condition \ref{cond: non-allowed zigzags} suggests neither name would be accurate. To avoid all confusion we choose a neutral name below.
\end{rmk}

\begin{defn}
A $d^c$-diagram $(B,\varphi_A,\varphi_H)$ which satisfies any of the equivalent conditions in Lemma \ref{lem;jsimple}
will be called \emph{j-controlled}. 
A complex structure on a manifold $M$ will be called $j$-controlled if its standard $d^c$-diagram is $j$-controlled.
\end{defn}

Given a $j$-controlled $d^c$-diagram, we can relate the $j$-minimal model of $B$ to the $j$-minimal model of $A$, at least if all cup products into degree $j+1$ are trivial. Namely, for any graded ring $R$, denote by $\langle R^{\leq j}\rangle$ the subring generated in degrees $\leq j$. Then:

\begin{lem}  \label{lem;jminBA}
Fix an integer $j\geq 1$. Let $A$ be a Poincar\'e duality cdga such that
 \[
\langle H^{\leq j}(A) \rangle \cap H^{j+1}(A) = 0.
\]
If $A$ admits a $j$-controlled $d^c$-diagram $(B,\varphi_A,\varphi_H)$, then any $j$-minimal model $\psi:\cM^j \to B$, induces a $j$-minimal model  $\varphi_A \circ \psi: \cM^j \to A$ for $A$.
\end{lem}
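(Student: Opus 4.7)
The plan is to verify directly that $\varphi_A\circ\psi:\cM^j\to A$ satisfies the two defining conditions of a $j$-minimal model: namely, that $H^s(\varphi_A\circ\psi)$ is an isomorphism for all $s\leq j$, and that $H^{j+1}(\varphi_A\circ\psi)$ is injective. The cdga $\cM^j$ is minimal and generated in degrees $\leq j$ by assumption on $\psi$, so these cohomological conditions are the only remaining points to check.

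The isomorphism condition in degrees $s\leq j$ is immediate: $H^s(\psi)$ is an isomorphism because $\psi$ is a $j$-minimal model of $B$, and $H^s(\varphi_A)$ is an isomorphism by the $j$-controlled hypothesis (Lemma \ref{lem;jsimple}, condition \ref{cond: everything iso down}). The composition is therefore an isomorphism.

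The core of the proof is the injectivity in degree $j+1$, and the key observation is that $H^{j+1}(\varphi_H\circ\psi)=0$. This is essentially the computation that appeared in Theorem \ref{thm;warmup}: since $\cM^j$ is free graded-commutative on generators in degrees $1,\ldots,j$ and is connected, any cocycle representative of a class $\mathfrak c\in H^{j+1}(\cM^j)$ is a linear combination of monomials, each being a product of at least two generators of positive degree $\leq j$. Applying the cdga map $\varphi_H\circ\psi$ (whose target $H(A)$ has zero differential) and passing to cohomology, the image is a sum of products of classes in $H^{\geq 1}(A)\cap H^{\leq j}(A)$, hence lies in $\langle H^{\leq j}(A)\rangle\cap H^{j+1}(A)$, which vanishes by hypothesis.

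With this vanishing in hand, suppose $\mathfrak c\in H^{j+1}(\cM^j)$ satisfies $H^{j+1}(\varphi_A\circ\psi)(\mathfrak c)=0$. Then $H^{j+1}(\varphi_H\circ\psi)(\mathfrak c)=0$ as well, so $H^{j+1}(\varphi)\bigl(H^{j+1}(\psi)(\mathfrak c)\bigr)=0$ where $\varphi=(\varphi_A,\varphi_H)$. Since the $d^c$-diagram is $j$-controlled, Lemma \ref{lem;jsimple}\ref{cond: leq j iso, j+1 inj} gives that $H^{j+1}(\varphi)$ is injective, so $H^{j+1}(\psi)(\mathfrak c)=0$; and since $\psi$ is a $j$-minimal model, $H^{j+1}(\psi)$ is itself injective, forcing $\mathfrak c=0$. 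The only delicate point is the cup-product computation for $\varphi_H\circ\psi$, which is where the algebraic hypothesis $\langle H^{\leq j}(A)\rangle\cap H^{j+1}(A)=0$ is used in an essential way; everything else is formal chasing of the two defining conditions.
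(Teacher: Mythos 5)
Your proof is correct and follows essentially the same route as the paper's: isomorphisms in degrees $\leq j$ by composing the two given isomorphisms, vanishing of $H^{j+1}(\varphi_H\circ\psi)$ via freeness of $\cM^j$ in degrees $\leq j$ together with the hypothesis $\langle H^{\leq j}(A)\rangle\cap H^{j+1}(A)=0$, and then injectivity of the remaining component $H^{j+1}(\varphi_A\circ\psi)$ from the injectivity of $H^{j+1}(\varphi)$ and of $H^{j+1}(\psi)$. You merely spell out the cup-product computation and the final diagram chase in more detail than the paper does.
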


\begin{proof}
The map $H^s(\varphi_A\circ\psi)= H^s(\varphi_A) \circ H^s(\psi)$ is an isomorphism for all $s \leq j$ by assumption. 
Also,
\[
H^{j+1}(\varphi \circ\psi) =  (H^{j+1}(\varphi_A \circ\psi ),  H^{j+1}(\varphi_H \circ\psi) )
\] 
is injective by assumption, but the right factor $H^{j+1}(\varphi_H \circ\psi)$ is zero by freeness of the $j$-minimal model for $B$ and the assumption $\langle H^{\leq j}(A) \rangle \cap H^{j+1}(A) = 0$. Therefore, the left factor $H^{j+1}(\varphi_A\circ\psi) $ is injective.
\end{proof}

Now we introduce some topological invariants that will be used in the main theorem below. 

\begin{defn} 
For any cdga $A$ and $k > j \geq 1$, let 
\begin{align*}
r_j^k (A) &= \rank \left(  H^k(\cM^j) \to H^k(A) \right) \\
d_j^k (A) &= \dim \left( \langle H^{\leq j}(A) \rangle \cap H^k(A)  \right),
 \end{align*}
 where $\cM^j \to A$ is a $j$-minimal model of $A$.
 \end{defn}
 
 We note $r_j^k$ is well defined for any $j$ by uniqueness of the $j$-minimal model, up to isomorphism. 
For any $j>k$, $d_j^k (A) \leq r_j^k(A)$, by definition of $j$-minimal model. 
If $A$ is $j$-minimal, then $r_j^k(A)= b_k(A)$, the $k^{th}$ Betti number of $A$, for all $k$. 
If $A$ is $j$-minimal and formal, then $r_j^k(A)= d_j^k(A)$ for all $k$. 

The example of interest is $A = \cA(M)$, and we make the following  observation:
 
 \begin{rmk}  \label{rmk;rkdksum}
 For any $1 \leq j\leq k<n$, the numbers $r_j^k(\cA(M))$ and $d_j^k(\cA(M))$ are both additive with respect to connected sum of $n$-manifolds, namely
\begin{align*}
 r_j^k(\cA(M \# N))) &= r_j^k(\cA(M)) + r_j^k(\cA(N)) \\
  d_j^k(\cA(M \# N))) &= d_j^k(\cA(M)) + d_j^k(\cA(N)).
 \end{align*} 
 For the case $k=n$ we have 
 \begin{align*}
 r_j^{n}(\cA(M \# N))) &= \max\{r_j^{n}(\cA(M)) , r_j^{n}(\cA(N))\} \\
  d_j^{n}(\cA(M \# N))) &= \max\{d_j^{n}(\cA(M)) , d_j^{n}(\cA(N))\}.
 \end{align*} 
 and leftside numbers are either $0$ it $1$, depending on whether the top class is realized in either case. All these follow from the behavior of cohomology rings, and $j$-minimal models, under connected sums.
 \end{rmk}

Next we introduce an `analytic' invariant of $d^c$-diagrams, which the main Theorem will show is bounded below by the non-negative numbers $r_j^k-d_j^k$, under appropriate hypotheses.

\begin{defn}
For any  $d^c$-diagram $(B,\varphi_A,\varphi_H)$ of $A$, let
\[
\ell_k = \dim \Ke H^k(\varphi_A) = \dim \Ke H^k(\varphi_H).
\]
\end{defn}

Note that $\ell_0=\ell_{2n} = 0$ by the definition of $d^c$-diagram,  and that $\ell_k=0$ for $k\leq j$,  for a $j$-controlled $d^c$-diagram, and therefore also $\ell_{2n-k}=0$ for $0 \leq k \leq j$ by the equivalence of 
\ref{cond: leq j iso, j+1 inj} and \ref{cond: geq 2n-j iso, 2n-j-1 surj} in Corollary \ref{lem;jsimple}.
 One can infer from Table \ref{fig:phiRanks} how to compute $\ell_k$ for various zigzag types. The explicit count involves lengths of zigzags and will not be given here, but could be useful in applications.

\begin{thm}  \label{thm;TandAimpliesInequ}
Let $A$ be a Poincar\'e duality cdga. If for some  $j\geq 1$ we have
 \[
\langle H^{\leq j}(A) \rangle \cap H^{j+1}(A) = 0,
\]
and $A$ admits a $j$-controlled $d^c$-diagram $(B,\varphi_A,\varphi_H)$, then
\[
0 \leq  r_j^k -d_j^k   \leq  \ell_k
 \] 
 for all $k > j$.
\end{thm}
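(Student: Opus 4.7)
The plan is to exploit Lemma \ref{lem;jminBA}: picking any $j$-minimal model $\psi:\cM^j\to B$, the composite $\varphi_A\circ\psi:\cM^j\to A$ is a $j$-minimal model of $A$ itself. I would introduce the intermediate subspace
\[
V^k := \Img\bigl(H^k(\psi)\bigr) \subseteq H^k(B),
\]
so that $r_j^k = \dim H^k(\varphi_A)(V^k) \leq \dim V^k$. The entire argument then reduces to sandwiching $\dim V^k$ (and hence also its image in $H^k(A)$) between $d_j^k$ and $d_j^k+\ell_k$.

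For the lower bound $r_j^k \geq d_j^k$: since $\varphi_A\circ\psi$ is a $j$-minimal model, $H^s(\varphi_A\circ\psi)$ is an isomorphism for $s\leq j$, so every class in $H^{\leq j}(A)$ admits a preimage in $H^{\leq j}(\cM^j)$. Any cup product $\alpha_1\cup\cdots\cup\alpha_m\in\langle H^{\leq j}(A)\rangle\cap H^k(A)$ then lifts via the graded-commutative product on $H^*(\cM^j)$ to a class mapping to it, which gives $\langle H^{\leq j}(A)\rangle\cap H^k(A)\subseteq \Img H^k(\varphi_A\circ\psi)$ and hence $r_j^k\geq d_j^k$.

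For the upper bound $r_j^k\leq d_j^k+\ell_k$: the key observation is that $\cM^j=\Lambda V$ is freely generated by $V$ concentrated in degrees $\leq j$, while the target cdga $H(A)$ carries the zero differential. Consequently every element of $\Img H^k(\varphi_H\circ\psi)$ is represented as a polynomial in the elements of $\varphi_H\psi(V)\subseteq H^{\leq j}(A)$, so
\[
\Img H^k(\varphi_H\circ\psi) = H^k(\varphi_H)(V^k) \subseteq \langle H^{\leq j}(A)\rangle\cap H^k(A),
\]
which has dimension $d_j^k$. Rank--nullity applied to the restriction $H^k(\varphi_H)|_{V^k}:V^k\to H^k(A)$ then yields
\[
\dim V^k = \dim H^k(\varphi_H)(V^k) + \dim\bigl(V^k\cap \Ke H^k(\varphi_H)\bigr) \leq d_j^k + \ell_k,
\]
using the definition $\ell_k = \dim \Ke H^k(\varphi_H)$. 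Combined with $r_j^k \leq \dim V^k$, this gives the upper bound.

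The main subtle point is the containment $\Img H^k(\varphi_H\circ\psi)\subseteq \langle H^{\leq j}(A)\rangle$. It does \emph{not} rely on the generators of $\cM^j$ being closed (they typically are not); it works precisely because $\varphi_H\circ\psi$ factors through the cohomology cdga $H(A)$, where every element is automatically a cocycle equal to its own cohomology class. This is where the second leg of the $d^c$-diagram enters in an essential way, and explains why such a bound cannot be read off from the real homotopy type of $A$ alone.
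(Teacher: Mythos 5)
Your proposal is correct and follows essentially the same route as the paper: both invoke Lemma \ref{lem;jminBA} to make $\varphi_A\circ\psi$ a $j$-minimal model of $A$, bound $r_j^k$ by the dimension of $V^k=\Img H^k(\psi)$, and apply rank--nullity to $H^k(\varphi_H)|_{V^k}$ using the containment $\Img H^k(\varphi_H\circ\psi)\subseteq\langle H^{\leq j}(A)\rangle\cap H^k(A)$. The only cosmetic differences are that you spell out the lower bound $d_j^k\leq r_j^k$ (which the paper records as a general remark about $j$-minimal models just before the theorem) and omit the paper's surjectivity claim for the factored map, which is indeed not needed for the inequality.
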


In the  inequality above, the term $r_j^k -d_j^k $  is purely topological, and the right hand side is complex-analytic for a standard $d^c$-diagram. In examples below we show the condition $\langle H^{\leq j}(A) \rangle \cap H^{j+1}(A) = 0$ cannot be dropped.

\begin{proof}  Fix $k>j\geq1$, consider a $d^c$-diagram with $j$-minimal model $\psi: \cM^j \to B$ of $B$,
\[
	\begin{tikzcd}
	&\cM^j \ar[d,"\psi",swap] & \\
		&B\ar[ld,"\varphi_A",swap]\ar[rd,"\varphi_H"]&\\
		A &&H(A),
	\end{tikzcd}
	\]
and define
\[
 \nu_j^k  = \rank \left( H^k(\psi): H^k(\cM^j) \to H^k(B) \right) = \dim (V),
 \]
 where $V = \Img \left(H^k(\psi): H^k(\cM^j) \to H^k(B)\right)$.  
 By the assumptions and Lemma \ref{lem;jminBA}, $\psi: \cM^j \to B$, induces a $j$-minimal model  $\varphi_A \circ \psi: \cM^j \to A$ for $A$, therefore for all $k>j$ we have
 \[
 r_j^k = \rank \left(  H^k(\varphi_A \circ \psi ) \right)  \leq  \nu_j^k.
 \]

 The map $H^k(\varphi_H) \big|_V : V \to H^k(A)$ factors through $ \langle H^{\leq j}(A) \rangle \cap H^k(A)$,
since $\cM^j$ is generated by degrees $j$ and lower, and this factoring 
\[
H^k(\varphi_H) \big|_V : V \to  \langle H^{\leq j}(A) \rangle \cap H^k(A)
\]
is surjective since $H^s( \varphi _H \circ \psi)$ is an isomorphism for $s \leq j$, by the assumption that the $d^c$-diagram is $j$-controlled. Then,
\begin{align*}
r_j^k \leq \nu_j^k &= \dim \left( \Img \, H^k(\varphi_H) \big|_V \right) + \dim \left( \Ke H^k(\varphi_H) \big|_V \right) \\
&\leq \dim\left( H^{\leq j}(A) \cap H^k(A) \right) + \dim \left(  \Ke H^k(\varphi_H)  \right) \\
&=d_j^k + \ell_k
\end{align*}
\end{proof}

\begin{rmk}
Under the same hypotheses as the previous Theorem, we have a topological lower bound 
 \[
 2r_j^k-d_j^k \leq b_k(B),
 \]
for all $k> j$ since 
 \[
 b_k(B) = \rank \left(H^k(\varphi_A) \right) + \ell_k \geq r_j^k + \ell_k \geq 2r_j^k-d_j^k.
 \]
 \end{rmk}
 
 Duality gives us the following corollary

\begin{cor} \label{cor;j<kLEQ2n}
Let $A$ be a Poincar\'e duality cdga of formal dimension $2n$. If for some  $j\geq 1$ we have
 \[
\langle H^{\leq j}(A) \rangle \cap H^{j+1}(A) = 0,
\]
and for some $k$, with $2n-j \leq k \leq 2n$, we have $r_j^k > d_j^k$, i.e.
\[
\rank \left( H^k( \cM^j(A) \to A) \right) >  \dim \left( \langle H^{\leq j}(A) \rangle \cap H^k(A)  \right),
\]
then there is no compact complex manifold in the real homotopy type of $A$ with a $j$-controlled complex structure.
 \end{cor}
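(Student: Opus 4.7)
The plan is to argue by contrapositive: assume that a compact complex manifold $N$ in the real homotopy type of $A$ with a $j$-controlled complex structure exists, and derive a contradiction. The standard $d^c$-diagram $(\Ke d^c, i, \II\circ\pi)$ of $\cA(N)$, being $j$-controlled by hypothesis, will be transported across a chain of quasi-isomorphisms connecting $\cA(N)$ to $A$. By iterated application of Proposition \ref{prop: dc homotopy invariance} (using pullback and pushforward along each zig and zag), we obtain a $d^c$-diagram $(B,\varphi_A,\varphi_H)$ for $A$ together with a homotopy commutative ladder of quasi-isomorphisms relating it to the standard $d^c$-diagram of $\cA(N)$.

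Next I would verify that the resulting $d^c$-diagram for $A$ is still $j$-controlled. Since all the vertical maps in that ladder are quasi-isomorphisms, the ranks of $H^s(\varphi_A)$, $H^s(\varphi_H)$, $H^s(\psi_A)$ and $H^s(\psi_H)$ coincide in each degree $s$ with those of the original standard diagram; hence the characterisation \ref{cond: everything iso down} in Lemma \ref{lem;jsimple} is inherited. In particular, the integers $\ell_k = \dim \Ke H^k(\varphi_A)$ are quasi-isomorphism invariants of the $d^c$-diagram.

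Now Theorem \ref{thm;TandAimpliesInequ} applies to $A$ and the pulled-back diagram (both hypotheses holding by assumption), yielding
\[
r_j^k - d_j^k \leq \ell_k \quad \text{for every } k > j.
\]
To bound the right-hand side in the range $2n-j \leq k \leq 2n$, I would invoke the equivalence of conditions \ref{cond: leq j iso, j+1 inj} and \ref{cond: geq 2n-j iso, 2n-j-1 surj} in Lemma \ref{lem;jsimple}: being $j$-controlled forces $H^s(\varphi_A)$ to be an isomorphism for all $s \geq 2n-j$, so $\ell_k = 0$ whenever $2n-j \leq k \leq 2n$. Combining this with the inequality above yields $r_j^k \leq d_j^k$ in the specified range, contradicting the hypothesis $r_j^k > d_j^k$.

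The main obstacle is bookkeeping rather than anything substantive: one must be sure that every invariant used (the $\ell_k$, the rank of $H^k(\varphi_A)$, the $j$-controlled property itself) really is preserved under the pullback/pushforward of $d^c$-diagrams along a chain of quasi-isomorphisms. This, however, is already built into Proposition \ref{prop: dc homotopy invariance} and the intrinsic description of the $j$-controlled condition in terms of ranks of induced maps in cohomology via Lemma \ref{lem;jsimple}.
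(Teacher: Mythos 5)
Your proposal is correct and follows essentially the same route as the paper: transport the $j$-controlled $d^c$-diagram to $A$ (via Proposition \ref{prop: dc homotopy invariance}, which the paper leaves implicit), invoke Theorem \ref{thm;TandAimpliesInequ} to get $r_j^k - d_j^k \leq \ell_k$, and use the equivalence of conditions \ref{cond: leq j iso, j+1 inj} and \ref{cond: geq 2n-j iso, 2n-j-1 surj} in Lemma \ref{lem;jsimple} to conclude $\ell_k = 0$ for $2n-j \leq k \leq 2n$. The paper's own proof is a one-line version of exactly this argument.
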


\begin{proof} If we have a $j$-controlled $d^c$-diagram, then by the equivalences of Lemma \ref{lem;jsimple}, 
$\ell_k=\ell_{2n-k}=0$, so Theorem \ref{thm;TandAimpliesInequ} gives $r_j^k \leq d_j^k$.
\end{proof}

Taking $k=2n$ we recover Theorem \ref{thm;warmup} and Theorem \ref{thm;introhmtpyobst} of the introduction, since then
 $r_j^{2n}=1$ and $d_j^{2n}=0$. We give some examples.

\begin{ex}[Filiform revisited] The real homotopy type of the filiform nilmanifolds of complex dimension $n \geq2$, associated with the cdga of left-invariant forms
	\[
	F_{2n}:=\Lambda (\eta^1,\dots,\eta^{2n}) \qquad d\eta^1=d\eta^2=0,~ d\eta^k=\eta^1\eta^{k-1}\text{ for }k\geq 3,
	\]
	never contains a $1$-controlled complex structure.
	Indeed, nilmanifolds are $1$-minimal, and here the cup product on $H^1$ is trivial, as $d\eta^3 = \eta^1 \eta^2$,
	so that $1 =b_{2n} = r_1^{2n} > d_1^{2n} = 0$.

	As for $k=3$, the filiform nilmanifolds admit almost complex structures (e.g. set $J\eta^{2k}=\eta^{2k-1}$), and are known not to admit left-invariant complex structures in any dimension \cite{GM}. In complex dimension $2$, they are known not to admit any complex structure, as can be recovered here,  since by Corollary \ref{cor:surfaces} any complex structure in complex dimension $2$ is $d d^c+3$, and would be $d d^c$ in degree $1$, as can be shown using  $b_1=2$ is even. It is unknown if this homotopy type can admit a complex structure in dimension $n \geq 3$.
\end{ex}

\begin{ex}[A compact complex $3$-fold satisfying the assumptions of Corollary \ref{cor;j<kLEQ2n}]

Let $M=G/\Gamma$ be a nilmanifold with structure equations 
\begin{align*}
d\eta^3 &= \eta^1 \eta^2  &  d\eta^4 &= \eta^1 \eta^3 \\
 d\eta^5 &= \eta^2 \eta^3  &  d\eta^6 &= \eta^1 \eta^4 + \eta^2  \eta^5.
\end{align*}
Any such nilmanifold has a left invariant complex structure, c.f. \cite{Sal}. 
Then $b_1(M)=2$, and $\eta^1\eta^2 = d\eta^3$, so the product  $\cup:H^1(M)\times H^1(M)\to H^2(M)$ is trivial.  So
$1 =b_{6} = r_1^{6} > d_1^{6} = 0$, and there is no $1$-controlled compact complex manifold with this homotopy type.

According to \cite{COUV} (p. 4, Theorem $2.1$) there are two left-invariant complex structures on $M$. In fact, one may compute that for each of them, the bicomplex looks as follows:
\[
\cA(M)\simeq_1\img{heresy}\oplus \img{S311_3}\oplus\img{S210_3}\oplus \img{S1101_3}
\]
So, these satisfy purity in degree $1$, and $\pdef(M)\leq 1$, but there is a nonzero differential $E^{0,1}_2(M)\to E_2^{1,1}(M)$. This is consistent with \cite{COUV}, which shows any left invariant complex structure must degenerate at $E_2$ and not $E_1$, so in particular, is not $dd^c+3$. We emphasize that the results in this case, using the real homotopy type, apply to all complex structures, not only those that are left invariant, and show no complex structure which is $1$-controlled can have this homotopy type.
\end{ex}

\begin{rmk}
From the above examples many others can be constructed by taking connected sum with any manifold $N$ whose first Betti number is zero. Indeed, if $H^1(N)=0$ then
$\langle H^{\leq1}(N)  \rangle \cap H^2(N)=0$ and $r_1^6 = d_1^6=0$ so by Remark \ref{rmk;rkdksum}, $M \# N$ satisfies the topological hypothesis of Corollary \ref{cor;j<kLEQ2n} whenever $M$ does. If both $M$ and $N$ are almost complex, then so is $M \# N$ as well. Almost every orientable $6$-manifold is almost complex, with the only obstruction $W_3 \in H^3(X;\Z)$, so there are a plethora of examples, for which $1$-controlled complex structures are ruled out on $M \# N$. Similar comments apply to blow-ups along almost complex submanifolds and projectivized complex vector bundles.
\end{rmk}

\begin{ex}[The triviality condition on the cup product cannot be dropped] Consider the real homotopy type determined by the cdga with a six dimensional space of generators in degree one and structure equations
	\[
	d\eta^5=\eta^1\eta^3-\eta^2\eta^4,~ d\eta^6=\eta^2\eta^3+\eta^1\eta^4, ~d\eta^i=0\text{ else.}
	\]
 This cdga can be identified with the left-invariant forms on the Iwasawa manifold, given by upper triangular matrices with complex entries modulo those with entries in the Gau{\ss}ian integers. Note that the cup product map $H^1\times H^1\to H^2$ is not trivial. As in every nilmanifold, we have $r_1^{2n}-d_1^{2n}=1$. On the other hand, by construction, the Iwasawa manifold carries a complex structure and some of its small deformations (namely, those of type $(ii.b)$ and $(iii.b)$, according to the classification in \cite{AngellaBook}) are $1$-controlled (this follows from \cite[§9.1]{SteLR}). Thus, the condition on the vanishing cup product in degree $j+1$ cannot be dropped in Theorem \ref{thm;TandAimpliesInequ} or Corollary \ref{cor;j<kLEQ2n}
 
 Also, note this homotopy type has a non-vanishing triple Massey product in $H^2$, so this shows that such Massey products cannot in general be used to rule out the existence of $j$-controlled structures.
\end{ex}

The following example shows Corollary \ref{cor;j<kLEQ2n}  can sometimes be used for $k < 2n$ in situations where $k=2n$ does not apply. 
 
\begin{ex}  \label{ex:k=2,2n=4}
Let $M$ be a $6$-dimensional manifold with the real homotopy type of a nilmanifold with structure equations 
$d\eta^1=d\eta^2=0$,
and
\begin{align*}
d\eta^3 &= \eta^1 \eta^2  &  d\eta^4 &= \eta^2 \eta^3 \\
 d\eta^5 &= \eta^2 \eta^4  &  d\eta^6 &= \eta^1 \eta^5 + \eta^3  \eta^4,
\end{align*}
 c.f. \cite{Mor} or \cite{AngellaBook}. Then 
$H^1 = \langle \eta^1,\eta^2 \rangle$ and  
$H^2 =  \langle \eta^1 \eta^3,\eta^2  \eta^5\rangle$, and 
 $\cup:H^1(M)\times H^2(M)\to H^3(M)$ is trivial, since $d(\eta^1 \eta^4) = \eta^2 \eta^1\eta^3$  and 
$d(\eta^2\eta^6+\eta^3\eta^5) = 2\eta^1\eta^2\eta^5$.  Also, $\cup:H^2(M)\times H^2(M)\to H^4(M)$ is trivial since $d(\eta^1\eta^4\eta^5)=\eta^1\eta^2\eta^3\eta^5$. (The same claims hold with a different underlying homotopy type, changing only the structure equation for $d \eta^5$, to $d\eta^5 = \eta^2 \eta^4-\eta^1\eta^3$. We may use this for $M$ as well.)

So, for any such $M$, we have
\[
2=b_2= r_2^4 > d_2^4 = 0.
\]

Now let $N$ be any orientable $6$-manifold $N$ such that $\cup: H^1 \otimes H^1 \to H^2$ is non-trivial,
$\cup:H^1(N)\times H^2(N)\to H^3(N)$ is trivial, and $d_2^6(N)=1$.  For example, let 
$N = (S^1 \times S^1 \times S^4) \# \CC P^3$.

Now consider $M \# N$. We cannot apply Corollary \ref{cor;j<kLEQ2n} with $j=1$, nor can we apply it with $j=2$ and $k=6$.
But we can apply  Corollary \ref{cor;j<kLEQ2n} to $M \# N$ with $j=2$ and $k=4$, using additivity in 
Remark \ref{rmk;rkdksum}, and conclude $M \# N$  has no complex structure which is $2$-controlled.
\end{ex}

The examples of almost complex manifolds, without $j$-controlled complex structures, are not limited to nilmanifolds and their connected sums with other manifolds. For example, using Milivojevi\'c's realization theorem for almost complex manifolds, one can build examples with very sparse Betti numbers, which are rationally highly connected, by `stretching out' cdga's from the previous examples:

\begin{ex}[Highly connected examples]
Let $s$ be an odd positive integer. Consider the cdga with generators $\eta^1,\eta^2,\eta^3,\eta^4$ in degrees $s,s,2s-1,$ and $3s-2$, respectively, and the only nontrivial relations $d\eta^3 = \eta^1 \eta^2$ and $d \eta^4 = \eta^1 \eta^3$.  The Euler characteristic is zero, the cohomology satisfies Poincar\'e duality, and is trivial in middle degree, so this real homotopy type contains a simply connected $2n:=7s-3$-dimensional almost complex manifold \cite[Thm 2.4., Cor. 6.3. and 6.4.]{Mili}.  Additionally, it is rationally $(s-1)$-connected, $j:=3s-2$-minimal, and satisfies $\langle H^{\leq j}\rangle \cap H^{j+1} = \langle H^{\leq j} \rangle \cap H^{2n} =0$. Indeed, the cohomology $H^s$ vanishes for $s \leq j$, except for $H^s$ generated by $\eta^1$ and $\eta^2$, yet $d\eta^3 = \eta^1 \eta^2$. Then by Corollary \ref{cor;j<kLEQ2n}, any almost complex manifold with this real homotopy type has no complex structure which is $7$-controlled.
\end{ex}

\bibliographystyle{alpha}

\bibliography{biblio}

\begin{thebibliography}{BHPVdV04}

\bibitem[ADOS22]{ADOS}
Daniele Angella, Arturas Dubickas, Alexandra Otiman, and Jonas Stelzig.
\newblock On metric and cohomological properties of {O}eljeklaus-{T}oma
  manifolds.
\newblock {\em Preprint arXiv:2201.06377. To appear in Publicacions
  Matem\`atiques}, 2022.

\bibitem[AKMW02]{AKMW}
Dan Abramovich, Kalle Karu, Kenji Matsuki, and Jaros\l{}aw W\l{}odarczyk.
\newblock Torification and factorization of birational maps.
\newblock {\em J. Amer. Math. Soc.}, 15(3):531--572, 2002.

\bibitem[AM19]{AM}
Michael Albanese and Aleksandar Milivojevi\'{c}.
\newblock On the minimal sum of {B}etti numbers of an almost complex manifold.
\newblock {\em Differential Geom. Appl.}, 62:101--108, 2019.

\bibitem[Ang14]{AngellaBook}
Daniele Angella.
\newblock {\em Cohomological aspects in complex non-{K}\"ahler geometry},
  volume 2095 of {\em Lecture Notes in Mathematics}.
\newblock Springer, Cham, 2014.

\bibitem[AT13]{AnTo}
Daniele Angella and Adriano Tomassini.
\newblock On the {$\partial\overline{\partial}$}-lemma and {B}ott-{C}hern
  cohomology.
\newblock {\em Invent. Math.}, 192(1):71--81, 2013.

\bibitem[AT15]{ATBCf}
Daniele Angella and Adriano Tomassini.
\newblock On {B}ott-{C}hern cohomology and formality.
\newblock {\em J. Geom. Phys.}, 93:52--61, 2015.

\bibitem[BFMnT16]{BFMT}
Indranil Biswas, Marisa Fern\'{a}ndez, Vicente Mu\~{n}oz, and Aleksy Tralle.
\newblock On formality of {S}asakian manifolds.
\newblock {\em J. Topol.}, 9(1):161--180, 2016.

\bibitem[BHPVdV04]{BaHu}
Wolf~P. Barth, Klaus Hulek, Chris A.~M. Peters, and Antonius Van~de Ven.
\newblock {\em Compact complex surfaces}, volume~4.
\newblock Springer-Verlag, Berlin, second edition, 2004.

\bibitem[BMFM20]{BMM}
Urtzi Buijs, Jos\'{e}~M. Moreno-Fern\'{a}ndez, and Aniceto Murillo.
\newblock {$A_\infty$} structures and {M}assey products.
\newblock {\em Mediterr. J. Math.}, 17(1):Paper No. 31, 15, 2020.

\bibitem[Bor78]{Bor}
Armand Borel.
\newblock A spectral sequence for complex analytic bundles.
\newblock {\em Appendix II to {F}riedrich {H}irzebruch, {T}opological methods
  in algebraic geometry. {S}pringer}, 1978.

\bibitem[CG08]{CG}
Xue~Zhi Cheng and Ezra Getzler.
\newblock Transferring homotopy commutative algebraic structures.
\newblock {\em J. Pure Appl. Algebra}, 212(11):2535--2542, 2008.

\bibitem[COUV16]{COUV}
M.~Ceballos, A.~Otal, L.~Ugarte, and R.~Villacampa.
\newblock Invariant complex structures on 6-nilmanifolds: classification,
  {F}r\"{o}licher spectral sequence and special {H}ermitian metrics.
\newblock {\em J. Geom. Anal.}, 26(1):252--286, 2016.

\bibitem[Del71]{DeHII}
Pierre Deligne.
\newblock Th\'eorie de {H}odge. {II}.
\newblock {\em Publ. Math. IH\'ES}, (40):5--57, 1971.

\bibitem[DGMS75]{DGMS}
Pierre Deligne, Phillip Griffiths, John Morgan, and Dennis Sullivan.
\newblock Real homotopy theory of {K}\"ahler manifolds.
\newblock {\em Invent. Math.}, 29(3):245--274, 1975.

\bibitem[ES93]{EaSi}
Michael Eastwood and Micheal Singer.
\newblock The {F}r\"ohlicher spectral sequence on a twistor space.
\newblock {\em J. Differ. Geom.}, 38(1):653--669, 1993.

\bibitem[GR02]{GM}
M.~Goze and E.~Remm.
\newblock Non existence of complex structures on filiform {L}ie algebras.
\newblock {\em Comm. Algebra}, 30(8):3777--3788, 2002.

\bibitem[IK19]{IshKas}
Hiroaki Ishida and Hisashi Kasuya.
\newblock Transverse {K}\"{a}hler structures on central foliations of complex
  manifolds.
\newblock {\em Ann. Mat. Pura Appl. (4)}, 198(1):61--81, 2019.

\bibitem[IO22]{IO}
Nicolina Istrati and Alexandra Otiman.
\newblock Bott-{C}hern cohomology of compact {V}aisman manifolds.
\newblock {\em Preprint arXiv:2206.07312. To appear in Trans. Amer. Math.
  Soc.}, 2022.

\bibitem[Kad80]{Kad}
Tornike~V. Kadeishvili.
\newblock On the theory of homology of fiber spaces.
\newblock {\em Uspekhi Mat. Nauk}, 35(3(213)):183--188, 1980.
\newblock International Topology Conference (Moscow State Univ., Moscow, 1979).

\bibitem[Kad88]{Kad2}
Tornike~V. Kadeishvili.
\newblock The structure of the {$A(\infty)$}-algebra, and the {H}ochschild and
  {H}arrison cohomologies.
\newblock {\em Trudy Tbiliss. Mat. Inst. Razmadze Akad. Nauk Gruzin. SSR},
  91:19--27, 1988.

\bibitem[Kad93]{Kad4}
Tornike~V. Kadeishvili.
\newblock {\em The {$A_\infty$}-algebra structure in cohomology, and rational
  homotopy type}, volume 107 of {\em Tbilisi A. Razmadze Mathematics Institute
  Works}.
\newblock ``Metsniereba'', Tbilisi, 1993.

\bibitem[Kas80]{Kash}
Toyoko Kashiwada.
\newblock On {$V$}-harmonic forms in compact locally conformal {K}\"{a}hler
  manifolds with the parallel {L}ee form.
\newblock {\em Kodai Math. J.}, 3(1):70--82, 1980.

\bibitem[Kas13]{KasVais}
Hisashi Kasuya.
\newblock Vaisman metrics on solvmanifolds and {O}eljeklaus-{T}oma manifolds.
\newblock {\em Bull. Lond. Math. Soc.}, 45(1):15--26, 2013.

\bibitem[KQ20]{KhQi}
Mikhail Khovanov and You Qi.
\newblock A faithful braid group action on the stable category of tricomplexes.
\newblock {\em SIGMA}, (16):32, 2020.

\bibitem[KS00]{KS}
Maxim Kontsevich and Yan Soibelman.
\newblock Deformations of algebras over operads and the {D}eligne conjecture.
\newblock In {\em Conf\'{e}rence {M}osh\'{e} {F}lato 1999, {V}ol. {I}
  ({D}ijon)}, volume~21 of {\em Math. Phys. Stud.}, pages 255--307. Kluwer
  Acad. Publ., Dordrecht, 2000.

\bibitem[Li22]{Li}
Chi Li.
\newblock Polarized {H}odge structures for {C}lemens manifolds.
\newblock {\em Preprint arXiv:2202.10353}, 2022.

\bibitem[Mar92]{Markl}
Martin Markl.
\newblock A cohomology theory for {$A(m)$}-algebras and applications.
\newblock {\em J. Pure Appl. Algebra}, 83(2):141--175, 1992.

\bibitem[Mar06]{Markl2}
Martin Markl.
\newblock Transferring {$A_\infty$} (strongly homotopy associative) structures.
\newblock {\em Rend. Circ. Mat. Palermo (2) Suppl.}, (79):139--151, 2006.

\bibitem[Men22]{Meng22}
Lingxu Meng.
\newblock The $\partial\bar\partial$-lemma under surjective maps.
\newblock {\em Acta Math. Sci.}, 42B(3):865--875, 2022.

\bibitem[Mer99]{Merk}
Sergei~A. Merkulov.
\newblock Strong homotopy algebras of a {K}\"{a}hler manifold.
\newblock {\em Internat. Math. Res. Notices}, (3):153--164, 1999.

\bibitem[Mil22]{Mili}
Aleksandar Milivojevi\'{c}.
\newblock On the characterization of rational homotopy types and {C}hern
  classes of closed almost complex manifolds.
\newblock {\em Complex Manifolds}, 9(1):138--169, 2022.

\bibitem[Mor58]{Mor}
V.~V. Morozov.
\newblock Classification of nilpotent {L}ie algebras of sixth order.
\newblock {\em Izv. Vys\v{s}. U\v{c}ebn. Zaved. Matematika}, 1958(4
  (5)):161--171, 1958.

\bibitem[OT05]{OT}
Karl Oeljeklaus and Matei Toma.
\newblock Non-{K}\"ahler compact complex manifolds associated to number fields.
\newblock {\em Ann. Inst. Fourier}, 55(1):161--171, 2005.

\bibitem[OT21]{OtiTo}
Alexandra Otiman and Matei Toma.
\newblock Hodge decomposition for {C}ousin groups and {O}eljeklaus-{T}oma
  manifolds.
\newblock {\em Ann. Sc. Norm. Super. Pisa Cl. Sci}, 22(2):485--503, 2021.

\bibitem[OV03]{OV}
Liviu Ornea and Misha Verbitsky.
\newblock Structure theorem for compact {V}aisman manifolds.
\newblock {\em Math. Res. Lett.}, 10(5-6):799--805, 2003.

\bibitem[OV16]{OV4}
Liviu Ornea and Misha Verbitsky.
\newblock L{CK} rank of locally conformally {K}\"{a}hler manifolds with
  potential.
\newblock {\em J. Geom. Phys.}, 107:92--98, 2016.

\bibitem[OV22]{OV2}
Liviu Ornea and Misha Verbitsky.
\newblock Supersymmetry and {H}odge theory on {S}asakian and {V}aisman
  manifolds.
\newblock {\em manuscripta mathematica}, 2022.

\bibitem[PSU20]{PoSteUb}
Dan Popovici, Jonas Stelzig, and Luis Ugarte.
\newblock Higher-page {H}odge theory of compact complex manifolds.
\newblock {\em Preprint arXiv: 2001.02313}, 2020.

\bibitem[PSU21]{PoSteU}
Dan Popovici, Jonas Stelzig, and Luis Ugarte.
\newblock Higher-page {B}ott-{C}hern and {A}eppli cohomologies and
  applications.
\newblock {\em J. Reine Angew. Math.}, 777:157--194, 2021.

\bibitem[Sal01]{Sal}
S.~M. Salamon.
\newblock Complex structures on nilpotent {L}ie algebras.
\newblock {\em J. Pure Appl. Algebra}, 157(2-3):311--333, 2001.

\bibitem[Sta63]{Stash}
James~Dillon Stasheff.
\newblock Homotopy associativity of {$H$}-spaces. {I}, {II}.
\newblock {\em Trans. Amer. Math. Soc. 108 (1963), 275-292; ibid.},
  108:293--312, 1963.

\bibitem[Ste21a]{StDblBl}
Jonas Stelzig.
\newblock The double complex of a blow-up.
\newblock {\em Int. Math. Res. Not. IMRN}, (14):10731--10744, 2021.

\bibitem[Ste21b]{StStrDbl}
Jonas Stelzig.
\newblock On the structure of double complexes.
\newblock {\em J. Lond. Math. Soc. (2)}, 104(2):956--988, 2021.

\bibitem[Ste22a]{SteLR}
Jonas Stelzig.
\newblock On linear combinations of cohomological invariants of compact complex
  manifolds.
\newblock {\em Advances in Mathematics}, 407:108560, 2022.

\bibitem[Ste22b]{St22}
Jonas Stelzig.
\newblock Some remarks on the {S}chweitzer complex.
\newblock {\em Preprint arXiv:2204.06027}, 2022.

\bibitem[Sul77]{InfComp}
Dennis Sullivan.
\newblock Infinitesimal computations in topology.
\newblock {\em Publ. Math. IH\'ES}, 47:269--331, 1977.

\bibitem[Tie08]{T}
Aaron~Michael Tievsky.
\newblock {\em Analogues of {K}\"ahler geometry on {S}asakian manifolds}.
\newblock ProQuest LLC, Ann Arbor, MI, 2008.
\newblock Thesis (Ph.D.)--Massachusetts Institute of Technology.

\bibitem[Tsu94]{Tsu}
Kazumi Tsukada.
\newblock Holomorphic forms and holomorphic vector fields on compact
  generalized {H}opf manifolds.
\newblock {\em Compositio Math.}, 93(1):1--22, 1994.

\bibitem[Vai79]{Vais}
Izu Vaisman.
\newblock Locally conformal {K}\"{a}hler manifolds with parallel {L}ee form.
\newblock {\em Rend. Mat. (6)}, 12(2):263--284, 1979.

\bibitem[Vai80]{VaisLCKGCK}
Izu Vaisman.
\newblock On locally and globally conformal {K}\"{a}hler manifolds.
\newblock {\em Trans. Amer. Math. Soc.}, 262(2):533--542, 1980.

\bibitem[Vai82]{VaisGH}
Izu Vaisman.
\newblock Generalized {H}opf manifolds.
\newblock {\em Geom. Dedicata}, 13(3):231--255, 1982.

\bibitem[Voi07]{Voisin}
Claire Voisin.
\newblock {\em Hodge theory and complex algebraic geometry. {I}}, volume~76 of
  {\em Cambridge Studies in Advanced Mathematics}.
\newblock Cambridge University Press, Cambridge, english edition, 2007.
\newblock Translated from the French by Leila Schneps.

\bibitem[Voi08]{Voi08}
Claire Voisin.
\newblock {H}odge structures on cohomology algebras and geometry.
\newblock {\em Math. Ann.}, 341:39--69, 2008.

\bibitem[W\l03]{Wlod}
Jaros\l{}aw W\l{}odarczyk.
\newblock Toroidal varieties and the weak factorization theorem.
\newblock {\em Invent. Math.}, 154(2):223--331, 2003.

\end{thebibliography}

\end{document}